\newtheorem{theorem}{Theorem}[section]
\newtheorem{lemma}[theorem]{Lemma}
\newtheorem{prop}[theorem]{Proposition}
\newtheorem{coro}[theorem]{Corollary}
\theoremstyle{definition}
\newtheorem{remark}[theorem]{Remark}
\newcommand{\NN}{\mathbb{N}}
\newcommand{\Ec}{\mathcal{E}}
\newcommand{\setdef}{\ \vert \ }
\newcommand{\vep}{\varepsilon}
\newcommand{\psh}{{\rm PSH}}
\newcommand{\capa}{{\rm Cap}}
\newcommand{\capi}{{\rm Cap}_{\phi}}
\newcommand{\capo}{{\rm Cap}_{\omega}}
\newcommand{\vol}{{\rm Vol}}
\newcommand{\ddbar}{\partial\bar\partial}
\newcommand{\AM}{{\rm I}}
\newcommand{\AMO}{{\rm I}_{\phi}}
\newcommand{\Amp}{{\rm Amp}}
\newcommand{\id}{\mathbbm{1}}
\title{Monotonicity of  non-pluripolar products and  complex Monge-Amp\`ere equations with prescribed singularity }
\author{Tam\'as Darvas, Eleonora Di Nezza, Chinh H. Lu}
\date{\vspace{-0.2in}}
\begin{document}
\maketitle
  
\begin{abstract}
We establish the monotonicity property for the mass of non-pluripolar products on compact K\"ahler manifolds, and we initiate the study of complex Monge-Amp\`ere type equations with prescribed singularity type. Using the variational method  of Berman-Boucksom-Guedj-Zeriahi we prove existence and uniqueness of solutions with small unbounded locus. We give applications to K\"ahler--Einstein metrics with prescribed singularity, and we show that the log-concavity property holds for non-pluripolar products with small unbounded locus.
\end{abstract}

%\tableofcontents

\section{Introduction and main results}

Let $X$ be a compact K\"ahler manifold of complex dimension $n$, and let $\theta$ be a smooth closed real $(1,1)$-form on $X$ such that $\{ \theta\}$ is big. Broadly speaking, the purpose of this article is threefold. First, we develop the potential theory of non-pluripolar products without any restrictions on the singularity type, by combining techniques of Witt Nystr\"om \cite{WN17} and previous work of the authors \cite{DDL16}. Second, given $\phi \in \textup{PSH}(X,\theta)$, we introduce and study the spaces $\mathcal E(X,\theta,\phi)$ and $\mathcal E^1(X,\theta,\phi)$, generalizing the content of \cite{BEGZ10} to the relative framework. These latter spaces contain potentials that are slightly more singular than $\phi$, and satisfy a (relative) full mass/finite energy condition. Lastly, with sufficient potential theory developed, we focus on the variational study of the complex Monge-Amp\`ere equation
\begin{equation}\label{eq: CMAE_intr}
(\theta + i\ddbar u)^n = f \omega^n,
\end{equation}
where $f \geq 0$, $f \in L^p(\omega^n), \ p > 1,$  and the singularity type of $u \in \textup{PSH}(X,\theta)$ is the same as that of $\phi$.
As it will turn out, this equation is well posed only for potentials $\phi$ with a certain type of ``model'' singularity, that includes the case of analytic singularities, and we provide existence of unique solutions with small unbounded locus. As we will see, on the right hand side of \eqref{eq: CMAE_intr} one may even consider more general (non-pluripolar) Radon measures. 

When $\theta$ is a K\"ahler form, $f>0$ is smooth, and $\phi=0$, the above equation was solved (with smooth solutions) by Yau \cite{Au78,Ya78}, resolving the famous Calabi conjecture. Using both a priori estimates and pluripotential theory, this result was later extended in many different directions (see \cite{Kol98,Kol03,GZ07,BEGZ10,BBGZ13,Ber13,PS14}). Our approach seems to unify all existing works (in the compact setting), under the theme of solutions with arbitrary prescribed (model) singularity type.  

At the end of the paper, we give applications of our results to singular K\"ahler--Einstein metrics and establish the log-concavity property for certain non-pluripolar products. Other applications will be treated in a sequel.

Though we will work in the general framework of big cohomology classes throughout the paper, we note that all our results seem to be new in the particular case of K\"ahler classes as well. 

\paragraph{Monotonicity of non-pluripolar products and relative finite energy.} Unless otherwise specified, we fix a background K\"ahler structure $(X,\omega)$ for the remainder of the paper.

We say that a potential $u\in L^1(X,\omega^n)$  is $\theta$-plurisubharmonic ($\theta$-psh) if locally  $u$ is the difference of a plurisubharmonic  and a smooth function, and $\theta_u :=\theta+i\ddbar u \geq 0$ in the sense of currents. The set of $\theta$-psh potentials is denoted by $\textup{PSH}(X,\theta)$. We say that $\{\theta \}$ is \emph{pseudoeffective} if $\textup{PSH}(X,\theta)$ is non-empty. Along these lines, $\{\theta\}$ is \emph{big} if $\textup{PSH}(X,\theta-\varepsilon \omega)$ is non-empty for some $\varepsilon >0$. 

If $u$ and $v$ are two  $\theta$-psh functions on $X$, then $u$ is said to be \emph{less singular} than $v$ if $v\leq u+C$ for some $C\in \Bbb R$. We say that $u$ has the same singularities as $v$, if $u$ is less singular than $v$, and $v$ is less singular than $u$. This defines an equivalence relation on $\textup{PSH}(X,\theta)$, whose equivalence classes are the \emph{singularity types}
$[u], \ u \in \textup{PSH}(X,\theta)$.

%A  $\theta$-psh function $u$ is said to have \emph{minimal singularities} if it is less singular than any other $\theta$-psh function.  

Given closed positive $(1,1)$-currents $T_1:=\theta^1_{u_1},...,T_p:=\theta^p_{u_p}$, where $\theta^j$ are closed smooth real $(1,1)$-forms, generalizing the construction of Bedford-Taylor \cite{BT87} in the local setting, it has been shown in \cite{BEGZ10} that one can define the \emph{non-pluripolar product}  of these currents:
\[
\theta^1_{u_1}\wedge \ldots \wedge \theta^p_{u_p}:= \langle T_1\wedge...\wedge T_p\rangle.
\]
The resulting positive $(p,p)$-current does not charge pluripolar sets and it is \emph{closed}. For a $\theta$-psh function $u$, the \emph{non-pluripolar complex Monge-Amp{\`e}re measure} of $u$ is simply $\theta_u^n:=\theta_u\wedge \ldots\wedge \theta_u.$

It has recently been proved by Witt Nystr\"om that the complex Monge-Amp\` ere mass of $\theta$-psh potentials decreases as the singularity type increases  \cite[Theorem 1.2]{WN17}.
Our  main result about monotonicity of non-pluripolar products generalizes this result to the case of different cohomology classes $\{\theta^j\}$, fully proving  what was conjectured by Boucksom-Eyssidieux-Guedj-Zeriahi (see the comments after   \cite[Theorem 1.16]{BEGZ10} in which they prove that the result holds for potentials with small unbounded locus):

\begin{theorem}\label{thm1}
	Let $\theta^j, j \in \{1,\ldots,n\}$ be smooth closed real $(1,1)$-forms on $X$. Let $u_j,v_j \in \textup{PSH}(X,\theta^j)$ such that $u_j$ is less singular than $v_j$ for all $j \in \{1,\ldots,n\}$. Then 
	\[
	\int_X \theta^1_{u_1} \wedge \ldots \wedge \theta^n_{u_n}  \geq  \int_X \theta^1_{v_1} \wedge \ldots \wedge \theta^n_{v_n}. 
	\]
\end{theorem}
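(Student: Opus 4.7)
The approach is to combine a telescoping reduction, an approximation-from-above, and a lower semicontinuity result. First, by a telescoping argument, it suffices to prove the one-factor inequality
\[
\int_X \theta^1_u \wedge T \geq \int_X \theta^1_v \wedge T,
\]
where $u, v \in \textup{PSH}(X,\theta^1)$ with $u$ less singular than $v$, and $T = \theta^2_{u_2} \wedge \cdots \wedge \theta^n_{u_n}$ is an arbitrary closed positive non-pluripolar $(n-1,n-1)$-product. Replacing one $v_j$ at a time in the full inequality then delivers the theorem.

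For the one-factor inequality, I would introduce the canonical approximants $v_t := \max(v, u - t)$, $t > 0$. Since $u$ is less singular than $v$, say $v \leq u + C$, one has $u - t \leq v_t \leq u + C$, so $v_t$ has \emph{exactly the same singularity type as $u$}, while $v_t \searrow v$ as $t \to \infty$. The proof then rests on two claims. The first is a \emph{bounded-difference invariance}: for every $t > 0$,
\[
\int_X \theta^1_u \wedge T = \int_X \theta^1_{v_t} \wedge T.
\]
The second is a \emph{lower semicontinuity along decreasing limits}:
\[
\liminf_{t \to \infty} \int_X \theta^1_{v_t} \wedge T \geq \int_X \theta^1_v \wedge T.
\]
Chaining these immediately yields $\int_X \theta^1_u \wedge T \geq \int_X \theta^1_v \wedge T$. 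For both claims the strategy is to truncate at levels: on the plurifine-open set $\{u > -k\}$ (which, up to a bounded shift, coincides with $\{v_t > -k-C\}$), all potentials are locally bounded, Bedford--Taylor theory applies, and the difference $i\ddbar(u - v_t) \wedge T$ can be handled by Stokes. Passing $k \to \infty$ and using that the non-pluripolar product does not charge $\{u = -\infty\} \cup \{v = -\infty\}$ gives the invariance, while a Fatou-type bound for the truncated masses yields the semicontinuity.

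The principal obstacle is the bounded-difference invariance in the mixed-class setting. The current $T$ is itself a non-pluripolar product built from $n-1$ possibly very singular potentials in \emph{different} cohomology classes, so one cannot reduce to a single $\theta$-plurisubharmonic framework. Carrying out the truncation/integration-by-parts step requires truncating all factors simultaneously, controlling boundary contributions on $\{u = -k\} \cup \{v_t = -k\}$, and verifying that the error vanishes as $k \to \infty$. The one-class techniques of Witt Nystr\"om \cite{WN17} and the relative full-mass machinery developed by the authors in \cite{DDL16} should provide the right tools, but marrying them to cover mixed classes, with $T$ itself only defined as a non-pluripolar product, is the technical heart of the argument. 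Once this invariance is established, the semicontinuity step and the telescoping reduction are comparatively routine.
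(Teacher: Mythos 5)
Your overall architecture is the same as the paper's: approximate $v$ from above by $v_t := \max(v, u-t)$, use the fact that $v_t$ has the same singularity type as $u$, and pass to the limit $t \to \infty$ via a lower-semicontinuity theorem for non-pluripolar products under convergence in capacity. The lower-semicontinuity half of your argument is essentially the paper's Theorem \ref{thm: lsc of MA measures}, and your sketch for it (truncate, apply Bedford--Taylor convergence locally on the ample locus, then pass to the limit monotonically) is the right idea.

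The genuine gap is in what you call the ``bounded-difference invariance,'' i.e.\ the statement that $\int_X \theta^1_u \wedge T = \int_X \theta^1_{v_t} \wedge T$ when $u$ and $v_t$ have the same singularity type and $T$ is itself a non-pluripolar product. You correctly flag this as ``the technical heart,'' but your proposed mechanism --- truncate at levels, integrate by parts/Stokes on $\{u > -k\}$, and show the boundary error vanishes as $k \to \infty$ --- is not a proof sketch so much as a restatement of the difficulty; this is exactly the analysis Witt Nystr\"om had to carry out (with considerable effort, via Legendre-type transforms) even in the \emph{single-class} case, and you have not explained why the error terms vanish. More to the point, you never articulate the device the paper actually uses to deduce the mixed-class invariance from the single-class one: \emph{polarization}. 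Given $u_j, v_j$ with $[u_j] = [v_j]$, the paper applies Witt Nystr\"om's one-class theorem to $u_t := \sum_j t_j u_j$, $v_t := \sum_j t_j v_j$ and $\theta^t := \sum_j t_j \theta^j$ for all $t$ in the positive orthant, obtaining $\int_X (\theta^t_{u_t})^n = \int_X (\theta^t_{v_t})^n$; by multi-linearity of the non-pluripolar product both sides are homogeneous polynomials of degree $n$ in $t$, so their coefficients --- precisely the mixed masses $\int_X \theta^1_{u_1} \wedge \cdots \wedge \theta^n_{u_n}$ --- must agree. This is Proposition \ref{prop: comparison generalization} in the paper, and it completely sidesteps the Stokes/boundary analysis you were worried about. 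Without this (or some substitute), your argument does not close. The telescoping reduction to one factor, incidentally, is unnecessary: once you have the polarization identity, the $n$-fold statement comes out all at once.
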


To prove the above theorem, we first need to generalize the main convergence theorems of Bedford-Taylor theory (\cite{BT87}, see also  \cite{X96,X09}). This is done collectively in the next result, further elaborated in Theorem \ref{thm: lsc of MA measures} below:

\begin{theorem}\label{thm2}
Let $\theta^j, j \in \{1,\ldots,n\}$ be smooth closed real $(1,1)$-forms on $X$. Suppose that  we have $u_j,u_j^k\in \textup{PSH}(X,\theta^j)$ such that  $u^k_j \to u_j$ in capacity as $k \to \infty$, and 
\begin{equation}\label{eq: main_global_mass_semi_cont}
\int_ X \theta^1_{u_1} \wedge \ldots \wedge \theta^n_{u_n} \geq \limsup_{k\rightarrow \infty} \int_X \theta^1_{u^k_1} \wedge \ldots \wedge \theta^n_{u^k_n}.
\end{equation}
Then $\theta^1_{u^k_1} \wedge \ldots \wedge \theta^n_{u^k_n}  \to  \theta^1_{u_1} \wedge \ldots \wedge \theta^n_{u_n}$ in the weak sense of measures. 
\end{theorem}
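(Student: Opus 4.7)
The strategy is to reduce Theorem \ref{thm2} to the classical Bedford-Taylor convergence theorem for uniformly bounded potentials via truncation at height $-C$, and then exploit the limsup mass hypothesis to rule out any loss of mass to pluripolar sets in the limit.

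After subtracting constants, assume $u_j^k,u_j\le 0$. For $C>0$ write $u_j^{k,C}:=\max(u_j^k,-C)$ and $u_j^C:=\max(u_j,-C)$. Since the truncation $\max(\,\cdot\,,-C)$ is $1$-Lipschitz, $u_j^{k,C}\to u_j^C$ in capacity as $k\to\infty$, and these potentials are uniformly bounded in $[-C,0]$. A standard extension of Xing's theorem (\cite{X96,X09}) to mixed products of currents in possibly different cohomology classes, based on multilinearity of the Bedford-Taylor product, then gives
\[
\mu_k^C := \theta^1_{u_1^{k,C}} \wedge \cdots \wedge \theta^n_{u_n^{k,C}} \ \longrightarrow \ \mu^C := \theta^1_{u_1^C} \wedge \cdots \wedge \theta^n_{u_n^C}
\]
weakly as $k\to\infty$, for each fixed $C$. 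By plurifine locality of the non-pluripolar product,
\[
\mathbf{1}_{O_C^k}\mu_k=\mathbf{1}_{O_C^k}\mu_k^C, \qquad \mathbf{1}_{O_C}\mu=\mathbf{1}_{O_C}\mu^C,
\]
where $O_C^k:=\bigcap_j\{u_j^k>-C\}$ and $O_C:=\bigcap_j\{u_j>-C\}$ are plurifine open.

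Now let $\nu$ be any weak cluster point of $\mu_k=\theta^1_{u_1^k}\wedge\cdots\wedge\theta^n_{u_n^k}$; by compactness of $X$ and the mass hypothesis, $\nu(X)\le\mu(X)$. It suffices to prove the reverse comparison $\nu\ge\mu$ as measures, since this together with the mass bound forces $\nu=\mu$ and hence convergence of the whole sequence. Combining the plurifine locality above with the weak convergence $\mu_k^C\to\mu^C$, and using quasicontinuity of $\theta^j$-psh functions with respect to $\capa$, one deduces $\mathbf{1}_{O_C}\mu\le\nu$ as measures on $X$. Letting $C\to\infty$ and invoking that $\mu$ does not charge the pluripolar set $\bigcup_j\{u_j=-\infty\}\supseteq X\setminus\bigcup_C O_C$ then yields $\mu\le\nu$, completing the proof.

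The main obstacle is extracting the pointwise measure inequality $\mathbf{1}_{O_C}\mu\le\nu$ from the weak convergence $\mu_k^C\to\mu^C$. Because $O_C$ is plurifine open rather than Euclidean open, portmanteau-type arguments do not apply directly, and one must appeal to quasicontinuity to approximate, up to arbitrarily small $\capa$-mass, the indicator $\mathbf{1}_{O_C}$ by continuous cutoffs --- simultaneously for the limit $u_j$ and the tail of the sequence $u_j^k$, which is possible because convergence in capacity preserves quasicontinuous approximations. The error is absorbed using the standard fact that the bounded Monge-Amp\`ere measures $\mu^C$ do not charge sets of small capacity.
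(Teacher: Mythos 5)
Your proposal follows the same overall strategy as the paper: truncate, apply a Bedford--Taylor/Xing-type convergence theorem to the truncated potentials, use plurifine locality to transfer information back, and then combine a lower-semicontinuity inequality $\nu\ge\mu$ for cluster points with the mass hypothesis $\nu(X)\le\mu(X)$ to force $\nu=\mu$. The quasicontinuous-cutoff idea you describe for extracting $\mathbf{1}_{O_C}\mu\le\nu$ is exactly what the paper implements with the functions $f_j^{k,C,\vep}=\frac{\max(u_j^k-V_{\theta^j}+C,0)}{\max(u_j^k-V_{\theta^j}+C,0)+\vep}$ together with \cite[Theorem 4.26]{GZ17}.

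However, there is a genuine gap in the truncation step. You set $u_j^{k,C}:=\max(u_j^k,-C)$, but in the setting of the theorem the forms $\theta^j$ are merely smooth and closed (so $\{\theta^j\}$ is at best pseudoeffective/big), and the constant $-C$ is \emph{not} $\theta^j$-psh unless $\theta^j\ge 0$. Hence $\max(u_j^k,-C)$ need not be $\theta^j$-psh, and your objects $\mu_k^C$ are not well-defined non-pluripolar products. The correct truncation, used in the paper, is $u_j^{k,C}:=\max(u_j^k,V_{\theta^j}-C)$. This forces a second change that your sketch also misses: $V_{\theta^j}$ is only locally bounded on the ample locus $\Amp(\theta^j)$, so the truncated potentials are \emph{not} uniformly bounded on all of $X$. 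The Xing/GZ convergence step must therefore be localized to relatively compact opens $U\Subset\Omega:=\bigcap_j\Amp(\theta^j)$, and one then exhausts $\Omega$ at the end, noting that $X\setminus\Omega$ is pluripolar and hence not charged by any of the non-pluripolar products involved. In the special case where every $\theta^j$ is Kähler (so $V_{\theta^j}$ can be taken $\equiv 0$), your version of the truncation and the global boundedness claim are correct and the proof closes; but as written the argument does not cover the big case that the paper is after.
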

\noindent We recall that a sequence $\{u_k\}_k$ converges in capacity to $u$ if for any $\delta>0$ we have 
$$\lim_{k\rightarrow +\infty} \capo \{|u_k-u|\geq \delta\}=0, $$
where $\capo$ is the Monge-Amp\`ere capacity associated to $\omega$ (see \cite[Definition 4.23]{GZ17}).

We note that condition \eqref{eq: main_global_mass_semi_cont} is  necessary in this generality, even in the K\"ahler case. Indeed, if $u \in \textup{PSH}(X,\omega)$ is a pluricomplex Green potential, then the cut-offs $u_j := \max(u,-j) \in \textup{PSH}(X,\omega)$ satisfy $u_j \searrow u$. However $\int_X \omega_{u_j}^n = \int_X \omega^n >0$ for all $j$, and $\int_X \omega_u^n=0$, hence $\omega^n_{u_j}$ cannot converge to $\omega_u^n$ weakly. 

As noted above, Theorem \ref{thm2} generalizes classical theorems of Bedford-Taylor (when $u_j^k,u_j$ are uniformly bounded) and also results from \cite{BEGZ10} (when $u_j^k,u_j$ have full mass). In both of these cases, there are severe restrictions on the singularity class of the potentials  $u_j^k,u_j$. On the other hand, the above theorem shows that there is no need for restrictions on singularity type of the potentials involved. Instead, one needs only a semicontinuity condition on the total masses.\medskip

To develop the variational approach to equation \eqref{eq: CMAE_intr}, with the above general results in hand, we initiate the study of relative full mass/relative finite energy currents. Let $\phi \in \textup{PSH}(X,\theta)$. We say that $v \in \textup{PSH}(X,\theta)$ has \emph{full mass relative} to $\phi$ ($v \in \mathcal E(X,\theta,\phi)$) if $v$ is more singular than $\phi$ and $\int_X \theta_v^n = \int_X \theta_\phi^n$. In our investigation of these classes, the following well known envelope constructions will be of great help:
$$\textup{PSH}(X,\theta) \ni \psi \to P_\theta(\psi,\phi), \ P_\theta[\psi](\phi),  \ P_\theta[\psi] \in \textup{PSH}(X,\theta).$$ 
These were introduced by Ross and Witt Nystr\"om \cite{RWN} in their construction of geodesic rays, building on ideas of  Rashkovskii and Sigurdsson \cite{RS} in the local setting. Due to the frequency of these operators appearing in this work, we choose to follow slightly different notations. The starting point is the  ``rooftop envelope'' $P_\theta(\psi,\phi):=\sup\{v \in \textup{PSH}(X,\theta), \ v \leq \min(\psi,\phi) \}$. This allows to introduce
$$P_\theta[\psi](\phi) := \Big(\lim_{C \to +\infty}P_\theta(\psi+C,\phi)\Big)^*,$$
and is easy to see that $P_\theta[\psi](\phi)$ only depends on the singularity type of $\psi$. When $\phi=0$ or $\phi = V_\theta$, we will simply write $P_\theta[\psi]:=P_\theta[\psi](0)=P_\theta[\psi](V_\theta)$, and we refer to this potential as the \emph{envelope of the singularity type} $[\psi]$.

Using the techniques of our recent work \cite{DDL16}, we can give a generalization of \cite[Theorem 3]{Dar13} (paralleling \cite[Theorem 1.2]{DDL16}). This result characterizes membership in $\mathcal E(X,\theta,\phi)$ solely in terms of singularity type: 
 \begin{theorem}\label{thm3}Suppose $\phi \in \textup{PSH}(X,\theta)$ and $\int_X \theta_\phi^n >0$. The following are equivalent:\\
\noindent (i) $u \in \mathcal E(X,\theta,\phi)$.\\
\noindent (ii) $\phi$ is less singular than $u$, and $P_\theta[u](\phi)=\phi$.\\
\noindent (iii) $\phi$ is less singular than $u$, and $P_\theta[u]=P_\theta[\phi]$.
\end{theorem}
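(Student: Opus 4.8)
The plan is to run the cycle $(i)\Rightarrow(ii)\Rightarrow(iii)\Rightarrow(i)$, drawing on three facts that I would establish beforehand from the potential theory of Theorems~\ref{thm1} and~\ref{thm2}: \textbf{(a)} the mass identity $\int_X\theta^n_{P_\theta[\psi]}=\int_X\theta^n_\psi$ for every $\psi\in\textup{PSH}(X,\theta)$; \textbf{(b)} the concentration property $\theta^n_{P_\theta[u](\phi)}\le\ind_{\{P_\theta[u](\phi)=\phi\}}\theta^n_\phi$, the two measures coinciding on the contact set; and \textbf{(c)} the relative domination principle: if $\int_X\theta^n_\phi>0$, $h\in\mathcal E(X,\theta,\phi)$, and $g\in\textup{PSH}(X,\theta)$ is more singular than $\phi$ with $h\le g$ holding $\theta^n_h$-a.e., then $h\le g$ on $X$. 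Each of (i), (ii), (iii) contains the hypothesis that $\phi$ be less singular than $u$, which I assume throughout; it yields $u-C_0\le P_\theta[u](\phi)\le\phi$ for a suitable constant $C_0$, straight from the definition of the envelope.

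For $(i)\Rightarrow(ii)$, put $\rho:=P_\theta[u](\phi)\le\phi$. As $\rho$ is less singular than $u$, Theorem~\ref{thm1} gives $\int_X\theta^n_\rho\ge\int_X\theta^n_u=\int_X\theta^n_\phi$, while (b) gives $\int_X\theta^n_\rho\le\int_{\{\rho=\phi\}}\theta^n_\phi\le\int_X\theta^n_\phi$. Hence $\int_X\theta^n_\rho=\int_X\theta^n_\phi$, so $\theta^n_\phi(\{\rho<\phi\})=0$. Applying (c) with $h=\phi\in\mathcal E(X,\theta,\phi)$ and $g=\max(\rho,\phi-k)$ — which is more singular than $\phi$ and satisfies $\{g<\phi\}=\{\rho<\phi\}$ — gives $\phi\le\max(\rho,\phi-k)$ for every $k$; letting $k\to\infty$ yields $\phi\le\rho$, whence $P_\theta[u](\phi)=\phi$.

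The step $(ii)\Rightarrow(iii)$ is a formal manipulation of envelopes: since $P_\theta[\,\cdot\,]$ is idempotent and monotone for the singularity type, and $u-C_0\le P_\theta[u](\phi)\le P_\theta[u]$, one has $P_\theta[\,P_\theta[u](\phi)\,]=P_\theta[u]$, which together with (ii) says $P_\theta[\phi]=P_\theta[u]$. For $(iii)\Rightarrow(i)$, applying (a) to $u$ and to $\phi$ gives $\int_X\theta^n_u=\int_X\theta^n_{P_\theta[u]}=\int_X\theta^n_{P_\theta[\phi]}=\int_X\theta^n_\phi$, and since $u$ is more singular than $\phi$ this means $u\in\mathcal E(X,\theta,\phi)$.

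The substance lies in the preliminary facts, and the main obstacle is the mass identity (a): Theorem~\ref{thm1} gives $\int_X\theta^n_{P_\theta[\psi]}\ge\int_X\theta^n_\psi$ for free, as $P_\theta[\psi]$ is less singular than $\psi$, but the reverse inequality requires Theorem~\ref{thm2} applied to the increasing family $P_\theta(\psi+C,V_\theta)$, together with a careful bookkeeping of how the Monge--Amp\`ere mass of these envelopes is distributed — in particular the part carried near the polar set of $\psi$ must be shown to vanish as $C\to\infty$, which is where the fact that non-pluripolar products put no mass on pluripolar sets enters. Facts (b) and (c) are the relative counterparts of the standard theory of model potentials and of the domination principle, and they are where the hypothesis $\int_X\theta^n_\phi>0$ is genuinely used; granting these, the argument above is routine apart from the recurring device — used to apply (c) — of first replacing a competitor by one of the same singularity type as $\phi$.
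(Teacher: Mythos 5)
Your overall strategy — concentration of $\theta^n_{P_\theta[u](\phi)}$ (your fact (b), which is Theorem~\ref{thm: MA measure of WN envelope}), the mass identity (your fact (a)), and running the cycle $(i)\Rightarrow(ii)\Rightarrow(iii)\Rightarrow(i)$ through a domination principle — is essentially the paper's proof, and the steps $(ii)\Rightarrow(iii)$ and $(iii)\Rightarrow(i)$ are sound.

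The gap is in $(i)\Rightarrow(ii)$ and it sits inside your fact (c). You derive $\theta^n_\phi(\{\rho<\phi\})=0$ for $\rho:=P_\theta[u](\phi)$ and then invoke (c): for $h\in\mathcal E(X,\theta,\phi)$ and $g$ more singular than $\phi$, $\theta^n_h(\{h>g\})=0\Rightarrow h\le g$. But (c) is not the domination principle the paper establishes. Proposition~\ref{prop: domination principle} reads: for $u,v\in\mathcal E(X,\theta,\phi)$, if $\theta^n_u(\{u<v\})=0$ then $u\ge v$ — the measure in the hypothesis is that of the claimed \emph{dominating} potential. Your (c) places the measure on the claimed \emph{dominated} potential $h$, and since by Corollary~\ref{comparison principle} one has $\theta^n_h(\{h>g\})\le\theta^n_g(\{h>g\})$, the hypothesis of (c) is strictly weaker; thus (c) is a strictly stronger assertion and needs its own argument, which you neither give nor point to. Moreover (c) only assumes $g$ is more singular than $\phi$, not $g\in\mathcal E(X,\theta,\phi)$; without a full-mass hypothesis on $g$ the comparison principle is unavailable to you and (c) in this generality is very likely false. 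The fix — and what the paper actually does — sidesteps (c) entirely: fact (b) already gives $\theta^n_\rho\le\mathbbm{1}_{\{\rho=\phi\}}\theta^n_\phi$, so $\theta^n_\rho(\{\rho<\phi\})=0$ comes \emph{directly}, with the measure $\theta^n_\rho$ rather than $\theta^n_\phi$ and no mass bookkeeping needed. Since $u\le\rho\le\phi$ up to constants and $u\in\mathcal E(X,\theta,\phi)$, Theorem~\ref{thm1} sandwiches the masses and yields $\rho\in\mathcal E(X,\theta,\phi)$; now Proposition~\ref{prop: domination principle} applies with $u=\rho$, $v=\phi$ and gives $\rho\ge\phi$ at once, so that $P_\theta[u](\phi)=\phi$, without any truncation $\max(\rho,\phi-k)$ or passage to the limit.
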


Without the \emph{non-zero mass} condition $\int_X \theta_{\phi}^n>0$ this characterization cannot hold (see Remark \ref{ex: zero mass}). 
The equivalence between (i) and (iii) in the above theorem shows that $P_\theta[u]$ is the same potential for any $u \in \mathcal E(X,\theta,\phi)$, and equals to $P_\theta[\phi]$. Given this and the inclusion  $\mathcal E(X,\theta,\phi) \subset \mathcal E(X,\theta,P_\theta[\phi])$, one is  tempted to consider only potentials $\phi$ in the image of the operator $\psi \to P_\theta[\psi]$, when studying the classes of relative full mass $\mathcal E(X,\theta,\phi)$. 
These potentials seemingly play the same role as $V_\theta$, the potential with minimal singularities from \cite{BEGZ10}. Implementation of this idea will be further motivated by the results of the next paragraph.

In addition to the above result, we also establish analogs of many classical results for $\mathcal E(X,\theta,\phi)$, like the comparison, maximum and domination principles. Some of these are routine while others, like the domination principle, require new techniques and a more involved analysis compared to the existing literature (see Proposition \ref{prop: domination principle}). 

\paragraph{Complex Monge-Amp\`ere equations with prescribed singularity.}  With the potential theoretic tools developed, we focus on solving \eqref{eq: CMAE_intr}. A simple minded example shows that this equation is not well posed for arbitrary $\phi \in \textup{PSH}(X,\theta)$ (see the introduction of Section \ref{sec 4}). Instead, one needs to consider only potentials $\phi$ that are fixed points of the operator $\psi \to P_\theta[\psi]$, i.e. $\psi = P_\theta[\psi]$.  Such potentials $\psi$ will be called \emph{model potentials}, and their singularity types $[\psi]$ will be called \emph{model type singularities}. In this direction we have the following result:

\begin{theorem}\label{thm4} Suppose $\phi \in \textup{PSH}(X,\theta)$  has small unbounded locus, and $\phi = P_\theta[\phi]$. Let  $f \in L^p(\omega^n), p > 1$ such that $f \geq 0$  and $\int_X f \omega^n=\int_X \theta_{\phi}^n > 0$. Then the following hold:\\
(i) There exists  $u\in \textup{PSH}(X,\theta)$, unique up to a constant, such that $[u]=[\phi]$ and 
\begin{equation}\label{eq: main_thm_eq1}
\theta_u^n=f \omega^n.
\end{equation}
(ii) For any $\lambda >0$  there exists a unique $v\in \textup{PSH}(X,\theta)$,  such that $[v]=[\phi]$ and
\begin{equation}\label{eq: main_thm_eq2}
\theta_v^n=e^{\lambda v}f \omega^n.
\end{equation}
\end{theorem}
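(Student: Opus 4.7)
The plan is to attack this via the variational approach of Berman--Boucksom--Guedj--Zeriahi, adapted to the relative finite--energy class $\mathcal{E}^1(X,\theta,\phi)$ built earlier in the paper. Introduce the relative Monge--Amp\`ere energy
\[
\AMO(u) := \frac{1}{n+1}\sum_{k=0}^n \int_X (u-\phi)\, \theta_u^k \wedge \theta_\phi^{n-k}
\]
for $u \in \mathcal{E}(X,\theta,\phi)$, and set $\mathcal{E}^1(X,\theta,\phi) := \{u : \AMO(u) > -\infty\}$. Using Theorem~\ref{thm1} and Theorem~\ref{thm2} one verifies that $\AMO$ is monotone, concave along affine paths, and upper semicontinuous on normalized slices. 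For part~(i), the equation $\theta_u^n = f\omega^n$ is the Euler--Lagrange equation for
\[
F(u) := \AMO(u) - \int_X (u-\phi)\, f\omega^n,
\]
restricted to $\{u \in \mathcal{E}^1(X,\theta,\phi) : \sup(u-\phi) = 0\}$; the mass compatibility $\int_X f\omega^n = \int_X \theta_\phi^n$ makes $F$ invariant under adding constants, so the normalization is harmless, and standard weak compactness of normalized $\theta$-psh potentials combined with the upper semicontinuity of $F$ (here $f\in L^p$ lets me bound $\int|u-\phi| f\omega^n$ by H\"older) yields a maximizer $u^\star \in \mathcal{E}^1(X,\theta,\phi)$.

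To see that this maximizer indeed satisfies $\theta_{u^\star}^n = f\omega^n$, I would perturb $u^\star$ through the rooftop construction of the paper: for a bounded $\theta$-psh test $\chi$, the path $u_t := P_\theta(u^\star + t\chi, \phi)$ stays in $\mathcal{E}^1(X,\theta,\phi)$ for small $|t|$, and a differentiability computation paralleling the BBGZ lemma (now requiring Theorem~\ref{thm2} to pass to the limit along the contact sets) gives
\[
\left.\tfrac{d}{dt}\right|_{t=0^+}\AMO(u_t) = \int_X \chi\, \theta_{u^\star}^n.
\]
Combined with the derivative of the linear term, this forces $\int \chi\,\theta_{u^\star}^n = \int \chi\, f\omega^n$ for all such $\chi$, whence $\theta_{u^\star}^n = f\omega^n$.

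The step I expect to be the main obstacle is proving that $u^\star$ has the same singularity type as $\phi$, not merely $u^\star \le \phi + C$. Here the model-singularity hypothesis $\phi = P_\theta[\phi]$ plays the decisive role: by Theorem~\ref{thm3}, $P_\theta[u^\star] = P_\theta[\phi] = \phi$, so $u^\star$ is already as close to $\phi$ as the envelope allows. I would then establish a Kolodziej-type $L^p$--$L^\infty$ comparison relative to $\phi$, controlling the relative capacity $\capi$ of the sublevel sets $\{u^\star < \phi - s\}$ via $f \in L^p$ and the small unbounded locus of $\phi$. A capacity-to-volume iteration of De~Giorgi type then forces these sublevel sets to be empty for $s$ large, yielding $u^\star \ge \phi - C'$. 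Uniqueness in (i) then follows from the relative domination principle of the paper: two solutions of the same singularity type differ by a bounded function and share their Monge--Amp\`ere measure, which by a standard contact-set argument forces the difference to be constant.

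For part (ii) I would instead maximize
\[
F_\lambda(v) := \AMO(v) - \tfrac{1}{\lambda}\int_X e^{\lambda (v-\phi)} f\omega^n
\]
over $\mathcal{E}^1(X,\theta,\phi)$. The exponential term is strictly convex in $v$ and grows fast enough to make $F_\lambda$ strictly concave and automatically coercive, so no sup-normalization is needed; a unique maximizer exists and satisfies $\theta_v^n = e^{\lambda v}f\omega^n$ by the same envelope perturbation. The same-singularity-type argument carries over since $e^{\lambda v}f$ remains in $L^p(\omega^n)$ once $v$ is bounded above. Uniqueness is cleaner here: given two solutions $v_1, v_2$, set $c := \sup(v_1 - v_2)$; applying the domination principle on $\{v_1 > v_2 + c - \varepsilon\}$ and exploiting strict monotonicity of $t \mapsto e^{\lambda t}$ forces $c \le 0$, and symmetrically, yielding $v_1 = v_2$.
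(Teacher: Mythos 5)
Your proposal follows the same general path as the paper: build the relative energy $\AMO$, maximize a free-energy functional over $\mathcal E^1(X,\theta,\phi)$, identify the Euler--Lagrange equation via envelope perturbation, and then run a Ko\l{}odziej-type capacity iteration to show the maximizer has the same singularity type as $\phi$. That matches the paper's strategy (Theorems \ref{thm: existence in M_A}, \ref{thm: existence_MA_eq_exp}, \ref{thm: min sing of solution}). One genuine simplification you make is to attack $f\omega^n$ directly using H\"older together with the uniform $L^q$ bound on normalized $\theta$-psh potentials (Skoda) to control $\int |u-\phi|f\omega^n$; the paper instead develops the full $\mathcal M_A$/Cegrell decomposition machinery (Lemmas \ref{lem: first condition of M_A}, \ref{lem: decomposition}) because it wants to solve $\theta_u^n=\mu$ for \emph{arbitrary} non-pluripolar $\mu$, not just $L^p$ densities. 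For the statement at hand, your shortcut is legitimate.

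However, two points need repair. First, in the perturbation step you take $\chi$ \emph{bounded $\theta$-psh}, but to conclude $\theta_{u^\star}^n=f\omega^n$ from $\int\chi\,\theta_{u^\star}^n=\int\chi f\omega^n$ you need $\chi$ to range over a set spanning $C^0(X)$; the cone of bounded $\theta$-psh functions does not do that. The paper (Proposition \ref{prop: derivative of AMO}) takes $\chi\in C^0(X)$ arbitrary and uses the envelope $P_\theta(u+t\chi)$ (no rooftop with $\phi$ is needed, since $u+t\chi\le \phi+t\sup\chi$ already keeps the singularity type); the rooftop with $\phi$ that you propose would, moreover, introduce an extra contact set $\{u_t=\phi\}$ into the derivative formula that you would have to show contributes nothing. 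Second, and more seriously, the uniqueness in part (i) is \emph{not} a ``standard contact-set argument.'' Knowing $\theta_u^n=\theta_v^n$ with $u,v$ of the same singularity type gives you no a priori control on $\theta_u^n(\{u<v\})$, so the domination principle (Proposition \ref{prop: domination principle}) does not apply directly. The paper's Theorem \ref{thm: uniqueness} runs the Dinew--Ko\l{}odziej mass-concentration argument: assuming $\mu(\{u<v\})\in(0,\mu(X))$, one solves an auxiliary equation with right-hand side $c^n\mathbbm{1}_{\{u<v\}}\mu$, invokes the mixed Monge--Amp\`ere inequalities and the partial comparison principle, and derives a contradiction. Without something of this nature, uniqueness in the $\lambda=0$ case is unproven. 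Your uniqueness argument for (ii) via the comparison principle and strict monotonicity of $e^{\lambda t}$, on the other hand, is exactly Lemma \ref{lem: comp_sol_subsol} and is fine.
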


That $\phi$ has \emph{small unbounded locus} means that $\phi$ is locally bounded outside a closed complete  pluripolar set $A \subset X$. It will be interesting to see if this condition is simply technical, or otherwise necessary. This seemingly extra condition on $\phi$ does have some benefits. Indeed, since in this setting solutions are locally bounded on $X \setminus A$, one can  interpret \eqref{eq: main_thm_eq1} and \eqref{eq: main_thm_eq2} in the following simply way: $u$ and $v$ satisfy the equations \eqref{eq: main_thm_eq1} and \eqref{eq: main_thm_eq2} on $X \setminus A$, in the sense of Bedford-Taylor.

\begin{remark} As argued in Theorem \ref{thm: naturality_of_model}, if \eqref{eq: main_thm_eq1} can be solved for all $f \in L^{p}(X), \ p >1$ (with the constraint $[u]=[\phi]$)  then $\phi$ \emph{must have} model type singularity. Consequently, our choice of $\phi$ in the above theorem is not ad hoc, but truly natural!
\end{remark}

In our study of the above equations we will start with a much more general context. In particular, we will show in Theorem \ref{thm: existence lambda=0} and Theorem \ref{thm: existence_MA_eq_exp} below that instead of $f\omega^n$ one can consider on the right hand side of \eqref{eq: main_thm_eq1} and \eqref{eq: main_thm_eq2} non-pluripolar measures, thereby generalizing \cite[Theorem A,Theorem D]{BEGZ10}.

\begin{remark}\label{rem: examples} Naturally, $V_\theta = P_\theta[V_\theta]$, but our reader may wonder if there are other interesting enough potentials with model type singularity. We believe this to be the case, as evidenced below :\vspace{0.1cm}

$\bullet$ By Theorem \ref{thm: ceiling coincide envelope non collapsing} below, $P_\theta[\psi]=P_\theta[P_\theta[\psi]]$ for any $\psi \in \textup{PSH}(X,\theta)$ with $\int_X \theta_\psi^n >0$. In particular, $P_\theta[\psi]$ is a model potential, giving an abundance of potentials with model type singularity.\vspace{0.1cm}

$\bullet$ By Proposition \ref{prop: Lp example} below, if $\psi \in \textup{PSH}(X,\theta)$ has small unbounded locus, and $\theta_\psi^n/ \omega^n  \in L^p(\omega^n), \ p > 1$ with $\int_X  \theta_\psi^n >0$, then $\psi$ has model type singularity.\vspace{0.1cm}

$\bullet$ All  \emph{analytic singularity types} (those that can be locally written as $c \log \big(\sum_j |f_j|^2\big) + g$, where $f_j$ are holomorphic, $c > 0$ and $g$ is smooth) are of model type (\cite[Remark 4.6]{RWN}, \cite{RS}, see also Proposition \ref{prop: analytic example}). In particular,  discrete logarithmic singularity types are of model type, making connection with pluricomplex \vspace{0.1cm} Green currents \cite{CG09,PS14,RS}. 

$\bullet$ By \cite{RWN,Dar13,DDL16}, potentials with model type singularity naturally arise as degenerations along geodesic rays and in particular along test configurations.
\end{remark}
Complex Monge-Amp\`ere equations with  bounded/minimally singular solutions have been intensely studied in the past (\cite{Kol98,Kol03,GZ07,BEGZ10,BBGZ13}, to name only a few works in a fast expanding literature). To our knowledge, in the compact case, only the paper \cite{PS14} discusses at length solutions that are not ``minimally singular'', without severe restrictions on the right hand side of the equation. They treat the case of solutions to \eqref{eq: main_thm_eq1} with isolated algebraic singularities in the K\"ahler case, with a view toward constructing pluricomplex Green currents on $X$. Given the specific setting, \cite[Theorem 3]{PS14} obtains more precise regularity estimates compared to ours, using blowup techniques. In our general framework better estimates are likely not possible. However for smooth $f$, we suspect that away from the singularity locus our solution $u$ should be as regular as $\phi$ (up to order two). For a general result on the regularity of certain model potentials we refer to \cite[Theorem 1.1]{RWN2}.  

Lastly, let us mention that  in \cite[Section 4]{Ber13} solutions to complex Monge-Amp\`ere equations with divisorial singularity type are used in the construction/approximation of geodesic rays corresponding to certain test configurations. In \cite[Section 5]{Ber13} Berman speculates that solutions with more general singularity type should allow for better understanding of degenerations along test configurations/geodesic rays, and we believe our treatise will lead to more results of this flavor.

In addition to the results in the compact setting mentioned above, finding singular/non-bounded solutions to the related Dirichlet problem on domains in $\mathbb{C}^n$, or more generally on compact manifolds  with boundary, was studied by a number of authors. We only mention \cite{L83, BD88, Gu98, PS09, PS10} to highlight a few works in a fast expanding literature.

\paragraph{Applications.} Solutions of complex Monge--Amp\`ere equations are linked to existence of special K\"ahler metrics. In particular, we can think of the solution to \eqref{eq: main_thm_eq1} as a potential with prescribed singularity type and prescribed Ricci curvature in the philosophy of the Calabi-Yau theorem.
As an immediate application of our solution to \eqref{eq: main_thm_eq2} we obtain existence of  singular \emph{K\"ahler-Einstein} (KE) metrics with prescribed singularity type on K\"ahler manifolds of general type. An analogous result also holds on Calabi-Yau manifolds as well, via solutions of \eqref{eq: main_thm_eq1}.

\begin{coro} Let $X$ be a smooth projective variety of general type ($K_X >0$) and let $h$ be a smooth Hermitian metric on $K_X$ with $\theta := \Theta(h)>0$. Suppose also that $\phi \in \textup{PSH}(X,\theta)$ is a model potential, has small unbounded locus and $\int_X \theta_\phi^n>0$. Then there exists a unique singular KE metric $he^{-\phi_{KE}}$ on $K_X$ ($\theta_{\phi_{KE}}^n = e^{\phi_{KE}+f_\theta} \theta^n$, where $f_\theta$ is the Ricci potential of $\theta$ satisfying  $\textup{Ric} \ \theta=\theta+i\ddbar f_\theta$), with $\phi_{KE} \in \textup{PSH}(X,\theta)$ having the same singularity type as $\phi$.
\end{coro}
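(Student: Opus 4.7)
The plan is to recognize the K\"ahler--Einstein equation as a direct instance of equation \eqref{eq: main_thm_eq2} in Theorem \ref{thm4}(ii), with parameters chosen so that all hypotheses are immediate, and then to read off the geometric conclusion.

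First, since $\theta=\Theta(h)>0$ is itself a K\"ahler form, there is no loss in taking the reference K\"ahler structure to be $\omega:=\theta$ throughout the argument. The Ricci potential $f_\theta$ is smooth on $X$, so
\[
f \; := \; e^{f_\theta} \; \in \; L^{\infty}(\theta^n) \; \subset \; L^{p}(\theta^n) \quad \text{for every } p>1,
\]
and $f>0$. Combined with the given assumptions that $\phi$ has small unbounded locus, $\phi = P_\theta[\phi]$ (model type), and $\int_X \theta_\phi^n>0$, all the hypotheses of Theorem \ref{thm4}(ii) are met. No mass normalization of $f_\theta$ is needed here: unlike in part (i) of the theorem, the coercive $e^{\lambda v}$ factor in the right-hand side of \eqref{eq: main_thm_eq2} selects the correct scaling automatically.

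Second, applying Theorem \ref{thm4}(ii) with $\lambda=1$ produces a unique $\phi_{KE}\in\textup{PSH}(X,\theta)$, with the same singularity type as $\phi$, such that
\[
\theta_{\phi_{KE}}^n \; = \; e^{\phi_{KE}}\,f\,\theta^n \; = \; e^{\phi_{KE}+f_\theta}\,\theta^n.
\]
On the open set $X\setminus A$, where $A$ is the complete pluripolar locus off which $\phi$ (and hence $\phi_{KE}$) is locally bounded, the equation can be interpreted in the classical Bedford--Taylor sense, and taking $-i\ddbar\log$ of both sides together with the identity $\textup{Ric}(\theta) = \theta + i\ddbar f_\theta$ (i.e., using the definition of $f_\theta$) gives $\textup{Ric}(\theta_{\phi_{KE}}) = -\theta_{\phi_{KE}}$. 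Thus $h e^{-\phi_{KE}}$ defines a (singular) K\"ahler--Einstein metric on $K_X$ with the prescribed singularity type.

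Uniqueness is inherited verbatim from the uniqueness clause of Theorem \ref{thm4}(ii); note that thanks to the presence of $e^{\phi_{KE}}$ on the right-hand side there is no residual additive constant to quotient out, in contrast to the Calabi--Yau setting. In short, the corollary is a formal consequence of the main Monge--Amp\`ere theorem. I do not anticipate a genuine obstacle beyond verifying the $L^p$ hypothesis on $f$ and translating the normalization of $f_\theta$; the substantive content is concentrated in Theorem \ref{thm4} itself.
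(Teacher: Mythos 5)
Your proof is correct and coincides with the paper's intended argument: the corollary is a direct application of Theorem \ref{thm4}(ii) with $\omega:=\theta$, $\lambda=1$, and $f:=e^{f_\theta}\in L^\infty\subset L^p$. Your observation that the mass condition $\int_X f\omega^n=\int_X\theta_\phi^n$ in the statement of Theorem \ref{thm4} is immaterial for part (ii) is accurate (one can either rescale $f$ and absorb the constant into $\phi_{KE}$, or invoke the more general Theorem \ref{thm: existence_MA_eq_exp}, which imposes no mass condition on $\mu$).
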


As another application we confirm the log-concavity conjecture \cite[Conjecture 1.23]{BEGZ10} in the case of currents with potentials having small unbounded locus:

\begin{theorem}\label{thm5}
	Let $T_1,...,T_n$ be positive closed $(1,1)$-currents  on a compact K\"ahler manifold $X$.  Assume that each $T_j$ has a potential with small unbounded locus. Then 
	\[
	\int_X \langle T_1 \wedge \ldots \wedge T_n\rangle \geq \left(\int_X \langle T_1^n\rangle \right)^{\frac{1}{n}} \ldots \left(\int_X \langle T_n^n\rangle \right)^{\frac{1}{n}}.
	\] 
\end{theorem}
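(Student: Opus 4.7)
The plan is to reduce the log-concavity inequality for general small-unbounded-locus currents to the classical Khovanskii--Teissier inequality for smooth positive $(1,1)$-forms. The key tools are Demailly's analytic approximation, together with the monotonicity Theorem \ref{thm1} and the convergence Theorem \ref{thm2} developed in this paper.

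Write $T_j = \theta^j + i\ddbar u_j$, with $u_j \in \textup{PSH}(X, \theta^j)$ locally bounded on $X \setminus A_j$ for some closed complete pluripolar set $A_j$; set $A := \bigcup_j A_j$. I would first apply Demailly's regularization to each $u_j$, producing approximants $u_j^\varepsilon$ with analytic singularities such that $u_j^\varepsilon \searrow u_j$ and $u_j^\varepsilon \in \textup{PSH}(X, \theta^j + \lambda_\varepsilon \omega)$ with $\lambda_\varepsilon \downarrow 0$. The currents $T_j^\varepsilon := \theta^j + \lambda_\varepsilon \omega + i\ddbar u_j^\varepsilon$ have analytic, hence model type (Remark \ref{rem: examples}), singularities. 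For each fixed $\varepsilon$ the plan is to establish
\[
\int_X \langle T_1^\varepsilon \wedge \ldots \wedge T_n^\varepsilon \rangle \;\geq\; \prod_{j=1}^n \Big(\int_X \langle (T_j^\varepsilon)^n\rangle\Big)^{1/n}
\]
by passing to a simultaneous log resolution $\pi\colon \widetilde X \to X$ of the analytic singularity loci. On $\widetilde X$ the pullbacks $\pi^* T_j^\varepsilon$ split as integration currents on effective divisors plus positive closed currents with bounded potentials in appropriately shifted K\"ahler classes. The non-pluripolar wedge product is concentrated on the bounded parts, where Bedford--Taylor approximation by smooth positive $(1,1)$-forms reduces the inequality to the classical Alexandrov--Fenchel / Khovanskii--Teissier inequality.

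The final and most delicate step is the passage to the limit $\varepsilon \to 0$. Since $u_j^\varepsilon \searrow u_j$, convergence in capacity is automatic; to invoke Theorem \ref{thm2} and conclude weak convergence of the non-pluripolar products one must verify the semicontinuity of masses
\[
\int_X \langle T_1 \wedge \ldots \wedge T_n\rangle \;\geq\; \limsup_{\varepsilon \to 0} \int_X \langle T_1^\varepsilon \wedge \ldots \wedge T_n^\varepsilon\rangle,
\]
and analogously for the pure masses $\int_X \langle T_j^n \rangle$. This is precisely where the small unbounded locus hypothesis becomes essential: on compact subsets of $X \setminus A$, Bedford--Taylor continuity for decreasing sequences of locally bounded potentials yields convergence of the wedge products, and since $A$ is pluripolar the non-pluripolar masses on $X$ coincide with the Bedford--Taylor masses on $X \setminus A$; this transfers the local convergence to the global non-pluripolar masses. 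The cohomology shift by $\lambda_\varepsilon \omega$ contributes corrections of order $O(\lambda_\varepsilon)$ to all volume computations and vanishes in the limit, so the log-concavity inequality survives. I expect this mass-semicontinuity verification to be the main obstacle: without the small unbounded locus hypothesis, the counterexample following Theorem \ref{thm2} obstructs exactly this kind of passage to the limit, so the argument must genuinely exploit the locally bounded behavior of the $u_j$ off the pluripolar set $A$.
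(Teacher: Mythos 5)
Your approach is genuinely different from the paper's, and it has a gap in the final limit passage that I do not think can be closed by the argument you sketch.

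The paper's proof takes a much more direct route: for each $j$, it invokes the existence result for complex Monge--Amp\`ere equations with prescribed singularity (Theorem \ref{thm: existence lambda=0}) to produce $\varphi_j \in \mathcal{E}(X,\theta^j,P_{\theta^j}[u_j])$ solving $(\theta^j_{\varphi_j})^n = c_j\,\omega^n$ with $c_j = \int_X\langle T_j^n\rangle$. By Proposition \ref{prop: comparison generalization} and Theorem \ref{thm: lsc of MA measures} the mixed mass is unchanged, $\int_X \theta^1_{\varphi_1}\wedge\cdots\wedge\theta^n_{\varphi_n}=\int_X\langle T_1\wedge\cdots\wedge T_n\rangle$, and the pointwise mixed Monge--Amp\`ere inequality of \cite{BEGZ10} then gives the log-concavity directly by integrating. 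This is self-contained and avoids regularization entirely; indeed, the whole theory of prescribed-singularity MA equations developed in the paper is exactly what makes this clean reduction possible. Your route (Demailly regularization, log resolution, Khovanskii--Teissier, pass to the limit) is the classical one and would be reasonable in principle, but it leans on external inputs the paper deliberately sidesteps.

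The genuine gap is the step where you write that ``since $A$ is pluripolar the non-pluripolar masses on $X$ coincide with the Bedford--Taylor masses on $X\setminus A$; this transfers the local convergence to the global non-pluripolar masses.'' This transfer does not hold. Weak convergence of measures on the open set $X\setminus A$ controls $\liminf$ on open sets and $\limsup$ on compacts, but does not prevent mass from escaping to $A$ in the limit. The counterexample the paper gives immediately after Theorem \ref{thm2} is precisely of this kind and satisfies your hypotheses: if $u\in\textup{PSH}(X,\omega)$ is a pluricomplex Green potential (small unbounded locus, pole at a single point $p$) and $u_k:=\max(u,-k)\searrow u$ (each $u_k$ bounded, hence trivially small unbounded locus), then $\omega_{u_k}^n\to\omega_u^n$ weakly on $X\setminus\{p\}$ by Bedford--Taylor, yet $\int_X\omega_{u_k}^n=\int_X\omega^n>0$ for all $k$ while $\int_X\omega_u^n=0$. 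The masses do not converge; they escape to the pluripolar set. So small unbounded locus alone does not yield the mass continuity you need, and Theorem \ref{thm2} cannot be invoked without first establishing condition \eqref{eq: main_global_mass_semi_cont}, which is exactly what you are trying to prove. To repair the argument you would have to exploit the quantitative Lelong-number control built into Demailly's regularization and then prove a continuity-of-volume statement along the approximation (a Fujita-type result), and you would separately have to handle the fact that the approximants live in the perturbed classes $\{\theta^j+\lambda_\varepsilon\omega\}$ rather than $\{\theta^j\}$ --- the multilinearity expansion you suggest does not directly apply since $u_j^\varepsilon$ is not $\theta^j$-psh. Both are substantive additional inputs not present in your sketch.
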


\paragraph{Possible future directions.}
It is well known that for $\lambda <0$ the equation \eqref{eq: main_thm_eq2} does not always have a solution. More importantly, solvability of this equation is tied together with existence of KE metrics on Fano manifolds. It would be interesting to see if the techniques of \cite{DR17} apply to give characterizations for existence of KE metrics with prescribed singularity type in terms of energy properness.

By \cite{Dar13,DDL16} the geometry of geodesic rays and properties of (relative) full mass potentials seems to be intimately related. In a future work we will explore this avenue further, by introducing a metric geometry on the space of singularity types, via the constructions of \cite{Dar13,DDL16}. By understanding the metric properties of this space, we hope to study degenerations of singularity types along complex Monge-Amp\`ere equations. 

\paragraph{Organization of the paper.} Most of our notation and terminology carries over from \cite{DDL16}, and we refer the reader to the introductory sections of this work. In Section \ref{sec 2} we prove Theorem \ref{thm1} and Theorem \ref{thm2}. In Section \ref{sec 3} we develop the theory of the relative full mass classes $\Ec(X, \theta, \phi)$ and we exploit properties of envelopes to prove Theorem \ref{thm3}. In Section \ref{sec 4} we generalize the variational methods of \cite{BBGZ13} to prove Theorem \ref{thm4}. Finally, Theorem \ref{thm5} is proved in Section \ref{sec 5}. 

\section{The monotonicity property and convergence of non-pluripolar products}\label{sec 2}

To begin, from the main result of \cite{WN17} we deduce the following proposition:
\begin{prop}
	\label{prop: comparison generalization}
	Let $\theta^j, j \in \{1,\ldots,n\}$ be smooth closed real $(1,1)$-forms on $X$ whose cohomology classes are pseudoeffective. Let $u_j,v_j \in \textup{PSH}(X,\theta^j)$ such that $u_j$ has the same singularity type as $v_j, \ j \in \{1,\ldots,n\}$. Then 
	\[
	\int_X \theta^1_{u_1} \wedge \ldots \wedge \theta^n_{u_n}  = \int_X \theta^1_{v_1} \wedge \ldots \wedge \theta^n_{v_n}. 
	\]
\end{prop}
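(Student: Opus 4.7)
The approach is to reduce the multi-class equality to the single-class version in \cite[Theorem 1.2]{WN17} by a polarization argument. Set $T_j := \theta^j_{u_j}$ and $T_j' := \theta^j_{v_j}$, and, for parameters $\mathbf t = (t_1,\dots,t_n) \in \mathbb{R}_{\geq 0}^n$, introduce
$$
\psi_{\mathbf t} := \sum_{j=1}^n t_j u_j, \qquad \psi'_{\mathbf t} := \sum_{j=1}^n t_j v_j,
$$
which both lie in $\textup{PSH}(X,\Theta_{\mathbf t})$ for $\Theta_{\mathbf t} := \sum_j t_j \theta^j$, a pseudoeffective form. Because $u_j$ and $v_j$ differ by a bounded function for each $j$, so do $\psi_{\mathbf t}$ and $\psi'_{\mathbf t}$; hence they share a common singularity type in $\textup{PSH}(X,\Theta_{\mathbf t})$.

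Applying \cite[Theorem 1.2]{WN17} in both directions to the pair $(\psi_{\mathbf t},\psi'_{\mathbf t})$ (same singularity type means each is less singular than the other) then gives
$$
\int_X (\Theta_{\mathbf t} + i\ddbar \psi_{\mathbf t})^n \;=\; \int_X (\Theta_{\mathbf t} + i\ddbar \psi'_{\mathbf t})^n \qquad \forall\, \mathbf t \in \mathbb{R}_{\geq 0}^n.
$$
Next, invoking multilinearity of the non-pluripolar product under nonnegative combinations of positive currents (inherited from the Bedford--Taylor construction on loci where all potentials are bounded, cf.\ \cite[Section 1]{BEGZ10}), I expand both sides:
$$
\int_X \langle (\textstyle\sum_j t_j T_j)^n \rangle \;=\; \sum_{|\alpha|=n} \binom{n}{\alpha_1,\ldots,\alpha_n}\, t_1^{\alpha_1}\cdots t_n^{\alpha_n} \int_X \langle T_1^{\alpha_1} \wedge \ldots \wedge T_n^{\alpha_n} \rangle,
$$
and analogously for $T_j'$. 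These two polynomials in $\mathbf t$ agree on the positive orthant, hence identically. Comparing the coefficient of the monomial $t_1 t_2 \cdots t_n$ (with multinomial coefficient $n!$) extracts exactly the desired equality of fully mixed masses.

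The main obstacle is justifying the polynomial expansion: one must verify that the global non-pluripolar mass is multilinear under positive combinations of closed positive currents lying in distinct cohomology classes. This is not isolated as a lemma in \cite{BEGZ10}, but follows from their construction, since on an open set where all $u_j$ are bounded the product is genuine Bedford--Taylor, and the global object is obtained by simultaneous truncations $u_j^k := \max(u_j,-k)$ compatible with the sum. A secondary technicality is that \cite[Theorem 1.2]{WN17} is most naturally stated for big classes; when $\{\Theta_{\mathbf t}\}$ is only pseudoeffective, one perturbs to $\Theta_{\mathbf t} + \varepsilon\omega$, applies the result there, and passes to $\varepsilon \to 0$ using the upper semicontinuity of total masses supplied by Theorem \ref{thm2}.
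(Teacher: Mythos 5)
Your proposal is correct and follows essentially the same polarization argument as the paper: form $\psi_{\mathbf t} = \sum_j t_j u_j$, apply \cite[Theorem 1.2]{WN17} to get equality of top masses for each $\mathbf t$, expand both sides as degree-$n$ homogeneous polynomials via multilinearity, and compare coefficients. The one point you flag as an ``obstacle'' is in fact not one: multilinearity of the non-pluripolar product over distinct classes is stated explicitly as \cite[Proposition 1.4]{BEGZ10}, which the paper cites both for the polynomial expansion and for the elementary passage $\varepsilon \to 0$ when reducing from pseudoeffective to big (no semicontinuity argument needed).
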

The proof of this result uses the arguments in \cite[Corollary 2.15]{BEGZ10}.
\begin{proof} 
First we note that we can assume that the classes $\{\theta^j\}$ are in fact big. Indeed, if this is not the case we can just replace each $\theta^j$ with $\theta^j + \varepsilon \omega$, and using the multi-linearity of the non-pluripolar product (\cite[Proposition 1.4]{BEGZ10}) we can let $\varepsilon \to 0$ at the end of our argument to conclude the statement for pseudoeffective classes.

For each $t\in\Delta= \{t=(t_1,...,t_n) \in \mathbb{R}^n \setdef t_j > 0\}$ consider $u_t:=\sum_{j} t_ju_{j}$, $v_t:=\sum_{j} t_jv_{j}$ and $\theta^t:=\sum_{j} t_j\theta^j$.  Clearly,  $\{\theta^t\}$ is big, and $u_t$ has the same singularities as $v_t$. Hence it follows from \cite[Theorem 1.2]{WN17} that $\int_X (\theta^t_{u_t})^n =\int_X (\theta^t_{v_t})^n$ for all $t\in \Delta$. On the other hand, using multi-linearity of the non-pluripolar product again (\cite[Proposition 1.4]{BEGZ10}), we see that both $t \to \int_X (\theta^t_{u_t})^n$  and $t \to \int_X (\theta^t_{v_t})^n$ are homogeneous polynomials of degree $n$. Our last identity  forces all the coefficients of these polynomials to be equal, giving the statement of our result. 
\end{proof}

We recall a classical convergence theorem from Bedford-Taylor theory. We refer to \cite[Theorem 4.26]{GZ17} for a proof of this result, which is merely slight generalization of \cite[Theorem 1]{X96}.  

\begin{prop}\label{prop: xing_conv} Let $\Omega \subset \Bbb C^n$ be an open set. Suppose $\{f_j\}_j$ are uniformly bounded quasi-continuous
functions which converge in capacity to another quasi-continuous function $f$ on $\Omega$. Let $\{ u^j_1\}_j, \{ u^j_2\}_j,\ldots, \{ u^j_n\}_j$ be uniformly bounded plurisubharmonic functions on $\Omega$, converging in capacity to $u_1, u_2, \ldots, u_n$
respectively. Then we have the following weak convergence of measures:
$$f_j i\partial \bar \partial u_1^j \wedge i\partial \bar \partial u_2^j \wedge \ldots \wedge i\partial \bar \partial u_n^j \to f i\partial \bar \partial u_1 \wedge i\partial \bar \partial u_2 \wedge \ldots \wedge i\partial \bar \partial u_n.$$
\end{prop}

The following lower-semicontinuity property of non-pluripolar products will be key in the sequel: 
\begin{theorem}
	\label{thm: lsc of MA measures}
	Let $\theta^j, j \in \{1,\ldots,n\}$ be smooth closed real $(1,1)$-forms on $X$ whose cohomology classes are big. Suppose that for all $j \in \{1,\ldots,n\}$  we have $u_j,u_j^k\in \textup{PSH}(X,\theta^j)$ such that  $u^k_j \to u_j$ in capacity as $k \to \infty$. Then for all positive bounded quasi-continuous functions $\chi$ we have
	\[
	\liminf_{k\to +\infty} \int_X \chi \theta^1_{u^k_1} \wedge \ldots \wedge \theta^n_{u^k_n}  \geq  \int_X \chi  \theta^1_{u_1} \wedge \ldots \wedge \theta^n_{u_n}. 
	\]
If additionally,  
	\begin{equation}\label{eq: global_mass_semi_cont}
	\int_ X \theta^1_{u_1} \wedge \ldots \wedge \theta^n_{u_n} \geq \limsup_{k\rightarrow \infty} \int_X \theta^1_{u^k_1} \wedge \ldots \wedge \theta^n_{u^k_n},
	\end{equation}
then $\theta^1_{u^k_1} \wedge \ldots \wedge \theta^n_{u^k_n}  \to  \theta^1_{u_1} \wedge \ldots \wedge \theta^n_{u_n}$ in the weak sense of measures on $X$. 
\end{theorem}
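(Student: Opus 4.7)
The plan is to reduce the statement to the classical Bedford--Taylor convergence theorem (in the form extended by Xing) for potentials with minimal singularities, exploiting plurifine locality of non-pluripolar products to compare integrals on $X$ with integrals on plurifine open sets where everything is essentially bounded. Concretely, since each $\{\theta^j\}$ is big there is a reference potential $V_{\theta^j}$ with minimal singularities, and I would truncate by setting $u_j^C := \max(u_j, V_{\theta^j} - C)$ and $u_j^{k,C} := \max(u_j^k, V_{\theta^j} - C)$. Write $\nu^C$ and $\nu^{k,C}$ for their (ordinary) Monge--Amp\`ere products, $O^C := \bigcap_j \{u_j > V_{\theta^j} - C\}$, and let $\mu_k, \mu$ denote the non-pluripolar products from the statement. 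By plurifine locality and the very definition of the non-pluripolar product, $\mathbf{1}_{O^C} \nu^C = \mathbf{1}_{O^C} \mu$ and $\mathbf{1}_{O^C} \nu^C \nearrow \mu$ as $C \nearrow +\infty$; analogous statements hold for the $k$-versions.

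For fixed $C$ the $\max$ operation is continuous with respect to convergence in capacity, so $u_j^{k,C} \to u_j^C$ in capacity as $k \to \infty$, and since all these truncations differ from $V_{\theta^j}$ by a bounded amount the Bedford--Taylor/Xing convergence theorem gives $\nu^{k,C} \to \nu^C$ weakly. The lower-semicontinuity assertion then follows from the trivial estimate
\[
\int_X \chi \, \mu_k \;\geq\; \int_X \chi \, \mathbf{1}_{O^{k,C}} \nu^{k,C}
\]
by taking $\liminf_k$ and then $C \nearrow +\infty$, provided I can prove $\liminf_k \int_X \chi \, \mathbf{1}_{O^{k,C}} \nu^{k,C} \geq \int_X \chi \, \mathbf{1}_{O^C} \nu^C$. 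For this I intend to use the set inclusion $O^{C-\eta} \setminus O^{k,C} \subset \bigcup_j \{|u_j^k - u_j| \geq \eta\}$ (together with a symmetric one involving $O^{k,C} \setminus O^{C+\eta}$), whose right-hand side has capacity tending to $0$, combined with quasi-continuity of psh functions to approximate $\mathbf{1}_{O^{C\pm\eta}}$ by continuous cutoffs outside small-capacity exceptional sets, and then apply the weak convergence $\nu^{k,C} \to \nu^C$ against those continuous cutoffs.

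Once lower-semicontinuity is established the full weak convergence conclusion is routine. Taking $\chi \equiv 1$ in the LSC yields $\liminf_k \int_X \mu_k \geq \int_X \mu$, which together with the mass hypothesis \eqref{eq: global_mass_semi_cont} forces $\int_X \mu_k \to \int_X \mu$. Then for any continuous $\chi \geq 0$ bounded by $M$, applying LSC to both $\chi$ and $M - \chi$ and subtracting gives $\limsup_k \int_X \chi \, \mu_k \leq \int_X \chi \, \mu$; together with LSC this yields $\int_X \chi \, \mu_k \to \int_X \chi \, \mu$ for every continuous $\chi \geq 0$, which is the claimed weak convergence of measures.

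The main obstacle is the localization step in the second paragraph: the indicator $\mathbf{1}_{O^{k,C}}$ depends on $k$, whereas the Bedford--Taylor/Xing convergence $\nu^{k,C} \to \nu^C$ only controls integrals against \emph{continuous} test functions. Bridging this gap cleanly requires simultaneously combining quasi-continuity of the potentials $u_j, u_j^k, V_{\theta^j}$ (to replace plurifine-open sets by honest open sets modulo arbitrarily small capacity error), the capacity convergence $u_j^k \to u_j$ (to make this approximation uniform in $k$), and the fact that the measures $\nu^{k,C}$ and $\nu^C$ are well behaved on sets of small capacity, so that the exceptional sets can be discarded without affecting the estimate.
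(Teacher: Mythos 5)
Your overall architecture matches the paper's: truncate at level $V_{\theta^j}-C$, reduce to a Bedford--Taylor-type convergence statement on the region where the potentials are bounded, and recover the global lower-semicontinuity by plurifine locality; the second half (deducing weak convergence from the mass hypothesis) is essentially identical. However, the ``main obstacle'' you correctly identify in your last paragraph is not a detail to be filled in but the whole content of the proof, and the route you sketch to bridge it would likely fail or at best lead you back to the paper's construction.

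The difficulty is that you localize with the sharp indicators $\mathbf{1}_{O^{k,C}}$ and $\mathbf{1}_{O^{C\pm\eta}}$. These are characteristic functions of plurifine open sets; they are quasi-lsc but \emph{not} quasi-continuous, and for sets depending on $k$ you certainly cannot feed them into a convergence theorem that only tolerates continuous test functions. The paper sidesteps this entirely by replacing the indicator with the quasi-continuous ``fractional cutoff''
\[
f_j^{k,C,\vep}=\frac{\max(u_j^{k}-V_{\theta^j}+C,0)}{\max(u_j^{k}-V_{\theta^j}+C,0)+\vep},
\]
which takes values in $[0,1]$, vanishes where $u_j^k\leq V_{\theta^j}-C$ (so plurifine locality still applies), increases to $\mathbf{1}_{\{u_j>V_{\theta^j}-C\}}$ as $\vep\to 0$, and, being quasi-continuous and converging in capacity to $f_j^{C,\vep}$, falls directly within the scope of \cite[Theorem 4.26]{GZ17} (a convergence theorem for products of Bedford--Taylor measures against uniformly bounded quasi-continuous functions converging in capacity). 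This single substitution eliminates the whole ``approximate $\mathbf{1}_{O^{C\pm\eta}}$ by continuous cutoffs outside small-capacity exceptional sets, then control the exceptional mass uniformly in $k$'' program, which is considerably more delicate to carry out than you suggest.

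Two further points you gloss over. First, the Bedford--Taylor/Xing machinery you invoke for $\nu^{k,C}\to\nu^C$ is a local theory for locally bounded potentials, and $V_{\theta^j}$ is unbounded on $X$ when $\{\theta^j\}$ is big but not K\"ahler; one must work on open relatively compact subsets $U\Subset\Omega:=\bigcap_j\Amp(\theta^j)$ (where the $V_{\theta^j}$ \emph{are} bounded) and only at the very end exhaust $\Omega$, using that $X\setminus\Omega$ is pluripolar so the non-pluripolar products do not see it. Second, a plan relying on "$\nu^{k,C}$ do not charge small capacity sets uniformly in $k$" needs an actual uniform Chern--Levine--Nirenberg-type estimate, which you would have to state and verify; the paper's route never needs this.

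So: the skeleton is right, the deduction of weak convergence from the mass hypothesis is correct and matches the paper, but the core localization step is a genuine gap, and the cleanest fix is the paper's trick of using the quasi-continuous cutoffs $f_j^{k,C,\vep}$ rather than indicators.
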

\begin{proof}
	Set $\Omega:=\bigcap_{j=1}^n \Amp(\theta^j)$ and fix an open relatively compact subset $U$ of $\Omega$.  Then  the functions $V_{\theta^j}$ are bounded on $U$. We now use a classical idea in pluripotential theory. Fix $C>0,\vep>0$ and consider
	\[
	f_j^{k,C,\vep}:= \frac{\max(u_j^{k}-V_{\theta^j}+C,0)}{\max(u_j^{k}-V_{\theta^j}+C,0)+\vep}, \ j=1,...,n, \ k\in \mathbb{N}^*,
	\]
	and 
	\[
	u_{j}^{k,C}:= \max(u_j^k, V_{\theta^j}-C). 
	\]
	Observe that for $C,j$ fixed, the functions $u_{j}^{k,C}\geq V_{\theta^j}-C$ are uniformly bounded in $U$ (since $V_{\theta^j}$ is bounded in $U$) and converge in capacity to $u_j^{C}$ as $k\to +\infty$. Moreover, $f_{j}^{k,C,\vep}=0$ if $u_j^{k}\leq V_{\theta^j}-C$. By locality of the non-pluripolar product we can write 
	\[
	f^{k,C,\vep}  \chi \theta^1_{u^k_1} \wedge \ldots \wedge \theta^n_{u^k_n} =f^{k,C,\vep}  \chi \theta^1_{u^{k,C}_1} \wedge \ldots \wedge \theta^n_{u^{k,C}_n} ,
	\]
	where $f^{k,C,\vep}=f_1^{k,C,\vep}\cdots f_n^{k,C,\vep}$. 
For each $C,\vep$ fixed the functions $f^{k,C,\vep}$ are quasi-continuous,  uniformly bounded (with values in $[0,1]$) and converge in capacity to $f^{C,\vep}:=f_1^{C,\vep}\cdots f_n^{C,\vep}$, where $f_j^{C,\vep}$  is defined by 
	\[
	f_j^{C,\vep}  := \frac{\max(u_j-V_{\theta^j}+C,0)}{\max(u_j-V_{\theta^j}+C,0)+\vep}. 
	\]
With the information above we can apply Proposition \ref{prop: xing_conv}  to get that
	\[
	f^{k,C,\vep}  \chi \theta^1_{u^{k,C}_1} \wedge \ldots \wedge \theta^n_{u^{k,C}_n}  \rightarrow f^{C,\vep}  \chi \theta^1_{u^{C}_1} \wedge \ldots \wedge \theta^n_{u^{C}_n} \ \textrm{as}\ k\to +\infty,
	\]
	in the weak sense of measures on $U$. In particular since $0\leq f^{k,C,\vep}\leq 1$ we have that 
	\begin{eqnarray*}
		\liminf_{k\to+\infty} \int_X \chi \theta^1_{u^{k}_1} \wedge \ldots \wedge \theta^n_{u^{k}_n} & \geq &  \liminf_{k\to+\infty} \int_U f^{k,C,\vep}  \chi \theta^1_{u^{k,C}_1} \wedge \ldots \wedge \theta^n_{u^{k,C}_n}\\
		&\geq & \int_U f^{C,\vep}  \chi \theta^1_{u^{C}_1} \wedge \ldots \wedge \theta^n_{u^{C}_n}.
	\end{eqnarray*}
	Now, letting $\vep\to 0$ and then $C\to +\infty$, by definition of the non-pluripolar product we obtain 
	\[
		\liminf_{k\to+\infty} \int_X \chi \theta^1_{u^{k}_1} \wedge \ldots \wedge \theta^n_{u^{k}_n}   
		\geq  \int_U  \chi \theta^1_{u_1} \wedge \ldots \wedge \theta^n_{u_n}.
	\]
	Finally, letting $U$ increase to $\Omega$ and noting that the complement of $\Omega$ is pluripolar we conclude the proof of the first statement of the the theorem.  
	
To prove the last statement, we set $\mu_k:= \theta^1_{u^k_1} \wedge \ldots \wedge \theta^n_{u^k_n}$ and $\mu:= \theta^1_{u_1} \wedge \ldots \wedge \theta^n_{u_n}$.  Note that the total mass of these measures is bounded by $\int_X \theta^1\wedge \ldots \wedge \theta^n$ (\cite[Definition 1.17]{BEGZ10}).  As a result, by the Banach-Alaoglu theorem, it suffices to show that any cluster point of $\{\mu_k\}_k$ coincides with $\mu$.  Let $\nu$ be such a cluster point and assume (after extracting a subsequence) that $\mu_k$ converges weakly to $\nu$.  Condition \eqref{eq: global_mass_semi_cont} implies that $\nu(X)\leq \mu(X)$. It suffices to argue that  $\nu \geq \mu$, which is a consequence of the first statement, thus finishing the proof. 
\end{proof}

Now we move on to the monotonicity of non-pluripolar products:

\begin{theorem}\label{thm: BEGZ_monotonicity_full}
	Let $\theta^j, j \in \{1,\ldots,n\}$ be smooth closed real $(1,1)$-forms on $X$ whose cohomology classes are pseudoeffective. Let $u_j,v_j \in \textup{PSH}(X,\theta^j)$ be such that $u_j$ is less singular than $v_j$ for all $j \in \{1,\ldots,n\}$. Then 
	\[
	\int_X \theta^1_{u_1} \wedge \ldots \wedge \theta^n_{u_n}  \geq  \int_X \theta^1_{v_1} \wedge \ldots \wedge \theta^n_{v_n}. 
	\]
\end{theorem}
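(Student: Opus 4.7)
My plan is to approximate each more-singular potential $v_j$ from above by auxiliary potentials $\psi^j_k$ that share the singularity type of $u_j$. Then Proposition \ref{prop: comparison generalization} pins down the total mass of the approximating non-pluripolar product to equal $\int_X \theta^1_{u_1}\wedge\cdots\wedge\theta^n_{u_n}$, while the lower semicontinuity part of Theorem \ref{thm: lsc of MA measures} bounds this same mass from below by $\int_X \theta^1_{v_1}\wedge\cdots\wedge\theta^n_{v_n}$; chaining the two estimates produces the inequality.

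Since Theorem \ref{thm: lsc of MA measures} requires big cohomology classes, I would first reduce to that case by replacing each $\theta^j$ with $\theta^j + \varepsilon\omega$ (which is big and still has $u_j,v_j$ as admissible potentials), establishing the monotonicity inequality for big classes, and then letting $\varepsilon\searrow 0$. Multilinearity of the non-pluripolar product (\cite[Proposition 1.4]{BEGZ10}) lets one write both sides of the target inequality as polynomials in $\varepsilon$, and the inequality passes to the pseudoeffective limit. Assuming now each $\{\theta^j\}$ big and, after normalizing by constants, $v_j \leq u_j$, I set
\[
\psi^j_k := \max(v_j, u_j - k) \in \textup{PSH}(X,\theta^j), \qquad j\in\{1,\ldots,n\}, \; k\in\mathbb{N}.
\]
The sandwich $u_j - k \leq \psi^j_k \leq u_j$ shows that $\psi^j_k$ has the same singularity type as $u_j$, so Proposition \ref{prop: comparison generalization} gives $\int_X \theta^1_{\psi^1_k}\wedge\cdots\wedge\theta^n_{\psi^n_k} = \int_X \theta^1_{u_1}\wedge\cdots\wedge\theta^n_{u_n}$ for every $k$. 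Moreover $\psi^j_k$ decreases pointwise to $v_j$, and a standard fact on quasi-psh functions then yields $\psi^j_k \to v_j$ in capacity. Applying Theorem \ref{thm: lsc of MA measures} with $\chi \equiv 1$ produces
\[
\liminf_{k\to\infty}\int_X \theta^1_{\psi^1_k}\wedge\cdots\wedge\theta^n_{\psi^n_k} \geq \int_X \theta^1_{v_1}\wedge\cdots\wedge\theta^n_{v_n},
\]
and combining this with the previous identity delivers the monotonicity statement.

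The substantive obstacle is already packaged inside the preceding Theorem \ref{thm: lsc of MA measures}: once that lower semicontinuity result is in hand, the present deduction is essentially a clean approximation argument. The only delicate verification to do by hand is that $\psi^j_k$ and $u_j$ have matching singularity types, which is what allows Proposition \ref{prop: comparison generalization} to act as the bridge between the two mass bounds; crucially, one cannot use $v_j$ directly in Proposition \ref{prop: comparison generalization} since it has a different (more singular) singularity type, and one cannot use $u_j$ directly in Theorem \ref{thm: lsc of MA measures} since $u_j$ does not converge to $v_j$.
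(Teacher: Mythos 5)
Your proof is correct and is essentially the same as the paper's: both reduce to big classes via $\theta^j + \varepsilon\omega$ and multilinearity, both approximate $v_j$ from above by $\max(v_j, u_j - k)$ (the paper writes $v_j^t := \max(u_j - t, v_j)$, which is the same potential), both invoke Proposition~\ref{prop: comparison generalization} for mass equality along the approximation, and both conclude with the lower semicontinuity part of Theorem~\ref{thm: lsc of MA measures} using decreasing convergence in capacity.
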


\begin{proof} By the same reason as in  Proposition \ref{prop: comparison generalization}, we can assume that the classes $\{\theta^j\}$ are in fact big. 
For each $t>0$ we set $v_{j}^{t}:= \max(u_{j}-t,v_j)$ for $j=1,...,n$. Observe that the $v_j^t$ converge decreasingly  to $v_j$ as $t \to \infty$. In particular, by \cite[Proposition 3.7]{GZ05} the convergence holds in capacity.  As $v_{j}^{t}$ and $u_j$ have the same singularity type, it follows from Proposition \ref{prop: comparison generalization} that 
\[
\int_X \theta^1_{u_1} \wedge \ldots \wedge  \theta^n_{u_n}=\int_X \theta^1_{v_1^t} \wedge \ldots \wedge  \theta^n_{v_n^t}.
\]
Letting $t \to \infty$, the first part of Theorem \ref{thm: lsc of MA measures} allows to conclude the argument.
\end{proof}

\begin{remark}\label{rem: increasing implies capacity} We note that condition \eqref{eq: global_mass_semi_cont} in Theorem \ref{thm: lsc of MA measures} is automatically satisfied if $u^k_j \nearrow u_j$ a.e. as $k \to \infty$. Indeed, in this case $u_j^k\to u_j$ in capacity (see \cite[Proposition 4.25]{GZ17}), and by Theorem \ref{thm: BEGZ_monotonicity_full} we have
$\int_ X \theta^1_{u_1} \wedge \ldots \wedge \theta^n_{u_n} \geq \limsup_k \int_X \theta^1_{u^k_1} \wedge \ldots \wedge \theta^n_{u^k_n}.$

On the other hand, if $u^k_j,u_j \in \mathcal E(X,\theta^j)$, by Corollary \ref{cor: full mass} below, it follows that \eqref{eq: global_mass_semi_cont} is again automatically satisfied. Moreover, in the next section we will show that this last property holds for potentials of relative full mass as well (see Corollary \ref{cor: convexity}), giving Theorem \ref{thm: BEGZ_monotonicity_full} a more broad spectrum of applications.
\end{remark}

\section{Pluripotential theory with relative full mass}\label{sec 3}

\subsection{Non-pluripolar products of relative full mass}

Suppose $\theta^j, \ j \in \{1,\ldots,n\}$ are smooth closed real $(1,1)$-forms on $X$ with $\{\theta^j\}$ pseudoeffective. Let $\phi_j,\psi_j \in \textup{PSH}(X,\theta^j)$, be such that $\phi_j$ is less singular than $\psi_j$. We say that $\theta^1_{\psi_1} \wedge\ldots\wedge\theta^n_{\psi_n}$ has full mass with respect to $\theta^1_{\phi_1} \wedge\ldots\wedge\theta^n_{\phi_n}$ (notation: $(\psi_1,\ldots,\psi_n) \in \mathcal E(X, \theta^1_{\phi_1},\ldots,\theta^n_{\phi_n})$) if 
$$\int_X \theta^1_{\psi_1} \wedge\ldots\wedge\theta^n_{\psi_n} = \int_X \theta^1_{\phi_1} \wedge\ldots\wedge\theta^n_{\phi_n}.$$
By Theorem \ref{thm: BEGZ_monotonicity_full}, in general we only have that the left hand side is less than the right hand side in the above identity. 

In the particular case when the potentials involved are from the same cohomology class $\{\theta\}$, and $\phi,\psi \in \textup{PSH}(X,\theta)$ with $\phi$ less singular than $\psi$ along with $\int_X\theta_\phi^n = \int_X \theta_\psi^n$, then we simply write $\psi \in \mathcal E(X,\theta,\phi)$, and say that $\psi$ has \emph{full mass relative to} $\theta^n_\phi$. When $\phi = V_\theta$, we recover the well known concept of full mass currents from the literature (see \cite{BEGZ10}).

As a consequence of Theorem \ref{thm: lsc of MA measures}, we prove a criterion for  testing membership in $\mathcal E(X, \theta^1_{\phi_1},\ldots,\theta^n_{\phi_n})$:

\begin{prop}\label{prop: env_pluri_general} Let $\theta^j, \ j \in \{1,\ldots,n\}$ be smooth closed real $(1,1)$-forms on $X$ with cohomology classes that are pseudoeffective. For all $j \in \{1,\ldots,n\}$  we choose $\phi_j,\psi_j \in \textup{PSH}(X,\theta^j)$ such that $\phi_j$ is less singular than $\psi_j$. If $P_{\theta^j}[\psi_j](\phi_j)=\phi_j$ then $(\psi_1,\ldots,\psi_n) \in \mathcal E(X, \theta^1_{\phi_1},\ldots,\theta^n_{\phi_n})$. 
\end{prop}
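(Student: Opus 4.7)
The plan is to establish both inequalities between the total non-pluripolar masses. Monotonicity (Theorem \ref{thm: BEGZ_monotonicity_full}) gives
$\int_X \theta^1_{\psi_1} \wedge \cdots \wedge \theta^n_{\psi_n} \leq \int_X \theta^1_{\phi_1} \wedge \cdots \wedge \theta^n_{\phi_n}$
for free, so the real content lies in the reverse inequality. The strategy is to approximate each $\phi_j$ from below by $\theta^j$-psh potentials $u_j^C$ whose mass can be pinned down, via Proposition \ref{prop: comparison generalization}, to equal that of the $\psi_j$-product.

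The natural candidates are the envelopes appearing in the very definition of $P_{\theta^j}[\psi_j](\phi_j)$, namely $u_j^C := P_{\theta^j}(\psi_j + C, \phi_j)$. By the hypothesis $P_{\theta^j}[\psi_j](\phi_j) = \phi_j$, the increasing sequence $u_j^C$ tends to $\phi_j$ almost everywhere as $C \to \infty$ (the usc regularization in the definition of $P_{\theta^j}[\psi_j](\phi_j)$ can alter a monotone limit of $\theta^j$-psh functions only on a pluripolar set, and $\phi_j \not\equiv -\infty$). The pivotal point is to check that each $u_j^C$ has the \emph{same} singularity type as $\psi_j$: the bound $u_j^C \leq \psi_j + C$ is built into the envelope, while for the lower bound, using that $\phi_j$ is less singular than $\psi_j$, one fixes $C_0$ with $\psi_j - C_0 \leq \phi_j$, so that $\psi_j - C_0$ is a legitimate competitor in the sup defining $u_j^C$ for all $C \geq -C_0$, giving $\psi_j - C_0 \leq u_j^C \leq \psi_j + C$.

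With this singularity matching in hand, Proposition \ref{prop: comparison generalization} yields
$\int_X \theta^1_{u_1^C} \wedge \cdots \wedge \theta^n_{u_n^C} = \int_X \theta^1_{\psi_1} \wedge \cdots \wedge \theta^n_{\psi_n}$
for every $C$. The a.e.\ monotone convergence $u_j^C \nearrow \phi_j$ places us in the setting of Remark \ref{rem: increasing implies capacity}, so capacity convergence holds and condition \eqref{eq: global_mass_semi_cont} is automatic; Theorem \ref{thm: lsc of MA measures} then delivers weak convergence $\theta^1_{u_1^C} \wedge \cdots \wedge \theta^n_{u_n^C} \to \theta^1_{\phi_1} \wedge \cdots \wedge \theta^n_{\phi_n}$. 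Testing against $\chi \equiv 1$ on the compact manifold $X$ upgrades this to convergence of total masses, closing the argument. In the merely pseudoeffective case one first replaces $\theta^j$ by $\theta^j + \vep \omega$ and lets $\vep \to 0$ by multilinearity, exactly as in the proof of Proposition \ref{prop: comparison generalization}.

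The only substantive obstacle is the singularity-type identification of the approximants $u_j^C$ with $\psi_j$ carried out in the second paragraph; it is short but decisive, as it is precisely what allows Proposition \ref{prop: comparison generalization} (which requires identical singularity type, not mere comparability) to be invoked in place of the weaker Theorem \ref{thm: BEGZ_monotonicity_full}. Everything else is a direct concatenation of the envelope hypothesis, comparison, and the weak-convergence results of Section \ref{sec 2}.
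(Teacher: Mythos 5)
Your argument reproduces the paper's proof, which invokes the same envelopes $P_{\theta^j}(\psi_j+C,\phi_j)\nearrow\phi_j$ and then combines Proposition \ref{prop: comparison generalization} with Theorem \ref{thm: lsc of MA measures}; you have merely filled in the details the paper leaves implicit (the explicit two-sided bound $\psi_j-C_0\le u_j^C\le\psi_j+C$ establishing the singularity-type match, the appeal to Remark \ref{rem: increasing implies capacity} for capacity convergence and the mass-semicontinuity condition, and the pseudoeffective-to-big reduction). The proof is correct and the approach is the same.
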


\begin{proof}If $P_{\theta^j}[\psi_j](\phi_j)=\phi_j$, then $v_j^C:= P_{\theta^j}(\psi_j + C,\phi_j) \nearrow \phi_j$ a.e., as $C \to \infty$. Theorem \ref{thm: lsc of MA measures} and Remark \ref{rem: increasing implies capacity} then imply that
$$\lim_{C\rightarrow +\infty} \int_X \theta_{v_1^C} \wedge ...\wedge \theta_{v_n^C} = \int_X  \theta_{\phi_1} \wedge ...\wedge \theta_{\phi_n} $$
As $P_{\theta^j}(\psi_j + C,\phi_j)$ has the same singularity type as $\psi_j$ for any $C$, the result follows from Proposition \ref{prop: comparison generalization}.
\end{proof}
As a result of this simple criterion, we obtain that  condition \eqref{eq: global_mass_semi_cont} in Theorem \ref{thm: lsc of MA measures} is satisfied if the potentials $u^k_j,u_j$ are from $\mathcal E(X,\theta^j)$:

\begin{coro}\label{cor: full mass}  Let $\theta^j, \ j \in \{1,\ldots,n\}$ be smooth closed real $(1,1)$-forms on $X$ with cohomology classes that are pseudoeffective.
If  $\psi_j\in \mathcal E(X, \theta^j), \ j\in \{1, \ldots, n\}$, then 
$$\int_X  \theta^1_{\psi_1} \wedge\ldots\wedge\theta^n_{\psi_n} = \int_X \theta^1_{V_{\theta^1}} \wedge \ldots  \wedge \theta_{V_{\theta^n}}^n,$$
or equivalently, $(\psi_1,\ldots,\psi_n) \in \mathcal E(X,\theta^1_{V_{\theta^1}},\ldots, \theta^n_{V_{\theta^n}})$.
\end{coro}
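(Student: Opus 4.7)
The strategy is to apply Proposition \ref{prop: env_pluri_general} with the choice $\phi_j := V_{\theta^j}$, the potential with minimal singularities in $\{\theta^j\}$, for every $j$. Since $V_{\theta^j}$ is less singular than every $\theta^j$-psh function (by minimality), $\psi_j$ is automatically more singular than $V_{\theta^j}$, so the hypotheses of Proposition \ref{prop: env_pluri_general} on the singularity ordering are met. Moreover, by \cite[Definition 1.17]{BEGZ10} the positive product of pseudoeffective classes is defined by $\langle\{\theta^1\}\cdots\{\theta^n\}\rangle = \int_X \theta^1_{V_{\theta^1}}\wedge\cdots\wedge\theta^n_{V_{\theta^n}}$, so the mass identity and the membership statement in the corollary are tautologically the same. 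Thus the task reduces to checking the envelope identity $P_{\theta^j}[\psi_j](V_{\theta^j}) = V_{\theta^j}$ for each $j$.

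In the big case, this identity is the (i)$\Rightarrow$(ii) implication of Theorem \ref{thm3} applied in the single class $\{\theta^j\}$ with $\phi = V_{\theta^j}$: bigness ensures $\int_X (\theta^j)^n_{V_{\theta^j}} = \textup{vol}(\{\theta^j\}) > 0$, and the classical full mass hypothesis $\psi_j\in\mathcal E(X,\theta^j)$ is by definition the same as $\psi_j\in\mathcal E(X,\theta^j,V_{\theta^j})$, so Theorem \ref{thm3} yields $P_{\theta^j}[\psi_j](V_{\theta^j}) = V_{\theta^j}$. Feeding this into Proposition \ref{prop: env_pluri_general} immediately gives the desired identity of total masses. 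For classes that are only pseudoeffective, one falls back on the standard perturbation argument used in the proof of Proposition \ref{prop: comparison generalization}: replace each $\theta^j$ by $\theta^j + \varepsilon\omega$, expand both sides of the desired equality via multilinearity of the non-pluripolar product, apply the big-case conclusion to every resulting monomial, and let $\varepsilon\to 0^+$.

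The only real content is the envelope identity itself, and there a forward reference to Theorem \ref{thm3} is in principle avoidable. Indeed, since $\psi_j\in\mathcal E(X,\theta^j)$, the cutoffs $\psi_j^C := \max(\psi_j, V_{\theta^j}-C)$ share the singularity type of $V_{\theta^j}$ for every $C>0$; by Proposition \ref{prop: comparison generalization} the product $\int_X \theta^1_{\psi_1^C}\wedge\cdots\wedge\theta^n_{\psi_n^C}$ is constant in $C$ and equals $\langle\{\theta^1\}\cdots\{\theta^n\}\rangle$, while the increasing limit $P_{\theta^j}(\psi_j+C, V_{\theta^j}) \nearrow P_{\theta^j}[\psi_j](V_{\theta^j})$ is controlled by Theorem \ref{thm: lsc of MA measures} together with Remark \ref{rem: increasing implies capacity}. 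Combining these with the monotonicity Theorem \ref{thm: BEGZ_monotonicity_full} squeezes the envelope between two potentials of equal mass, forcing the identity. I expect this last squeezing step to be the main technical point: one must be careful that the mass equality transferred through Proposition \ref{prop: comparison generalization} is stable under the relevant capacity limit, which is where Theorem \ref{thm: lsc of MA measures} plays its role.
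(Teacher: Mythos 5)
Your primary strategy is exactly the paper's: apply Proposition \ref{prop: env_pluri_general} with $\phi_j = V_{\theta^j}$, recall $\langle\{\theta^1\}\cdots\{\theta^n\}\rangle = \int_X \theta^1_{V_{\theta^1}}\wedge\cdots\wedge\theta^n_{V_{\theta^n}}$ by definition, and reduce everything to the single envelope identity $P_{\theta^j}[\psi_j](V_{\theta^j}) = V_{\theta^j}$. The difference is that the paper does \emph{not} invoke Theorem \ref{thm3} for that identity --- which, as you rightly note, would be a forward reference --- but cites the external result \cite[Theorem~1.2]{DDL16}, of which Theorem \ref{thm3} is precisely the later in-paper generalization. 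So there is no circularity to repair in the first place, and your ``forward reference to Theorem \ref{thm3}'' concern, while well-spotted, is resolved by the citation rather than by a new argument.

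Your proposed replacement argument (the ``squeezing'') has a genuine gap, and it is worth being precise about it since it illustrates why the envelope identity is a nontrivial theorem. You observe that $\psi_j$, $P_{\theta^j}[\psi_j](V_{\theta^j})$, and $V_{\theta^j}$ all carry the same Monge--Amp\`ere mass, with $\psi_j \le P_{\theta^j}[\psi_j](V_{\theta^j}) \le V_{\theta^j}$, and conclude that this ``forces the identity.'' But mass equality between nested potentials does not force equality of singularity types: indeed $\psi_j$ itself has full mass and is sandwiched the same way, yet $\psi_j \ne V_{\theta^j}$ in general. Nothing in Proposition \ref{prop: comparison generalization}, Theorem \ref{thm: lsc of MA measures}, or Theorem \ref{thm: BEGZ_monotonicity_full} can distinguish $P_{\theta^j}[\psi_j](V_{\theta^j})$ from some intermediate potential of the same mass. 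The statement $P_\theta[\psi] = V_\theta$ for $\psi\in\mathcal E(X,\theta)$ genuinely requires additional input beyond mass bookkeeping --- in the present paper it would be Theorem \ref{thm: MA measure of WN envelope} (that $\theta^n_{P_\theta[\psi]}$ is dominated by $\mathbbm{1}_{\{P_\theta[\psi]=0\}}\theta^n$) together with the domination principle, which appear only in the \emph{next} subsection; in the paper's actual proof it is supplied wholesale by \cite[Theorem~1.2]{DDL16}. So the correct fix is not to rederive the envelope identity from scratch here, but simply to cite the already-available result as the paper does.
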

\begin{proof}
By \cite[Theorem 1.2]{DDL16} we have $P_{\theta^j}[\psi_j]:=P_{\theta^j}[\psi_j](V_{\theta^j})=V_{\theta^j}$. Hence Proposition \ref{prop: env_pluri_general} yields the conclusion.%, since by definition $\langle \{\theta^1\} \ldots  \{\theta^n\}\rangle=\int_X  \theta^1_{V_{\theta^1} }\wedge\ldots\wedge\theta^n_{V_{\theta^n}}$.
\end{proof}

\begin{remark}\label{ex: zero mass}
Unfortunately, the reverse direction in Proposition \ref{prop: env_pluri_general} does not hold in general. Indeed, let $X = \mathbb{ C}\mathbb{ P}^1 \times \mathbb{ C}\mathbb{ P}^1$ with $\theta= \pi_1^* \omega_{FS}+\pi_2^\star \omega_{FS}$, where $\pi_1, \pi_2$  are the  projections to the first and second component respectively. 

Consider $\phi(z,w):=u(z) +v(w) \in \textup{PSH}(X,\theta)$ where $u, v\leq 0$ satisfy $\omega_{FS}+i\ddbar u =\delta_{z_0}$ and $\omega_{FS}+i\ddbar v =\delta_{w_0}$, where $\delta_{z_0},\delta_{w_0}$ are Dirac masses for some $z_0,w_0 \in \Bbb{C}\Bbb{P}^1$.  Clearly, $\int_X \theta_{\phi}^2= \int_X \theta_{\pi^*_2 v}^2=0$, and since $\phi\leq \pi_2^*v$, we have that $\phi\in \mathcal{E}(X ,\theta, \pi_2^*v)$. 

On the other hand, we know that $\phi$ has the same Lelong numbers as $P_\theta[\phi]$ (\cite[Theorem 1.1]{DDL16}). As  $P_{\theta}[\phi](\pi_2^*v) \leq P_{\theta}[\phi]$, it follows however that $P_{\theta}[\phi](\pi_2^*v) \lneq \pi_2^*v$, since at some points of $\mathbb{ C}\mathbb{ P}^1 \times \mathbb{ C}\mathbb{ P}^1$ the Lelong number of $\pi_2^*v$ is zero, but the Lelong number of $\phi$ is non-zero. 

As we will see below (Theorem \ref{thm: E_memb_char}), a partial converse of Proposition \ref{prop: env_pluri_general} is still possible under the assumption of non-vanishing total mass.
\end{remark}

In the remaining part of this subsection we prove basic properties of non-pluripolar products with relative full mass, that will be used later in this work.

\begin{lemma}\label{lem: E_def_lem} Suppose $\phi_j, \psi_j\in  \textup{PSH}(X,\theta^j)$. Then $(\psi_1, \ldots, \psi_n) \in \mathcal E(X,\theta^1_{\phi_1}, \ldots , \theta^n_{\phi_n} )$ if and only if $\phi_j$ is less singular than $\psi_j$, and $\int_{\bigcup_j\{\psi_j\leq  \phi_j -k \}}\theta^1_{\max(\psi_1,\phi_1 - k)} \wedge \ldots \wedge\theta^n_{\max(\psi_n,\phi_n - k)} \to 0$, as $k \to \infty$.
\end{lemma}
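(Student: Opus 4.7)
The plan is to use a straightforward truncation argument based on Proposition 3.1 (which says that potentials of the same singularity type produce the same total mass of their non-pluripolar product) together with plurifine locality.

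First I would fix the truncations $\psi_j^k := \max(\psi_j, \phi_j - k)$ for each $j$. Granting the standing hypothesis that $\phi_j$ is less singular than $\psi_j$ (which is required for the relative full mass class even to be defined, hence is forced in both directions of the equivalence), the potentials $\psi_j^k$ satisfy $\phi_j - k \leq \psi_j^k \leq \phi_j + C_j$, so they have the same singularity type as $\phi_j$. Applying Proposition \ref{prop: comparison generalization} therefore gives
\[
\int_X \theta^1_{\psi_1^k} \wedge \ldots \wedge \theta^n_{\psi_n^k} \;=\; \int_X \theta^1_{\phi_1} \wedge \ldots \wedge \theta^n_{\phi_n}
\]
for every $k$.

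Next, on the plurifine open set $\Omega_k := \bigcap_j \{\psi_j > \phi_j - k\}$ we have $\psi_j^k = \psi_j$ pointwise, so by the locality of the non-pluripolar product in the plurifine topology (\cite[Proposition 1.4]{BEGZ10}) I can split
\[
\int_X \theta^1_{\psi_1^k} \wedge \ldots \wedge \theta^n_{\psi_n^k} \;=\; \int_{\Omega_k} \theta^1_{\psi_1} \wedge \ldots \wedge \theta^n_{\psi_n} \;+\; \int_{X \setminus \Omega_k} \theta^1_{\psi_1^k} \wedge \ldots \wedge \theta^n_{\psi_n^k},
\]
where $X \setminus \Omega_k = \bigcup_j \{\psi_j \leq \phi_j - k\}$ is exactly the set appearing in the lemma. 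The sets $\Omega_k$ form an increasing family whose union is $\bigcap_j \{\psi_j > -\infty\}$, whose complement is pluripolar; since the non-pluripolar product $\theta^1_{\psi_1} \wedge \ldots \wedge \theta^n_{\psi_n}$ does not charge pluripolar sets, monotone convergence gives
\[
\int_{\Omega_k} \theta^1_{\psi_1} \wedge \ldots \wedge \theta^n_{\psi_n} \;\longrightarrow\; \int_X \theta^1_{\psi_1} \wedge \ldots \wedge \theta^n_{\psi_n} \quad \text{as } k \to \infty.
\]

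Combining the last three displays I obtain
\[
\int_X \theta^1_{\phi_1} \wedge \ldots \wedge \theta^n_{\phi_n} \;-\; \int_X \theta^1_{\psi_1} \wedge \ldots \wedge \theta^n_{\psi_n} \;=\; \lim_{k \to \infty} \int_{\bigcup_j \{\psi_j \leq \phi_j - k\}} \theta^1_{\psi_1^k} \wedge \ldots \wedge \theta^n_{\psi_n^k},
\]
and the equivalence in the statement follows immediately by reading this identity in both directions. I do not anticipate a serious obstacle here: the only subtle point is to justify the plurifine-locality decomposition and the vanishing of $\theta^1_{\psi_1} \wedge \ldots \wedge \theta^n_{\psi_n}$ on the pluripolar set $\bigcap_k (X \setminus \Omega_k)$, both of which are standard properties of the non-pluripolar product from \cite{BEGZ10}. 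Everything else is formal manipulation of monotone set sequences.
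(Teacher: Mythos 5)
Your proof is correct and follows essentially the same route as the paper's: the same truncations $\psi_j^k = \max(\psi_j,\phi_j-k)$, the same application of Proposition \ref{prop: comparison generalization}, the same plurifine decomposition into $\bigcap_j\{\psi_j>\phi_j-k\}$ and its complement, and the same monotone-convergence limit. The paper's argument is just a more compact version of what you wrote.
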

\begin{proof}If $\phi_j$ is less singular than $\psi_j$, then $\max(\psi_j,\phi_j-k)$ has the same singularity type as $\phi_j$. Consequently, Proposition \ref{prop: comparison generalization} gives that
\begin{flalign*}
\int_X \theta^1_{\phi_1} & \wedge \ldots \wedge  \theta^n_{\phi_n} = \int_X \theta^1_{\max(\psi_1,\phi_1 - k)} \wedge \ldots \wedge\theta^n_{\max(\psi_n,\phi_n - k)}\\
&=\int_{\cap_j\{\psi_j > \phi_j - k\}}\theta^1_{\psi_1} \wedge \ldots \wedge  \theta^n_{\psi_n} + \int_{\bigcup_j\{\psi_j\leq  \phi_j -k \}}\theta^1_{\max(\psi_1,\phi_1 - k)} \wedge \ldots \wedge\theta^n_{\max(\psi_n,\phi_n - k)}.
\end{flalign*}
Since $\int_{\cap_j\{\psi_j > \phi_j - k\}}\theta^1_{\psi_1} \wedge \ldots \wedge  \theta^n_{\psi_n} \to \int_X \theta^1_{\psi_1}  \wedge \ldots \wedge  \theta^n_{\psi_n}$ as $k \to \infty$, the equivalence of the lemma follows after we take the limit $k \to \infty$ in the above identity.
\end{proof}

As a consequence of this last lemma and the locality of the non-pluripolar product with respect to the plurifine topopolgy we obtain the following uniform estimate
\begin{flalign*}%\label{eq: Borel_set_measure}
\lim_{k \to \infty}\bigg|\int_B & \theta^1_{\psi_1} \wedge \ldots \wedge \theta^n_{\psi_n} - \int_B \theta^1_{\max(\psi_1,\phi_1 - k)} \wedge \ldots \wedge\theta^n_{\max(\psi_n,\phi_n - k)}\bigg| \leq  \nonumber \\
&\leq 2 \int_{\bigcup_j\{\psi_j\leq  \phi_j -k \}}\theta^1_{\max(\psi_1,\phi_1 - k)} \wedge \ldots \wedge\theta^n_{\max(\psi_n,\phi_n - k)} \to 0. 
\end{flalign*}
for any Borel set $B \subset X$ and $(\psi_1, \ldots, \psi_n) \in \mathcal E(X,\theta^1_{\phi_1}, \ldots , \theta^n_{\phi_n} )$.

Lastly, we note the \emph{partial comparison principle} for non-pluripolar products of relative full mass, generalizing a result of Dinew from \cite{Dw09b}:

\begin{prop}\label{prop: general CP} Suppose $\phi_k,\psi_k \in \textup{PSH}(X,\theta^k), k=1,\cdots j\leq n$ and $\phi\in {\rm PSH}(X,\theta)$. Assume that $(u,...,u,\psi_1,...,\psi_j), (v,...,v,\psi_1,...,\psi_j) \in \mathcal{E}(X,\theta_{\phi},...,\theta_{\phi}, \theta_{\phi_1},...,\theta_{\phi_j})$. Then 
\[
\int_{\{u<v\}} \theta_{v}^{n-j}\wedge  \theta^1_{\psi_1} \wedge \ldots \wedge \theta^j_{\psi_j} \leq \int_{\{u<v\}} \theta_{u}^{n-j}\wedge  \theta^1_{\psi_1} \wedge \ldots \wedge \theta^j_{\psi_j}.
\] 
\end{prop}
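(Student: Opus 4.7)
The plan is to adapt Dinew's classical argument \cite{Dw09b} by leveraging the monotonicity theorem (Theorem \ref{thm: BEGZ_monotonicity_full}) and the relative full mass assumption to force an equality of total masses, from which the desired partial comparison inequality drops out by plurifine locality. For $\varepsilon>0$ I would set
\[
w_\varepsilon := \max(u, v-\varepsilon), \qquad \mu := \theta^1_{\psi_1}\wedge\ldots\wedge\theta^j_{\psi_j}.
\]
Since $w_\varepsilon\geq u$ and $w_\varepsilon\leq \phi+C$ for some constant $C$ (as both $u$ and $v$ are more singular than $\phi$), the potential $w_\varepsilon$ sits between $\phi$ and $u$ in singularity. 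Applying Theorem \ref{thm: BEGZ_monotonicity_full} twice, first between $\phi$ and $w_\varepsilon$ and then between $w_\varepsilon$ and $u$, together with the relative full mass of $u$, yields the sandwich
\[
\int_X \theta_\phi^{n-j}\wedge\theta^1_{\phi_1}\wedge\ldots\wedge\theta^j_{\phi_j} \geq \int_X \theta_{w_\varepsilon}^{n-j}\wedge\mu \geq \int_X \theta_u^{n-j}\wedge\mu = \int_X \theta_\phi^{n-j}\wedge\theta^1_{\phi_1}\wedge\ldots\wedge\theta^j_{\phi_j},
\]
forcing equality throughout; in particular $\int_X \theta_{w_\varepsilon}^{n-j}\wedge\mu = \int_X \theta_u^{n-j}\wedge\mu$.

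Next I would exploit the plurifine locality of non-pluripolar products on the plurifine open sets $\{u<v-\varepsilon\}$ and $\{u>v-\varepsilon\}$. Since $w_\varepsilon = v-\varepsilon$ on the first set and $w_\varepsilon=u$ on the second, locality gives
\[
\int_{\{u<v-\varepsilon\}}\theta_{w_\varepsilon}^{n-j}\wedge\mu = \int_{\{u<v-\varepsilon\}}\theta_v^{n-j}\wedge\mu, \qquad \int_{\{u>v-\varepsilon\}}\theta_{w_\varepsilon}^{n-j}\wedge\mu = \int_{\{u>v-\varepsilon\}}\theta_u^{n-j}\wedge\mu.
\]
Partitioning $X$ into $\{u<v-\varepsilon\}\sqcup\{u=v-\varepsilon\}\sqcup\{u>v-\varepsilon\}$, substituting these identifications into the mass equality from the previous step and cancelling the common $\{u>v-\varepsilon\}$ contribution, one finds
\[
\int_{\{u<v-\varepsilon\}}\theta_v^{n-j}\wedge\mu + \int_{\{u=v-\varepsilon\}}\theta_{w_\varepsilon}^{n-j}\wedge\mu = \int_{\{u\leq v-\varepsilon\}}\theta_u^{n-j}\wedge\mu.
\]
Discarding the nonnegative middle term on the left and using $\{u\leq v-\varepsilon\}\subset\{u<v\}$ on the right yields the uniform bound
\[
\int_{\{u<v-\varepsilon\}}\theta_v^{n-j}\wedge\mu \leq \int_{\{u<v\}}\theta_u^{n-j}\wedge\mu,
\]
and monotone convergence along $\{u<v-\varepsilon\}\nearrow\{u<v\}$ as $\varepsilon\searrow 0$ delivers the claim.

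The main obstacle is that plurifine locality gives no direct control over the level set $\{u=v-\varepsilon\}$, which could a priori carry positive mass for $\theta_{w_\varepsilon}^{n-j}\wedge\mu$ or $\theta_u^{n-j}\wedge\mu$. Because only a one-sided inequality is sought, I can simply absorb the $\{u=v-\varepsilon\}$ piece into the right hand side via the trivial inclusion $\{u=v-\varepsilon\}\subset\{u\leq v-\varepsilon\}$, bypassing any need for a generic choice of $\varepsilon$ or finer regularity properties of $u$ and $v$.
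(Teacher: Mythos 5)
Your proof is correct and follows essentially the same strategy as the paper's: use the monotonicity theorem plus the relative full mass hypothesis to force equality of total masses for the max-regularization, then exploit plurifine locality and monotone convergence. The only cosmetic difference is that you introduce the $\varepsilon$-shift from the outset via $w_\varepsilon=\max(u,v-\varepsilon)$, whereas the paper works with $\max(u,v)$ and replaces $u$ by $u+\varepsilon$ at the end; these are equivalent parametrizations of the same argument.
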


\begin{proof} The proof follows the argument of \cite[Proposition 2.2]{BEGZ10} with a vital ingredient from Theorem \ref{thm: BEGZ_monotonicity_full}.

Since $\max(u,v)$ is more singular than $\phi$, and $\psi_k$ is more singular than $\phi_k$, for $k=1,...,j$, it follows from the assumption and Theorem \ref{thm: BEGZ_monotonicity_full} that
\begin{eqnarray*}
\int_X \theta_{\phi}^{n-j}\wedge  \theta^1_{\phi_1} \wedge \ldots \wedge \theta^j_{\phi_j} &=&
\int_X \theta_{v}^{n-j}\wedge  \theta^1_{\psi_1} \wedge \ldots \wedge \theta^j_{\psi_j} \\
&\leq & \int_X \theta_{\max(u,v)}^{n-j}\wedge  \theta^1_{\psi_1} \wedge \ldots \wedge \theta^j_{\psi_j}\\
&\leq &   \int_X \theta_{\phi}^{n-j}\wedge  \theta^1_{\phi_1} \wedge \ldots \wedge \theta^j_{\phi_j}. 
\end{eqnarray*}
Hence the inequalities above are in fact equalities. 
By locality of the non-pluripolar product we then can write:
\begin{flalign*}
	\int_X \theta_{\max(u,v)}^{n-j} \wedge \theta^1_{\psi_1} \wedge ... \wedge \theta^j_{\psi_j}
&\geq  \int_{\{u> v\}} \theta_{u}^{n-j} \wedge \theta^1_{\psi_1} \wedge ... \wedge \theta^j_{\psi_j} + \int_{\{v> u\}} \theta_{v}^{n-j} \wedge \theta^1_{\psi_1} \wedge ... \wedge \theta^j_{\psi_j} \\
&=  \int_X \theta_{u}^{n-j} \wedge \theta^1_{\psi_1} \wedge ... \wedge \theta^j_{\psi_j} - \int_{\{u\leq  v\}} \theta_{u}^{n-j} \wedge \theta^1_{\psi_1} \wedge ... \wedge \theta^j_{\psi_j} \\
&+  \int_{\{v> u\}} \theta_{v}^{n-j} \wedge \theta^1_{\psi_1} \wedge ... \wedge \theta^j_{\psi_j}\\
& =  \int_X \theta_{\max(u,v)}^{n-j} \wedge \theta^1_{\psi_1} \wedge ... \wedge \theta^j_{\psi_j} - \int_{\{u\leq  v\}} \theta_{u}^{n-j} \wedge \theta^1_{\psi_1} \wedge ... \wedge \theta^j_{\psi_j} \\
&+  \int_{\{v> u\}} \theta_{v}^{n-j} \wedge \theta^1_{\psi_1} \wedge ... \wedge \theta^j_{\psi_j}.
\end{flalign*}
We thus get
\begin{flalign*}
\int_{\{u < v\}} \theta_{v}^{n-j} \wedge \theta^1_{\psi_1} \wedge \ldots \wedge \theta^j_{\psi_j}  \leq \int_{\{u \leq v\}} \theta_{u}^{n-j} \wedge \theta^1_{\psi_1} \wedge \ldots \wedge \theta^j_{\psi_j}.
\end{flalign*}
Replacing $u$ with $u + \varepsilon$ in the above inequality, and letting $\varepsilon \searrow 0$, by the monotone convergence theorem we arrive at the result.
\end{proof}

In the next subsection, after we explore the class $\mathcal E(X,\theta,\phi)$, we will give a partial comparison principle specifically for this class, as a corollary of the above general proposition. Here we only note the following trivial consequence:

\begin{coro}\label{comparison principle} Suppose $\phi \in \textup{PSH}(X,\theta)$ and assume that $u,v\in \Ec(X,\theta,\phi)$. Then 
\[
\int_{\{u<v\}} \theta_{v}^{n} \leq \int_{\{u<v\}} \theta_{u}^{n}.\] 
\end{coro}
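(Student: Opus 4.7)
The plan is that this corollary is just the $j=0$ specialization of the partial comparison principle in Proposition \ref{prop: general CP}, and the proof amounts to checking that the hypotheses match up.

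First I would unpack what $u,v \in \Ec(X,\theta,\phi)$ means from the definition given at the start of Section \ref{sec 3}: namely, $u$ and $v$ are both more singular than $\phi$, and
\[
\int_X \theta_u^n = \int_X \theta_v^n = \int_X \theta_\phi^n.
\]
In the notation of Proposition \ref{prop: general CP}, these two identities are precisely the statements
\[
(u,\ldots,u) \in \mathcal{E}(X,\theta_\phi,\ldots,\theta_\phi), \qquad (v,\ldots,v) \in \mathcal{E}(X,\theta_\phi,\ldots,\theta_\phi),
\]
where each tuple has $n$ entries (i.e. $j=0$, no auxiliary $\psi_k$ factors).

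Then I would simply invoke Proposition \ref{prop: general CP} with $j=0$. The only thing worth verifying is that the proof of that proposition goes through when there are no $\theta^k_{\psi_k}$ factors at all; inspection of the argument shows it does, since the only ingredients used are that $\max(u,v)$ is more singular than $\phi$ (which follows from $u,v$ being more singular than $\phi$) together with the monotonicity Theorem \ref{thm: BEGZ_monotonicity_full} and locality of the non-pluripolar product with respect to the plurifine topology. With $j=0$ the output inequality is precisely
\[
\int_{\{u<v\}} \theta_v^n \leq \int_{\{u<v\}} \theta_u^n,
\]
which is what we want.

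There is no real obstacle here; the corollary is labelled ``trivial'' in the excerpt, and indeed the only content is matching the definition of $\Ec(X,\theta,\phi)$ against the hypothesis of the general partial comparison principle.
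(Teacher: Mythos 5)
Your proposal is correct and matches what the paper intends: the paper explicitly states this corollary as a trivial consequence of Proposition \ref{prop: general CP}, and specializing that proposition to $j=0$ (no auxiliary $\psi_k$ factors) is exactly the right move, with the hypothesis $(u,\ldots,u),(v,\ldots,v)\in\mathcal E(X,\theta_\phi,\ldots,\theta_\phi)$ being precisely a restatement of $u,v\in\mathcal E(X,\theta,\phi)$. Your observation that the proof of the proposition goes through unchanged at $j=0$ (the sandwich $\int_X\theta_\phi^n=\int_X\theta_v^n\le\int_X\theta_{\max(u,v)}^n\le\int_X\theta_\phi^n$ still holds via Theorem \ref{thm: BEGZ_monotonicity_full}) is a correct sanity check.
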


Note that the above result generalizes \cite[Corollary 2.3]{BEGZ10}.
\subsection{The envelope $P_\theta[\phi]$ and the class $\mathcal E(X,\theta,\phi)$} 

Let $\theta$ be a smooth closed real $(1,1)$-form on $X$ which represents a big class and  fix  $\phi\in \psh(X,\theta)$ such that $\phi\leq 0$. In this short subsection we focus on the relative full mass class $\mathcal E(X,\theta,\phi)$. 

Based on our previous findings, one wonders if the following set of potentials has a maximal element:

\[ F_\phi:= \bigg\{v\in \psh(X,\theta) \setdef  \phi \leq v \leq 0 \ \textrm{and} \ \int_X \theta_v^n =\int_X \theta_{\phi}^n\bigg\}. 
\]
In other words, does there exist a \emph{least singular} potential that is less singular than $\phi$ but has the same full mass as $\phi$. As we will see, if $\int_X \theta_\phi^n >0$, this is indeed the  case, moreover this maximal potential is equal to $P_\theta[\phi]$ (Theorem \ref{thm: ceiling coincide envelope non collapsing}). 

Linking the envelope $P_\theta[\phi]$ to the class $\mathcal E(X,\theta,\phi)$, observe that $\phi \leq P_\theta[\phi] \leq 0$ and  $\int_X \theta_{P_\theta[\phi]}^n= \int_X  \theta_{\phi}^n$, in particular $P_\theta[\phi] \in F_\phi$ and $\phi \in \mathcal E(X, \theta, P_{\theta}[\phi])$. Indeed, since $P_{\theta}(\phi+C,0) \nearrow P_\theta[\phi](0)=P_{\theta}[\phi]$  a.e. as $C \to +\infty$, using Theorem \ref{thm: BEGZ_monotonicity_full} and Theorem \ref{thm: lsc of MA measures} we can conclude that $\int_X \theta_{P_\theta[\phi]}^n= \int_X  \theta_{\phi}^n$.

In our study, we will need the following preliminary result, providing an estimate for the complex Monge-Amp\`ere operator of rooftop envelopes, that builds on recent progress in \cite{GLZ17}:
\begin{lemma}\label{lem: GLZ non-full}
	Let $\varphi,\psi\in \psh(X,\theta)$. If $P_{\theta}(\varphi,\psi) \neq -\infty$ then
	\[
	\theta_{P_{\theta}(\varphi,\psi)}^n \leq\mathbbm{1}_{\{P_{\theta}(\varphi,\psi)=\varphi\}} \theta_{\varphi}^n + \mathbbm{1}_{\{P_{\theta}(\varphi,\psi)=\psi\}} \theta_{\psi}^n. 
	\]
\end{lemma}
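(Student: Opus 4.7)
The plan is to deduce this estimate from two classical-in-spirit facts about envelopes: a maximality statement saying that $\theta_u^n$ (where $u := P_\theta(\varphi,\psi)$) puts no non-pluripolar mass on the ``free'' region where $u$ lies strictly below both obstacles, and a plurifine locality statement identifying $\theta_u^n$ with $\theta_\varphi^n$ (resp.\ $\theta_\psi^n$) on the coincidence set $\{u=\varphi\}$ (resp.\ $\{u=\psi\}$). Granting these two facts, the estimate drops out from
\[
\theta_u^n \;=\; \mathbbm{1}_{\{u=\varphi\}\cup\{u=\psi\}}\theta_u^n \;\leq\; \mathbbm{1}_{\{u=\varphi\}}\theta_u^n + \mathbbm{1}_{\{u=\psi\}}\theta_u^n \;=\; \mathbbm{1}_{\{u=\varphi\}}\theta_\varphi^n + \mathbbm{1}_{\{u=\psi\}}\theta_\psi^n.
\]

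For the plurifine locality, I would argue as follows. On $\{u=\varphi\}$ the function $u-\varphi$ vanishes, so on the plurifine open sets $U_k := \{u > \varphi - 1/k\}$ one can compare $\theta_u^n$ with $\theta_{\max(u,\varphi - 1/k)}^n$ and use the locality of the non-pluripolar product w.r.t.\ plurifine open sets (cf.\ \cite{BEGZ10}); passing $k \to \infty$, and noting that $\{u = \varphi\} = \bigcap_k U_k$ up to the pluripolar set $\{\varphi = -\infty\}$, gives $\mathbbm{1}_{\{u=\varphi\}}\theta_u^n = \mathbbm{1}_{\{u=\varphi\}}\theta_\varphi^n$. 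The case of $\psi$ is symmetric.

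The maximality statement is the substantive step, and I would handle it by reduction to the bounded setting. Set $\varphi_C := \max(\varphi, V_\theta - C)$, $\psi_C := \max(\psi, V_\theta - C)$, and $u_C := P_\theta(\varphi_C,\psi_C)$. Each $u_C$ has minimal singularities, $u_C \searrow u$ a.e.\ as $C \to \infty$, and in the bounded case the balayage/local-bump argument of Bedford-Taylor (extended in \cite{GLZ17} to the big cohomology setting) yields
\[
\theta_{u_C}^n \;\leq\; \mathbbm{1}_{\{u_C=\varphi_C\}}\theta_{\varphi_C}^n + \mathbbm{1}_{\{u_C=\psi_C\}}\theta_{\psi_C}^n.
\]
Using the first part of Theorem \ref{thm: lsc of MA measures} (lower semicontinuity along capacity-convergent sequences, which applies since monotone decreasing convergence implies convergence in capacity), one tests against an arbitrary positive continuous $\chi$ to obtain $\int_X \chi \theta_u^n \leq \liminf_C \int_X \chi \theta_{u_C}^n$. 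For the right-hand side, plurifine locality gives $\mathbbm{1}_{\{\varphi > V_\theta - C\}}\theta_{\varphi_C}^n = \mathbbm{1}_{\{\varphi > V_\theta - C\}}\theta_\varphi^n$, and the contact sets $\{u_C = \varphi_C\}$ stabilize on $\{\varphi > -\infty\}$ to $\{u = \varphi\}$ as $C \to \infty$ (since $u \leq u_C \leq \varphi_C = \varphi$ eventually at any point where $\varphi$ is finite).

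The main obstacle is the passage to the limit in the approximation step: one must carefully combine Theorem \ref{thm: lsc of MA measures} (which only gives a one-sided inequality) with plurifine locality to ensure that the right-hand side of the bounded estimate does converge to the desired right-hand side. Alternatively, one can bypass the limiting argument entirely by running the bump/balayage argument directly on the unbounded envelope $u$, replacing local boundedness with locality of the non-pluripolar product on the plurifine open sets $\{\varphi > -N\} \cap \{\psi > -N\}$, which is essentially the method introduced in \cite{GLZ17}.
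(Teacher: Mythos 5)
Your second route (approximation from the bounded case) matches the paper's strategy in outline: both set $\varphi_t := \max(\varphi, V_\theta - t)$, $\psi_t := \max(\psi, V_\theta - t)$, $v_t := P_\theta(\varphi_t,\psi_t)$, invoke \cite[Lemma~4.1]{GLZ17} for the minimally singular $v_t$, and use a lower-semicontinuity argument for the left-hand side. But there are two concrete problems with what you have written.

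First, the ``plurifine locality'' claim $\mathbbm{1}_{\{u=\varphi\}}\theta_u^n = \mathbbm{1}_{\{u=\varphi\}}\theta_\varphi^n$ in your first route is not plurifine locality and, as stated, is not correct. The contact set $\{u=\varphi\}$ is a plurifinely \emph{closed} set, not a plurifine open one, and genuine plurifine locality compares two potentials \emph{agreeing} on a plurifine open set. Your proposed justification is vacuous: on $U_k=\{u>\varphi-1/k\}$ you have $\max(u,\varphi-1/k)=u$, so comparing $\theta_u^n$ with $\theta_{\max(u,\varphi-1/k)}^n$ there yields the tautology $\theta_u^n=\theta_u^n$ and says nothing about $\theta_\varphi^n$. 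The content you actually need is the one-sided comparison $\mathbbm{1}_{\{u=\varphi\}}\theta_u^n\leq\mathbbm{1}_{\{u=\varphi\}}\theta_\varphi^n$, which in the bounded setting is precisely the balayage/bump estimate, i.e.\ the content of \cite[Lemma~4.1]{GLZ17}; it is not a locality statement and cannot be derived the way you propose.

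Second, and more importantly, the gap you flag at the end (``one must carefully combine\ldots to ensure that the right-hand side\ldots does converge'') is exactly the crux, and you leave it unresolved. The contact sets $\{v_t=\varphi_t\}$ are not monotone in $t$ (both $v_t$ and $\varphi_t$ decrease), so $\mathbbm{1}_{\{v_t=\varphi_t\}}\theta_{\varphi_t}^n$ has no a priori limiting behavior. The paper's resolution is an exponential-domination device: since $v_t\leq\varphi_t$, one has $\mathbbm{1}_{\{v_t=\varphi_t\}}\leq e^{A(v_t-\varphi_t)}$ for any $A>0$, and after using plurifine locality on the set $\{\varphi>V_\theta-t\}$ to replace $\theta_{\varphi_t}^n$ by $\theta_\varphi^n$, the right-hand side becomes $e^{A(v_t-\varphi_t)}\theta_\varphi^n + e^{A(v_t-\psi_t)}\theta_\psi^n$. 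These integrands, tested against $\theta_\varphi^n$ and $\theta_\psi^n$ (which do not charge $\{\varphi=-\infty\}$ resp.\ $\{\psi=-\infty\}$), converge by dominated convergence as $t\to\infty$; then letting $A\to\infty$ recovers the indicators $\mathbbm{1}_{\{v=\varphi\}}$, $\mathbbm{1}_{\{v=\psi\}}$. Nothing in your write-up plays this role, and without it the argument does not close. The paper also needs a separate quasi-continuous cutoff argument (via the functions $f_\varepsilon$ and \cite[Theorem~4.26]{GZ17}) to establish $\liminf_t\mathbbm{1}_{G_C}\theta_{v_t^C}^n\geq\mathbbm{1}_{G_C}\theta_{v^C}^n$ on the sublevel set $G_C=\{v>V_\theta-C\}$, which is a more localized statement than what your appeal to Theorem~\ref{thm: lsc of MA measures} alone delivers.

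Your parenthetical alternative (``run the bump argument directly on $u$'') is too vague to assess; in the presence of potentials with arbitrary singularities, the balayage argument is not immediately available and a careful reduction as in the paper seems necessary.
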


\begin{proof}
For each $t>0$ we set $\varphi_t:=\max(\varphi,V_{\theta}-t), \psi_t:=\max(\psi,V_{\theta}-t)$ and $v_t:= P_{\theta}(\varphi_t,\psi_t)$. Set $v:=P_\theta(\phi, \psi)$.  Since $\varphi_t, \psi_t$ have minimal singularities, it follows from \cite[Lemma 4.1]{GLZ17} that 
\begin{equation}\label{eq: GLZ17est}
	\theta_{v_t}^n \leq \mathbbm{1}_{\{v_t=\varphi_t\}} \theta_{\varphi_t}^n + \mathbbm{1}_{\{v_t=\psi_t\}}\theta_{\psi_t}^n.
\end{equation}
For $C>0$ we introduce 
\[
G_C:= \{v>V_{\theta}-C\}, \ \ v^C:=\max(v,V_{\theta}-C), \ \textup{ and } \ v_{t}^C:=\max(v_t,V_{\theta}-C).
\]
Since ${P_{\theta}(\varphi,\psi)}\leq \varphi,\psi,v_t$, we have $G_C \subset \{V_\theta -C < \varphi\}\cap \{V_\theta -C < \psi\} \cap \{V_\theta -C < v_t\}$. 
For arbitrary $A>0$ and $t > C$, this inclusion allows to build on \eqref{eq: GLZ17est} and write:
\begin{eqnarray}
\nonumber		\id_{G_C}\theta^n_{v^C_t}= \id_{G_C}\theta^n_{v_t}& \leq &  \id_{\{v_t=\varphi_t\}\cap G_{C}} \theta_{\varphi_t} ^n+ \id_{\{v_t=\psi_t\}\cap G_{C}}\theta_{\psi_t}^n\\
\nonumber	& \leq & \id_{\{v_t=\varphi_t\} \cap \{\varphi>V_{\theta}-t\}} \theta_{\varphi_t}^n + \id_{\{v_t=\psi_t\}\cap \{\psi>V_{\theta}-t\}}\theta_{\psi_t}^n\\
\nonumber	& = & \id_{\{v_t=\varphi_t\} \cap \{\varphi>V_{\theta}-t\}} \theta_{\varphi}^n + \id_{\{v_t=\psi_t\}\cap \{\psi>V_{\theta}-t\}}\theta_{\psi}^n\\
    &\leq &   e^{A(v_t-\varphi_t)}\theta_{\varphi}^n +  e^{A(v_t-\psi_t)}\theta_{\psi}^n. \label{eq: interm_est}
	\end{eqnarray}
To proceed, we want to prove that 
\begin{equation}\label{eq: quasi open}
\liminf_{t\to \infty} \mathbbm{1}_{G_C} \theta^n_{v_t^C} \geq \id_{G_C}  \theta^n_{v^C}. 
\end{equation}
More precisely, alluding to the Banach-Alaoglu theorem, we want to show that any weak limit of $\big\{ \mathbbm{1}_{G_C} \theta^n_{v_t^C}\big\}_t$ is greater than $\mathbbm{1}_{G_C}\theta^n_{v^C}$.

Let $U := \textup{Amp}(\theta)$. The potential $V_\theta$ is locally bounded on $U$, hence so is $v^C_t$ and $v^C$. To obtain \eqref{eq: quasi open}, we employ an idea from the proof of  Theorem \ref{thm: lsc of MA measures}.  For $\vep>0$ consider 
\[
f_{\vep}:= \frac{\max(v-V_{\theta}+C,0)}{\max(v-V_{\theta}+C,0) +\vep},
\] 
and observe that $f_{\vep} \geq 0$ is quasicontinuous on $X$. Moreover, $f_{\vep}$ increase pointwise to $\mathbbm{1}_{G_C}$ as $\varepsilon$ goes to zero. Since $v_t^C \searrow v^C$ as $t \to \infty$, from \cite[Theorem 4.26]{GZ17} it follows that  $f_\varepsilon \theta_{v_t^C}^n\big|_{U} \to f_\varepsilon\theta_{v^C}^n\big|_{U}$ weakly. Using this we can write
\[
\liminf_{t\to \infty} \mathbbm{1}_{G_C} \theta^n_{v_t^C} \big|_{U}\geq \lim_{t\to+\infty}  f_{\vep}\theta^n_{v_t^C}\big|_{U} =   f_{\vep}  \theta^n_{v^C}\big|_{U}.
\]
Since $X\setminus U$ is pluripolar, we let $\vep\to 0$ and use the monotone convergence theorem to conclude \eqref{eq: quasi open}. 

Now, letting $t\to \infty$ in \eqref{eq: interm_est}, the estimate in \eqref{eq: quasi open} allows to conclude that:
\[
\id_{G_C} \theta_{\max(P_\theta(\varphi,\psi),V_\theta - C)}^n \leq e^{A(P_\theta(\varphi,\psi)-\varphi)} \theta_{\varphi}^n + e^{A(P_\theta(\varphi,\psi)-\psi)} \theta_{\psi}^n.
\]
Letting $C\to \infty$, and later $A\to \infty$, we arrive at the conclusion.    
\end{proof}

We prove in the following that the non-pluripolar complex Monge-Amp\`ere measure of  $P_{\theta}[\psi](\chi)$ has bounded density with respect to $\theta_\chi^n$.  This plays a crucial role in the sequel.

\begin{theorem}\label{thm: MA measure of WN envelope}
	Let $\psi,\chi \in \textup{PSH}(X,\theta)$, such that $\psi$ is more singular than $\chi$. Then  $\theta_{P_\theta[\psi](\chi)}^n \leq \mathbbm{1}_{\{P_\theta[\psi](\chi)=\chi\}} \theta_{\chi}^n$. In particular, $\theta^n_{P_\theta[\psi]} \leq \mathbbm{1}_{\{P_\theta[\psi]=0\}}\theta^n$.
\end{theorem}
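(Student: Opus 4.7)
The plan is to apply Lemma \ref{lem: GLZ non-full} with $\varphi = \psi+C$ (in place of $\varphi$) and keep $\chi$ as the second slot, then pass to the limit $C \to \infty$. Writing $u_C := P_\theta(\psi+C,\chi)$, the lemma gives
\[
\theta_{u_C}^n \leq \mathbbm{1}_{\{u_C = \psi+C\}}\theta_{\psi}^n + \mathbbm{1}_{\{u_C = \chi\}}\theta_{\chi}^n.
\]
I would then dispose of the first term as follows: on $\{u_C = \psi+C\}$ we have $\psi + C = u_C \leq \chi$, so $\{u_C = \psi+C\} \subseteq \{\psi \leq \chi - C\}$. As $C \to \infty$, these sets decrease to $\{\psi = -\infty\}$, which is pluripolar, and since the non-pluripolar measure $\theta_\psi^n$ does not charge pluripolar sets, $\mathbbm{1}_{\{\psi \leq \chi - C\}}\theta_\psi^n$ has total mass tending to zero.

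For the second term, because $u_C \leq P_\theta[\psi](\chi) \leq \chi$, the pointwise equality $u_C = \chi$ at a point forces $P_\theta[\psi](\chi) = \chi$ at that point, giving the inclusion $\{u_C = \chi\} \subseteq \{P_\theta[\psi](\chi) = \chi\}$ for every $C$. Therefore, for any continuous test function $g \geq 0$,
\[
\int_X g\, \theta_{u_C}^n \leq \int_{\{\psi \leq \chi - C\}} g\, \theta_{\psi}^n + \int_{\{P_\theta[\psi](\chi) = \chi\}} g\, \theta_{\chi}^n,
\]
and taking $\limsup$ in $C$ kills the first term entirely.

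To convert this into the claimed measure inequality, I need weak convergence $\theta_{u_C}^n \to \theta_{P_\theta[\psi](\chi)}^n$. Since $u_C \nearrow P_\theta[\psi](\chi)$ almost everywhere, Remark \ref{rem: increasing implies capacity} gives convergence in capacity, and monotonicity (Theorem \ref{thm: BEGZ_monotonicity_full}) combined with the fact that $P_\theta[\psi](\chi)$ is less singular than each $u_C$ yields the semicontinuity hypothesis \eqref{eq: global_mass_semi_cont}. Hence Theorem \ref{thm: lsc of MA measures} applies and gives weak convergence of the measures. Passing to the limit above yields
\[
\int_X g\, \theta_{P_\theta[\psi](\chi)}^n \leq \int_{\{P_\theta[\psi](\chi) = \chi\}} g\, \theta_{\chi}^n
\]
for all continuous $g \geq 0$, which is precisely the desired measure inequality. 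The ``in particular'' statement is the special case $\chi = V_\theta$ after noting that under the normalization $V_\theta \leq 0$ the set $\{P_\theta[\psi] = V_\theta\}$ coincides (up to a pluripolar set irrelevant for $\theta^n$) with $\{P_\theta[\psi] = 0\}$.

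The main obstacle is verifying that the inequality from Lemma \ref{lem: GLZ non-full} survives the limit $C \to \infty$; this is not automatic because the indicators on the right depend on $C$ in a delicate way. The two saving features are (i) the first indicator is supported on a set that shrinks to a $\theta_\psi^n$-null pluripolar set, and (ii) the second indicator is majorized by a single $C$-independent indicator, so no sharp control on its limit is needed.
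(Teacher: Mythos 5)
The main inequality $\theta_{P_\theta[\psi](\chi)}^n \leq \mathbbm{1}_{\{P_\theta[\psi](\chi)=\chi\}}\theta_\chi^n$ is proved correctly, and your argument is essentially identical to the paper's: apply Lemma \ref{lem: GLZ non-full} to $P_\theta(\psi+C,\chi)$, use the two inclusions of contact sets, and pass to the limit via Theorem \ref{thm: lsc of MA measures} with the mass-semicontinuity hypothesis supplied by Remark \ref{rem: increasing implies capacity}.

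Your derivation of the ``in particular'' statement, however, has a genuine gap. Taking $\chi = V_\theta$ in the first part yields $\theta_{P_\theta[\psi]}^n \leq \mathbbm{1}_{\{P_\theta[\psi]=V_\theta\}}\theta_{V_\theta}^n$, but the claim that $\{P_\theta[\psi]=V_\theta\}$ and $\{P_\theta[\psi]=0\}$ differ only by a pluripolar set is false. For instance, with $\psi = V_\theta$ one has $P_\theta[\psi]=V_\theta$, so $\{P_\theta[\psi]=V_\theta\}=X$, whereas $\{P_\theta[\psi]=0\}=\{V_\theta=0\}$; the difference is the open set $\{V_\theta<0\}$, which is non-empty and hence non-pluripolar whenever $\theta$ is big but not semipositive. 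Moreover, even granting the set identification, you would still be left with $\theta_{V_\theta}^n$ rather than $\theta^n$ on the right-hand side, and nothing in your argument converts one into the other. Both problems are handled simultaneously by the key fact $\theta_{V_\theta}^n \leq \mathbbm{1}_{\{V_\theta=0\}}\theta^n$ (cited in the paper from \cite{Ber13}, \cite{DDL16}, \cite{GLZ17}), which must be invoked here: combining it with $\{P_\theta[\psi]=V_\theta\}\cap\{V_\theta=0\} = \{P_\theta[\psi]=0\}$ (valid since $P_\theta[\psi]\leq V_\theta\leq 0$) gives the claimed bound $\theta_{P_\theta[\psi]}^n \leq \mathbbm{1}_{\{P_\theta[\psi]=0\}}\theta^n$.
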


This result can be thought of as a regularity result for the envelope $P_{\theta}[\psi](\chi)$. For a more precise regularity result on such envelopes in the particular case of potentials with algebraic singularities we refer to \cite[Theorem 1.1]{RWN2}.

\begin{proof}
	Without loss of generality we can assume that $\psi ,\chi\leq 0$. For each $t>0$ we consider $P_{\theta}(\psi+t,\chi)$. Since $\psi$ is more singular than $\chi$, we note that $P_{\theta}(\psi+t,\chi)$ has the same singularity type as $\psi$ and $P_{\theta}(\psi+t,\chi) \nearrow P_\theta[\psi](\chi)$ a.e.. It follows from Lemma \ref{lem: GLZ non-full} that 
    \[
    \theta_{P_{\theta}(\psi+t,\chi)}^n \leq \mathbbm{1}_{\{P_{\theta}(\psi+t,\chi)=\psi+t\}} \theta_{\psi}^n + \mathbbm{1}_{\{P_{\theta}(\psi+t,\chi)=\chi\}} \theta_{\chi}^n.  
    \]
    Since $\{P_{\theta}(\psi+t,\chi)=\psi+t\}\subset \{\psi + t\leq \chi\}\subset \{\psi + t\leq V_{\theta}\}$, and the latter decreases to a pluripolar set, the first term on the right-hand side above goes to zero, as $t\to \infty$. For the second term, we observe that $\{P_{\theta}(\psi+t,\chi)=\chi\}\subset \{P_\theta[\psi](\chi)=\chi\}$. Hence applying Theorem \ref{thm: lsc of MA measures} the result follows. 
    
For the last statement, we can apply the above argument to $\chi:=V_{\theta}$, and note that  from \cite[(1.2)]{Ber13} (see also  \cite[Theorem 2.6 (arXiv version)]{DDL16},  \cite[Proposition 5.2]{GLZ17}) it follows that $\theta_{V_{\theta}}^n \leq \mathbbm{1}_{\{V_{\theta}=0\}}\theta^n$.
\end{proof}

Using the above result, we can establish a \emph{non-collapsing property} for the class of potentials with the same singularity type as $\phi$, when $\theta_{\phi}^n(X)>0$:
\begin{coro}
\label{cor: non-collapsing}
Assume that $\phi \in \psh(X,\theta)$ is such that $\int_X \theta_{\phi}^n>0$. If $U$ is a Borel subset of $X$  with positive Lebesgue measure, then there exists $\psi\in \psh(X,\theta)$  having the same singularity type as $\phi$  such that $\theta_{\psi}^n(U)>0$.  
\end{coro}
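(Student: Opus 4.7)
The plan is to construct $\psi$ as a rooftop envelope $P_\theta(\phi + C, \chi)$, where $\chi \in \psh(X,\theta)$ has minimal singularities and Monge-Amp\`ere mass concentrated in $U$, and $C$ is chosen large. The main obstacle is producing such a $\chi$, for which we invoke the existence theory of \cite{BEGZ10}; once $\chi$ is in hand, the argument reduces to combining Lemma \ref{lem: GLZ non-full} with the fact that $\theta_\phi^n$ does not charge pluripolar sets.

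Normalize so that $\phi \leq V_\theta \leq 0$. Since $\int_U \omega^n > 0$, we apply \cite[Theorem A]{BEGZ10} to the bounded density $c \mathbbm{1}_U$, where $c := \int_X \theta_{V_\theta}^n / \int_U \omega^n$, to obtain a $\theta$-psh function $\chi$ with minimal singularities solving $\theta_\chi^n = c \mathbbm{1}_U \omega^n$; in particular $\theta_\chi^n(X \setminus U) = 0$. Since $\chi$ has minimal singularities, there exists $M \geq 0$ with $\phi - M \leq \chi$. For any $C \geq 0$, the function $\phi - M$ is then admissible in the envelope defining $\psi_C := P_\theta(\phi + C, \chi)$, yielding $\phi - M \leq \psi_C \leq \phi + C$. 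Thus $\psi_C$ has the same singularity type as $\phi$, and by Proposition \ref{prop: comparison generalization} one has $\int_X \theta_{\psi_C}^n = \int_X \theta_\phi^n > 0$ uniformly in $C$.

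It remains to show $\theta_{\psi_C}^n(U) > 0$ for $C$ large enough. By Lemma \ref{lem: GLZ non-full},
\[
\theta_{\psi_C}^n \leq \mathbbm{1}_{\{\psi_C = \phi + C\}}\theta_\phi^n + \mathbbm{1}_{\{\psi_C = \chi\}}\theta_\chi^n.
\]
The second term vanishes on $X \setminus U$. The first is supported in the contact set $\{\psi_C = \phi + C\} \subset \{\phi + C \leq \chi\} \subset \{\phi \leq V_\theta - C\}$, where the last inclusion uses $\chi \leq V_\theta$. Hence
\[
\theta_{\psi_C}^n(X \setminus U) \leq \theta_\phi^n\bigl(\{\phi \leq V_\theta - C\}\bigr),
\]
and as $C \to \infty$ the sets $\{\phi \leq V_\theta - C\}$ decrease to a subset of the pluripolar set $\{\phi = -\infty\} \cup (X \setminus \Amp(\theta))$. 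Since $\theta_\phi^n$ is a finite measure not charging pluripolar sets, the right-hand side tends to $0$. Choosing $C$ large enough that this upper bound is strictly less than $\int_X \theta_\phi^n$ forces $\theta_{\psi_C}^n(U) > 0$, and $\psi := \psi_C$ satisfies the required conclusion.
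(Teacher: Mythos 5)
Your proof is correct and follows essentially the same route as the paper: both construct a minimally singular $\chi$ (resp.\ $h$) with $\theta_\chi^n = c\mathbbm{1}_U\omega^n$ via \cite[Theorem A, B]{BEGZ10}, form the envelope $P_\theta(\phi+C,\chi)$, apply Lemma \ref{lem: GLZ non-full}, and let $C\to\infty$ using that $\theta_\phi^n$ does not charge pluripolar sets. The only cosmetic difference is that you pass through the inclusion $\{\phi+C\le\chi\}\subset\{\phi\le V_\theta-C\}$, whereas the paper directly notes $\cap_C\{\phi+C\le h\}\subset\{\phi=-\infty\}$.
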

\begin{proof}
It follows from \cite[Theorem A,B]{BEGZ10} that  there exists $h\in \psh(X,\theta)$ with minimal singularities such that $\theta_h^n=c\mathbbm{1}_U \omega^n$, for some normalization constant $c>0$. For  $C>0$ consider $\varphi_C:= P_{\theta}(\phi+C, h)$ and note that $\varphi_C$ has the same singularities as $\phi$. It follows from Lemma \ref{lem: GLZ non-full} that 
\[
\theta_{\varphi_C}^n \leq \mathbbm{1}_{\{\varphi_C=\phi+C\}} \theta_{\phi}^n + \mathbbm{1}_{\{\varphi_C=h\}} \theta_{h}^n \leq \mathbbm{1}_{\{\phi+C \leq h\}} \theta_{\phi}^n + c\mathbbm{1}_{\{\varphi_C=h\} \cap U} \omega^n.
\]
Since $\theta_{\phi}^n$ is non-pluripolar, we have that $\lim_{C\to \infty}  \int_{\{\phi+C \leq h\}}\theta_{\phi}^n=0$. Thus  for $C>0$ big enough, by the above estimate we have that 
\[
\int_{X\setminus U} \theta_{\varphi_C}^n \leq  \int_{\{\phi+C \leq h\}} \theta_{\phi}^n < \int_X \theta_{\varphi_C}^n,
\]
where in the last inequality we used the fact that $ \int_X \theta_{\varphi_C}^n= \int_X \theta_{\phi}^n>0$.
This implies  that $\int_{U}\theta_{\varphi_C}^n>0$ for big enough $C>0$, finishing the argument.
\end{proof}

A combination of Corollary \ref{cor: non-collapsing} and \cite[Corollary 4.2]{WN17} immediately gives the following  version of the domination principle, making the conclusion of \cite[Corollary 4.2]{WN17} more precise: 
\begin{coro}
\label{cor: general domination principle} 
Assume that $u,v\in \psh(X,\theta)$, $u$ is less singular than $v$ and $\int_X \theta_u^n>0$. If $u\geq v$ a.e. with respect to $\theta_u^n$, then $u\geq v$ on $X$. 
\end{coro}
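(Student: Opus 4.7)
The natural approach is to argue by contradiction. Assume the Borel set $U := \{u < v\}$ has positive Lebesgue measure, and derive an impossibility by combining two ingredients, which is exactly what the statement announces.

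\emph{Step 1: produce an auxiliary potential of the same singularity type via non-collapsing.} Since $\int_X \theta_u^n > 0$, we may apply Corollary \ref{cor: non-collapsing} with $\phi := u$ and the Borel set $U$. This yields a potential $\psi \in \psh(X,\theta)$ having the same singularity type as $u$ and satisfying $\theta_\psi^n(U) > 0$. This step addresses the principal difficulty: $\theta_u^n$ itself may completely fail to charge $U$, in which case the hypothesis ``$u \geq v$ a.e. with respect to $\theta_u^n$'' carries no information whatsoever about what happens on $U$. Non-collapsing produces a second testing measure, of the \emph{same} singularity class as $\theta_u^n$, which does detect $U$.

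\emph{Step 2: apply Witt Nystr\"om's domination principle to transfer the a.e. inequality.} With $\psi$ of the same singularity type as $u$ in hand, we invoke \cite[Corollary 4.2]{WN17}. In its original form this provides an ``almost'' domination principle whose conclusion is a priori weaker than ours, but which, when tested against a potential with the same singularity type as $u$, forces the a.e. inequality $u \geq v$ to propagate from $\theta_u^n$ to $\theta_\psi^n$. Combined with Step 1, namely $\theta_\psi^n(U) > 0$, this contradicts the fact that $\{u < v\} = U$, and the proof is complete.

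\emph{Main obstacle.} The heart of the argument is the transfer of the a.e. inequality in Step 2, which is not automatic because $u \geq v$ a.e. with respect to $\theta_u^n$ says nothing, per se, about the measure $\theta_\psi^n$ (both measures vanish on pluripolar sets, but their null ideals need not otherwise coincide). This is precisely where \cite[Corollary 4.2]{WN17} does the work; once the bridge between $\theta_u^n$ and $\theta_\psi^n$ is in place, the non-collapsing property of Step 1 closes the argument in one line. The content of the corollary is thus to record that these two results interlock cleanly to give a genuine (pointwise) domination principle whenever $\theta_u^n$ has positive total mass.
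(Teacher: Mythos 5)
Your plan is correct and is precisely the combination the paper invokes (it gives no further details, saying only that Corollary \ref{cor: non-collapsing} together with \cite[Corollary 4.2]{WN17} ``immediately gives'' the result). One small loose end: the contradiction you reach only shows that $\{u<v\}$ has Lebesgue measure zero, not that it is empty; you should close with the standard observation that for $\theta$-psh functions, $v\le u$ Lebesgue-a.e.\ implies $v\le u$ everywhere (locally, by the sub-mean value inequality applied to $v$ and the shrinking-mean characterization of $u$).
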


\begin{proof}
Assume by contradiction that $\{u<v\}\subseteq X$ has positive Lebesgue measure. Then, by Corollary \ref{cor: non-collapsing} we can ensure that there exists $\psi\in \psh(X,\theta)$  having the same singularity type as $u$  such that $\theta_{\psi}^n(\{u<v\})>0$. On the other hand, since $\theta_{u}^n(\{u<v\})=0$, \cite[Corollary 4.2]{WN17} gives that $\theta_{\psi}^n(\{u<v\})=0$, which is a contradiction.
\end{proof}
The non-collapsing mass condition $\int_X \theta_u^n>0$ is trivially seen to be necessary.  
We now give the version of the \emph{domination principle} for the relative full mass class $\Ec(X,\theta,\phi)$:

\begin{prop}\label{prop: domination principle}
Suppose that $\phi\in \psh(X,\theta)$  satisfies $\int_X \theta_\phi^n >0$ and  $u,v\in \Ec(X,\theta,\phi)$. If $\theta_u^n(\{u<v\})=0$  then $u\geq v$. 
\end{prop}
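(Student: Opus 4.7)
The plan is to reduce the statement to the general domination principle of Corollary \ref{cor: general domination principle}, which cannot be applied directly to the pair $(u,v)$ because $u$ and $v$ need not be comparable in singularity; they merely both lie in $\Ec(X,\theta,\phi)$. The key idea is to interpose a less-singular auxiliary potential.

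First, Corollary \ref{comparison principle} supplies the symmetric information $\theta_v^n(\{u<v\})\leq \theta_u^n(\{u<v\})=0$, so both $\theta_u^n$ and $\theta_v^n$ vanish on $\{u<v\}$. For each $\epsilon>0$ I then set $v_\epsilon:=\max(u,v-\epsilon)$. Since $u\leq v_\epsilon$ and $v_\epsilon$ is still more singular than $\phi$, Theorem \ref{thm: BEGZ_monotonicity_full} forces $\int_X \theta_{v_\epsilon}^n=\int_X \theta_{\phi}^n$, so $v_\epsilon\in\Ec(X,\theta,\phi)$. Moreover $v_\epsilon\geq v-\epsilon$ shows $v_\epsilon$ is less singular than $v$, and the identity $\{v_\epsilon<v\}=\{u<v\}$ follows because $v-\epsilon<v$.

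The main technical step, and the place where I expect the only real difficulty, is to show $\theta_{v_\epsilon}^n(\{u<v\})=0$. On the plurifine open sets $\{u<v-\epsilon\}$ and $\{u>v-\epsilon\}$, locality of the non-pluripolar product identifies $\theta_{v_\epsilon}^n$ with $\theta_v^n$ and $\theta_u^n$ respectively; both contributions to $\{u<v\}$ vanish by the first step. The coincidence set $\{u=v-\epsilon\}$, which lies inside $\{u<v\}$ but is not plurifine open, requires a separate argument. I will handle it by a mass balance: writing $\int_X \theta_{v_\epsilon}^n=\int_X \theta_u^n=\int_X \theta_\phi^n$, decomposing the left integral according to the three sets above, and observing that $\{u\leq v-\epsilon\}\subset\{u<v\}$ gives $\int_{\{u>v-\epsilon\}}\theta_u^n=\int_X \theta_u^n$, one obtains the identity
\[
\int_{\{u=v-\epsilon\}} \theta_{v_\epsilon}^n+\int_{\{u<v-\epsilon\}}\theta_v^n=0,
\]
which forces the desired vanishing on $\{u=v-\epsilon\}$ since both summands are non-negative.

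Having verified that $v_\epsilon$ is less singular than $v$, that $\int_X \theta_{v_\epsilon}^n>0$, and that $\theta_{v_\epsilon}^n(\{v_\epsilon<v\})=0$, Corollary \ref{cor: general domination principle} applied to the pair $(v_\epsilon,v)$ yields $v_\epsilon\geq v$ on $X$. Since $v-\epsilon<v$ strictly, the pointwise inequality $\max(u,v-\epsilon)\geq v$ can only hold if $u\geq v$ everywhere, completing the proof.
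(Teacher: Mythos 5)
Your proof is correct, and it takes a genuinely different route from the paper's. The paper proceeds by a two-step case analysis: it first treats the case where $v$ is less singular than $u$, invoking Corollary~\ref{cor: non-collapsing} to reduce to showing $\theta_h^n(\{u<v\})=0$ for all $h$ with the singularity type of $u$, which it proves by a comparison-principle estimate applied to the convex combinations $(1-t)v+th$; the general case is then handled by applying the first case to the pair $(u,\max(u,v))$. You instead bypass the case split by interposing the perturbed potential $v_\epsilon=\max(u,v-\epsilon)$, which is simultaneously less singular than $v$, lies in $\Ec(X,\theta,\phi)$, and --- via a neat plurifine mass-balance --- carries no mass on $\{u<v\}=\{v_\epsilon<v\}$; this lets you conclude in one shot from Corollary~\ref{cor: general domination principle}. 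Both proofs ultimately rest on the same underlying ingredients (the non-collapsing property feeding into a domination statement, plus monotonicity and plurifine locality), but your version is somewhat more streamlined: it trades the $(1-t)v+th$ limiting argument for a transparent mass count. One small remark: your Step 1 (that $\theta_v^n$ also vanishes on $\{u<v\}$, via Corollary~\ref{comparison principle}) is in fact redundant --- your mass-balance identity already forces $\int_{\{u<v-\epsilon\}}\theta_v^n=0$ directly, so the comparison principle is not actually needed in your argument. Also note that a single $\epsilon>0$ suffices for the final pointwise conclusion; no limiting over $\epsilon$ is required.
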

\begin{proof} First, assume that $v$ is less singular than $u$. In view of Corollary \ref{cor: non-collapsing} it suffices to prove that $\theta_{h}^n(\{u<v\})=0$ for all $h\in \psh(X,\theta)$ with the same singularity type as $u$. Let $h$ be such a potential, and after possibly adding a constant, we can assume that $h\leq u,v$.  We claim that for each $t\in (0,1)$, $(1-t)v+th\in \Ec(X,\theta,\phi)$. Indeed, since $(1-t)v + th$ is less singular than $u$, and more singular than $v$, by Theorem \ref{thm: BEGZ_monotonicity_full} we can write
$$\int_X \theta_u^n \leq \int_X \theta_{(1-t)v+th}^n \leq \int_X \theta_v^n.$$
The comparison principle (Corollary \ref{comparison principle}) allows then to write:
$$t^n\int_{\{u< (1-t) v + t h\}}\theta_{h}^n \leq \int_{\{u < (1-t)v+th\}}\theta_{(1-t)v+th}^n \leq \int_{\{u < v\}}\theta_u^n=0. $$
Since $0=\theta_h^n(\{u<(1-t)v + th\}) \nearrow \theta_h^n(\{u<v\})$, as $t \to 0$, it follows that $\theta_h^n(\{u<v\})=0$. 

For the general case, we observe that $\theta_u^n(\{u<v\})= \theta_u^n (\{u< \max(u,v)\})$, and the first step implies $u\geq \max(u,v) \geq v$.
\end{proof}

Next we show that $F_\phi$, the set of potentials introduced in the beginning of this subsection, has a very specific maximal element:

\begin{theorem}\label{thm: ceiling coincide envelope non collapsing}
Assume that   $\phi\in \psh(X,\theta)$ satisfies $\int_X \theta_{\phi}^n>0$ and $\phi \leq 0$. Then $$P_{\theta}[\phi] = \sup_{v \in F_\phi} v.$$
In particular, $P_{\theta}[\phi]=P_{\theta}[P_{\theta}[\phi]]$.
\end{theorem}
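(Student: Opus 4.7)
The plan is to establish both inequalities \(P_\theta[\phi]\le\sup_{v\in F_\phi}v\) and \(\sup_{v\in F_\phi}v\le P_\theta[\phi]\). The first one has essentially been observed already in the paragraphs preceding Lemma \ref{lem: GLZ non-full}: from the definition \(P_\theta(\phi+C,0)\nearrow P_\theta[\phi]\) one checks \(\phi\le P_\theta[\phi]\le 0\) directly, and the mass equality \(\int_X\theta_{P_\theta[\phi]}^n=\int_X\theta_\phi^n\) follows by combining monotonicity (Theorem \ref{thm: BEGZ_monotonicity_full}) with the weak convergence statement of Theorem \ref{thm: lsc of MA measures} applied to the capacity-increasing family \(P_\theta(\phi+C,0)\nearrow P_\theta[\phi]\) (observing that each \(P_\theta(\phi+C,0)\) has the same singularity type as \(\phi\), so that Proposition \ref{prop: comparison generalization} fixes its mass). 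Hence \(P_\theta[\phi]\in F_\phi\).

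For the reverse inequality I fix an arbitrary \(v\in F_\phi\) and aim to prove \(v\le P_\theta[\phi]\) pointwise. The crucial observation is that both \(v\) and \(P_\theta[\phi]\) lie in the relative full-mass class \(\Ec(X,\theta,\phi)\): each is less singular than \(\phi\) (we have \(\phi\le v\) and \(\phi\le P_\theta[\phi]\)), and each has top non-pluripolar mass equal to \(\int_X\theta_\phi^n\), by the definition of \(F_\phi\) and by the first step respectively. Since \(\int_X\theta_\phi^n>0\), Proposition \ref{prop: domination principle} reduces the task to proving the single concentration identity
\[
\theta_{P_\theta[\phi]}^n\bigl(\{P_\theta[\phi]<v\}\bigr)=0.
\]

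For this I invoke the last assertion of Theorem \ref{thm: MA measure of WN envelope}, which yields \(\theta_{P_\theta[\phi]}^n\le\mathbbm{1}_{\{P_\theta[\phi]=0\}}\,\theta^n\). On the contact set \(\{P_\theta[\phi]=0\}\) we trivially have \(v\le 0=P_\theta[\phi]\), so \(\{P_\theta[\phi]<v\}\) is disjoint from the support of \(\theta_{P_\theta[\phi]}^n\), giving the required vanishing. This is the only substantive step; the rest is bookkeeping. The non-collapsing hypothesis \(\int_X\theta_\phi^n>0\) is genuinely needed here (through Proposition \ref{prop: domination principle}), reflecting the obstruction highlighted in Remark \ref{ex: zero mass}; if I were to look for a difficulty this is where it would live, but all the heavy lifting has already been carried out in Theorem \ref{thm: MA measure of WN envelope} and Proposition \ref{prop: domination principle}.

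The corollary \(P_\theta[\phi]=P_\theta[P_\theta[\phi]]\) then follows by a short bootstrap. Set \(\psi:=P_\theta[\phi]\); the first step gives \(\int_X\theta_\psi^n=\int_X\theta_\phi^n>0\), so the already proved identity applies to \(\psi\) as well. Trivially \(\psi\in F_\psi\), so \(\psi\le P_\theta[\psi]\). Conversely, any \(w\in F_\psi\) satisfies \(\phi\le\psi\le w\le 0\) together with \(\int_X\theta_w^n=\int_X\theta_\psi^n=\int_X\theta_\phi^n\), hence \(w\in F_\phi\) and therefore \(w\le P_\theta[\phi]=\psi\) by the main identity. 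Taking the supremum over \(F_\psi\) yields \(P_\theta[\psi]\le\psi\), and equality follows.
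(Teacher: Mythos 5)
Your proof is correct in outline and in all steps except one, but that one step contains a genuine error: the claim that $v$ and $P_\theta[\phi]$ lie in $\mathcal E(X,\theta,\phi)$. Recall that membership in $\mathcal E(X,\theta,\phi)$ requires being \emph{more} singular than $\phi$, whereas both $v\in F_\phi$ and $P_\theta[\phi]$ satisfy $\phi\leq v$ and $\phi\leq P_\theta[\phi]$, i.e.\ they are \emph{less} singular than $\phi$. In fact, in the K\"ahler case with $\phi\in\mathcal E(X,\omega)$ unbounded one has $P_\omega[\phi]=V_\omega=0$ (constant), which is certainly not more singular than $\phi$, so $P_\omega[\phi]\notin\mathcal E(X,\omega,\phi)$. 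As stated, Proposition \ref{prop: domination principle} does not apply, and the reverse inequality is not established.

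The paper repairs exactly this by choosing the correct base potential: with $v\in F_\phi$ fixed, one checks that $P_\theta[\phi]$ and $v$ both lie in $\mathcal E(X,\theta,P_\theta[v])$. Indeed $v\leq P_\theta[v]$ (so $v$ is more singular than $P_\theta[v]$), $\phi\leq v$ gives $P_\theta[\phi]\leq P_\theta[v]$ (so $P_\theta[\phi]$ is more singular than $P_\theta[v]$), and by the same envelope-limit argument you used in your first step one gets $\int_X\theta_{P_\theta[v]}^n=\int_X\theta_v^n=\int_X\theta_\phi^n=\int_X\theta^n_{P_\theta[\phi]}>0$. With that base, Proposition \ref{prop: domination principle} together with your concentration identity $\theta_{P_\theta[\phi]}^n(\{P_\theta[\phi]<v\})=0$ (which is correctly argued from Theorem \ref{thm: MA measure of WN envelope}) yields $P_\theta[\phi]\geq v$. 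Everything else in your write-up is fine; in particular, your bootstrap for $P_\theta[\phi]=P_\theta[P_\theta[\phi]]$ is a clean alternative to the paper's argument, which instead shows $\sup_{v\in F_\phi}v=\sup_{v\in F_{P_\theta[\phi]}}v$.
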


As remarked in the beginning of the subsection, $P_{\theta}[\phi] \in F_\phi$, hence by the above result $P_{\theta}[\phi]$ is the maximal element of $F_\phi$.

\begin{proof}
Let $u\in F_{\phi}$. By Theorem \ref{thm: MA measure of WN envelope} we have
\begin{flalign*}
\theta_{P_{\theta}[\phi]}^n (\{P_{\theta}[\phi] < u\}) &\leq \mathbbm{1}_{\{P_{\theta}[\phi]=0\}}\theta^n  (\{P_{\theta}[\phi] < u\}) \leq \mathbbm{1}_{\{P_{\theta}[\phi]=0\}}\theta^n  (\{P_{\theta}[\phi] < 0\})=0.
\end{flalign*}
As $\phi \leq u$, and $\int_X \theta_\phi^n = \int_X \theta_u^n$,  by Theorem \ref{thm: BEGZ_monotonicity_full} and Theorem \ref{thm: lsc of MA measures} we have 
$$
\int_X \theta^n_{P_{\theta}[\phi]}=\int_X \theta_\phi^n = \int_X \theta_u^n=\int_X \theta^n_{P_{\theta}[u]}>0.
$$
Consequently, $P_{\theta}[\phi],u \in \mathcal E(X,\theta, P_{\theta}[u])$ and Proposition \ref{prop: domination principle} now insures that $P_{\theta}[\phi] \geq u$, hence $P_{\theta}[\phi]\geq \sup_{v \in F_{\phi}} v$. As $P_{\theta}[\phi] \in F_{\phi}$, it follows that $P_{\theta}[\phi]= \sup_{v \in F_{\phi}} v$. 

For the last statement notice that $P_\theta[\phi]=\sup_{v \in F_\phi} v \geq  \sup_{v \in F_{P_{\theta}[\phi]}} v=P_\theta[P_\theta[\phi]]$, since $F_\phi \supset F_{P_{\theta}[\phi]}$. The reverse inequality is trivial. %, giving $\sup_{v \in F_\phi} v \geq  \sup_{v \in F_{P_{\theta}[\phi]}} v$. On the other hand, another application of Theorem  \ref{thm: BEGZ_monotonicity_full} and Theorem \ref{thm: lsc of MA measures} yields that $P_{\theta}[v] \in F_{P_{\theta}[\phi]}$ for any $v \in F_\phi$, giving the reverse inequality.  
\end{proof}

\begin{remark} The assumption $\int_X \theta_\phi^n >0$ is necessary in the above theorem. Indeed, in the setting of Remark \ref{ex: zero mass}, it can be seen that $P_{\theta}[\phi] \lneq \sup_{h \in F_\phi}h$, as the potential on the right hand side is greater than $\pi_2 ^*v$, since $\pi_2 ^*v \in F_\phi$.
\end{remark}

As a consequence of this last result, we obtain the following characterization of membership in $\mathcal E(X,\theta,\phi)$, providing a partial converse to Proposition \ref{prop: env_pluri_general}:

\begin{theorem}\label{thm: E_memb_char} Suppose $\phi \in \textup{PSH}(X,\theta)$ with $\int_X \theta_\phi^n >0$ and $\phi \leq 0$. The following are equivalent:\\
\noindent (i) $u \in \mathcal E(X,\theta,\phi)$.\\
\noindent (ii) $\phi$ is less singular than $u$, and $P_{\theta}[u](\phi)=\phi$.\\
\noindent (iii) $\phi$ is less singular than $u$, and $P_{\theta}[u]=P_{\theta}[\phi]$.
\end{theorem}

As a consequence of the equivalence between (i) and (iii), we see that the potential $P_{\theta}[u]$ stays the same for all $u \in \mathcal E(X,\theta,\phi)$, i.e., it is an invariant of this class. In particular, since $\mathcal E(X,\theta,\phi) \subset \mathcal E(X,\theta,P_{\theta}[\phi])$, by the last statement of Theorem \ref{thm: ceiling coincide envelope non collapsing}, it seems natural to only consider potentials $\phi$ that are in the image of the operator $\psi \to P_{\theta}[\psi]$, when studying classes of relative full mass $\mathcal E(X,\theta,\phi)$. What is more, in the next section it will be clear that considering such $\phi$ is not just more natural, but also necessary when trying to solve complex Monge-Amp\`ere equations with prescribed singularity.

\begin{proof}Assume that (i) holds. By Theorem \ref{thm: MA measure of WN envelope} it follows that $P_{\theta}[u](\phi) \geq \phi$ a.e. with respect to $\theta^n_{P_{\theta}[u](\phi)}$. Proposition \ref{prop: domination principle} gives $P_{\theta}[u](\phi) = \phi$, hence (ii) holds. 

Suppose (ii) holds. We can assume that $u \leq \phi \leq 0$. Then $P_{\theta}[u] \geq P_{\theta}[u](\phi) = \phi$. By the last statement of the previous theorem, this implies that 
$$P_{\theta}[u] = P_{\theta}[P_{\theta}[u]] \geq P_{\theta}[\phi].$$
As the reverse inequality is trivial, (iii) follows.

Lastly, assume that (iii) holds. By Theorem \ref{thm: BEGZ_monotonicity_full} and Theorem \ref{thm: lsc of MA measures} it follows that $\int_X \theta_u^n=\int_X \theta_{P_{\theta}[u]}^n=\int_X \theta_{P_{\theta}[\phi]}^n=\int_X \theta_\phi^n$, hence (i) holds.
\end{proof}

\begin{coro}\label{cor: convexity}
	Suppose $\phi \in \textup{PSH}(X,\theta)$ such that $\int_X \theta_{\phi}^n>0$. Then $\Ec(X,\theta,\phi)$ is convex. Moreover, given $\psi_1, \ldots, \psi_n \in \Ec(X, \theta, \phi)$ we have
   \begin{equation}\label{mixed mass} 
    \int_X \theta_{\psi_1}^{s_1}\wedge \ldots \wedge \theta_{\psi_n}^{s_n}= \int_X \theta_\phi^n,
    \end{equation}
 where $s_j \geq 0$ are integers such that $\sum_{j=1}^n s_j=n$.
\end{coro}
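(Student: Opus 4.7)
The plan is to first establish that $\Ec(X, \theta, \phi)$ is convex, and then deduce the mixed mass formula by combining multi-linearity of the non-pluripolar product with a homogeneous polynomial identity argument.

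\emph{Convexity.} Let $\psi_1, \ldots, \psi_m \in \Ec(X, \theta, \phi)$ and $\lambda_1, \ldots, \lambda_m \geq 0$ with $\sum_j \lambda_j = 1$; after subtracting constants we may assume $\psi_j \leq \phi \leq 0$. Set $w := \sum_j \lambda_j \psi_j \leq \phi$, so $\phi$ is less singular than $w$. By Theorem \ref{thm: E_memb_char}(iii) it suffices to prove $P_\theta[w] = P_\theta[\phi]$. The inequality $P_\theta[w] \leq P_\theta[\phi]$ is immediate from monotonicity of $P_\theta[\,\cdot\,]$ in singularity type. For the reverse, I use the concavity of $P_\theta(\,\cdot\,, 0)$ in its first slot, namely
\[
P_\theta(w + C, 0) \;\geq\; \sum_{j} \lambda_j \, P_\theta(\psi_j + C, 0),
\]
which holds because any convex combination of $\theta$-psh candidates for the right-hand envelopes is itself a $\theta$-psh candidate for the left-hand envelope. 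Letting $C \to \infty$, taking upper semicontinuous regularizations, and invoking $P_\theta[\psi_j] = P_\theta[\phi]$ (Theorem \ref{thm: E_memb_char}(iii) applied to each $\psi_j$) then yields $P_\theta[w] \geq P_\theta[\phi]$.

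\emph{Mixed mass formula.} Given $\psi_1, \ldots, \psi_n \in \Ec(X, \theta, \phi)$ and $\lambda$ in the open simplex $\{\lambda_j > 0,\, \sum_j \lambda_j = 1\}$, convexity gives $\psi_\lambda := \sum_j \lambda_j \psi_j \in \Ec(X, \theta, \phi)$, whence $\int_X \theta_{\psi_\lambda}^n = \int_X \theta_\phi^n$. Expanding by multi-linearity of the non-pluripolar product (\cite[Proposition 1.4]{BEGZ10}) and comparing with $1 = (\sum_j \lambda_j)^n = \sum_{|\alpha|=n} \binom{n}{\alpha} \lambda^\alpha$ leads to
\[
\sum_{|\alpha|=n} \binom{n}{\alpha}\, \lambda^\alpha \left(\int_X \theta_{\psi_1}^{\alpha_1} \wedge \cdots \wedge \theta_{\psi_n}^{\alpha_n} - \int_X \theta_\phi^n \right) = 0,
\]
valid for all $\lambda$ in the open simplex. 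This is a homogeneous polynomial of degree $n$ in $\lambda$; by homogeneity the vanishing extends to the full positive orthant, which is open in $\mathbb{R}^n$, forcing every coefficient to vanish, yielding precisely \eqref{mixed mass}.

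\emph{Main obstacle.} The crux is the convexity step, specifically the lower bound $P_\theta[w] \geq P_\theta[\phi]$. A naive attempt through the cutoffs $\psi_j^k := \max(\psi_j, \phi - k)$ and a direct mass computation for $\sum_j \lambda_j \psi_j^k$ fails: both Theorem \ref{thm: BEGZ_monotonicity_full} and the lower-semicontinuity part of Theorem \ref{thm: lsc of MA measures} produce only $\int_X \theta_w^n \leq \int_X \theta_\phi^n$, never the reverse direction. Bypassing mass computations altogether and working instead with the envelope characterization of Theorem \ref{thm: E_memb_char}, via the concavity of $P_\theta(\,\cdot\,, 0)$, is the key idea that unlocks the argument.
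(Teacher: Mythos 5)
Your proof is correct and follows essentially the same path as the paper: convexity via the concavity of the relevant envelope operator combined with the characterization in Theorem \ref{thm: E_memb_char}, then the mixed-mass formula via multi-linearity of the non-pluripolar product and comparison of two homogeneous degree-$n$ polynomials. The only cosmetic difference is that you invoke part (iii) of the characterization (working with $P_\theta[\cdot]$) where the paper invokes part (ii) (working with $P_\theta[\cdot](\phi)$); the underlying concavity argument is identical, and your remark extending the polynomial identity from the open simplex to the open positive orthant by homogeneity is a small clarification of what the paper leaves implicit.
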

\begin{proof}
  Let $u,v\in \Ec(X,\theta,\phi)$ and fix $t\in (0,1)$. It follows from Theorem \ref{thm: E_memb_char} that $P_\theta[v](\phi)=P_\theta[u](\phi)=\phi$. This implies that  $$P_\theta[tv + (1-t)u](\phi)\geq tP_\theta[v](\phi) + (1-t)P_\theta[u](\phi)=\phi.$$
As the reverse inequality is trivial, another application of Theorem  \ref{thm: E_memb_char} gives that $t v + (1-t)u \in \mathcal E(X,\theta,\phi)$. 

We now prove the last statement. Since $\mathcal E(X,\theta,\phi)$ is convex, given $\psi_1, \ldots, \psi_n \in \Ec(X, \theta, \phi)$ we know that any convex combination $\psi:=\sum_{j=1}^n s_j \psi_j$ with $0\leq s_j\leq 1 $ and $\sum_j s_j=n$, belongs to $\Ec(X, \theta, \phi)$. Hence
$$\int_X \bigg( \sum_j s_j \theta_{\psi_j}\bigg)^n=\int_X \theta_{\psi}^n= \int_X\theta_\phi^n=\int_X \bigg( \sum_j s_j\theta_\phi \bigg)^n.$$
As a result, we have an identity of two homogeneous polynomials of degree $n$. Therefore all the coefficients of these polynomials have to be equal, giving \eqref{mixed mass}.
\end{proof}

Lastly, we provide another corollary, in the spirit of the partial comparison principle from Proposition \ref{prop: general CP}:

\begin{coro}\label{partial comparison principle} Suppose $\phi \in \textup{PSH}(X,\theta)$ with $\int_X \theta_{\phi}^n > 0$. Assume that $u,v,\psi_1,...,\psi_{j}\in \Ec(X,\theta,\phi)$ for some $j \in \{0,\ldots,n\}$. Then 
\[
\int_{\{u<v\}} \theta_{v}^{n-j}\wedge  \theta_{\psi_1} \wedge \ldots \wedge \theta_{\psi_j} \leq \int_{\{u<v\}} \theta_{u}^{n-j}\wedge  \theta_{\psi_1} \wedge \ldots \wedge \theta_{\psi_j}.\] 
\end{coro}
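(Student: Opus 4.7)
The plan is to reduce this corollary directly to the general partial comparison principle, Proposition \ref{prop: general CP}, by verifying that its relative full mass hypothesis is met. Specializing Proposition \ref{prop: general CP} to the case where all the reference forms coincide with $\theta$ and all the reference potentials coincide with $\phi$, the inequality I want is the conclusion, provided I can check that
\[
(u,\ldots,u,\psi_1,\ldots,\psi_j),\ (v,\ldots,v,\psi_1,\ldots,\psi_j) \in \mathcal E(X,\theta_\phi,\ldots,\theta_\phi).
\]
By the definition of $\mathcal E(X,\theta_\phi,\ldots,\theta_\phi)$ given at the start of Section \ref{sec 3}, this amounts to the two identities
\[
\int_X \theta_u^{n-j}\wedge \theta_{\psi_1}\wedge\ldots\wedge \theta_{\psi_j} \;=\; \int_X \theta_\phi^n \;=\; \int_X \theta_v^{n-j}\wedge \theta_{\psi_1}\wedge\ldots\wedge \theta_{\psi_j},
\]
together with the fact that $\phi$ is less singular than each of $u,v,\psi_1,\ldots,\psi_j$ (which is built into membership in $\mathcal E(X,\theta,\phi)$).

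The main step is therefore to produce these mixed-mass identities, and this is exactly what Corollary \ref{cor: convexity} delivers: under the hypothesis $\int_X \theta_\phi^n>0$, for any finite collection of potentials in $\mathcal E(X,\theta,\phi)$, every mixed non-pluripolar product involving them has total mass equal to $\int_X \theta_\phi^n$. Applied with exponents $(n-j, 1, \ldots, 1)$ attached to $(u,\psi_1,\ldots,\psi_j)$ and then to $(v,\psi_1,\ldots,\psi_j)$, this gives precisely the two equalities above.

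Having checked the hypotheses, I would then invoke Proposition \ref{prop: general CP} with $\theta^k = \theta$, $\phi_k = \phi$ for every $k$, and conclude
\[
\int_{\{u<v\}} \theta_v^{n-j}\wedge\theta_{\psi_1}\wedge\ldots\wedge\theta_{\psi_j} \;\leq\; \int_{\{u<v\}} \theta_u^{n-j}\wedge\theta_{\psi_1}\wedge\ldots\wedge\theta_{\psi_j}.
\]
There is no genuine obstacle here: all the real work has already been done in Proposition \ref{prop: general CP} (which in turn relied on Theorem \ref{thm: BEGZ_monotonicity_full}) and in Corollary \ref{cor: convexity}. The only thing to be careful about is to ensure that the non-collapsing assumption $\int_X \theta_\phi^n>0$ is present, since it is precisely this assumption that makes Corollary \ref{cor: convexity} available and hence validates the relative full mass condition required by Proposition \ref{prop: general CP}.
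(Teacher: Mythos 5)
Your argument is exactly the paper's: the paper's proof states that the conclusion follows from \eqref{mixed mass} (Corollary \ref{cor: convexity}) together with Proposition \ref{prop: general CP}, which is precisely the reduction you carry out. You have merely spelled out the verification of the relative full mass hypothesis that the paper leaves implicit.
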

\begin{proof} The conclusion follows immediately from \eqref{mixed mass} together with Proposition \ref{prop: general CP}.
\end{proof}

\section{Complex Monge-Amp\`ere equations with prescribed singularity type}\label{sec 4}

Let $\theta$ be a smooth closed real $(1,1)$-form on $X$ such that $\{\theta\}$ is big and $\phi \in \textup{PSH}(X,\theta)$. By $\mathrm{PSH}(X,\theta,\phi)$  we denote the set of $\theta$-psh functions that are more singular than $\phi$. We say that $v \in \textup{PSH}(X,\theta,\phi)$ has \emph{relatively minimal singularities} if $v$ has the same singularity type as $\phi$. Clearly, $\mathcal E(X,\theta,\phi) \subset \textup{PSH}(X,\theta,\phi)$. 

Let $\mu$ be a non-pluripolar positive  measure on $X$ such that $\mu(X)=\int_X \theta_\phi^n > 0$.
Our aim is to study existence and uniqueness of solutions to the following equation of complex Monge-Amp\`ere type:
\begin{equation}\label{eq: CMAE_sing}
\theta_\psi^n = \mu, \ \ \ \psi \in \mathcal E(X,\theta,\phi).
\end{equation}
It is not hard to see that this equation does not have a solution for arbitrary $\phi$. Indeed, suppose for the moment that $\theta = \omega$, and  choose $\phi \in \mathcal E(X,\omega) :=\mathcal E(X,\omega,0)$ unbounded. It is clear that $\mathcal E(X,\omega,\phi) \subsetneq \mathcal E(X,\omega,0)$. By \cite[Theorem A]{BEGZ10}, the (trivial) equation $\omega_{\psi}^n = \omega^n, \ \psi \in \mathcal E(X,\omega,0)$ is \emph{only} solved by potentials $\psi$ that are constant over $X$, hence we cannot have $\psi \notin \mathcal E(X,\omega,\phi)$.

This simple example suggests that we need to be more selective in our choice of $\phi$, to make \eqref{eq: CMAE_sing} well posed. As it turns out, the natural choice is to take $\phi$ such that  $P_{\theta}[\phi]=\phi$, as suggested by our study of currents of relative full mass in the previous subsection. Therefore, for the rest of this section we ask that $\phi$ additionally satisfies:
\begin{equation}\label{eq: fixed_point}
\phi=P_{\theta}[\phi].
\end{equation}
Such a potential $\phi$  is called a model potential, and $[\phi]$ is a \emph{model type singularity}. As $V_\theta = P_\theta[V_\theta]$, one can think of such $\phi$ as generalizations of $V_\theta$, the potential with minimal singularity from \cite{BEGZ10}. Wee refer to Remark \ref{rem: examples} for natural constructions of model type singularities.

As a technical assumption, we will ask that $\phi$ has additionally \emph{small unbounded locus}, i.e., $\phi$ is locally  bounded outside a closed pluripolar set $A \subset X$. This will be needed to carry out arguments involving integration by parts in the spirit of \cite{BEGZ10}.

One wonders if maybe model type potentials (those that satisfy \eqref{eq: fixed_point}) always have small unbounded locus. Sadly, this is not the case, as the following simple example shows. Suppose $\theta$ is a K\"ahler form, and $\{x_j\}_j \subset X$ is a dense countable subset. Also let $v_j \in \textup{PSH}(X,\theta)$ such that $v_j < 0$, $\int_X v_j \theta^n = 1$, and $v_j$ has a positive Lelong number at $x_j$. Then $\psi = \sum_j \frac{1}{2^j} v_j \in \textup{PSH}(X,\theta)$ has positive Lelong numbers at all $x_j$. As we have argued in \cite[Theorem 1.1]{DDL16}, the Lelong numbers of $P_{\theta}[\psi]$ are the same as those of $\psi$, hence the model type potential $P_{\theta}[\psi]$ cannot have small unbounded locus.

The following convergence result is important in our later study, and it can be implicitly found in the arguments of \cite{BEGZ10}, as well as other works: 
\begin{lemma}
	\label{lem: basic convergence s.u.l}
	Let $u_k,u_k^j \in \textup{PSH}(X,\theta,\phi)$  and $C>0$ such that \[
	- C \leq u^j_k - \phi \leq C,  
	\] 
for all $j \in \Bbb N$ and $k \in \{1,\ldots,n\}$. Assume also that  $u_k^j \to u_k, k \in \{1,\ldots,n\}$ in capacity. Suppose also that  $f,{f_j}$ are uniformly bounded, quasi-continuous, such that $f_j \to f$ in  capacity. Then $f_j \theta_{u_1^j} \wedge ...\wedge \theta_{u_n^j} \rightarrow f \theta_{u_1} \wedge ...\wedge \theta_{u_n}$ weakly. 
\end{lemma}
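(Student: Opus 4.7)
The plan is to reduce to the locally bounded setting on $X \setminus A$, where $A$ is the pluripolar unbounded locus of $\phi$, apply the classical Bedford--Taylor convergence theorem there, and then use the global mass identity from Proposition \ref{prop: comparison generalization} to rule out mass escape to $A$.

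First I would invoke the small unbounded locus hypothesis to fix a closed complete pluripolar set $A \subset X$ outside of which $\phi$ is locally bounded. Because every $u_k$ and every $u_k^j$ has the same singularity type as $\phi$ (they have relatively minimal singularities in $\mathcal{E}(X,\theta,\phi)$), each is locally bounded on $X \setminus A$, and the non-pluripolar products in question do not charge $A$. On $X \setminus A$ the potentials fit into the classical Bedford--Taylor framework: \cite[Theorem 4.26]{GZ17} applies locally and yields the weak convergence
\[
f_j\, \theta_{u_1^j} \wedge \ldots \wedge \theta_{u_n^j} \longrightarrow f\, \theta_{u_1} \wedge \ldots \wedge \theta_{u_n} \quad \textrm{weakly on } X \setminus A.
\]

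To upgrade to weak convergence on $X$, split $f$ and $f_j$ into positive and negative parts so that, without loss of generality, $f, f_j \geq 0$, and set $\mu_j := f_j\, \theta_{u_1^j} \wedge \ldots \wedge \theta_{u_n^j}$ and $\mu := f\, \theta_{u_1} \wedge \ldots \wedge \theta_{u_n}$. By Proposition \ref{prop: comparison generalization}, the unweighted MA masses all equal $\int_X \theta_\phi^n$, so $\{\mu_j(X)\}$ is uniformly bounded. By Banach--Alaoglu it suffices to show that any weak subsequential limit $\nu$ of $\{\mu_j\}$ equals $\mu$; the previous step gives $\nu = \mu$ on $X \setminus A$, and since $\mu(A) = 0$, it is enough to establish the mass convergence $\mu_j(X) \to \mu(X)$. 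Via the triangle inequality this reduces to (a) $\int_X f(\theta_{u_1^j} \wedge \ldots - \theta_{u_1} \wedge \ldots) \to 0$ and (b) $\int_X |f_j - f|\, \theta_{u_1^j} \wedge \ldots \wedge \theta_{u_n^j} \to 0$.

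The main obstacle is (b). For fixed $\varepsilon > 0$, writing $E_{j,\varepsilon} := \{|f_j - f| > \varepsilon\}$ one has
\[
\int_X |f_j - f|\, \theta_{u_1^j} \wedge \ldots \wedge \theta_{u_n^j} \leq \varepsilon \int_X \theta_\phi^n + 2\|f\|_\infty\, \theta_{u_1^j} \wedge \ldots \wedge \theta_{u_n^j}(E_{j,\varepsilon}),
\]
and $\capa(E_{j,\varepsilon}) \to 0$ as $j \to \infty$ by assumption. The delicate point is that the MA mass $\theta_{u_1^j} \wedge \ldots \wedge \theta_{u_n^j}(E_{j,\varepsilon})$ must vanish uniformly in $j$ as $\capa(E_{j,\varepsilon}) \to 0$, which is not automatic from the standard bounded-potential estimate $\mu^{MA}(E) \leq C\,\capa(E)$ whose constant depends on non-uniform $L^\infty$ bounds. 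The remedy is to compare with the cutoff potentials $\max(u_k^j, \phi - C)$: these have MA masses on any fixed open neighborhood $V$ of $A$ which are uniformly small in $j$ (combining the global mass identity with compact exhaustion of $X \setminus A$), while the Bedford--Taylor bounded-potential estimate on $X \setminus A$ controls the contribution away from $A$. Part (a) is handled analogously, approximating the bounded quasi-continuous $f$ on $X \setminus A$ by continuous functions modulo exceptional sets of small capacity.
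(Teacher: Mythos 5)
Your overall strategy (localize to $X \setminus A$ where potentials are bounded, invoke Bedford--Taylor/[GZ17, Thm 4.26], then rule out mass escape to $A$ using Proposition \ref{prop: comparison generalization}) is the same skeleton as the paper's proof. However, the execution differs in a way that leaves a real gap, and you yourself flag it: at step (b) you need $\theta_{u_1^j}\wedge\cdots\wedge\theta_{u_n^j}(E_{j,\varepsilon}) \to 0$ from $\capa(E_{j,\varepsilon})\to 0$, but no capacity estimate of the form $\mu^{MA}(E)\leq C\,\capa(E)$ is available globally, since the potentials blow up near $A$ and the constant is not under control there. Your proposed remedy (cutoffs $\max(u_k^j,\phi-C)$ plus ``compact exhaustion'') is not carried through: you would still need a uniform-in-$j$ bound for the MA mass of the cutoffs on a neighbourhood of $A$ that tends to zero with the neighbourhood, and you don't explain how to get it; moreover your step (a), handling $\int_X f(\theta_{u_1^j}\wedge\cdots - \theta_{u_1}\wedge\cdots)$ with $f$ only quasi-continuous, runs into the same issue near $A$.

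The paper's proof circumvents capacity estimates entirely by a complement trick that you do not cleanly deploy. Because every $u_k^j$ has the same singularity type as $\phi$ (and hence as $u_k$), Proposition \ref{prop: comparison generalization} gives exact equality $\mu_j(X) = \mu(X)$, where $\mu_j := \theta_{u_1^j}\wedge\cdots\wedge\theta_{u_n^j}$ and $\mu := \theta_{u_1}\wedge\cdots\wedge\theta_{u_n}$. Choosing $U \Subset V \Subset X\setminus A$ with $\mu(X\setminus U)\leq \varepsilon$, one gets from weak convergence of $\mu_j$ to $\mu$ on $V$ (Bedford--Taylor, as the potentials are uniformly bounded there) and the Portmanteau lower bound on the open set $U$ that $\liminf_j \mu_j(U)\geq\mu(U)$, and therefore $\limsup_j\mu_j(X\setminus U) = \mu(X) - \liminf_j\mu_j(U) \leq \mu(X\setminus U)\leq\varepsilon$. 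This bounds all tail integrals over $X\setminus U$ by $O(\varepsilon)$ uniformly in $j$, since $f,f_j,\chi$ are uniformly bounded; the contribution on $U$ converges by [GZ17, Thm 4.26]. No Banach--Alaoglu, no capacity estimate, and no separate treatment of your steps (a) and (b) is needed. The lesson: the exact mass identity is best exploited through the complement inequality $\limsup_j\mu_j(X\setminus U)\leq\mu(X\setminus U)$ rather than through a capacity comparison.
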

\begin{proof}
Let $A \subset X$ be closed pluripolar such that $\{\phi = -\infty \} \subset A$.
We set $\mu_j:=\theta_{u_1^j} \wedge ...\wedge \theta_{u_n^j}$, and $\mu:=\theta_{u_1} \wedge ...\wedge \theta_{u_n}$.  Fix a continuous function $\chi$ on $X$, $\vep>0$ and $U$ an open relatively compact subset of $X\setminus A$ such that $\mu(X\setminus U) \leq \vep$. Fix $V$ a slightly larger open subset of $X\setminus A$ such that $U\Subset V\Subset X\setminus A$. Fix $\rho$ a continuous non negative function on $X$ which is supported in $V$ and is identically $1$ in $U$.   Since all functions $u_k^j$ are uniformly bounded in $V$ (along with $u_k$) it follows from \cite[Theorem 4.26]{GZ17} that $\chi f_j\mu_j$ converges weakly to $\chi f\mu$ in $V$. Also, Bedford-Taylor theory gives that $\mu_j$ converges weakly to $\mu$ in $V$. Thus $\liminf_{j} \mu_j(U) \geq \mu(U)$, hence $\limsup_j \mu_j(X\setminus U) \leq \mu(X\setminus U)\leq  \vep$ since $\mu_j(X)=\mu(X)$. Since $\chi, \rho, f_j,f$ are uniformly bounded it follows that $\limsup_j \int_{X\setminus U} \rho  |\chi f_j|\mu_j,  \limsup_j \int_{X\setminus U}    |\chi f_j|\mu_j,  \int_{X\setminus U}  \rho  |\chi f|\mu,  \int_{X\setminus U} |\chi f|\mu$ are all bounded by $ C\vep$  for some uniform constant $C>0$. On the other hand, since $\chi f_j \mu_j$ converges weakly to $\chi f \mu$ in $V$ and $\rho=0$ outside $V$, we have
	\[
	\lim_{j} \int_X \rho \chi f_j d\mu_j  = \int_X \rho \chi f d\mu.
	\]
    
    Thus,
    \begin{eqnarray*}
    \limsup_{j} \left|\int_X \chi f_j d\mu_j -\int_X \chi f d\mu\right| &\leq & \limsup_j \left|\int_X \rho \chi f_j d\mu_j -\int_X \rho \chi f d\mu\right| +4C\varepsilon
    \end{eqnarray*}
    It then follows that 
	\[
	\limsup_{j} \left|\int_X \chi f_j d\mu_j -\int_X \chi f d\mu\right| \leq C'\vep. 
	\]
	Letting $\vep \to 0$ we arrive at the conclusion. 
\end{proof}

\subsection{The relative Monge-Amp\`ere capacity}
We introduce the \emph{relative Monge-Amp\`ere capacity} of a Borel set $B \subset X$:
\begin{equation*}%\label{equ:capa}
\textup{Cap}_\phi(B):=\sup\left\{\int_B \theta^n_\psi, \ \psi\in \psh(X,\theta) ,\ \phi \leq \psi \leq \phi + 1\right\}.
\end{equation*}
Note that in the K\"ahler case  a related notion of capacity has been studied in \cite{DiLu14,DiLu15}. 
In the case when $\phi=V_{\theta}$ we recover the Monge-Amp\`ere capacity used in \cite[Section 4.1]{BEGZ10}. As is well known, the (generalized) Monge-Amp\`ere capacity and the global relative extremal functions play a vital role in establishing uniform estimates for  complex Monge-Amp\`ere equations (see \cite{Kol98}, \cite{BEGZ10}, \cite{DiLu14,DiLu15}). Along these lines the capacity  $\capa_{\phi}$ will play a crucial role in proving the regularity part of Theorem \ref{thm4}.

\begin{lemma}
	\label{lem: cap is inner regular}
	The relative Monge-Amp\`ere capacity $\capi$ is inner regular, i.e.
\[
\capi(E) =\sup \{\capi(K)\setdef K\subset E\ ; \ K \ \textrm{is compact}\}.
\]
\end{lemma}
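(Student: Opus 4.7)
The plan is to reduce inner regularity of $\capi$ to inner regularity of each individual non-pluripolar Monge--Amp\`ere measure $\theta_\psi^n$ for admissible $\psi$, and then pass the supremum through by a standard two-$\varepsilon$ argument.

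First, observe that for every $\psi \in \textup{PSH}(X,\theta)$ with $\phi \leq \psi \leq \phi+1$, the potentials $\psi$ and $\phi$ have the same singularity type, so by Proposition \ref{prop: comparison generalization} we have $\int_X \theta_\psi^n = \int_X \theta_\phi^n$, which is a finite number. Thus $\theta_\psi^n$ is a finite positive Borel measure on the compact metric space $X$. It is classical that any such measure is automatically Radon, hence inner regular: for every Borel set $E \subset X$ and every $\varepsilon>0$ there exists a compact set $K \subset E$ with
\[
\int_E \theta_\psi^n - \int_K \theta_\psi^n < \varepsilon.
\]

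Now fix a Borel set $E \subset X$ and $\varepsilon>0$. By definition of $\capi(E)$ choose $\psi$ admissible (i.e.\ $\phi \leq \psi \leq \phi+1$) with
\[
\int_E \theta_\psi^n \geq \capi(E) - \varepsilon.
\]
By the inner regularity of $\theta_\psi^n$, pick a compact $K \subset E$ with $\int_K \theta_\psi^n \geq \int_E \theta_\psi^n - \varepsilon$. Then
\[
\capi(K) \geq \int_K \theta_\psi^n \geq \capi(E) - 2\varepsilon,
\]
and letting $\varepsilon \to 0$, together with the trivial inequality $\capi(K) \leq \capi(E)$ for $K \subset E$, yields the claim.

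There is essentially no obstacle in this argument: the only slightly subtle point is confirming that each competitor measure $\theta_\psi^n$ is finite, which follows from the boundedness constraint $\phi \leq \psi \leq \phi+1$ combined with Proposition \ref{prop: comparison generalization}. Everything else is formal measure theory.
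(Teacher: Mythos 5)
Your proof is correct and follows essentially the same two-$\varepsilon$ argument as the paper: pick $\psi$ nearly realizing $\capi(E)$, invoke inner regularity of the finite Borel measure $\theta_\psi^n$ to find a compact $K \subset E$, and conclude. Your extra remark explaining why $\theta_\psi^n$ has finite mass (via Proposition~\ref{prop: comparison generalization} and the constraint $\phi \leq \psi \leq \phi+1$) is a harmless but correct elaboration of a point the paper leaves implicit.
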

\begin{proof}
By definition $\capi(E) \geq \capi(K) $ for any compact set $ K\subset E$. Fix $\vep>0$. There exists $u\in \psh(X,\theta)$ such that $\phi\leq u\leq \phi+1$ and 
\[
\int_E \theta_u^n \geq \capi(E)-\vep.
\]
Since $\theta_u^n$ is an inner regular Borel measure it follows that there exists a compact set $K\subset E$ such that $\int_K \theta_u^n \geq \int_E \theta_u^n -\vep\geq \capi(E)-2\vep$. Hence $\capi(K)\geq \capi(E)-2\vep$. Letting $\vep\to 0$  and taking the supremum over all the compact set $K\subset E$, we arrive at the conclusion. 
\end{proof}

By definition, $\textup{Cap}_\theta(B)\leq\textup{Cap}_\theta(X)=\int_X \theta_\phi^n$. Next we note that if $\textup{Cap}_\phi(B)=0$ then $B$ is a very ``small'' set:

\begin{lemma}\label{lem: zero_cap} Let $B \subset X$ be a Borel set. Then $\textup{Cap}_\phi(B)=0$ if and only if $B$ is pluripolar.
\end{lemma}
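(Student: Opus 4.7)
The easy implication is immediate: by construction, no non-pluripolar Monge-Amp\`ere measure $\theta_\psi^n$ charges pluripolar sets, so pluripolarity of $B$ forces $\capi(B)=0$.

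For the converse, I argue by contradiction. Suppose $\capi(B)=0$. By the inner regularity proved in Lemma \ref{lem: cap is inner regular} and the analogous reduction for pluripolarity (a Borel set is pluripolar iff each of its compact subsets is, by Choquet capacitability), we may assume $B$ is compact and, for contradiction, non-pluripolar. The plan is to produce a test potential $\psi$ with $\phi\leq\psi\leq\phi+1$ for which $\theta_\psi^n(B)>0$, directly contradicting $\capi(B)=0$.

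The construction goes via rooftop envelopes. Since $B$ is compact and non-pluripolar, standard pluripotential theory supplies a non-pluripolar probability measure $\mu$ with support in $B$. By \cite[Theorem A]{BEGZ10} there exists $u\in\psh(X,\theta)$ of minimal singularities solving $\theta_u^n=V\mu$, where $V:=\int_X\theta^n$. After normalizing we may assume $\phi\leq V_\theta\leq 0$, $u\leq 0$, and $|u-V_\theta|\leq M$, so in particular $\phi\leq u+M$. Set
\[
\psi:=P_\theta(\phi+1,\,u+M)\in\psh(X,\theta).
\]
Then $\phi\leq\psi\leq\phi+1$, so $\psi$ is admissible in the definition of $\capi(B)$, and Theorem \ref{thm: BEGZ_monotonicity_full} forces $\int_X\theta_\psi^n=\int_X\theta_\phi^n>0$, placing $\psi$ in $\Ec(X,\theta,\phi)$.

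The final step combines Lemma \ref{lem: GLZ non-full} with plurifine locality of the non-pluripolar product, which shows that $\theta_\psi^n$ coincides with $V\mu$ on the coincidence set $\{\psi=u+M\}$ and with $\theta_\phi^n$ on $\{\psi=\phi+1\}$; in particular $\theta_\psi^n(B)\geq V\mu\bigl(\{\psi=u+M\}\bigr)$. The main obstacle is to show that this last quantity is strictly positive, i.e.\ that the coincidence set is not $\mu$-negligible. I would rule out the degenerate scenario $\mu(\{\psi=u+M\})=0$, in which the mass identity $\int_X\theta_\psi^n=\int_X\theta_\phi^n$ would force all of $\theta_\psi^n$ to concentrate on $\{\psi=\phi+1\}$; one should then derive a contradiction by varying $M$ or by invoking the partial comparison principle (Proposition \ref{prop: general CP}) and the domination principle (Proposition \ref{prop: domination principle}) to propagate the non-pluripolarity of $\mu$ into positive mass on the coincidence set. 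This propagation step is where the heart of the argument lies, and where I expect most of the care to be required.
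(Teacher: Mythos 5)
Your outline has the right shape but leaves a genuine gap precisely where you flag one, and the suggested repair (``varying $M$'') points in the wrong direction. The difficulty is not $\mu$-negligibility of the coincidence set per se; it is that with the ceiling fixed at $\phi+1$ you have no mechanism to force $\theta_\psi^n$ to place any mass on $K$ at all. Indeed, the envelope estimate from Lemma~\ref{lem: GLZ non-full} only gives $\theta_\psi^n \leq \mathbbm{1}_{\{\psi=\phi+1\}}\theta_\phi^n + \mathbbm{1}_{\{\psi=u+M\}}V\mu$, which is an \emph{upper} bound; and if $\phi+1 \leq u+M$ on a large set (which is quite possible for your normalization of $M$ --- enlarging $M$ only makes this worse, so varying $M$ does not help), then $\psi$ is essentially $\phi+1$, $\theta_\psi^n$ is essentially $\theta_\phi^n$, and there is no reason this should charge $K$. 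Nothing in the mass identity $\int_X\theta_\psi^n=\int_X\theta_\phi^n$ rules this out.

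The paper's proof resolves exactly this by introducing a free parameter in the \emph{shift of $\phi$}, not in $M$. It uses the global extremal function $V_{\theta,K}^*$ (bounded above because $K$ is non-pluripolar), whose Monge--Amp\`ere measure is supported on $K$ automatically --- no need to manufacture $\mu$ and solve a Monge--Amp\`ere equation. It then considers $u_t := P_\theta(\phi+t,V_{\theta,K}^*)$ and shows, via Lemma~\ref{lem: GLZ non-full} and the non-pluripolarity of $\theta_\phi^n$, that $\int_{X\setminus K}\theta_{u_t}^n \leq \int_{\{\phi+t\leq V_{\theta,K}^*\}}\theta_\phi^n \to 0$ as $t\to\infty$, while $\int_X\theta_{u_t}^n = \int_X\theta_\phi^n > 0$ stays fixed; hence $\theta_{u_t}^n(K)>0$ for large $t$. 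Since the resulting $\psi := u_{t_0}$ may only satisfy $\phi\leq\psi\leq\phi+C$ with $C>1$, the last step is a rescaling: the convex combination $(1-1/C)\phi + (1/C)\psi$ lies in $[\phi,\phi+1]$, is admissible in the definition of $\capi(K)$, and its Monge--Amp\`ere mass on $K$ is at least $C^{-n}\theta_\psi^n(K)>0$. Your version would become correct if you replaced the ceiling $\phi+1$ by $\phi+t$, ran the same concentration argument to get positive mass on $K$ for large $t$, and then applied this same convex-combination rescaling; as written, however, the lower bound on $\theta_\psi^n(K)$ is not established and cannot be without such a device.
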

\begin{proof} Fix $\omega$ K\"ahler with $\omega \geq \theta$. Recall that a Borel subset $E\subset X$ is pluripolar if and only if $\capo(E)=0$ (see \cite[Corollary 3.11]{GZ05} which goes back to \cite{BT82}).  

If $B$ is pluripolar then $\capi(B)=0$ by definition. Conversely, assume that $\textup{Cap}_\phi(B)=0$. If $B$ is non-pluripolar then, $\capo(B)>0$. Since $\textup{Cap}_{\omega}$ is inner regular (\cite[Remark 1.7]{BBGZ13}), there exists  a compact subset $K$ of $B$  such that $\textup{Cap}_{\omega}(K)>0$. In particular $K$ is non-pluripolar, hence the global extremal function of $(K,\omega)$, $V_{\omega,K}^*$  is bounded from above (i.e. it is not identically $+\infty$) by \cite[Theorem 9.17]{GZ17}. Since $\omega\geq \theta$  we have $V_{\theta,K}^* \leq V_{\omega,K}^*$, hence $V_{\theta,K}^*$ is also bounded from above. 

We recall that $\theta_{V_{\theta,K}^*}^n$ is supported on $K$ (\cite[Theorem 9.17]{GZ17}), and we consider $u_t:=P_{\theta}(\phi+t,V_{\theta,K}^*), \ t>0$. By the argument of Corollary \ref{cor: non-collapsing} there exists $t_0>0$ big enough such that $\psi := u_{t_0} \in \textup{PSH}(X,\theta)$ has the same  singularity type as $\phi$ and $\int_K \theta^n_\psi >0$. We can assume that $\phi\leq \psi \leq \phi + C$ for some $C> 0$. If $C \leq 1$ then $\psi$ is a candidate in the definition of $\textup{Cap}_\phi(B)$, hence $\textup{Cap}_\phi(B) > 0$, which is a contradiction. In case $C>1$, then $(1-\frac{1}{C}) \phi + \frac{1}{C}\psi$ is a candidate in the definition of $\textup{Cap}_\phi(K)$, hence 
$$
\textup{Cap}_\phi(B) \geq \textup{Cap}_\phi(K)\geq \int_K \theta^n_{\left(1-\frac{1}{C}\right) \phi + \frac{1}{C}\psi} > \frac{1}{C^n} \int_K \theta_\psi^n >0,
$$ 
 a contradiction. 
\end{proof}
\subsubsection{The $\phi$-relative extremal function}
Recall that $\phi$ has small unbounded locus, i.e. $\phi$ is locally bounded outside a closed complete pluripolar subset $A\subset X$.  Recall that by ${\rm PSH}(X,\theta,\phi)$ we denote the set of all $\theta$-psh functions which are more singular than $\phi$. 

Let $E$ be a Borel subset of $X$. The relative extremal function of $(E,\phi,\theta)$ is defined as 
\[
h_{E,\phi}:= \sup\{u\in \psh(X,\theta,\phi) \setdef u\leq \phi-1\ \textrm{on}\ E\ ; \ u\leq 0\ \textrm{on}\ X\}.
\]
\begin{lemma}\label{basic property of relative extremal function phi}
Let $E$ be a Borel subset of $X$ and $h_{E,\phi}$ be the relative extremal function of $(E,\phi,\theta)$. Then $h_{E,\phi}^*$ is a $\theta$-psh function such that $\phi-1\leq h_{E,\phi}^*\leq \phi$. Moreover, $\theta_{h_{E,\phi}^*}^n$ vanishes on $\{h_{E,\phi}^*<0\}\setminus \bar{E}$. 
\end{lemma}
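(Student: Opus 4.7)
The plan splits into the two claims. First I verify the $\theta$-plurisubharmonicity and the bounds. After the standard normalization $\phi\leq 0$, the function $\phi-1\leq 0$ lies in the defining family, so $h_{E,\phi}\geq \phi-1$ and the envelope is not identically $-\infty$. For any candidate $u$ (satisfying $u\leq 0$ and $u$ more singular than $\phi$), I observe that $u\leq \min(u+C,0)$ for every $C>0$, hence $u\leq P_\theta(u+C,0)$; letting $C\to\infty$ yields $u\leq P_\theta[u]$. From $u\leq \phi+C_u$ one gets $P_\theta(u+C,0)\leq P_\theta(\phi+C+C_u,0)$ for every $C$, and passing to the limit combined with the model-type assumption $\phi=P_\theta[\phi]$ of this section gives $P_\theta[u]\leq P_\theta[\phi]=\phi$. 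Thus $u\leq \phi$ for every candidate $u$, so $h_{E,\phi}\leq \phi$; Choquet's lemma then ensures $h_{E,\phi}^*$ is $\theta$-psh.

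For the Monge--Amp\`ere vanishing, let $U:=(\{h_{E,\phi}^*<0\}\setminus \bar E)\cap(X\setminus A)$, which is open since $h_{E,\phi}^*$ is upper semi-continuous and $A,\bar E$ are closed. Because $\theta_{h_{E,\phi}^*}^n$ is non-pluripolar it does not charge $A$, so it suffices to show $\theta_{h_{E,\phi}^*}^n(B)=0$ for each open ball $B$ with $\bar B\subset U$. Fix such a $B$. Upper semi-continuity gives $\delta>0$ with $h_{E,\phi}^*\leq -\delta$ on $\bar B$; local boundedness of $\phi$ off $A$ gives $M>0$ with $\phi\geq -M$ on $\bar B$. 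I now apply the balayage respecting the ceiling $\leq 0$:
\[
\hat h := \bigl(\sup\bigl\{v\in \psh(X,\theta)\ :\ v\leq h_{E,\phi}^*\text{ on }X\setminus B,\ v\leq 0\text{ on }B\bigr\}\bigr)^*.
\]
Standard obstacle-problem arguments (analogous to the Bedford--Taylor/BEGZ balayage) show that $\hat h\in\psh(X,\theta)$, that $\hat h\geq h_{E,\phi}^*$ with equality on $X\setminus\bar B$, that $\hat h\leq 0$ globally, and that $\theta_{\hat h}^n$ vanishes on $\{\hat h<0\}\cap B$.

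It remains to show that $\hat h$ competes in the envelope defining $h_{E,\phi}$. By construction $\hat h\leq 0$ on $X$; since $\bar B\cap \bar E=\emptyset$, we have $\hat h=h_{E,\phi}^*\leq \phi-1$ on $E$; for membership in $\psh(X,\theta,\phi)$, outside $B$ I have $\hat h\leq \phi$ by the first paragraph, while on $B$ I have $\hat h\leq 0\leq \phi+M$, so $\hat h\leq \phi+M$ globally. Hence $\hat h\leq h_{E,\phi}\leq h_{E,\phi}^*$, and combined with $\hat h\geq h_{E,\phi}^*$ this forces $\hat h=h_{E,\phi}^*$. Since $h_{E,\phi}^*\leq -\delta<0$ on $\bar B$, I conclude $\hat h<0$ on $B$, whence $\theta_{h_{E,\phi}^*}^n(B)=\theta_{\hat h}^n(B)=0$, finishing the proof. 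The main obstacle is engineering the balayage so that $\hat h$ stays inside the constrained class $\psh(X,\theta,\phi)$: the small unbounded locus hypothesis is used precisely to bound $\phi$ from below on $B$ and absorb the potential growth of $\hat h$ there, while the model singularity $\phi=P_\theta[\phi]$ is essential for the upper bound $h_{E,\phi}^*\leq\phi$ in the first step.
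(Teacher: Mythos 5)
Your first paragraph matches the paper's argument almost verbatim: $\phi-1$ competes, and any competitor $u$ is dominated by $P_\theta(\phi+C_u,0)\le P_\theta[\phi]=\phi$. Your second paragraph is a spelling-out of the ``standard balayage argument'' that the paper simply delegates to \cite{BT76}, \cite[Prop.~4.1]{GZ05}, \cite[Lemma~1.5]{BBGZ13}, so the overall approach is the same.

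There is, however, a genuine gap in the step where you check that $\hat h$ competes in the envelope defining $h_{E,\phi}$: you assert ``$\hat h=h_{E,\phi}^*\le\phi-1$ on $E$,'' but this pointwise inequality is false in general. The defining family forces $h_{E,\phi}\le\phi-1$ on $E$, but the upper-semicontinuous regularisation $h_{E,\phi}^*$ can strictly exceed $h_{E,\phi}$ (hence exceed $\phi-1$) on a nonempty pluripolar subset $N\subset E$ — take, e.g., $E$ itself pluripolar, where $h_{E,\phi}^*=\phi>\phi-1$ on $E\cap\{\phi>-\infty\}$. The paper is aware of this and writes, in the proof of Theorem \ref{thm: capi formula compact}, ``$h=\phi-1$ on $K$ modulo a pluripolar set.'' As a result $\hat h$ need not be a genuine candidate for $h_{E,\phi}$, so the inequality $\hat h\le h_{E,\phi}\le h_{E,\phi}^*$ — and therefore the identity $\hat h=h_{E,\phi}^*$ that the rest of the argument relies on — does not follow as written.

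The gap is a standard technicality and can be patched: use Choquet's lemma to pick an increasing sequence $(u_j)$ of \emph{genuine} candidates with $(\lim_j u_j)^*=h_{E,\phi}^*$, balayage each $u_j$ over $B$ (with the ceiling $\le 0$) to get $\hat u_j$. Since $\hat u_j=u_j$ off $\bar B$ and $\bar B\cap\bar E=\emptyset$, each $\hat u_j$ is still a genuine candidate, hence $\hat u_j\le h_{E,\phi}^*\le -\delta$ on $\bar B$, so $\theta_{\hat u_j}^n(B)=0$ for all $j$. Since $u_j\le\hat u_j\le h_{E,\phi}^*$, we get $\hat u_j\nearrow h_{E,\phi}^*$ a.e.; Theorem \ref{thm: lsc of MA measures} together with Remark \ref{rem: increasing implies capacity} gives weak convergence $\theta_{\hat u_j}^n\to\theta_{h_{E,\phi}^*}^n$, and since $B$ is open, $\theta_{h_{E,\phi}^*}^n(B)\le\liminf_j\theta_{\hat u_j}^n(B)=0$, which closes the argument.
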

\begin{proof}
Since $\phi-1$ is a candidate defining $h_{E,\phi}$ it follows that $\phi-1\leq h_{E,\phi}\leq h_{E,\phi}^*$. Any $u \in \textup{PSH}(X,\theta,\phi)$ with $u \leq 0$ is a candidate of $P_\theta(\phi+C,0)$, for some $C\in \Bbb R$. By Theorem \ref{thm: ceiling coincide envelope non collapsing} we get that $u \leq P_{\theta}[\phi]=\phi$, hence $h_{E,\phi}^* \leq \phi$.

By the above, $h_{E,\phi}^*$ is locally bounded outside the closed pluripolar set $A$, and a standard balayage argument (see e.g. \cite{BT76}, \cite[Proposition 4.1]{GZ05}, \cite[Lemma 1.5]{BBGZ13})  gives that $\theta_{h_{E,\phi}^*}^n$ vanishes in $\{h_{E,\phi}^*<0\}\setminus \bar{E}$.
\end{proof}

\begin{theorem}\label{thm: capi formula compact}
	If  $K$ is a compact subset of $X$ and $h:=h_{K,\phi}^*$ then 
	\[
	\capi(K) =\int_K \theta_{h}^n =\int_X (\phi-h) \theta_h^n. 
	\]
\end{theorem}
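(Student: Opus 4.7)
I would first dispose of the second equality $\int_K\theta_h^n=\int_X(\phi-h)\theta_h^n$. By Lemma \ref{basic property of relative extremal function phi}, $\phi-1\leq h\leq\phi\leq 0$ and $\theta_h^n$ vanishes on $\{h<0\}\setminus K$, so its support lies in $K\cup\{h=0\}$. Moreover, $h=0$ together with $h\leq\phi\leq0$ forces $\phi=0$, so $\phi-h=0$ on $\{h=0\}$. Since $\phi-1$ is itself a candidate in the defining supremum of $h_{K,\phi}$, a standard negligible-set argument gives $h=\phi-1$ (equivalently $\phi-h=1$) on $K$ quasi-everywhere. Splitting $X=K\sqcup(X\setminus K)$ and using that non-pluripolar measures do not charge pluripolar sets immediately yields $\int_X(\phi-h)\theta_h^n=\int_K\theta_h^n$.

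For the capacity identity, the lower bound $\capi(K)\geq\int_K\theta_h^n$ is straightforward: $h+1$ satisfies $\phi\leq h+1\leq\phi+1$, so it is a candidate in the sup defining $\capi(K)$, and $\theta_{h+1}^n=\theta_h^n$. The main work is the reverse inequality. Fix a candidate $\psi$ with $\phi\leq\psi\leq\phi+1$. Both $h$ and $\psi-1$ share the singularity type of $\phi$, hence lie in $\Ec(X,\theta,\phi)$. Applying the comparison principle (Corollary \ref{comparison principle}) to $u=h$ and $v=\psi-1$, and rewriting $\{h<\psi-1\}=\{\psi>h+1\}$, one obtains
$$\int_{\{\psi>h+1\}}\theta_\psi^n\leq\int_{\{\psi>h+1\}}\theta_h^n.$$
The key geometric observation is that on $\{h=0\}$ we have $h+1=1\geq\psi$ (using $\psi\leq\phi+1\leq1$); combined with $\supp\theta_h^n\subset K\cup\{h=0\}$ this reduces the right-hand side to $\int_{K\cap\{\psi>h+1\}}\theta_h^n$.

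It then remains to control $\int_{K\cap\{\psi\leq h+1\}}\theta_\psi^n$. On $K$ one has $h+1=\phi$ q.e., and since $\psi\geq\phi$, the set $K\cap\{\psi\leq h+1\}$ coincides with $K\cap\{\psi=\phi\}$ up to a pluripolar set. Plurifine locality of the non-pluripolar product (\cite[Prop.~1.4]{BEGZ10}) yields $\theta_\psi^n=\theta_\phi^n$ on $\{\psi=\phi\}$ and $\theta_\phi^n=\theta_h^n$ on $\{h=\phi-1\}$ (since there $h$ and $\phi$ differ by a constant); hence $\int_{K\cap\{\psi\leq h+1\}}\theta_\psi^n=\int_{K\cap\{\psi\leq h+1\}}\theta_h^n$. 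Summing the two contributions gives $\int_K\theta_\psi^n\leq\int_K\theta_h^n$, and taking the sup over $\psi$ completes the proof.

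The main technical obstacle is the locality identity on $K\cap\{\psi=\phi\}$, which is not plurifine open in general; handling it requires combining the approximation of $\{\psi=\phi\}$ by the plurifine open sets $\{|\psi-\phi|<\varepsilon\}$ with the fact that the measures $\theta_\psi^n$ and $\theta_h^n$ ignore pluripolar sets, together with careful bookkeeping of the various quasi-everywhere identities (notably $h=\phi-1$ on $K$).
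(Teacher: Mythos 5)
Your treatment of the second equality $\int_K\theta_h^n=\int_X(\phi-h)\theta_h^n$ and of the lower bound $\capi(K)\geq\int_K\theta_h^n$ is correct and essentially matches the paper's. The gap is in the upper bound $\capi(K)\leq\int_K\theta_h^n$, specifically in how you handle the ``boundary'' piece $K\cap\{\psi\leq h+1\}$.

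The plurifine locality property (\cite[Prop.~1.4]{BEGZ10}) asserts that if two potentials agree on a \emph{plurifine open} set, then their non-pluripolar Monge--Amp\`ere measures agree there. You invoke it for the sets $\{\psi=\phi\}$ and $\{h=\phi-1\}$. But since $\psi\geq\phi$ and $h\geq\phi-1$, both sets are complements of plurifine open sets, i.e.\ plurifine \emph{closed}, not plurifine open. The identity $\theta_\psi^n=\theta_\phi^n$ on $\{\psi=\phi\}$ is simply false in general: for instance, if $\psi$ is a ``max'' with $\phi$ of some other potential, $\theta_\psi^n$ can carry extra mass precisely on the boundary $\{\psi=\phi\}$ that $\theta_\phi^n$ does not see. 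Your proposed remedy --- approximating $\{\psi=\phi\}$ from outside by the plurifine open sets $\{\psi<\phi+\varepsilon\}$ --- does not rescue the step, because on those open sets $\psi$ does not coincide with $\phi$ (nor with any other fixed potential whose measure you control), so locality still gives nothing.

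The paper sidesteps this entirely with a one-line perturbation that you skip: set $u:=\psi-1$ (so $\phi-1\leq u\leq\phi$) and $u_\varepsilon:=(1-\varepsilon)u+\varepsilon\phi$ for $\varepsilon\in(0,1)$. Since $u\geq\phi-1$, one has $u_\varepsilon\geq\phi-(1-\varepsilon)>\phi-1=h$ quasi-everywhere on $K$; thus $K\subset\{h<u_\varepsilon\}$ modulo a pluripolar set, and the comparison principle (Corollary \ref{comparison principle}) applies on a plurifine open set containing essentially all of $K$ --- there is no boundary piece left over. Combined with $\theta_{u_\varepsilon}^n\geq(1-\varepsilon)^n\theta_u^n$ and the support property $\{h<u_\varepsilon\}\cap\supp\theta_h^n\subset K$ (since $u_\varepsilon\leq\phi\leq0$), this yields $(1-\varepsilon)^n\int_K\theta_u^n\leq\int_K\theta_h^n$, and letting $\varepsilon\to0$ finishes. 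In short: the strict-inequality trick of mixing in a little $\phi$ is not cosmetic; it is what removes the very set on which your locality argument breaks down.
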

\begin{proof}
Set $h:=h^*_{K,\phi}$ and observe that $h+1$ is a candidate defining $\textup{Cap}_{\phi}$.  Since $\theta_h^n$ puts no mass on the set $\{h<\phi\}\setminus K$ and $h=\phi-1$ on $K$ modulo a pluripolar set we thus get 
\[
\capi(K) \geq \int_K \theta_h^n  = \int_X (\phi-h)\theta_h^n. 
\]

Now let $u$ be a $\theta$-psh function such that $\phi-1\leq u\leq \phi$. For a fixed $\vep\in (0,1)$ set $u_{\vep}:=(1-\vep)u +\vep \phi$. Since $h=\phi-1$ on $K$ modulo a pluripolar set and $\phi-1\leq u_{\varepsilon}$ it follows that  $K\subset \{h<u_{\vep}\}$ modulo a pluripolar set. By the comparison principle we then get 
\[
%(1-\vep)^n\int_K \theta_{u+1}^n=
(1-\vep)^n\int_K \theta_u^n \leq \int_{\{h<u_{\vep}\}} \theta_{u_{\vep}}^n \leq  \int_{\{h<u_{\vep}\}}  \theta_h^n =\int_ K \theta_h^n,
\]
where in the last equality we use the fact that $\theta_h^n$ vanishes in $\{h<0\}\setminus K$. Since $u$ was taken arbitrarily, letting $\vep\to 0$ we obtain $\capi(K)\leq \int_K \theta_h^n$. This together with the previous step gives the result.  
\end{proof}

\begin{coro}
	\label{cor: ext capacity of compact sets}
	If $(K_j)$ is a decreasing sequence of compact sets then 
	\[
	\capi(K) =\lim_{j\to +\infty} \capi(K_j),
	\]
    where $K:= \bigcap_j K_j$.
	In particular, for any compact set $K$ we have 
	\[
	\capi(K) =\inf\{\capi(U) \setdef K\subset U\subset X\ ; \ U \ \textrm{is open in}\ X\}.
	\]
\end{coro}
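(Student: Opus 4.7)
The plan is to observe first that the second statement (outer regularity by open neighborhoods) follows immediately from the first (decreasing-compact continuity): the tubular closures $\bar U_\varepsilon := \{x\in X : d(x,K)\leq \varepsilon\}$ form a decreasing family of compacts with intersection $K$, so $\capi(\bar U_\varepsilon) \to \capi(K)$ by the first part, and monotonicity gives $\capi(U_\varepsilon) \leq \capi(\bar U_\varepsilon)$, squeezing $\inf_{U\supset K, \,\textup{open}} \capi(U) = \capi(K)$. So the real content is the first assertion, and the trivial half $\capi(K) \leq \liminf_j \capi(K_j)$ follows from monotonicity of $\capi$ on sets.

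For the reverse inequality $\limsup_j \capi(K_j) \leq \capi(K)$, my strategy is to use the relative extremal function $h_j := h^*_{K_j,\phi}$ from Lemma \ref{basic property of relative extremal function phi}. Because $K_{j+1}\subset K_j$, the admissible class defining $h_{K_{j+1},\phi}$ is larger than that defining $h_{K_j,\phi}$, so $(h_j)$ is pointwise increasing; set $g := (\lim_j h_j)^* \in \psh(X,\theta)$. Since every $h_j$ and $g$ lie in the strip $[\phi-1,\phi]$, they all share the singularity type of $\phi$, and Proposition \ref{prop: comparison generalization} yields the common total mass $\int_X \theta_{h_j}^n = \int_X\theta_g^n = \int_X\theta_\phi^n$. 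The monotone convergence $h_j\nearrow g$ gives convergence in capacity (Remark \ref{rem: increasing implies capacity}), and the mass condition \eqref{eq: global_mass_semi_cont} is automatic; Theorem \ref{thm: lsc of MA measures} then supplies the weak convergence $\theta_{h_j}^n \to \theta_g^n$ on $X$.

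With this in hand, Theorem \ref{thm: capi formula compact} gives $\capi(K_j) = \int_{K_j}\theta_{h_j}^n$. For any open $U\supset K$, a standard compactness argument forces $K_j\subset U$ for all $j$ large (any subsequential limit of $x_j\in K_j\setminus U$ would lie in $\bigcap_j K_j = K\subset U$, a contradiction). Hence the Portmanteau upper-semicontinuity on the closed set $\bar U$ --- valid because the measures $\theta_{h_j}^n$ have common finite total mass --- gives
\[
\limsup_{j\to\infty}\capi(K_j) \leq \limsup_{j\to\infty}\int_{\bar U}\theta_{h_j}^n \leq \int_{\bar U}\theta_g^n.
\]
Shrinking $U$ to $K$ through tubular neighborhoods and using the continuity of finite measures along the decreasing intersection yields $\limsup_j \capi(K_j) \leq \int_K \theta_g^n$.

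To close, I would note that $g+1$ is itself an admissible candidate in the definition of $\capi(K)$, as it is $\theta$-psh and satisfies $\phi \leq g+1 \leq \phi+1$. Therefore $\int_K \theta_g^n = \int_K \theta_{g+1}^n \leq \capi(K)$, completing the argument. The main subtlety I expect is to resist identifying $g$ with $h^*_{K,\phi}$: although both coincide with $\phi-1$ quasi-everywhere on $K$ and have the same singularity type, they may well differ on a pluripolar set off $K$, and any attempt to equate them (for instance via the domination principle) would require verifying that $\theta_g^n$ does not charge $\{g<h^*_{K,\phi}\}$, which is delicate. Using $g+1$ directly as an admissible competitor elegantly bypasses this identification and keeps the argument quantitative.
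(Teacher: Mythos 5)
Your proof is correct and shares the same backbone as the paper's: both introduce the increasing sequence $h_j := h^*_{K_j,\phi}$ with monotone limit $g$, invoke $h_j \nearrow g$ to get $\theta_{h_j}^n \to \theta_g^n$ weakly (via Theorem \ref{thm: lsc of MA measures} and Remark \ref{rem: increasing implies capacity}), and close by observing that $g+1$ is an admissible competitor in the definition of $\capi(K)$. The genuine difference is the intermediate step converting weak convergence into an estimate on $\int_K \theta_g^n$. The paper first shows that $\theta_g^n$ charges no mass on $\{g<0\}\setminus K$ (by applying weak lower semicontinuity on the open sets $\{g<0\}\setminus K_m$ and using Lemma \ref{basic property of relative extremal function phi}), and then uses Lemma \ref{lem: basic convergence s.u.l} to pass $\int_X(\phi-h_j)\theta_{h_j}^n \to \int_X(\phi-g)\theta_g^n = \int_K \theta_g^n$. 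You instead use Portmanteau upper semicontinuity on the closed tubular neighborhoods $\bar U_\varepsilon$ together with $K_j\subset U_\varepsilon$ for $j$ large, and then let $\varepsilon\to 0$ by downward continuity of finite measures; this bypasses any analysis of where $\theta_g^n$ is supported and trades one pluripotential-theoretic lemma for a purely measure-theoretic one. Both are valid; your route is somewhat lighter on machinery, while the paper's version also yields as a byproduct the structural fact that $\theta_g^n$ is concentrated on $K$ (up to a pluripolar set). Your handling of the ``in particular'' clause via shrinking tubes is also fine and parallels the paper's version using compacts containing $K$ in their interiors.
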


\begin{proof}Let $h_j:=h^*_{K_j,\phi}$ be the relative extremal function of $(K_j,\phi)$. Then  $(h_j)$ increases almost everywhere to $h\in \psh(X,\theta)$ which satisfies $\phi-1\leq h\leq \phi$, since $\phi-1\leq h_j\leq \phi$.  

Next we claim that  $\theta_h^n(\{h<0\}\setminus K)=0$. Indeed, for $m\in \mathbb{N}$ fixed and for each $j>m$ we have that $\{h<0\}\setminus K_m \subset \{h_j<0\}\setminus K_j$ and by Lemma \ref{basic property of relative extremal function phi},
$$\theta_{h_j}^n (\{h_j<0\}\setminus K_j) =0.
$$
Using the continuity of the Monge-Amp\`ere measure along monotone sequences (Theorem \ref{thm: lsc of MA measures} and Remark \ref{rem: increasing implies capacity}) we have that $\theta_{h_j}^n$ converges weakly to $\theta_h^n$. Since $\{h<0\}\setminus K_m$ is open it follows that 
 \[
 \theta_{h}^n (\{h<0\}\setminus K_m)\leq \liminf_{j\to +\infty} \theta_{h_j}^n(\{h<0\}\setminus K_m)=0.
 \]
 The claim follows as $m\rightarrow +\infty$. 
 It then  follows from Theorem \ref{thm: capi formula compact}  and Lemma \ref{lem: basic convergence s.u.l}  that 
\begin{flalign*}
	\lim_{j\to +\infty}\capi(K_j) =  \lim_{j\to +\infty} \int_X (\phi-h_j) \theta_{h_j}^n = \int_X (\phi-h)\theta_h^n=\int_K \theta_h^n \leq \capi(K).  
\end{flalign*}
As the reverse inequality is trivial, the first statement follows.	

To prove the last statement, let $(K_j)$ be a decreasing sequence of compact sets such that $K$ is contained in the interior of $K_j$ for all $j$. Then by the first part of the corollary we have that 
	\begin{eqnarray*}
	\capi(K) =\lim_{j\to +\infty} \capi(K_j)&\geq & \lim_{j\to +\infty} \capi(\textrm{Int}(K_j)) \\
    &\geq & \inf\{\capi(U) \setdef K\subset U\subset X\ ; \ U \ \textrm{is open in}\ X\},
	\end{eqnarray*}
    hence equality.
\end{proof}
\begin{coro}
	If $U$ is an open subset of $X$ then 
	\[
	\capi(U) = \int_X (\phi-h_{U,\phi})\theta_{h_{U,\phi}}^n. 
	\]
\end{coro}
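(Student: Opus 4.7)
The plan is to reduce to Theorem \ref{thm: capi formula compact} via an inner approximation of $U$ by compact sets and then pass to the limit using Lemma \ref{lem: basic convergence s.u.l}.

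First, I would fix an exhaustion of $U$ by compact sets $K_1 \Subset K_2 \Subset \ldots$ with $K_j \subset \textup{Int}(K_{j+1})$ and $\bigcup_j K_j = U$, and set $h_j := h_{K_j, \phi}^*$. By Lemma \ref{basic property of relative extremal function phi} each $h_j$ satisfies $\phi - 1 \leq h_j \leq \phi$, so it shares the singularity type of $\phi$ and is locally bounded off the small unbounded locus $A$ of $\phi$. Inner regularity (Lemma \ref{lem: cap is inner regular}) together with the compact formula gives
\[
\capi(U) = \lim_{j \to \infty} \capi(K_j) = \lim_{j \to \infty} \int_X (\phi - h_j)\,\theta_{h_j}^n.
\]
Since the defining constraint of $h_{K_{j+1},\phi}$ is more restrictive than that of $h_{K_j,\phi}$, the sequence $(h_j)$ is decreasing; set $h := \lim_j h_j \in \psh(X,\theta)$.

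Next I would identify $h$ with $h_{U,\phi}$ pointwise. On one hand, any candidate $u$ in the definition of $h_{U,\phi}$ satisfies $u \leq \phi - 1$ on $U \supset K_j$, so $u \leq h_j$ for every $j$, and hence $h_{U,\phi} \leq h$. For the reverse inequality, the choice $K_j \subset \textup{Int}(K_{j+1})$ is crucial: for $x \in \textup{Int}(K_{j+1})$ one has
\[
h_{j+1}(x) = \limsup_{y \to x} h_{K_{j+1}, \phi}(y) \leq \phi(x) - 1,
\]
because $\phi$ is upper semicontinuous. Therefore $h \leq h_{j+1} \leq \phi - 1$ on $K_j$, and letting $j \to \infty$ yields $h \leq \phi - 1$ on $U = \bigcup_j K_j$. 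Combined with $h \leq 0$, this makes $h$ itself a candidate in the definition of $h_{U,\phi}$, so $h \leq h_{U,\phi}$. Hence $h = h_{U,\phi}$.

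Finally, I would pass to the limit. The sequence $(h_j)$ decreases to $h$ while trapped between $\phi - 1$ and $\phi$; in the locally bounded setting off $A$ this implies $h_j \to h$ in capacity. The quasi-continuous functions $f_j := \phi - h_j$ lie uniformly in $[0,1]$ and converge in capacity to $\phi - h$. Applying Lemma \ref{lem: basic convergence s.u.l} with $u_1^j = \cdots = u_n^j = h_j$ yields the weak convergence $(\phi - h_j)\theta_{h_j}^n \to (\phi - h)\theta_h^n$ on $X$, and since $X$ is compact this promotes to convergence of the total masses, completing the proof. The step I expect to require the most care is the identification $h = h_{U,\phi}$, in particular the use of $K_j \subset \textup{Int}(K_{j+1})$ to ensure the usc-regularizations $h_j$ obey the pointwise constraint $\leq \phi - 1$ on $K_j$ rather than only quasi-everywhere, which is what lets the limit pass through without resorting to an additional pluripolar perturbation.
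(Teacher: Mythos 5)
Your proof is correct and follows essentially the same route as the paper: exhaust $U$ by compacts, invoke the compact-set formula (Theorem \ref{thm: capi formula compact}), and pass to the limit via Lemma \ref{lem: basic convergence s.u.l} and the inner regularity lemma. The one place where you go beyond the paper is worth flagging: the paper simply asserts "$h_j$ decreases to $h_{U,\phi}$" without proof, whereas you correctly observe that this requires the exhaustion to satisfy $K_j \subset \textup{Int}(K_{j+1})$ so that the usc regularization $h_j = h_{K_j,\phi}^*$ still obeys $h_j \leq \phi - 1$ pointwise on the earlier $K_{j-1}$ (using upper semicontinuity of $\phi$), which then shows the decreasing limit $h$ is itself a candidate for $h_{U,\phi}$ and incidentally proves $h_{U,\phi}$ is $\theta$-psh (so that $\theta_{h_{U,\phi}}^n$ is well-defined in the statement). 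This is a genuine detail the paper leaves to the reader, and your handling of it is the right one.
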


\begin{proof}
	Let $(K_j)$ be an increasing sequence of compact subsets of $U$ such that $\cup K_j=U$. For each $j$ we set $h_j:=h^*_{K_j,\phi}$. By Theorem \ref{thm: capi formula compact} we have that 
	\[
	\capi(K_j) = \int_X (\phi-h_j)\theta_{h_j}^n. 
	\]
	Since $h_j$ decreases to $h_{U,\phi}$ it follows from Lemma \ref{lem: basic convergence s.u.l} that the right-hand side above converges to $\int_X (\phi-h_{U,\phi})\theta_{h_{U,\phi}}^n$. 
 Moreover, by the argument of Lemma \ref{lem: cap is inner regular} we have $\lim_j \capi(K_j) = \capi (U)$, hence the result follows.
\end{proof}

\subsubsection{The global $\phi$-extremal function}
For a Borel set $E \subset X$, we define the  global $\phi$-{\it extremal function} of $(E,\phi,\theta)$ by  

$$
V_{E,\phi}:=\sup\left\{\psi \in \textup{PSH}(X,\theta,\phi), \ \psi\leq \phi
\textup{ on } E \right \}.$$
We then introduce the \emph{relative Alexander-Taylor capacity} of $E$,
$$
T_\phi(E):=\exp(-M_{\phi}(E)), \ \ \text{where}\ M_{\phi}(E) : =\sup_X V_{E,\phi}^*. 
$$ 
Paralleling Lemma \ref{lem: zero_cap}, we have the following result:
\begin{lemma}\label{lem: Alexander Taylor capacity} Let $E \subset X$ be a Borel set. If $M_\phi (E)=+\infty$, then $E$ is pluripolar. 
\end{lemma}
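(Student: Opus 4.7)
The plan is to establish the contrapositive: if $E$ is non-pluripolar, then $V_{E,\phi}^*$ is bounded above on $X$, hence $M_\phi(E)<+\infty$. The key idea is to reduce everything to the well-known Alexander-Taylor result for the classical $\omega$-relative global extremal function $V_{E,\omega}^*$, which, as already invoked in the proof of Lemma \ref{lem: zero_cap}, is bounded above on non-pluripolar sets by \cite[Theorem 9.17]{GZ17}.

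First I would fix a constant $C>0$, large enough that $\theta \leq C\omega$ (possible by compactness of $X$ and smoothness of $\theta$). Then for any candidate $\psi$ defining $V_{E,\phi}$ (in particular any $\theta$-psh function, with no use of the singularity condition $\psi \in \psh(X,\theta,\phi)$) one computes
\[
\omega + i\ddbar (\psi/C) \;\geq\; \omega - \theta/C \;\geq\; 0,
\]
so $\psi/C \in \psh(X,\omega)$. Since $\psi \leq 0$ on $E$, this means $\psi/C$ is a candidate for the classical extremal function, and hence $\psi/C \leq V_{E,\omega} \leq V_{E,\omega}^*$ pointwise on $X$.

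Taking the supremum over all candidates and then upper semicontinuous regularization gives $V_{E,\phi}^* \leq C \, V_{E,\omega}^*$ on $X$. If $E$ is non-pluripolar then $\sup_X V_{E,\omega}^*<+\infty$, whence $M_\phi(E)=\sup_X V_{E,\phi}^*\leq C \sup_X V_{E,\omega}^*<+\infty$, contradicting the hypothesis $M_\phi(E)=+\infty$. There is essentially no obstacle beyond the bookkeeping that the singularity constraint $\psi \in \psh(X,\theta,\phi)$ plays no role in the upper bound—only $\theta$-plurisubharmonicity is used—so the argument reduces cleanly to the classical statement.
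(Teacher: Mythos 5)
Your proposal is correct and follows essentially the same route as the paper: reduce to the classical fact that the $\omega$-extremal function $V_{E,\omega}^*$ is bounded above on non-pluripolar sets. The only cosmetic difference is that the paper picks a K\"ahler form $\omega$ with $\omega \geq \theta$ so that $\psh(X,\theta)\subset\psh(X,\omega)$ directly, whereas you keep the fixed $\omega$ and rescale candidates by $1/C$; both yield $M_\phi(E)\lesssim\sup_X V_{E,\omega}^*$ and then invoke the same Guedj--Zeriahi result.
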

\begin{proof}
Let $\omega$ be a K\"ahler form such that $\omega \geq \theta$.  By definition we have 
\[
V_{E, \phi} \leq V_{E, \omega}:=\sup\left\{\psi \in \textup{PSH}(X,\omega), \ \psi\leq 0 \textup{ on } E \right \}.
\]
 This clearly implies $M_\phi (E)\leq \sup_X V_{E, \omega}^* $, and so by assumption we know that $\sup_X V_{E, \omega}^*=+\infty $. It then follows from \cite[Theorem 5.2]{GZ05} that $E$ is pluripolar. 
\end{proof}
If $M_\phi (E)<+\infty$ then $V^*_{E, \phi}\in \textup{PSH}(X, \theta)$, and standard  arguments give that $\theta_{V_{E,\phi}^*}^n$ does not charge $X \setminus \overline{E}$ (see \cite[Theorem 9.17]{GZ17} or \cite[Theorem 5.2]{GZ05}).
Now, we claim that
\begin{equation} \phi \leq V_{E,\phi}^* \leq P_{\theta}[\phi] + M_\phi(E)=\phi + M_\phi(E).\label{eq: V_K_est}
\end{equation}
The first inequality simply follows by definition, since $\phi \leq 0$ is a candidate in the definition of $V_{E,\phi}$. If $M_\phi (E)=+\infty$ then the second inequality holds trivially. Assume that $M_\phi (E)<+\infty$. The inequality then holds, since $V_{E,\phi}^* -M_\phi(E)\leq 0$, and each candidate potential $\psi$ in the definition of  $V_{E,\phi}^*$ is more singular than $\phi$, i.e., $\psi-M_\phi(E)$ is a candidate in the definition of $P_\theta (\phi+C, 0)$, for some $C>0$. Finally, the last identity follows from Theorem \ref{thm: ceiling coincide envelope non collapsing}.  

In particular, since $\phi$ has small unbounded locus, so does the usc regularization $V_{E,\phi}^*$. Also, from \eqref{eq: V_K_est} we deduce that  if  $M_\phi(E)<+\infty$, the $\theta$-psh functions $V_{E,\phi}^*$ and $\phi$ have the same singularity type, hence Proposition \ref{prop: comparison generalization} insures that $$\int_X \theta_{V_{E,\phi}^*}^n = \int_ X \theta_\phi^n.$$
The Alexander-Taylor and Monge-Amp\`ere capacities are related by the following estimates: 

\begin{lemma}\label{lem: compcap} Suppose $K \subset X$ is a compact subset and $\textup{Cap}_\phi(K)>0$. Then we have
$$1\leq\bigg(\frac{\int_X \theta_\phi^n}{\textup{Cap}_\phi(K)}\bigg)^{1/n}\leq \max(1,M_\phi(K)).$$
\end{lemma}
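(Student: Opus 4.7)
The first inequality is immediate from monotonicity of $\textup{Cap}_\phi$: since $E \subset X$ we have $\textup{Cap}_\phi(E) \leq \textup{Cap}_\phi(X) = \int_X \theta_\phi^n$, so $(\int_X \theta_\phi^n / \textup{Cap}_\phi(E))^{1/n} \geq 1$. For the upper bound by $\max(1, M_\phi(E))$, I would first observe that $\textup{Cap}_\phi(E) > 0$ forces $E$ to be non-pluripolar (Lemma \ref{lem: zero_cap}) and hence $M := M_\phi(E) < \infty$ (Lemma \ref{lem: Alexander Taylor capacity}). Consequently $V^*_{E,\phi}$ is a genuine $\theta$-psh function with the same singularity type as $\phi$, satisfying $\phi \leq V^*_{E,\phi} \leq \phi + M$ by \eqref{eq: V_K_est}.

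The key construction is the convex combination $\psi_t := (1-t)\phi + tV^*_{E,\phi}$ with $t := 1/\max(1, M) \in (0, 1]$. Since $tM \leq 1$ we have $\phi \leq \psi_t \leq \phi + tM \leq \phi + 1$, so $\psi_t$ is an admissible candidate in the definition of $\textup{Cap}_\phi(E)$. Expanding $\theta_{\psi_t}^n = ((1-t)\theta_\phi + t\theta_{V^*_{E,\phi}})^n$ via the multinomial formula and discarding all but one non-negative term yields the pointwise bound $\theta_{\psi_t}^n \geq t^n \theta^n_{V^*_{E,\phi}}$, so
\[
\textup{Cap}_\phi(E) \;\geq\; \int_E \theta_{\psi_t}^n \;\geq\; t^n \int_E \theta^n_{V^*_{E,\phi}}.
\]

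The heart of the argument is then to show $\int_E \theta^n_{V^*_{E,\phi}} = \int_X \theta_\phi^n$. Since $V^*_{E,\phi}$ and $\phi$ are equisingular, Proposition \ref{prop: comparison generalization} gives $\int_X \theta^n_{V^*_{E,\phi}} = \int_X \theta_\phi^n$, so it suffices to argue that $\theta^n_{V^*_{E,\phi}}$ does not charge $X \setminus E$ modulo pluripolar sets. A standard balayage argument---replacing $V^*_{E,\phi}$ on a small ball $B \subset X \setminus \bar E$ disjoint from the unbounded locus of $\phi$ by its Bedford--Taylor MA-envelope produces a $\theta$-psh perturbation $u \geq V^*_{E,\phi}$ that remains a candidate for $V_{E,\phi}$, forcing $u = V^*_{E,\phi}$ on $B$---shows $\mathrm{supp}(\theta^n_{V^*_{E,\phi}}) \subset \bar E$. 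For compact $E$ this closes the argument since $\bar E = E$. For general Borel $E$ one has to control possible mass on $\bar E \setminus E$, and the plan is to approximate by compact subsets using the inner regularity of $\textup{Cap}_\phi$ (Lemma \ref{lem: cap is inner regular}), combined with a Choquet-type regularity argument for $M_\phi$.

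Rearranging the resulting estimate $\textup{Cap}_\phi(E) \geq t^n \int_X \theta_\phi^n$ gives $(\int_X \theta_\phi^n / \textup{Cap}_\phi(E))^{1/n} \leq 1/t = \max(1, M_\phi(E))$, as desired. The assumption that $\phi$ has small unbounded locus is crucial for the balayage step, ensuring $V^*_{E,\phi}$ is locally bounded off a pluripolar set. I expect the main obstacle to be the passage from compact to general Borel $E$: one has $M_\phi(K) \geq M_\phi(E)$ for compact $K \subset E$, the wrong direction for a naive inner-regularity approximation, so the support statement $\mathrm{supp}(\theta^n_{V^*_{E,\phi}}) \subset \bar E$ alone is not enough and one must show that the mass of $\theta^n_{V^*_{E,\phi}}$ on $\bar E \setminus E$ is indeed absent, most plausibly by reducing to an appropriate $G_\delta$ or pluripolar-saturated version of $E$ where the relative extremal function is unchanged.
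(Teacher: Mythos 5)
Your core argument coincides with the paper's: set $t = 1/\max(1,M_\phi(E))$, test $\psi_t = (1-t)\phi + tV^*_{E,\phi}$ against $\textup{Cap}_\phi(E)$, expand $\theta_{\psi_t}^n \geq t^n \theta^n_{V^*_{E,\phi}}$, and use that $\theta^n_{V^*_{E,\phi}}$ has total mass $\int_X\theta_\phi^n$ (by Proposition \ref{prop: comparison generalization}, since $V^*_{E,\phi}$ and $\phi$ are equisingular) and does not charge $X\setminus\bar E$. The paper splits into two cases ($M\leq 1$ versus $M\geq 1$) rather than using $1/\max(1,M)$, but this is purely presentational. So the body of your proof matches the paper's.

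Where you diverge is in your caution about general Borel $E$, and your caution is in fact warranted. The paper opens its proof of Proposition \ref{prop: capLp} and of Lemma \ref{lem: compcap} itself with the blanket assertion ``since $\textup{Cap}_\phi$ is inner regular, we can assume $E=K$ is compact,'' without addressing the very issue you raise: a compact $K\subset E$ that nearly attains $\textup{Cap}_\phi(E)$ may have $M_\phi(K)$ much larger than $M_\phi(E)$ (the inequality $M_\phi(K)\geq M_\phi(E)$ goes the unhelpful way). A clean fix is the following: given $\varepsilon>0$, pick a compact $K_1\subset E$ with $\textup{Cap}_\phi(K_1)\geq\textup{Cap}_\phi(E)-\varepsilon$ and a compact $K_2\subset E$ with $M_\phi(K_2)\leq M_\phi(E)+\varepsilon$ (assuming the needed Choquet-type inner approximation for $M_\phi$), then use $K=K_1\cup K_2$, for which both $\textup{Cap}_\phi(K)\geq\textup{Cap}_\phi(K_1)$ and $M_\phi(K)\leq M_\phi(K_2)$ simultaneously; the compact estimate applied to $K$ then passes to $E$ as $\varepsilon\to 0$. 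This is what your ``Choquet-type regularity argument for $M_\phi$'' would have to supply, and you are right that it is not free. Note that in the only place the paper applies this lemma (the proof of Proposition \ref{prop: capLp}) the set has already been reduced to a compact, so the Borel case is never actually needed downstream; the lemma as stated for general Borel $E$ is slightly stronger than what is used, and the terse reduction in the paper's proof deserves exactly the scrutiny you gave it.
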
 
\begin{proof}
The first  inequality is trivial. We now prove the second inequality. Note that we can assume that $M_\phi(K)<+\infty$, since otherwise the inequality is trivially satisfied. We then consider two cases. If $M_\phi(K)\leq 1$, then $V_{K,\phi}^* \leq \phi +1$, hence $V_{K,\phi}^*$ is a candidate in the definition $\textup{Cap}_\phi(K)$. Since $\theta_{V_{K,\theta}^*}^n$ is supported on $K$, we thus have 
\[
\textup{Cap}_\phi(K)\ge\int_K \theta_{V_{K,\phi}^*}^n=\int_X\theta_{V_{K,\phi}^*}^n=\int_X \theta_\phi^n,
\]
and the desired inequality holds in this case.

If $M:=M_\phi(K)\ge 1$, then by \eqref{eq: V_K_est} we have $\phi \le M^{-1}V_{K,\phi}^*+(1-M^{-1})\phi \le \phi+1,$ and by definition of the relative capacity we can write:
$$\textup{Cap}_\phi(K)\ge\int_K\ \theta_{M^{-1}V_{K,\phi}^*+(1-M^{-1})\phi}^n \geq  \frac{1}{M^n} \int_K \theta_{V^*_{K,\phi}}^n =  \frac{1}{M^n} \int_X \theta_{V^*_{K,\phi}}^n=\frac{1}{M^n}\int_X \theta_\phi^n,$$
implying the desired inequality.
\end{proof}

\subsection{The relative finite energy class $\mathcal E^1(X,\theta,\phi)$}

To develop the variational approach to \eqref{eq: CMAE_sing}, we need to understand the relative version of  the Monge-Amp\`ere energy, and its bounded locus $\mathcal E^1(X,\theta,\phi)$. For $u\in \Ec(X,\theta, \phi) $ with relatively minimal singularities, we define the Monge-Amp\`ere energy of $u$ relative to $\phi$ as 
\[
\mathrm{I}_{\phi} (u) :=\frac{1}{n+1} \sum_{k=0}^n \int_X (u-\phi) \theta_u^k \wedge\theta_{\phi}^{n-k}. 
\]

In the next theorem we collect basic properties of the Monge-Amp\`ere energy:

\begin{theorem}\label{thm: basic I energy} Suppose $u,v \in \mathcal E(X,\theta,\phi)$ have relatively minimal singularities. The following hold:\\
\noindent (i) $ \mathrm{I}_{\phi}(u)-\mathrm{I}_{\phi}(v) = \frac{1}{n+1}\sum_{k=0}^n \int_X (u-v) \theta_{u}^k \wedge \theta_{v}^{n-k}.$\\
\noindent (ii) If $u\leq \phi$ then, $
\int_X (u-\phi) \theta_u^n \leq I_{\phi}(u) \leq \frac{1}{n+1} \int_X (u-\phi) \theta_{u}^n. $ \\
\noindent (iii) $\mathrm{I}_{\phi}$ is non-decreasing and concave along affine curves. Additionally, the following estimates hold: $
	\int_X (u-v) \theta_u^n \leq I_{\phi}(u) -I_{\phi}(v) \leq \int_X (u-v) \theta_v^n.$
\end{theorem}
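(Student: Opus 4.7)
The plan is to derive everything from a single differentiation formula for $t \mapsto \mathrm{I}_\phi(u_t)$, where $u_t := (1-t)v + tu$. Since $u,v \in \mathcal E(X,\theta,\phi)$ have relatively minimal singularities, each of $u,v,u_t$ has the same singularity type as $\phi$, hence is locally bounded on $X \setminus A$, where $A$ is the closed complete pluripolar set outside of which $\phi$ is locally bounded. Crucially, the differences $u-v$, $u-\phi$, $v-\phi$ are globally bounded on $X$, which is exactly the input required to justify the integration-by-parts manipulations below in the usual BEGZ/BBGZ framework: use truncations $\max(\,\cdot\,,\phi-C)$, perform bounded Bedford--Taylor computations on $X\setminus A$, and pass $C\to\infty$ via Lemma \ref{lem: basic convergence s.u.l} together with the fact that non-pluripolar products do not charge pluripolar sets.

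The key formula to establish is
\[
\frac{d}{dt}\mathrm{I}_\phi(u_t) = \int_X (u-v)\,\theta_{u_t}^n.
\]
This is the classical energy calculation: differentiating the defining sum, the $\partial_t(u_t-\phi)$ contribution gives $\sum_{k=0}^n\int(u-v)\theta_{u_t}^k\wedge\theta_\phi^{n-k}$, while the $\partial_t\theta_{u_t}^k = k\,\theta_{u_t}^{k-1}\wedge i\ddbar(u-v)$ contribution, after integration by parts moving $i\ddbar$ from $u-v$ onto $u_t-\phi$ to produce $\theta_{u_t}-\theta_\phi$, yields a telescoping sum in $k$. Adding the two contributions, everything cancels except for $(n+1)\int(u-v)\theta_{u_t}^n$. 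Part (i) then follows by integrating from $0$ to $1$, expanding $\theta_{u_t}^n = \sum_k \binom{n}{k}t^k(1-t)^{n-k}\theta_u^k\wedge\theta_v^{n-k}$, and using $\int_0^1 t^k(1-t)^{n-k}dt = 1/((n+1)\binom{n}{k})$. Monotonicity of $\mathrm{I}_\phi$ in (iii) is immediate: if $u \geq v$, each summand of the cocycle identity (i) is non-negative. Concavity in (iii) follows from a second differentiation and one more integration by parts:
\[
\frac{d^2}{dt^2}\mathrm{I}_\phi(u_t) = n\int_X (u-v)\,i\ddbar(u-v)\wedge\theta_{u_t}^{n-1} = -n\int_X i\partial(u-v)\wedge\dbar(u-v)\wedge\theta_{u_t}^{n-1} \leq 0.
\]

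For (ii) and the sandwich estimate in (iii), the observation is that the summands $a_k := \int(u-\phi)\theta_u^k\wedge\theta_\phi^{n-k}$ are non-increasing in $k$: since $\theta_u - \theta_\phi = i\ddbar(u-\phi)$, integration by parts gives
\[
a_{k+1}-a_k = \int_X (u-\phi)\,i\ddbar(u-\phi)\wedge\theta_u^k\wedge\theta_\phi^{n-k-1} = -\int_X i\partial(u-\phi)\wedge\dbar(u-\phi)\wedge\theta_u^k\wedge\theta_\phi^{n-k-1} \leq 0,
\]
and an identical argument applied to $b_k := \int(u-v)\theta_u^k\wedge\theta_v^{n-k}$ shows $b_{k+1}-b_k \leq 0$. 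Hence $a_n \leq a_k$ and $b_n \leq b_k \leq b_0$ for all $k$; summing and dividing by $n+1$ gives the lower bound in (ii) together with the full sandwich in (iii), using (i) in the latter case. For the upper bound in (ii), the hypothesis $u \leq \phi$ forces every $a_k \leq 0$, so $\sum_k a_k \leq a_n$ (discarding the non-positive $k<n$ terms), which yields $\mathrm{I}_\phi(u) \leq \frac{1}{n+1}\int(u-\phi)\theta_u^n$.

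The only real technical hurdle throughout is justifying the integration-by-parts identities for non-pluripolar products when the potentials $u,v,\phi$ blow up along $A$. This is routine in the BEGZ framework under our small-unbounded-locus hypothesis on $\phi$, since the relevant differences are globally bounded and the cutoff/limit procedure is precisely the one handled in \cite{BEGZ10,BBGZ13}; once this is in place, the remaining steps are formal.
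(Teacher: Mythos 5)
Your argument is correct and follows essentially the same route as the paper's proof: both establish the basic integration-by-parts monotonicity $\int_X (u-v)\theta_u^{k+1}\wedge\theta_v^{n-k-1} \leq \int_X (u-v)\theta_u^k\wedge\theta_v^{n-k}$, compute the first and second derivatives of $t\mapsto \mathrm{I}_\phi(u_t)$, and integrate to obtain the cocycle identity; you merely reorganize the order (deriving (i) first, then (ii) and (iii)), whereas the paper proves the monotone-in-$k$ estimate first, applies it immediately to get (ii), and returns to it after (i) to get (iii). One small inaccuracy in your framing: since $u,v$ have relatively minimal singularities, the differences $u-v$, $u-\phi$, $v-\phi$ are already globally bounded, so the truncation-and-pass-to-the-limit step you mention is not needed here — the integration by parts of \cite[Theorem 1.14]{BEGZ10} applies directly on $X\setminus A$; truncations become relevant only later when extending $\mathrm{I}_\phi$ to general members of $\mathcal E^1(X,\theta,\phi)$.
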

\begin{proof}
Since $\phi$ has small unbounded locus, it is possible to repeat  the arguments of  \cite[Proposition 2.8]{BEGZ10} almost word for word. As a courtesy to the reader  the detailed proof is presented here.

To start, we note that the  non-pluripolar products  appearing in our arguments are simply the mixed  Monge-Amp\`ere measures defined  in the sense of Bedford and Taylor \cite{BT76} on $X\setminus A$, where $A$ is a closed complete pluripolar subset of $X$, such that $\phi$ is locally bounded on $X \setminus A$ (consequently, $u$ and $v$ are locally bounded in on $X \setminus A$). Since $u-v$ is globally bounded on $X$,  we can perform integration by parts in our arguments below, via \cite[Theorem 1.14]{BEGZ10}. 

For any fixed $k\in \{0,...,n-1\}$, set $T=\theta_u^k \wedge\theta_{v}^{n-k-1}$. Using integration by parts \cite[Theorem 1.14]{BEGZ10}, we can write
\begin{flalign}
\nonumber\int_X (u-v) \theta_u^k \wedge\theta_{v}^{n-k}& = \int_X (u-v)(\theta +i\ddbar v) \wedge T  \\
\nonumber &=\int_X  (u-v) i\ddbar (v-u)\wedge T +  \int_X (u-v) i\ddbar u \wedge T+\int_X (u-v)\theta \wedge T\\
\nonumber &= \int_X  (v-u)   i\ddbar (u-v)\wedge T + \int_X (u-v)  \theta_u \wedge T \\
&\geq  \int_X (u-v) \theta_u\wedge T=\int_X (u-v) \theta_u^{k+1} \wedge\theta_{v}^{n-k-1} ,\label{eq: basic_est}
\end{flalign}
where in the last inequality we used that $\int_X (-\varphi)i\ddbar \varphi \wedge T=i \int_X \partial \varphi \wedge \bar \partial \varphi \wedge T\geq 0$ with $\varphi:=u-v$. This shows in particular that the sequence $k\mapsto \int_X (u-\phi) \theta_{u}^k \wedge \theta_{\phi}^{n-k}$ is non-increasing in $k$, verifying (ii).

Now we compute the derivative of  $f(t):=I_{\phi}(u_t), t\in [0,1]$, where $u_t:=tu +(1-t)v$. By the multi-linearity property of the non-pluripolar product we see that $f(t)$ is a polynomial in $t$. Using again integration by parts \cite[Theorem 1.14]{BEGZ10}, one can check the following formula:
\begin{eqnarray*}
f'(t) &=&\frac{1}{n+1}\bigg(\sum_{k=0}^n \int_X (u-v) \theta_{u_t}^k\wedge \theta_{\phi}^{n-k} +\sum_{k=1}^n \int_X k(u_t-\phi) i\ddbar (u-v)\wedge \theta_{u_t}^{k-1}\wedge \theta_{\phi}^{n-k}\bigg)\\
&=& \frac{1}{n+1}\bigg(\sum_{k=0}^n \int_X (u-v) \theta_{u_t}^k\wedge \theta_{\phi}^{n-k} +\sum_{k=1}^n \int_X k(u-v) (\theta_{u_t}-\theta_{\phi})\wedge \theta_{u_t}^{k-1}\wedge \theta_{\phi}^{n-k}\bigg)\\
&=& \int_X (u-v) \theta_{u_t}^n. 
\end{eqnarray*}
Computing one more derivative, we arrive at
$$f''(t)=n \int_X (u-v) i\ddbar (u-v) \wedge \theta_{u_t}^{n-1}= -n i \int_X \partial (u-v) \wedge \bar \partial (u-v) \theta_{u_t}^{n-1} \leq 0.$$
This shows that $\AMO$ is concave along affine curves. 

Now, the function $t\mapsto f'(t)$ is continuous on $[0,1]$, thanks to convergence property of the Monge-Amp\`ere operator (see Lemma \ref{lem: basic convergence s.u.l}). It thus follows that 
\[
I_{\phi}(u_1) -I_{\phi}(u_0) = \int_0^1 f'(t)dt = \int_0^1 \int_X (u-v) \theta_{u_t}^n dt. 
\]
Using the multi-linearity of the non-pluripolar product again, we get that
\begin{eqnarray*}
\int_0^1 \int_X (u-v) \theta_{u_t}^n dt &=& \sum_{k=0}^n \left(\int_0^1\binom{n}{k} t^k (1-t)^{n-k} dt\right) \int_X   (u-v) \theta_{u}^k \wedge \theta_v^{n-k}\\
&=&  \frac{1}{n+1}\sum_{k=0}^n \int_X   (u-v) \theta_{u}^k \wedge \theta_v^{n-k}.
\end{eqnarray*}
This verifies (i), and another application of \eqref{eq: basic_est} finishes the proof of (iii).
\end{proof}

\begin{lemma}\label{lem: convergence of AM} Suppose $u_j,u \in \Ec(X,\theta,\phi)$ have relatively minimal singularities such that $u_j$ decreases to $u$. Then $\AM_{\phi}(u_j)$ decreases to $\mathrm{I}_{\phi}(u)$.   
\end{lemma}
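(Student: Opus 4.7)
The plan starts by observing that $\AM_\phi$ and $\mathrm{I}_\phi$ denote the \emph{same} functional: the macros declared at the start of the paper set $\AM := \mathrm{I}$ and $\AMO := \mathrm{I}_\phi$, so the relative Monge-Amp\`ere energy $\mathrm{I}_\phi$ introduced just before Theorem~\ref{thm: basic I energy} is exactly the $\AM_\phi$ written in the statement. Since by hypothesis both $u_j$ and $u$ have relatively minimal singularities, the quantities $\AM_\phi(u_j)=\mathrm{I}_\phi(u_j)$ and $\AM_\phi(u)=\mathrm{I}_\phi(u)$ are all well-defined through that integral formula, and the task reduces to showing $\mathrm{I}_\phi(u_j)\searrow \mathrm{I}_\phi(u)$.

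The argument will then use only Theorem~\ref{thm: basic I energy}(iii). From $u_j\searrow u$ one has $u\le u_{j+1}\le u_j$; the monotonicity clause of (iii) therefore yields $\mathrm{I}_\phi(u)\le \mathrm{I}_\phi(u_{j+1})\le \mathrm{I}_\phi(u_j)$, so $(\mathrm{I}_\phi(u_j))$ is a decreasing sequence whose limit $L$ satisfies $L\ge \mathrm{I}_\phi(u)$. To obtain the reverse inequality, the quantitative half of (iii) (applied with $u\mapsto u_j$ and $v\mapsto u$) supplies
\[
0\le \mathrm{I}_\phi(u_j)-\mathrm{I}_\phi(u)\le \int_X (u_j-u)\,\theta_u^n.
\]
Since $u_1$ and $u$ both share the singularity type of $\phi$, there is a constant $C$ with $|u_1-\phi|,|u-\phi|\le C$, so $0\le u_j-u\le u_1-u\le 2C$ pointwise. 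Combined with the finite total mass $\int_X \theta_u^n=\int_X\theta_\phi^n<\infty$ and the pointwise decrease $u_j-u\searrow 0$, the dominated convergence theorem sends the right-hand side to $0$, forcing $L\le \mathrm{I}_\phi(u)$ and hence $L=\mathrm{I}_\phi(u)$.

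The only point that requires any care at all is the uniform bound on $u_1-u$, which is immediate from the relatively minimal singularity hypothesis imposed on \emph{both} endpoints of the sequence; without that hypothesis the quantity $\AM_\phi(u)$ would not even be meaningful in the integral definition used here. No new convergence-of-currents input beyond Theorem~\ref{thm: basic I energy}(iii) is required, so I do not expect a genuine obstacle.
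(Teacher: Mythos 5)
Your proposal is correct and follows exactly the paper's argument: apply Theorem \ref{thm: basic I energy}(iii) to obtain $0\le I_\phi(u_j)-I_\phi(u)\le\int_X(u_j-u)\theta_u^n$, then conclude via dominated convergence. You merely spell out the details (monotonicity, the uniform bound $u_1-u\le 2C$ from relatively minimal singularities, and the identity $\AM_\phi=I_\phi$) that the paper leaves implicit.
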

\begin{proof} From Theorem \ref{thm: basic I energy}(iii) it follows that $|I_\phi(u_j) - I_\phi(u)|=I_\phi(u_j) - I_\phi(u) \leq \int_X (u_j - u)\theta_u^n$. An application of the dominated convergence theorem finishes the argument.
\end{proof}

We can now define the Monge-Amp\`ere energy for arbitrary $u\in \mathrm{PSH}(X,\theta,\phi)$ using  a familiar formula:
\[
I_{\phi}(u) : =\inf \{I_{\phi}(v) \setdef  v\in \Ec(X,\theta,\phi), \; v\ \textrm{has relatively minimal singularities, and } u\leq v\}. 
\]
\begin{lemma}\label{lem: easy convergence AMO}
	If $u\in \psh(X,\theta,\phi)$ then $\AMO(u)=\lim_{t\to \infty} \AMO(\max(u,\phi-t))$. 
\end{lemma}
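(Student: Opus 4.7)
The plan is to set $u_t := \max(u, \phi - t)$ and show that both inequalities $I_{\phi}(u) \leq \lim_t I_{\phi}(u_t)$ and $I_{\phi}(u) \geq \lim_t I_{\phi}(u_t)$ hold, using only the definition of $I_{\phi}$ on potentials more singular than $\phi$ and the monotonicity in Theorem \ref{thm: basic I energy}(iii).

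The first step is to verify that $u_t$ is a legitimate candidate in the infimum defining $I_{\phi}(u)$. Since $u$ is more singular than $\phi$, there exists $C$ with $u \leq \phi + C$; then $\phi - t \leq u_t \leq \max(\phi+C, \phi - t) = \phi + C$ for every $t \geq -C$, so $u_t$ has the same singularity type as $\phi$. By Proposition \ref{prop: comparison generalization}, this forces $\int_X \theta_{u_t}^n = \int_X \theta_{\phi}^n$, hence $u_t \in \mathcal{E}(X,\theta,\phi)$ with relatively minimal singularities. Since $u \leq u_t$ by construction, $u_t$ is a valid candidate in the definition of $I_{\phi}(u)$, which yields $I_{\phi}(u) \leq I_{\phi}(u_t)$ for every $t$. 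Moreover, $t \mapsto u_t$ is pointwise non-increasing, so by Theorem \ref{thm: basic I energy}(iii) the sequence $I_{\phi}(u_t)$ is non-increasing in $t$ and therefore admits a limit $L \in [-\infty, +\infty)$, with $I_{\phi}(u) \leq L$.

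For the reverse inequality, I would fix an arbitrary competitor $v \in \mathcal{E}(X,\theta,\phi)$ with relatively minimal singularities and $u \leq v$. Relative minimal singularities of $v$ give a constant $C_v$ with $v \geq \phi - C_v$. For any $t \geq C_v$, both $u \leq v$ and $\phi - t \leq \phi - C_v \leq v$ hold everywhere, so $u_t = \max(u, \phi - t) \leq v$. Monotonicity of $I_{\phi}$ (Theorem \ref{thm: basic I energy}(iii)) then gives $I_{\phi}(u_t) \leq I_{\phi}(v)$, and letting $t \to \infty$ yields $L \leq I_{\phi}(v)$. Taking the infimum over all such $v$ produces $L \leq I_{\phi}(u)$, which completes the proof.

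There is no real obstacle: the only subtle point is ensuring that $u_t$ actually lies in $\mathcal{E}(X,\theta,\phi)$ with relatively minimal singularities so that $I_{\phi}(u_t)$ is defined by the explicit formula, and this is immediate from the elementary sandwich $\phi - t \leq u_t \leq \phi + C$ combined with Proposition \ref{prop: comparison generalization}.
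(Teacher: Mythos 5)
Your proof is correct and follows exactly the same approach as the paper: one inequality from the fact that each $u_t=\max(u,\phi-t)$ is a candidate in the infimum defining $\AMO(u)$, the other by bounding $u_t\leq v$ for $t$ large against any competitor $v$ and invoking the monotonicity in Theorem \ref{thm: basic I energy}(iii). The only difference is that you spell out more explicitly why $u_t$ has the same singularity type as $\phi$ (hence lies in $\mathcal E(X,\theta,\phi)$ via Proposition \ref{prop: comparison generalization}), which the paper leaves implicit.
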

\begin{proof}
	It follows from the above definition that $\AMO(u)\leq \lim_{t\to \infty} \AMO(\max(u,\phi-t))$. Assume now that $v\in \psh(X,\theta,\phi)$ is such that $u\leq v$, and $v$ has the same singularity type as $\phi$ (i.e. $v$ is a candidate in the definition of $I_{\phi}(u)$). Then for $t$ large enough we have $\max(u,\phi-t)\leq v$, hence the other inequality follows from monotonicity of $\AMO$.
\end{proof}

We let $\mathcal{E}^1(X,\theta,\phi)$ denote the set of all $u\in \psh(X,\theta,\phi)$  such that $\AMO(u)$ is finite. As a result of Lemma \ref{lem: easy convergence AMO} and Theorem \ref{thm: basic I energy}(iii) we observe that $\AMO$ is non-decreasing in $\psh(X,\theta,\phi)$. Consequently, $\mathcal{E}^1(X,\theta,\phi)$ is stable under the max operation, moreover we have the following familiar characterization of $\mathcal E^1(X,\theta,\phi)$:

\begin{lemma}\label{charcterization class E^1}
Let $u\in \mathrm{PSH}(X,\theta,\phi)$. Then $u\in \mathcal{E}^1(X,\theta,\phi)$ if and only if $u\in \mathcal{E}(X,\theta,\phi)$ and $\int_X (u-\phi) \theta_u^n >-\infty$. 
\end{lemma}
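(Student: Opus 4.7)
The plan is to approximate $u$ by the truncations $u_t := \max(u, \phi - t)$, after first normalizing so that $u \leq \phi$ (this only shifts $I_\phi$ by a constant and preserves both sides of the equivalence). These $u_t$ have relatively minimal singularities, decrease to $u$, and by Proposition \ref{prop: comparison generalization} lie in $\Ec(X,\theta,\phi)$. By Lemma \ref{lem: easy convergence AMO}, $I_\phi(u) = \lim_t I_\phi(u_t)$, so the question reduces to whether this limit is finite, and to extracting information about $u \in \Ec$ and $\int_X (u-\phi)\theta_u^n$ from a uniform bound on $I_\phi(u_t)$.

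The central tool is the decomposition obtained from the locality of the non-pluripolar product with respect to the plurifine topology, using that $u_t = u$ on the plurifine-open set $\{u > \phi - t\}$ and $u_t = \phi - t$ on $\{u<\phi-t\}$:
\[
\int_X (u_t - \phi) \theta_{u_t}^n = -t \cdot a_t + \int_{\{u > \phi - t\}} (u - \phi) \theta_u^n,
\]
where $a_t := \theta_{u_t}^n(\{u \leq \phi - t\})$ and both terms on the right are non-positive. Since $\int_X \theta_{u_t}^n = \int_X \theta_\phi^n$ and $\theta_{u_t}^n |_{\{u > \phi - t\}} = \theta_u^n|_{\{u > \phi - t\}}$, one obtains $a_t = \int_X \theta_\phi^n - \theta_u^n(\{u > \phi - t\})$, a quantity that decreases to $\int_X \theta_\phi^n - \int_X \theta_u^n \geq 0$, with the limit vanishing precisely when $u \in \Ec(X,\theta,\phi)$.

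For the forward direction, assume $I_\phi(u) > -\infty$. Theorem \ref{thm: basic I energy}(ii) applied to $u_t$ gives $(n+1) I_\phi(u_t) \leq \int_X (u_t - \phi) \theta_{u_t}^n \leq 0$, and the monotone convergence $I_\phi(u_t) \searrow I_\phi(u)$ bounds the left-hand side of the decomposition below by $(n+1)I_\phi(u)$ uniformly in $t$. Since both non-positive terms on the right are thus individually bounded below, monotone convergence yields $\int_X (u - \phi) \theta_u^n > -\infty$, and uniform boundedness of $t\, a_t$ forces $a_t \to 0$, hence $u \in \Ec$. Conversely, given $u \in \Ec$ and $\int_X (u - \phi) \theta_u^n > -\infty$, the identity $a_t = \theta_u^n(\{u \leq \phi - t\})$ combined with the standard tail bound $t \cdot \theta_u^n(\{u \leq \phi - t\}) \leq \int_{\{u \leq \phi - t\}} (\phi - u)\, \theta_u^n \to 0$ (since $\theta_u^n$ is non-pluripolar and the integrand is in $L^1(\theta_u^n)$) gives $t\, a_t \to 0$; so the right side of the decomposition converges to $\int_X (u-\phi)\theta_u^n$, and Theorem \ref{thm: basic I energy}(ii) sandwiches $I_\phi(u_t)$ into a bounded interval, forcing $I_\phi(u) > -\infty$.

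The main obstacle will be setting up the decomposition rigorously: the equality $\theta_{u_t}^n|_{\{u > \phi - t\}} = \theta_u^n|_{\{u > \phi - t\}}$ must be invoked through plurifine locality of the non-pluripolar product, and the mass-flip $a_t = \theta_u^n(\{u \leq \phi - t\})$ used in the reverse direction relies crucially on the non-pluripolarity of $\theta_u^n$ coupled with the full mass condition $\int_X \theta_u^n = \int_X \theta_\phi^n$. Once the decomposition and these identifications are in place, both implications are driven by playing Theorem \ref{thm: basic I energy}(ii) off against the two non-positive terms in the decomposition.
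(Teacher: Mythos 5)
Your proof is correct and follows essentially the same route as the paper: normalize $u \leq \phi$, truncate to $u_t = \max(u,\phi-t)$, use the plurifine decomposition $\int_X (u_t-\phi)\theta_{u_t}^n = -t\,\theta_{u_t}^n(\{u\le\phi-t\}) + \int_{\{u>\phi-t\}}(u-\phi)\theta_u^n$ together with the mass-flip $\theta_{u_t}^n(\{u\le\phi-t\})=\theta_u^n(\{u\le\phi-t\})$, play this off against the two-sided bound in Theorem \ref{thm: basic I energy}(ii), and pass to the limit via Lemma \ref{lem: easy convergence AMO}. The only cosmetic difference is in the reverse direction: you establish the stronger fact $t\,a_t\to 0$ via the $L^1(\theta_u^n)$ tail estimate, whereas the paper just bounds $\int_X(u_t-\phi)\theta_{u_t}^n$ from below by $\int_X(u-\phi)\theta_u^n$ directly (using $u-\phi\le -t$ on $\{u\le\phi-t\}$); both are fine.
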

\begin{proof} We can assume that $u\leq \phi$. For each $C>0$ we set $u^C:= \max(u, \phi-C)$. If $I_{\phi}(u) >-\infty$ then by the monotonicity property we have $I_\phi(u^C) \geq  I_{\phi}(u)$. Since $u^C \leq \phi$, an application of Theorem \ref{thm: basic I energy}(ii) gives that $\int_X (u^C-\phi) \theta_{u^C}^n\geq -A, \ \forall C$, for some  $A>0$. From this we  obtain that 
\[
\int_{\{u\leq \phi-C\}} \theta_{u^C}^n \leq \frac{A}{C} \to 0, 
\]
as $C\to +\infty$. Hence it follows from Lemma \ref{lem: E_def_lem} that $u\in \mathcal{E}(X,\theta,\phi)$. Moreover by the plurifine property of the non-pluripolar product  we have that
\[
\int_X (u^C -\phi) \theta_{u^C}^n \leq \int_{\{u>\phi-C\}} (u-\phi)\theta_u^n.
\]
Letting $C\to \infty$ we see that $\int_X (u-\phi)\theta_u^n >-A$. 

To prove the reverse statement,  assume that $u\in \mathcal{E}(X,\theta,\phi)$ and $\int_X (u-\phi)\theta_u^n >-\infty$.
For each $C>0$ since $\theta_u^n$ and $\theta_{u^C}^n$ have the same mass and coincide in $\{u>\phi-C\}$ it follows that $\int_{\{u\leq \phi-C\}} \theta_{u^C}^n =\int_{\{u\leq \phi-C\}} \theta_{u}^n$.  From this we deduce that 
\begin{eqnarray*}
\int_X (u^C-\phi)  \theta_{u^C}^n &=&-\int_{\{u\leq \phi-C\}} C \theta_u^n  + \int_{\{u>\phi-C\}}(u-\phi) \theta_{u}^n = \int_X (u-\phi) \theta_u^n>-A. 
\end{eqnarray*}
It thus follows from Theorem \ref{thm: basic I energy}(ii) that $I_{\phi}(u^C)$ is uniformly bounded.  Finally, it follows from Lemma \ref{lem: easy convergence AMO}  that $I_{\phi}(u^C)\searrow I_{\phi}(u)$ as $C\to \infty$, finishing the proof. 
\end{proof}

We finish this subsection with a series of small results listing various properties of the class $\mathcal E^1(X,\theta,\phi)$:

\begin{lemma}\label{lem: AM continuous along decreasing sequence}
Assume that $(u_j)$ is a sequence in $\mathcal{E}^1(X,\theta,\phi)$ decreasing to $u\in \mathcal{E}^1(X,\theta,\phi)$. Then $\AMO(u_j)$ decreases to $\AMO(u)$.
\end{lemma}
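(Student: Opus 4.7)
The plan is to reduce to the case of potentials with relatively minimal singularities (where Lemma \ref{lem: convergence of AM} already gives continuity along decreasing sequences) via truncation, and then pass to the limit using Lemma \ref{lem: easy convergence AMO}.

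First I would note that monotonicity of $\AMO$ on $\textup{PSH}(X,\theta,\phi)$ gives one inequality for free: since $u_j \searrow u$, one has $\AMO(u_1)\geq \AMO(u_2)\geq \ldots \geq \AMO(u)$, so $L:=\lim_j \AMO(u_j)$ exists and satisfies $L\geq \AMO(u)$. The entire content of the lemma is the reverse inequality $L \leq \AMO(u)$.

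For this, fix $t>0$ and introduce the truncations
\[
u^t := \max(u,\phi-t), \qquad u_j^t := \max(u_j,\phi-t).
\]
Since $u,u_j$ are more singular than $\phi$, we have $u,u_j \leq \phi + C$ for some constant $C$, so for every sufficiently large $t$ the potentials $u^t$ and $u_j^t$ all have the same singularity type as $\phi$; in particular they belong to $\mathcal E(X,\theta,\phi)$ with relatively minimal singularities. For each fixed $t$, the sequence $u_j^t$ decreases to $u^t$ as $j\to\infty$, so Lemma \ref{lem: convergence of AM} applies and yields
\[
\lim_{j\to\infty} \AMO(u_j^t) = \AMO(u^t).
\]

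Now combine the two ingredients. From $u_j \leq u_j^t$ and monotonicity of $\AMO$ on $\textup{PSH}(X,\theta,\phi)$ (which follows from the infimum definition, since any candidate for $\AMO(u_j^t)$ is also a candidate for $\AMO(u_j)$), we obtain $\AMO(u_j) \leq \AMO(u_j^t)$. Letting $j\to\infty$ with $t$ fixed gives
\[
L \;=\; \lim_{j\to\infty}\AMO(u_j) \;\leq\; \lim_{j\to\infty}\AMO(u_j^t) \;=\; \AMO(u^t).
\]
Finally, letting $t\to\infty$ and invoking Lemma \ref{lem: easy convergence AMO} (applied to $u\in \mathcal E^1(X,\theta,\phi)$), we conclude $L \leq \lim_{t\to\infty}\AMO(u^t) = \AMO(u)$, which combined with the opposite inequality gives $\lim_j\AMO(u_j) = \AMO(u)$.

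There is essentially no serious obstacle here, since both building blocks (continuity along decreasing sequences of potentials with relatively minimal singularities, and the exhaustion formula $\AMO(u)=\lim_t \AMO(u^t)$) are already in hand. The only mild care needed is to confirm that $u_j^t$ indeed has relatively minimal singularities for large $t$, and that the monotonicity of $\AMO$ on the full class $\textup{PSH}(X,\theta,\phi)$ holds—both of which are immediate from the definition of $\AMO$ by infimum over candidates with relatively minimal singularities.
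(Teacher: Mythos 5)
Your proof is correct and follows essentially the same route as the paper: truncate at level $\phi-t$, apply the continuity of $\AMO$ along decreasing sequences of potentials with relatively minimal singularities (Lemma \ref{lem: convergence of AM}), combine with monotonicity of $\AMO$, and then pass to the limit $t\to\infty$ via Lemma \ref{lem: easy convergence AMO}. The paper normalizes $u_j\leq\phi$ at the outset, which makes the truncations trivially of relatively minimal singularities for all $t>0$; you instead observe $u_j\leq u_1\leq\phi+C$, which is equivalent.
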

\begin{proof}
Without loss of generality we can assume that $u_j\leq \phi$ for all $j$. For each $C>0$ we set $u_j^C:=\max (u_j, \phi-C)$ and $u^C:= \max(u, \phi-C)$. Note that $u_j^C, u^C$ have the same singularities as $\phi$. Then Lemma \ref{lem: convergence of AM} insures that $\lim_{j} I_{\phi}(u_j^C) = I_{\phi}(u^C)$. Monotonicity of $I_\phi$ gives now that $I_{\phi}(u) \leq \lim_j I_\phi(u_j) \leq \lim_j I_\phi(u_j^C)= I_\phi(u^C)$. Letting $C \to \infty$, the result follows.
\end{proof}

\begin{lemma}
	\label{lem: criteria in E1}
	Assume that $(u_j)$ is a decreasing sequence in $\Ec^1(X,\theta,\phi)$  such that $\AMO(u_j)$ is uniformly bounded. Then the limit $u:=\lim_{j} u_j$ belongs to $\Ec^1(X,\theta,\phi)$ and $\AMO(u_j)$ decreases to $\AMO(u)$.
\end{lemma}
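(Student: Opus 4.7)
The plan is first to rule out that $u \equiv -\infty$, then to deduce membership in $\Ec^1(X,\theta,\phi)$ and convergence of energies via the truncation machinery already developed. Normalize so that $u_j \leq \phi \leq 0$. By monotonicity of $\AMO$ (Theorem \ref{thm: basic I energy}(iii)) the sequence $\AMO(u_j)$ is decreasing, and by hypothesis it converges to some finite $L$. For fixed $C>0$ I would work with the truncated potentials $u_j^C := \max(u_j, \phi - C)$: these have relatively minimal singularities, decrease in $j$ to $u^C := \max(u,\phi - C)$ (a genuine $\theta$-psh function bounded below by $\phi - C$, whether or not $u$ itself is trivial), and Lemma \ref{lem: convergence of AM} yields $\AMO(u_j^C) \searrow \AMO(u^C)$. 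Combined with $u_j \leq u_j^C$ and monotonicity of $\AMO$, this delivers the uniform lower bound $\AMO(u^C) \geq L$ for every $C$.

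The crucial step is ruling out $u \equiv -\infty$. Were that the case, then $u^C = \phi - C$; using $\theta_{\phi-C} = \theta_\phi$ and Proposition \ref{prop: comparison generalization} to see that each mixed mass $\int_X \theta_{\phi-C}^k \wedge \theta_\phi^{n-k}$ collapses to $\int_X \theta_\phi^n$, one computes directly
\[
\AMO(\phi - C) \;=\; -C \int_X \theta_\phi^n.
\]
In the natural non-degenerate regime $\int_X \theta_\phi^n > 0$ this diverges to $-\infty$ as $C\to\infty$, contradicting the bound $\AMO(u^C) \geq L$. Hence $u\not\equiv -\infty$ and, since $u \leq \phi$, we get $u \in \psh(X,\theta,\phi)$.

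Once $u$ is a genuine $\theta$-psh function, $u^C \searrow u$ as $C \to \infty$, and Lemma \ref{lem: easy convergence AMO} gives $\AMO(u) = \lim_C \AMO(u^C) \geq L > -\infty$, placing $u$ in $\Ec^1(X,\theta,\phi)$. For the final convergence $\AMO(u_j) \searrow \AMO(u)$, monotonicity yields $\AMO(u_j) \geq \AMO(u)$; for the reverse, given $\vep > 0$ choose $C$ so that $\AMO(u^C) \leq \AMO(u) + \vep$ and chain $\AMO(u_j) \leq \AMO(u_j^C) \to \AMO(u^C)$ as $j\to \infty$, then let $\vep \to 0$. The main obstacle, as usual in this circle of ideas, is the non-triviality step: without it Lemma \ref{lem: easy convergence AMO} cannot be invoked at $u$; the resolution exploits the elementary but essential fact that $\AMO(\phi - C)$ decays linearly in $C$ under the non-vanishing mass assumption on $\theta_\phi^n$.
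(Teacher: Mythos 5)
Your proof is correct and follows the same overall scaffolding as the paper's: truncate to $u_j^C := \max(u_j, \phi - C)$, use Lemma \ref{lem: convergence of AM} to pass to the limit in $j$, and finish with Lemma \ref{lem: easy convergence AMO}. The one genuine difference is your argument for ruling out $u \equiv -\infty$. The paper instead uses the bound $\AMO(u_j) \leq \int_X (u_j - \phi)\theta_\phi^n$ (the $k=0$ term in the definition, which dominates since the terms are non-increasing in $k$), combined with the fact that $\theta_\phi^n$ has bounded density with respect to $\omega^n$ (Theorem \ref{thm: MA measure of WN envelope}), to conclude that $\int_X u_j\, \omega^n$ stays bounded. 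Your route---first pass $\AMO(u_j^C) \geq L$ to the limit in $j$ to get $\AMO(u^C) \geq L$ for every $C$, then compute directly that $\AMO(\phi - C) = -C \int_X \theta_\phi^n$ diverges under the standing hypothesis $\int_X \theta_\phi^n > 0$---is equally valid and makes the role of the positive-mass assumption explicit; it also sidesteps the need for the $L^\infty$-density fact. A small overkill: you cite Proposition \ref{prop: comparison generalization} for the mixed masses, but since $\theta_{\phi - C} = \theta_\phi$, the identity $\theta_{\phi-C}^k \wedge \theta_\phi^{n-k} = \theta_\phi^n$ is immediate. Your closing squeeze for $\AMO(u_j) \searrow \AMO(u)$ is also correct; note that once $u \in \Ec^1(X,\theta,\phi)$ is established, this convergence also follows directly from Lemma \ref{lem: AM continuous along decreasing sequence}, which is what the paper implicitly relies on.
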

\begin{proof}
We can assume that $u_j\leq \phi$ for all $j$. Since $\AMO(u_j)\leq \int_X (u_j-\phi) \theta_{\phi}^n$,  $\AMO(u_j)$ is uniformly bounded and $\theta_{\phi}^n$ has bounded density with respect to $\omega^n$, it follows that $\int_X u_j \omega^n$ is uniformly bounded, hence $u \neq -\infty$. 

By continuity along decreasing sequences (Lemma \ref{lem: AM continuous along decreasing sequence}) we have $\lim_{j\rightarrow +\infty} \AMO(\max(u_j, \phi-C))= \AMO(\max(u, \phi-C))$)  .  It follows that  $\AMO(\max(u,\phi-C))$ is uniformly bounded. Lemma \ref{lem: easy convergence AMO} then insures that  $\AMO(u)$ is finite, i.e., $u\in \Ec^1(X,\theta,\phi)$. 
\end{proof}

\begin{coro}\label{cor: I_conc_on_E}
$\AMO$ is concave along affine curves in $\psh(X,\theta,\phi)$. In particular, the set $\Ec^1(X,\theta,\phi)$ is convex. 
\end{coro}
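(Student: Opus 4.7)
The goal is to extend the concavity of $\AMO$ from the class of relatively minimally singular potentials (where it is already established by Theorem \ref{thm: basic I energy}(iii)) to the full class $\psh(X,\theta,\phi)$; the convexity of $\Ec^1(X,\theta,\phi)$ will then drop out as a byproduct. The natural strategy is to approximate arbitrary $u,v\in\psh(X,\theta,\phi)$ by their canonical truncations $u^C:=\max(u,\phi-C)$ and $v^C:=\max(v,\phi-C)$, which do have the same singularity type as $\phi$, and then use the already developed convergence tools for $\AMO$.

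Fix $t\in[0,1]$ and set $w:=tu+(1-t)v$. If $\AMO(u)=-\infty$ or $\AMO(v)=-\infty$ the concavity inequality is vacuously true, so I may assume $u,v\in\Ec^1(X,\theta,\phi)$. The key observation is that $tu^C+(1-t)v^C$ itself has relatively minimal singularities: it is squeezed between $\phi-C$ and $\phi+K$ for some uniform $K$ (since $u,v$ are more singular than $\phi$), and it decreases pointwise to $w$ as $C\to\infty$. I can therefore apply Theorem \ref{thm: basic I energy}(iii) to the pair $u^C,v^C$ (both of which are relatively minimally singular) to obtain
$$\AMO\bigl(tu^C+(1-t)v^C\bigr)\;\geq\; t\,\AMO(u^C)+(1-t)\,\AMO(v^C),$$
and my task is then to pass to the limit in $C$ on both sides.

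On the right, Lemma \ref{lem: easy convergence AMO} immediately gives $\AMO(u^C)\searrow\AMO(u)$ and $\AMO(v^C)\searrow\AMO(v)$. On the left, the monotonicity of $\AMO$ on $\psh(X,\theta,\phi)$ (noted right after the definition of the extended energy) makes $C\mapsto\AMO(tu^C+(1-t)v^C)$ decreasing in $C$ and bounded above by $\AMO(tu^1+(1-t)v^1)<+\infty$; the concavity inequality itself, combined with $\AMO(u^C)\geq\AMO(u)$ and $\AMO(v^C)\geq\AMO(v)$, bounds it below by $t\AMO(u)+(1-t)\AMO(v)>-\infty$. These uniform energy bounds place us exactly in the hypotheses of Lemma \ref{lem: criteria in E1}, which simultaneously certifies $w\in\Ec^1(X,\theta,\phi)$ and yields $\AMO(tu^C+(1-t)v^C)\searrow\AMO(w)$. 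Passing $C\to\infty$ in the displayed inequality then produces $\AMO(w)\geq t\AMO(u)+(1-t)\AMO(v)$, and the convexity of $\Ec^1(X,\theta,\phi)$ is immediate from the membership $w\in\Ec^1(X,\theta,\phi)$ just obtained.

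The only genuine bookkeeping to carry out is verifying that $tu^C+(1-t)v^C$ truly has the singularity type of $\phi$ at each finite $C$ (so Theorem \ref{thm: basic I energy}(iii) applies) and that the uniform bounds above hold; both reduce to one-line comparisons from the definition of ``more singular than $\phi$'' and from monotonicity of $\AMO$. There is no subtler obstacle, since the heavy lifting (continuity along decreasing sequences with bounded energy, plus monotonicity of the extended $\AMO$) has already been done in Lemmas \ref{lem: easy convergence AMO} and \ref{lem: criteria in E1}.
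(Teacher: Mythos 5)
Your proof is correct and follows essentially the same strategy as the paper: reduce to $u,v\in\Ec^1(X,\theta,\phi)$, truncate to $u^C=\max(u,\phi-C)$ and $v^C=\max(v,\phi-C)$ so that $tu^C+(1-t)v^C$ has relatively minimal singularities, invoke Theorem \ref{thm: basic I energy}(iii) for the concavity at level $C$, and pass to the limit via Lemma \ref{lem: criteria in E1}. The only cosmetic difference is that you spell out the uniform two-sided energy bound needed for Lemma \ref{lem: criteria in E1}, which the paper leaves implicit.
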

\begin{proof}
Let $u,v\in \psh(X,\theta,\phi)$ and $u_t:=tu+(1-t)v, t\in (0,1)$. If one of $u,v$ is not in $\Ec^1(X,\theta,\phi)$ then the conclusion is obvious. So, we can assume that both $u$ and $v$ belong to $\Ec^1(X,\theta,\phi)$. For each $C>0$ we set $u_t^C:=t \max(u,\phi-C) + (1-t)\max(v,\phi-C)$. By Theorem \ref{thm: basic I energy})(iii), $t \to \AMO(u_t^C)$ is concave. Since $u_t^C$ decreases to $u_t$ as $C\to \infty$, Lemma  \ref{lem: criteria in E1} gives the conclusion. 
\end{proof}

\subsection{The variational method}
Recall that $\phi$ is a $\theta$-psh function with small unbounded locus such that $\phi=P_\theta[\phi]$,  and $\int_X \theta_{\phi}>0$. For this subsection we additionally  normalize our class so that $\int_X \theta_{\phi}^n=1$. 

We adapt the variational method of \cite{BBGZ13} to solve the complex Monge-Amp\`ere equations in our more general setting: 

\begin{equation}
\label{eq: MA lambda}
\theta_u^n = e^{\lambda u} \mu, \ u\in \Ec(X,\theta,\phi). 
\end{equation}
where $\lambda\geq 0$, $\mu$ is a positive non-pluripolar measure on $X$. If $\lambda=0$ then  we also assume that $\mu(X)=1$ which is a necessary condition for the equation to be solvable.  

We introduce the following functionals on $\mathcal{E}^1(X,\theta,\phi)$: 
\[
F_{\lambda}(u) :=F_{\lambda,\mu}(u):= \AMO(u) -L_{\lambda,\mu}(u) , \ u\in \Ec^1(X,\theta,\phi),
\]
where $L_{\lambda,\mu}(u) :=\frac{1}{\lambda}\int_X e^{\lambda u} d\mu$ if $\lambda>0$ and $L_{\mu}(u):=L_{0,\mu}(u):=\int_X (u-\phi)d\mu$. 
Note that when $\lambda>0$, $F_{\lambda}$ is finite on $\Ec^1(X,\theta,\phi)$. It is no longer the case if $\lambda=0$ in which case  we will restrict ourself to the following set of measures. For each constant $A\geq 1$ we let $\mathcal{M}_A$ denote the set of all probability measures $\mu$ on $X$ such that 
\begin{equation*}%\label{eq: MA_def}
\mu(E)\leq A \cdot \capi(E), \ \textrm{for all Borel subsets} \ E\subset X.
\end{equation*}
\begin{lemma}
	\label{lem: MA compact convex}
	$\mathcal{M}_A$ is a compact convex subset of the set of probability measures on $X$. 
\end{lemma}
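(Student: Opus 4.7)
Convexity is immediate: if $\mu_1,\mu_2 \in \Mm_A$ and $t \in [0,1]$, then $t\mu_1 + (1-t)\mu_2$ is a probability measure and, for any Borel set $E$,
\[
(t\mu_1 + (1-t)\mu_2)(E) = t\mu_1(E) + (1-t)\mu_2(E) \leq A\capi(E).
\]

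For compactness, the plan is to show that $\Mm_A$ is weakly closed in the space $\Pc(X)$ of Borel probability measures on $X$. Since $X$ is compact, $\Pc(X)$ itself is weakly compact (Banach--Alaoglu), and the conclusion will follow. Let $\mu_j \in \Mm_A$ converge weakly to some $\mu \in \Pc(X)$; the goal is to verify $\mu(E) \leq A\capi(E)$ for every Borel $E \subset X$.

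The argument proceeds in three steps, transferring the inequality from open sets to compact sets to Borel sets. First, for any open $U \subset X$, the portmanteau theorem gives $\mu(U) \leq \liminf_j \mu_j(U) \leq A\capi(U)$. Second, for any compact $K \subset X$ and any open neighborhood $U \supset K$, monotonicity yields $\mu(K) \leq \mu(U) \leq A\capi(U)$; taking the infimum over all open $U \supset K$ and invoking the outer regularity of $\capi$ on compact sets from Corollary \ref{cor: ext capacity of compact sets}, we obtain $\mu(K) \leq A\capi(K)$. Third, for a general Borel set $E$, inner regularity of the Radon probability measure $\mu$ on the compact metrizable space $X$ gives $\mu(E) = \sup\{\mu(K) : K \subset E \text{ compact}\}$, and by the previous step each such $\mu(K)$ is bounded by $A\capi(K) \leq A\capi(E)$, so $\mu(E) \leq A\capi(E)$.

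The steps are all routine once one has the capacity's outer regularity on compact sets (Corollary \ref{cor: ext capacity of compact sets}) and its inner regularity (Lemma \ref{lem: cap is inner regular}) in hand; no step presents a real obstacle. The only point requiring a moment's care is the direction of the portmanteau inequality, which is why the argument must pass through open sets first and then transfer to compact sets via approximation from outside, rather than attempting to bound $\mu(F)$ directly on closed sets $F$ (where the portmanteau estimate goes the wrong way).
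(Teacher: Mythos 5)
Your proof is correct and follows essentially the same route as the paper: portmanteau on open sets, transfer to compact sets via the outer regularity provided by Corollary \ref{cor: ext capacity of compact sets}, and then to Borel sets via inner regularity of $\mu$. The only cosmetic difference is that you close the Borel-set step using monotonicity of $\capi$ (i.e.\ $\capi(K)\leq\capi(E)$), whereas the paper cites the inner regularity of $\capi$ from Lemma \ref{lem: cap is inner regular}; both are valid and amount to the same estimate.
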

\begin{proof}
	The convexity is obvious. We now prove that $\mathcal{M}_A$ is closed. Assume that $(\mu_j)\subset \mathcal{M}_A$ is a sequence converging weakly to a probability measure $\mu$. Then for any open set $U$ we have 
	\[
	\mu(U) \leq \liminf_{j}\mu_j(U) \leq A\capi(U).
	\]
Now, let $K\subset X$ be a compact subset. Taking the infimum over all open sets $U\supset K$ in the above inequality, it follows from Corollary \ref{cor: ext capacity of compact sets} that $\mu(K)\leq A \capi(K)$.  Since $\mu$  and $\capi$ are inner regular (Lemma \ref{lem: cap is inner regular}) it follows that the inequality holds for all Borel sets, finishing the proof.
	\end{proof}

\begin{lemma}
	\label{lem: first condition of M_A}
If $\mu\in \mathcal{M}_A$ then $F_{0,\mu}$ is finite on $\Ec^1(X,\theta,\phi)$. Moreover, there is a constant $B>0$ depending on $A$ such that for all $u\in \psh(X,\theta,\phi)$ with $\sup_X u=0$ we have 
	\[
	\int_X (u-\phi)^2 d\mu \leq B (|\AMO(u)|+1). 
	\]
\end{lemma}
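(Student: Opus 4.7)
The plan is to observe first that the quantitative estimate implies the finiteness claim. For $u \in \Ec^1(X,\theta,\phi)$, set $\tilde u := u - \sup_X u$, so $\sup_X \tilde u = 0$. Since $\mu(X) = \int_X \theta_\phi^n = 1$, a direct computation gives $\AMO(u) = \AMO(\tilde u) + \sup_X u$ and $L_{0,\mu}(u) = L_{0,\mu}(\tilde u) + \sup_X u$, hence $F_{0,\mu}(u) = F_{0,\mu}(\tilde u)$. The quantitative estimate applied to $\tilde u$, combined with Cauchy--Schwarz, yields
\[ |L_{0,\mu}(\tilde u)| = \Bigl|\int_X (\tilde u - \phi)\,d\mu\Bigr| \leq \Bigl(\int_X (\tilde u - \phi)^2\,d\mu\Bigr)^{1/2} \leq B^{1/2}(|\AMO(\tilde u)| + 1)^{1/2} < \infty, \]
so $F_{0,\mu}(u) = \AMO(\tilde u) - L_{0,\mu}(\tilde u)$ is finite. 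Since the quantitative bound is trivially true when $\AMO(u) = -\infty$, I may restrict to $u \in \Ec^1(X,\theta,\phi)$ with $\sup_X u = 0$ when proving it.

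For the $L^2$ estimate, split $(u - \phi)^2 = ((u-\phi)^+)^2 + ((u-\phi)^-)^2$. The positive part contributes a uniformly bounded quantity: $u$ being more singular than $\phi$ combined with $\sup_X u = 0$ forces $u - \phi \leq C_\phi$ for some $C_\phi$ depending only on $\phi$, so $\int_X ((u-\phi)^+)^2\,d\mu \leq C_\phi^2$. For the negative part, layer-cake and the capacity hypothesis $\mu \in \Mm_A$ give
\[ \int_X ((u-\phi)^-)^2\,d\mu = 2\int_0^\infty t\,\mu(\{u < \phi - t\})\,dt \leq 2A\int_0^\infty t\,\capi(\{u < \phi - t\})\,dt. \]

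The central step is a capacity decay estimate
\[ \capi(\{u < \phi - t\}) \leq \frac{C_1(|\AMO(u)|+1)}{t^\gamma}, \quad t \geq 1, \]
with $\gamma > 2$ large enough to render the tail integral convergent. I would derive this by adapting the classical BBGZ convex-combination technique to the relative setting. For any candidate $w \in \psh(X,\theta)$ with $\phi \leq w \leq \phi + 1$ and any parameter $s > 1$, form the convex combination $\psi_s := (1-1/s)w + (1/s)u$; by Corollary \ref{cor: convexity}, $\psi_s \in \Ec(X,\theta,\phi)$. The elementary inclusion $\{u < \phi - A\} \subseteq \{u + (s-1) < \psi_s\}$ for a suitable $A = A(s)$, together with the relative comparison principle (Corollary \ref{comparison principle}) and the binomial lower bound $\theta_{\psi_s}^n \geq (1-1/s)^n \theta_w^n$, yields
\[ (1-1/s)^n \int_{\{u < \phi - A\}} \theta_w^n \leq \int_{\{u < \phi - (s-1)\}} \theta_u^n. \]
Chebyshev's inequality combined with the Monge--Amp\`ere energy bound $\int_X (\phi - u)\theta_u^n \leq (n+1)|\AMO(u)|$ from Theorem \ref{thm: basic I energy}(ii) controls the right-hand side; taking the supremum over $w$ produces a first-order capacity bound, which is then iterated (by applying the same scheme to $\psi_s$ in place of $u$, or by nesting convex combinations) to upgrade the decay to the required order.

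The main obstacle is the iteration step: a single application of the convex-combination argument produces only $O(1/t)$ capacity decay, which is insufficient for the $L^2$ tail integral to converge. Carrying out the iteration while keeping careful track of constants, and ensuring that all intermediate potentials remain in $\Ec(X,\theta,\phi)$ so that the relative comparison principle continues to apply at each step, is the technical heart of the argument. Once the sharp decay is in hand, the tail integral is controlled by $C_2(|\AMO(u)| + 1)$, and combining this with the bounded positive-part contribution yields the desired inequality with some constant $B = B(A,n,\phi)$.
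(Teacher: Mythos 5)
Your reduction to the normalized case $\sup_X u = 0$ and the Cauchy--Schwarz argument to derive finiteness of $F_{0,\mu}$ from the quantitative $L^2$ bound are fine. Also a small simplification you missed: since $\phi = P_\theta[\phi]$ is the maximal element of $F_\phi$ (Theorem \ref{thm: ceiling coincide envelope non collapsing}), every $u \in \psh(X,\theta,\phi)$ with $\sup_X u = 0$ satisfies $u \leq \phi$, so $(u-\phi)^+\equiv 0$ and the positive-part/negative-part decomposition is unnecessary.

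The genuine gap is in what you call the central step. You propose to establish a \emph{pointwise} capacity decay $\capi(\{u < \phi - t\}) \leq C_1(|\AMO(u)|+1)/t^\gamma$ with $\gamma > 2$, and you correctly observe that one application of the convex-combination plus comparison principle argument only yields $O(1/t)$. You then defer to an ``iteration'' which you do not carry out, and I do not believe it can be carried out as described. Applying the same scheme to $\psi_s$ instead of $u$ does not strengthen the exponent in $t$ (the energy $|\AMO(\psi_s)|$ is controlled by $|\AMO(u)|$ by concavity, but the geometry of the sublevel sets does not improve), and the Ko{\l}odziej/EGZ-style iteration that upgrades such bounds needs a quadratic (or superlinear) relation of the form $g(t+\delta) \lesssim g(t)^2/\delta$, which is not available here because $\mu$ is only assumed dominated by $\capi$ to the first power. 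In fact, from finiteness of $\AMO(u)$ alone one should not expect a pointwise decay faster than $t^{-2}$ with linear dependence on $|\AMO(u)|$, and $t^{-2}$ is precisely the borderline case where $\int_1^\infty t \cdot t^{-\gamma}\,dt$ diverges; the pointwise estimate you seek is too strong.

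The correct move (which is what the paper does, following BBGZ Lemma~2.9) is to bound the \emph{integral} $\int_1^\infty t\, \capi(\{u < \phi - 2t\})\,dt$ directly, never passing through a pointwise decay. The crucial algebraic observation is that for $u_t := t^{-1}u + (1-t^{-1})\phi$ one has the expansion
\[
\theta_{u_t}^n \leq C t^{-1}\sum_{k=1}^n \theta_u^k\wedge\theta_\phi^{n-k} + \theta_\phi^n,
\]
and the factor $t^{-1}$ \emph{exactly cancels} the factor $t$ coming from the layer-cake formula. After multiplying the comparison-principle inequality $\theta_\psi^n(u<\phi-2t) \leq \theta_{u_t}^n(u<\phi-t)$ (valid for any candidate $\phi-1\leq\psi\leq\phi$, via the inclusions $\{u<\phi-2t\}\subset\{u_t<\psi-1\}\subset\{u<\phi-t\}$) by $t$ and integrating in $t$ from $1$ to $\infty$, the first term gives $\sum_k\int_1^\infty \theta_u^k\wedge\theta_\phi^{n-k}(u<\phi-t)\,dt \leq \sum_k\int_X(\phi-u)\,\theta_u^k\wedge\theta_\phi^{n-k} \leq (n+1)|\AMO(u)|$, and the second term is uniformly bounded because $\theta_\phi^n$ has bounded density (Theorem \ref{thm: MA measure of WN envelope}) and $\phi, u$ lie in a fixed compact family. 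Taking the sup over $\psi$ then gives $\int_1^\infty t\,\capi(\{u<\phi-2t\})\,dt \leq C(|\AMO(u)|+1)$, and the conclusion follows from the layer-cake formula for $\int_X (u-\phi)^2\,d\mu$ and the hypothesis $\mu \leq A\,\capi$. Without this cancellation trick, your proposal as written is incomplete.
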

The proof given below is inspired by \cite[Lemma 2.9]{BBGZ13}.
\begin{proof}
	Fix  $u\in \psh(X,\theta,\phi)$ such that    $\sup_X u=0$. By considering $u_k:=\max(u,\phi-k)$ and then letting $k\to +\infty$, we can assume that $u-\phi$ is bounded.  We first prove that 
    \begin{equation}
    	\label{eq: proof of Lemma 4.18}
    	 \int_{1}^{+\infty} t\capi(u<\phi-2t) dt \leq C(-\AMO(u) +1),
    \end{equation}
    for some uniform constant $C:=C(n)>0$.
    
    Indeed, for each $t>1$ we set $u_t:=t^{-1}u+(1-t^{-1})\phi$.  We also fix $\psi\in \psh(X,\theta)$ such that $\phi-1\leq \psi\leq \phi$. Observe that $u_t, \psi \in \mathcal{E}( X,\theta, \phi)$) and that the following inclusions hold
	\[
	(u<\phi-2t) \subset (u_t <\psi-1) \subset (u<\phi-t), \ t>1. 
	\]
 It thus follows that 
	\begin{equation}\label{ineq measures}
	\theta_{\psi}^n(u<\phi-2t) \leq \theta_{\psi}^n(u_t<\psi-1) \leq \theta_{u_t}^n (u_t<\psi-1)\leq \theta_{u_t}^n (u<\phi-t),
	\end{equation}
   where in the second inequality we used the comparison principle (see Corollary \ref{comparison principle}).
	Expanding $\theta_{u_t}^n$ we see that 
	\begin{equation}\label{ineq u}
	\theta_{u_t}^n \leq Ct^{-1}\sum_{k=1}^n \theta_{u}^k\wedge \theta_{\phi}^{n-k} +  \theta_{\phi}^n, \ \ \forall t>1, 
	\end{equation}
for a uniform constant $C=C(n)$. Since $\theta_{\phi}^n$ has bounded density with respect to Lebesgue measure (see Theorem \ref{thm: MA measure of WN envelope}), using \cite[Theorem 2.50]{GZ17} we infer that 
\begin{equation}
	\label{eq: GZ 05 CLN} 
	\theta_{\phi}^n (u<\phi-t) \leq A \int_{\{u\leq - t\}} \omega^n \leq    A e^{-at}, 
\end{equation}
for some uniform constants $a,A>0$ depending only on $n,\omega,X$. 
  Combining \eqref{eq: GZ 05 CLN} with \eqref{ineq measures} and \eqref{ineq u} we get that 
	\begin{eqnarray*}
		\int_1^{\infty} t\theta_{\psi}^n(u<\phi-2t)dt &\leq &  \int_1^{\infty} t\theta_{u_t}^n(u<\phi-t)dt\\
        &\leq &  C \int_1^{\infty}  \sum_{k=0}^n \theta_{u}^k\wedge \theta_{\phi}^{n-k}(u<\phi-t)dt + \int_1^{\infty}  t\theta_{\phi}^n(u<\phi-t) dt\\
		&\leq &C(n+1) |\AMO(u)| + C'.
	\end{eqnarray*}
Taking the supremum over all candidates $\psi+	1$ we arrive at
    \[
    \int_1^{+\infty} t\capi(u<\phi-2t) dt \leq C(n+1) |\AMO(u)| + C',
    \]
    proving \eqref{eq: proof of Lemma 4.18}.
    Finally, we can write
	\begin{eqnarray*}
		\int_X (u-\phi)^2 d\mu &=& 2 \int_0^{+\infty} t\mu(u<\phi-t)dt\leq 4 + 8\int_1^{+\infty} t\mu(u<\phi-2t)dt\\
        &\leq & 4 +8 \int_1^{+\infty} At\capi(u<\phi-2t)dt \leq B(|I_{\phi}(u)|+1),
	\end{eqnarray*}
	where $B>0$ is a uniform constant depending on $n,C,C'$.  
\end{proof}
Observe that Lemma \ref{lem: first condition of M_A} above together with H\"older inequality give that $F_{0, \mu}$ is finite on $\Ec^1(X, \theta, \phi)$ whenever $\mu\in \mathcal{M}_A$ for some $A\geq 1$. Indeed
\begin{equation}\label{F_0 finite}
\int_X |u-\phi| d\mu \leq \left(\int_X (u-\phi)^2 d\mu \right)^{1/2} \mu(X)^{1/2}\leq  C(|I_{\phi}(u)|^{1/2}+1)
\end{equation}
for a suitable $C>0$.
\subsubsection{Maximizers are solutions}

\begin{prop}\label{prop: usc of AMO}
$\AMO:\Ec^1(X,\theta,\phi) \to \Bbb R$  is upper semicontinuous with respect to the weak $L^1$ topology of potentials. 
\end{prop}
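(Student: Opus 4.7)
The strategy is to express $I_\phi$ as a decreasing infimum of weakly continuous functionals, and invoke the elementary fact that the pointwise infimum of continuous functions is upper semicontinuous. Specifically, Lemma \ref{lem: easy convergence AMO} together with the monotonicity of $I_\phi$ (Theorem \ref{thm: basic I energy}(iii)) yields
\[
I_\phi(u) \;=\; \inf_{t > 0} I_\phi(u^t), \qquad u^t := \max(u, \phi - t),
\]
where each $u^t$ has the same singularity type as $\phi$ (hence lies in $\mathcal{E}(X,\theta,\phi)$ by Proposition \ref{prop: comparison generalization}). Thus it suffices to show that for each fixed $t > 0$, the functional $I_\phi^{(t)}(u) := I_\phi(u^t)$ is continuous on $\mathcal{E}^1(X,\theta,\phi)$ with respect to weak $L^1$ convergence.

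For this continuity, take $u_j \to u$ in $L^1(X,\omega^n)$ with $u_j, u \in \mathcal{E}^1(X,\theta,\phi)$. Standard submean estimates for $\theta$-psh functions furnish a uniform bound $\sup_X u_j \leq M$, and the inequality $|u_j^t - u^t| \leq |u_j - u|$ gives $u_j^t \to u^t$ in $L^1$. On any relatively compact open $U \Subset X \setminus A$, where $A$ is the small unbounded locus of $\phi$, the functions $u_j^t$ lie in the bounded range $[\phi - t, M]$, forming a uniformly bounded family of $\theta$-psh functions. By classical Bedford-Taylor theory, $L^1$ convergence of such uniformly bounded sequences upgrades to convergence in the Monge-Amp\`ere capacity on each such $U$.

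Since $u_j^t$ and $u^t$ all have the same singularity type as $\phi$, the explicit formula for $I_\phi$ on potentials with relatively minimal singularities gives
\[
I_\phi(u_j^t) \;=\; \frac{1}{n+1}\sum_{k=0}^{n}\int_X (u_j^t - \phi)\, \theta_{u_j^t}^k \wedge \theta_\phi^{n-k},
\]
and analogously for $u^t$. On each $U \Subset X \setminus A$, the integrand $u_j^t - \phi$ is quasi-continuous and uniformly bounded (independently of $j$), and converges in capacity to $u^t - \phi$. Lemma \ref{lem: basic convergence s.u.l} then delivers convergence of the localized integrals $\int_U (u_j^t - \phi)\, \theta_{u_j^t}^k \wedge \theta_\phi^{n-k}$. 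Exhausting $X \setminus A$ by such $U$ and exploiting that the mixed non-pluripolar measures have constant total mass $\int_X \theta_\phi^n$ (Proposition \ref{prop: comparison generalization}) and do not charge the pluripolar set $A$, the full integrals converge as well, so $I_\phi(u_j^t) \to I_\phi(u^t)$.

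The principal obstacle is making the exhaustion argument uniform in $j$: one must argue that the tail contributions $\int_{X\setminus U}(u_j^t - \phi)\, \theta_{u_j^t}^k \wedge \theta_\phi^{n-k}$ can be controlled uniformly in $j$ as $U$ exhausts $X \setminus A$. The integrand is not globally bounded above by a fixed constant, since its upper bound a priori depends on $\sup_X(u_j - \phi)$, which could degenerate. One must therefore carefully combine the uniform total mass bound on the mixed non-pluripolar measures with their absolute continuity with respect to capacity to produce the required equi-integrability on neighborhoods of $A$; this is precisely where the small unbounded locus hypothesis on $\phi$ plays its decisive role.
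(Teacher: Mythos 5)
There is a genuine gap. The architecture of your proof writes $I_\phi$ as an infimum of the truncated energies $I_\phi^{(t)}(u) := I_\phi(\max(u,\phi-t))$ and asserts that each $I_\phi^{(t)}$ is \emph{continuous} for the weak $L^1$ topology, then invokes the fact that an infimum of continuous functions is usc. But $I_\phi^{(t)}$ is not weakly $L^1$-continuous, and it cannot be: weak $L^1$ convergence of $\theta$-psh potentials only controls the potentials from above (via Hartogs), so energy functionals of this kind are at best upper semicontinuous, never continuous. Proving upper semicontinuity of $I_\phi^{(t)}$ is essentially the original problem, so the reduction does not gain anything.

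The false step that makes the architecture appear to work is the claim that ``$L^1$ convergence of uniformly bounded $\theta$-psh functions upgrades to convergence in the Monge--Amp\`ere capacity on $U \Subset X\setminus A$.'' This is not a theorem of Bedford--Taylor, and it is false in general: $L^1$ convergence forces $\limsup_j u_j^t\le u^t$ quasi-everywhere by Hartogs' lemma, but gives no uniform control from below, and one can have the Lebesgue measure of $\{u_j^t < u^t - \epsilon\}$ tend to zero while the capacity of this set stays bounded away from zero. This is exactly why Theorem~\ref{thm2} in this paper must take convergence in capacity as a hypothesis, and why Lemma~\ref{lem: basic convergence s.u.l} (which you invoke) also assumes it. So the key input to your localization argument is unavailable; the difficulty you flag at the end (uniformity of the tails near $A$) is real but secondary to this one.

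The paper's proof sidesteps all of this with the standard Hartogs envelope device. Given $u_j\to u$ in $L^1$, set $\varphi_k := (\sup_{j\ge k} u_j)^*$. One checks $\varphi_k\in\Ec^1(X,\theta,\phi)$ by exhausting with the increasing sequence $v_{k,\ell}=\max(u_k,\dots,u_{k+\ell})\nearrow\varphi_k$ and using monotonicity of $I_\phi$. By Hartogs' lemma $\varphi_k\searrow u$, so Lemma~\ref{lem: AM continuous along decreasing sequence} (continuity of $I_\phi$ along decreasing sequences, which is a dominated-convergence argument, not a capacity argument) gives $I_\phi(\varphi_k)\searrow I_\phi(u)$. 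Since $\varphi_k\ge u_k$, monotonicity gives $I_\phi(\varphi_k)\ge I_\phi(u_k)$, whence $I_\phi(u)=\lim_k I_\phi(\varphi_k)\ge\limsup_k I_\phi(u_k)$. No capacity convergence, no exhaustion near $A$, and no claim of continuity is needed — only the one-sided (decreasing) continuity, which is exactly what Hartogs' lemma can feed.
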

\begin{proof}
	Assume that $(u_j)$ is a sequence in $\mathcal{E}^1(X,\theta,\phi)$ converging in $L^1$ to $u\in \Ec^1(X,\theta,\phi)$. We can assume that $u_j\leq 0$ for all $j$.  For each $k,\ell \in \mathbb{N}$ we set $v_{k,\ell}:= \max(u_{k},...,u_{k+\ell})$. As $\Ec^1(X,\theta,\phi)$ is stable under the max operation,  we have that $v_{k,\ell}\in \Ec^1(X,\theta,\phi)$. 

Moreover $v_{k,\ell}\nearrow \varphi_k:= \left(\sup_{j \geq k} u_j\right)^{*}$, hence by the monotonicity property we get $\AMO(\varphi_k)\geq \AMO(v_{k,\ell})\geq \AMO(u_k)>-\infty$. As a result, $\varphi_k\in  \Ec^1(X,\theta,\phi)$.
     By Hartogs' lemma $\varphi_k \searrow u$ as $k\to \infty$.  By Lemma \ref{lem: AM continuous along decreasing sequence} it follows that $\AMO(\varphi_k)$ decreases to $\AMO(u)$. Thus, using the monotonicity of $\AMO$ we get $\AMO(u)= \lim_{k\rightarrow \infty} \AMO(\varphi_k) \geq \limsup_{k\rightarrow \infty} \AMO(u_k),$ finishing the proof.
\end{proof}

Next we describe the first order variation of $I_\phi$, shadowing a result from \cite{BB10}:

\begin{prop}\label{prop: derivative of AMO}
Let $u\in \mathcal{E}^1(X,\theta,\phi)$ and $\chi$ be a continuous function on $X$. For each $t>0$ set $u_t:= P_{\theta}(u+t\chi)$. Then $u_t\in \mathcal{E}^1(X,\theta,\phi)$, $t\mapsto \AMO(u_t)$ is differentiable, and its derivative is given by
\[
\frac{d}{dt} \AMO(u_t) = \int_X \chi \theta_{u_t}^n, \ t \in \Bbb R. 
\]
\end{prop}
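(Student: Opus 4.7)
The plan is to first verify $u_t \in \mathcal{E}^1(X,\theta,\phi)$, then establish the differentiation formula in the case when $u$ has relatively minimal singularities, and finally extend to general $u$ by a monotone approximation argument.

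First, since $\chi$ is continuous and hence bounded, we have $u + s\chi \le u + t\chi + |s-t|\|\chi\|_\infty$, and because $P_\theta(v + c) = P_\theta(v) + c$ for any constant $c$ (as $v+c$ is $\theta$-psh whenever $v$ is), applying $P_\theta$ to this pointwise inequality and its reverse yields the Lipschitz-type bound $|u_s - u_t| \le |s-t|\|\chi\|_\infty$. In particular, $u - t\|\chi\|_\infty \le u_t \le u + t\|\chi\|_\infty$, so $u_t$ has the same singularity type as $u$. Then $P_\theta[u_t] = P_\theta[u] = \phi$, so Theorem \ref{thm: E_memb_char} gives $u_t \in \mathcal{E}(X,\theta,\phi)$, and the same two-sided bound combined with monotonicity of $I_\phi$ yields $|I_\phi(u_t) - I_\phi(u)| \le t\|\chi\|_\infty \int_X \theta_\phi^n$, so $u_t \in \mathcal{E}^1(X,\theta,\phi)$.

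Next, assume $u$ (and hence $u_t$) has relatively minimal singularities; then both potentials are locally bounded on $X\setminus A$. The key ingredient is the classical \emph{orthogonality property} of the envelope, namely that $\theta_{u_t}^n$ is concentrated on the contact set $\{u_t = u + t\chi\}$. This is the standard envelope-support result, verified by a local balayage argument on $X\setminus A$ exactly as in \cite{BB10}. Granting this, Theorem \ref{thm: basic I energy}(iii) applied to $u_s, u_t$ gives for $s > t$:
\[
\int_X(u_s - u_t)\,\theta_{u_s}^n \;\le\; I_\phi(u_s) - I_\phi(u_t) \;\le\; \int_X(u_s - u_t)\,\theta_{u_t}^n.
\]
On $\{u_t = u+t\chi\}$ one has $u_s - u_t = u_s - (u+t\chi) \le (u+s\chi) - (u+t\chi) = (s-t)\chi$, while on $\{u_s = u+s\chi\}$ the reverse inequality $u_s - u_t \ge (s-t)\chi$ holds. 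Combining with the orthogonality property gives
\[
(s-t)\int_X \chi\, \theta_{u_s}^n \;\le\; I_\phi(u_s) - I_\phi(u_t) \;\le\; (s-t)\int_X \chi\,\theta_{u_t}^n.
\]
Since $u_s \to u_t$ uniformly as $s \to t$, convergence holds in capacity, and Lemma \ref{lem: basic convergence s.u.l} (applied with the continuous, hence quasi-continuous, weight $\chi$) yields $\int_X \chi\,\theta_{u_s}^n \to \int_X \chi\,\theta_{u_t}^n$. The squeeze gives the derivative formula in the relatively minimal singularities case.

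For general $u \in \mathcal{E}^1(X,\theta,\phi)$, approximate by the truncations $u^C := \max(u,\phi-C)$, which have relatively minimal singularities and decrease to $u$. Monotonicity of $P_\theta$ gives $u_t^C \searrow u_t$ for every $t$. Integrating the identity already proved for $u^C$,
\[
I_\phi(u_s^C) - I_\phi(u_t^C) \;=\; \int_t^s \int_X \chi\,\theta_{u_r^C}^n \, dr,
\]
we pass to the limit $C\to\infty$: the left-hand side converges by Lemma \ref{lem: AM continuous along decreasing sequence}, and on the right-hand side the integrand is uniformly bounded by $\|\chi\|_\infty \int_X\theta_\phi^n$, while Lemma \ref{lem: basic convergence s.u.l} yields pointwise convergence $\int_X \chi\,\theta_{u_r^C}^n \to \int_X \chi\,\theta_{u_r}^n$, so dominated convergence applies. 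This yields $I_\phi(u_s) - I_\phi(u_t) = \int_t^s \int_X\chi\,\theta_{u_r}^n\, dr$, and the continuity of $r \mapsto \int_X \chi\,\theta_{u_r}^n$ (again by the Lipschitz estimate on $u_r$ plus Lemma \ref{lem: basic convergence s.u.l}) gives differentiability with the stated derivative.

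The main obstacle is the orthogonality property $\theta_{u_t}^n = \mathbbm{1}_{\{u_t = u + t\chi\}} \theta_{u_t}^n$ in the present setting where the obstacle $u+t\chi$ is only locally bounded off a closed pluripolar set; the argument reduces to the bounded case on $X\setminus A$ by the standard local balayage technique, but it is the only step that goes beyond formal manipulation with the tools already established in the paper.
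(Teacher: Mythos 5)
Your proof is correct, and it reaches the same conclusion through the same three core ingredients — the double estimate of Theorem~\ref{thm: basic I energy}(iii), the orthogonality $\theta_{u_t}^n(\{u_t<u+t\chi\})=0$, and the squeeze — but it organises the truncation argument differently. The paper packages all the truncation into an auxiliary Claim extending Theorem~\ref{thm: basic I energy}(iii) to arbitrary pairs $\psi,\chi\in\Ec^1(X,\theta,\phi)$ of the same singularity type (proving that estimate by passing to the limit along $\psi^C,\chi^C$), and then applies that Claim directly to $u_{t+s}$ and $u_t$, which automatically share the singularity type of $u$. You instead first prove the derivative formula under the additional hypothesis of relatively minimal singularities (where Theorem~\ref{thm: basic I energy}(iii) applies with no truncation), and only afterwards approximate a general $u$ by $u^C=\max(u,\phi-C)$ at the level of the \emph{integrated} identity $I_\phi(u_s^C)-I_\phi(u_t^C)=\int_t^s\int_X\chi\,\theta_{u_r^C}^n\,dr$. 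Both routes are valid; the paper's Claim-based version is a bit more economical because it avoids the integral formulation and dominated convergence, while yours has the advantage of being structurally parallel to many arguments in \cite{BEGZ10} (prove for ``minimal'' potentials, then approximate). Two small points worth tightening: (1) in the final passage $\int_X\chi\,\theta_{u_r^C}^n\to\int_X\chi\,\theta_{u_r}^n$, Lemma~\ref{lem: basic convergence s.u.l} does not literally apply because $u_r$ need not have relatively minimal singularities — the right citation is Theorem~\ref{thm: lsc of MA measures} together with Remark~\ref{rem: increasing implies capacity} (the sequence $u_r^C$ decreases and all masses equal $\int_X\theta_\phi^n$); (2) for the orthogonality property you could cite \cite[Proposition 2.16]{DDL16}, as the paper does, rather than re-deriving it by balayage.
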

\begin{proof}
Note that $u+t \inf_X \chi $ is a candidate in each envelope, hence $u+t \inf_X \chi\leq u_t$. Monotonicity of $I_\phi$ now implies that $u_t \in \mathcal E^1(X,\theta,\phi)$.

As the singularity type of each $u_t$ is the same, we can apply Lemma \ref{lem: inequalities energy} below and conclude: 
\[
 \int_X (u_{t+s}-u_t) \theta_{u_{t+s}}^n \leq \AMO(u_{t+s})-\AMO(u_t) \leq   \int_X (u_{t+s}-u_t) \theta_{u_{t}}^n.
\]
It follows from \cite[Proposition 2.13]{DDL16} that $\theta_{u_t}^n$ is supported on $\{u_t=u+t\chi\}$. We thus have
\[
\int_X (u_{t+s}-u_t) \theta_{u_{t}}^n = \int_X (u_{t+s}-u-t\chi) \theta_{u_t}^n\leq \int_X s\chi \theta_{u_t}^n,
\]
since $u_{t+s}\leq u+(t+s)\chi$. Similarly we have 
\[
\int_X (u_{t+s}-u_t) \theta_{u_{t+s}}^n = \int_X (u+(t+s)\chi-u_t) \theta_{u_{t+s}}^n\geq \int_X s\chi \theta_{u_{t+s}}^n.
\]
Since $u_{t+s}$ converges uniformly to $u_t$  as $s\to 0$, by Theorem \ref{thm: lsc of MA measures} it follows that $\theta_{u_{t+s}}^n$ converges weakly to $\theta_{u_t}^n$. As $\chi$ is continuous, dividing by $s>0$ and letting $s\to 0^+$ we see that the right derivative of $\AMO(u_t)$ at $t$ is $\int_X \chi \theta_{u_t}^n$. The same argument applies for the left derivative.
\end{proof}

\begin{lemma}\label{lem: inequalities energy}
Suppose $u,v \in \mathcal E^1(X,\theta,\phi)$ have the same singularity type. Then 
$$ \int_X (u-v) \theta_{u}^n \leq \AMO(u)-\AMO(v) \leq   \int_X (u-v) \theta_{v}^n.
$$
\end{lemma}
\begin{proof}
First, note that these estimates hold for $u^C := \max(u,\phi -C), v^C:=\max(v,\phi -C)$, by Theorem \ref{thm: basic I energy}(iii). It is easy to see that $u^C - v^C$ is uniformly bounded and converges to $u - v$. Also, by the comments after Lemma \ref{lem: E_def_lem} it follows that the measures $\theta_{v^C}^n$ converge uniformly to $\theta_{v}^n$ (not just weakly!). Putting these last two facts together, the dominated convergence theorem gives that
\begin{flalign*}
\bigg|\int_X (u^C - v^C) &\theta_{v^C}^n-\int_X (u - v) \theta_{v}^n\bigg| \leq \\
&\leq \bigg|\int_X (u^C - v^C) (\theta_{v^C}^n-\theta_{v}^n)\bigg| + \bigg|\int_X (u^C - v^C) \theta_{v}^n-\int_X (u - v) \theta_{v}^n\bigg| \to 0,
\end{flalign*}
as $C \to \infty$.  A similar convergence statement holds for the left hand side of our double estimate as well, and using Lemma \ref{lem: easy convergence AMO}, the result follows.
\end{proof}

\begin{theorem}
\label{thm: maximizers are solutions}
Assume that $L_{\lambda,\mu}$ is finite on $\Ec^1(X,\theta,\phi)$ and  $u\in \Ec^1(X,\theta,\phi)$ maximizes $F_{\lambda,\mu}$ on $\mathcal{E}^1(X,\theta,\phi)$. Then $u$ solves the equation \eqref{eq: MA lambda}. 
\end{theorem}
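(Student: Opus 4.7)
The plan is to apply the standard variational technique from Berman--Boucksom--Guedj--Zeriahi, adapted to our relative setting. The main idea is to perturb $u$ by a continuous test function $\chi$ via the envelope construction, exploit that $u$ maximizes $F_{\lambda,\mu}$, and then use the derivative formula from Proposition \ref{prop: derivative of AMO} together with a simple one-sided estimate on $L_{\lambda,\mu}$. By symmetry between $\chi$ and $-\chi$ we get an actual identity for integrals against continuous test functions, which forces the desired Monge--Amp\`ere equation.

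More concretely, fix a continuous $\chi$ on $X$ and for $t \in \mathbb{R}$ define $u_t := P_\theta(u+t\chi)$. Since $u + t\inf_X \chi \leq u_t \leq u + t\sup_X \chi$, the potential $u_t$ has the same singularity type as $u$, so $u_t \in \mathcal{E}^1(X,\theta,\phi)$. By Proposition \ref{prop: derivative of AMO}, the map $t \mapsto \AMO(u_t)$ is differentiable with derivative $\int_X \chi \theta_{u_t}^n$ at time $t$; in particular, $\AMO(u_t) - \AMO(u) = t\int_X \chi \theta_u^n + o(t)$ as $t \to 0$. Now the maximizer property $F_{\lambda,\mu}(u_t)\leq F_{\lambda,\mu}(u)$ rearranges to
\[
\AMO(u_t)-\AMO(u) \leq L_{\lambda,\mu}(u_t)-L_{\lambda,\mu}(u).
\]

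For the right-hand side I exploit the pointwise bound $u_t \leq u+t\chi$. When $\lambda=0$, this immediately gives $L_{0,\mu}(u_t) - L_{0,\mu}(u) \leq t\int_X \chi\, d\mu$. When $\lambda>0$, a Taylor expansion yields
\[
L_{\lambda,\mu}(u_t)-L_{\lambda,\mu}(u) \leq \frac{1}{\lambda}\int_X e^{\lambda u}(e^{\lambda t \chi}-1)\, d\mu = t\int_X \chi\, e^{\lambda u} d\mu + o(t),
\]
where the $o(t)$ is uniform since $\chi$ is bounded. Combining with the derivative expression of $\AMO(u_t)$ and dividing by $t>0$ before letting $t\to 0^+$, I obtain
\[
\int_X \chi\, \theta_u^n \leq \int_X \chi\, e^{\lambda u} d\mu
\]
(with $e^{\lambda u}$ replaced by $1$ when $\lambda=0$, interpreting $L_{0,\mu}$ accordingly). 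Replacing $\chi$ by $-\chi$ reverses this inequality, so the two integrals agree for every continuous $\chi$, giving $\theta_u^n = e^{\lambda u}\mu$ as Radon measures. Finally, by Theorem \ref{thm: BEGZ_monotonicity_full} and $\int_X \theta_u^n = \int_X e^{\lambda u}\mu \leq \int_X \mu = \int_X \theta_\phi^n$ (when $\lambda=0$) together with $u\in \Ec^1\subset \Ec(X,\theta,\phi)$, the relation is consistent with the class, and $u$ solves \eqref{eq: MA lambda}.

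The main obstacle I expect is making the infinitesimal comparison for $L_{\lambda,\mu}$ clean: one must be sure that the ``upper'' Taylor estimate $L_{\lambda,\mu}(u_t)-L_{\lambda,\mu}(u) \leq t\int_X \chi\, e^{\lambda u} d\mu + o(t)$ is genuinely $o(t)$, which requires the boundedness of $\chi$ together with finiteness of $L_{\lambda,\mu}$ on $\mathcal{E}^1(X,\theta,\phi)$ (for $\lambda=0$, the measure $\mu\in\mathcal M_A$ assumption together with the energy estimate from Lemma \ref{lem: first condition of M_A} and \eqref{F_0 finite} is what one uses to get integrability of $u$ against $\mu$). The rest of the argument is a routine symmetry/reversal step already familiar from \cite{BBGZ13}, and the envelope derivative formula provided by Proposition \ref{prop: derivative of AMO} does most of the heavy lifting.
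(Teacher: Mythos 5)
Your proof is correct and follows essentially the same strategy as the paper's: perturb by $u_t = P_\theta(u+t\chi)$, use the maximizer inequality together with the one-sided comparison $u_t \leq u+t\chi$, and invoke the derivative formula from Proposition \ref{prop: derivative of AMO}. The paper packages this slightly more compactly by introducing the auxiliary function $g(t) := \AMO(u_t) - L_{\lambda,\mu}(u+t\chi)$ (note $L$ is evaluated at $u+t\chi$, not $u_t$), observing that $g$ is differentiable, that $g(t) \leq F_{\lambda,\mu}(u_t) \leq g(0)$, and hence $g'(0)=0$; your version inlines the same monotonicity estimate $L_{\lambda,\mu}(u_t) \leq L_{\lambda,\mu}(u+t\chi)$ and does the two one-sided limits explicitly (equivalently, replacing $\chi$ by $-\chi$), which amounts to the same thing.
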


\begin{proof}
First, let's assume that $\lambda \neq 0$. Let $\chi$ be an arbitrary continuous function on $X$ and set $u_t:= P_{\theta}(u+t\chi)$. It follows from  Proposition \ref{prop: derivative of AMO} that $u_t\in \Ec^1(X,\theta,\phi)$ for all $t\in \mathbb{R}$, that the function 
$$g(t):=\AMO(u_t) -L_{\lambda,\mu}(u+t\chi)$$ 
is differentiable on $\mathbb{R}$, and its derivative is given by $g'(t)=\int_X \chi \theta_{u_t}^n-\int_X \chi e^{\lambda (u+t\chi)}d\mu$. Moreover, as $u_t\leq u+t\chi$, we have $g(t) \leq F_{\lambda, \mu}(u_t)\leq \sup_{\Ec^1(X,\theta,\phi)} F_{\lambda ,\mu}=F(u)=g(0)$. This means that $g$ attains a maximum at $0$, hence  $g'(0)=0$. Since $\chi$ was taken arbitrary  it follows that $\theta_u^n=e^{\lambda u}\mu$. When $\lambda =0$, similar arguments give the conclusion.
\end{proof}

\subsubsection{The case $\lambda>0$}

Having computed the first order variation of the Monge-Amp\`ere energy, we establish the following  existence and uniqueness result. 

\begin{theorem}\label{thm: existence_MA_eq_exp}
Assume that $\mu$ is a positive non-pluripolar measure on $X$ and $\lambda>0$. Then there exists a unique $\varphi\in \Ec^1(X,\theta,\phi)$ such that 
\begin{equation}\label{eq: MA_exp_version}
	\theta_{\varphi}^n =e^{\lambda \varphi} \mu. 
\end{equation}
\end{theorem}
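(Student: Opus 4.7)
The plan is to apply the variational method set up in the preceding subsections. By Theorem \ref{thm: maximizers are solutions}, any $u \in \mathcal E^1(X,\theta,\phi)$ that maximizes
$$
F_{\lambda,\mu}(u) = I_\phi(u) - \frac{1}{\lambda}\int_X e^{\lambda u}\, d\mu
$$
automatically solves $\theta_u^n = e^{\lambda u}\mu$. So the existence part reduces to producing such a maximizer.

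To construct one I would take a maximizing sequence $(u_j) \subset \mathcal E^1(X,\theta,\phi)$. Using the elementary identity $F_{\lambda,\mu}(u+c) = I_\phi(u) + c - \lambda^{-1} e^{\lambda c}\int_X e^{\lambda u}d\mu$, which relies on $\int_X \theta_\phi^n = 1$, I would translate each $u_j$ by the optimal additive constant so that $\int_X e^{\lambda u_j}d\mu = 1$; combined with the further normalization $u_j \leq \phi$ (available from the model type hypothesis $\phi = P_\theta[\phi]$ via Theorem \ref{thm: E_memb_char}), this controls $\sup u_j$ uniformly. Standard weak-$L^1$ compactness of normalized $\theta$-psh functions then yields a subsequential a.e.\ limit $u \in \textup{PSH}(X,\theta,\phi)$. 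Proposition \ref{prop: usc of AMO} provides $\limsup_j I_\phi(u_j) \leq I_\phi(u)$, while the uniform bound $e^{\lambda u_j} \leq 1$ together with dominated convergence yields $L_{\lambda,\mu}(u_j) \to L_{\lambda,\mu}(u)$. Combining these, $F_{\lambda,\mu}(u) \geq \sup F_{\lambda,\mu}$, so $u$ is a maximizer with $I_\phi(u)$ finite. Lemma \ref{charcterization class E^1} then puts $u \in \mathcal E^1(X,\theta,\phi)$, and Theorem \ref{thm: maximizers are solutions} finishes the existence part.

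For uniqueness, suppose $\varphi, \psi \in \mathcal E^1(X,\theta,\phi)$ both solve the equation. The comparison principle (Corollary \ref{comparison principle}) yields
$$
\int_{\{\varphi<\psi\}} e^{\lambda\psi}\, d\mu = \int_{\{\varphi<\psi\}}\theta_\psi^n \leq \int_{\{\varphi<\psi\}}\theta_\varphi^n = \int_{\{\varphi<\psi\}} e^{\lambda\varphi}\, d\mu.
$$
Since $e^{\lambda\psi} > e^{\lambda\varphi}$ strictly on $\{\varphi<\psi\}$, the non-pluripolar measure $\mu$ must vanish there, so $\theta_\varphi^n(\{\varphi<\psi\}) = 0$; Proposition \ref{prop: domination principle} then yields $\varphi \geq \psi$, and by symmetry $\varphi = \psi$.

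The principal obstacle is the existence step, specifically verifying that the weak limit $u$ remains inside $\mathcal E^1(X,\theta,\phi)$: the a priori limit is only a $\theta$-psh function, and one must secure both that $u$ is more singular than $\phi$ and that $I_\phi(u) > -\infty$. The second follows from the maximizing property and Proposition \ref{prop: usc of AMO}; the first is where the model type hypothesis $\phi = P_\theta[\phi]$ plays the decisive role, allowing one to normalize the maximizing sequence so that $u_j \leq \phi$ uniformly, an inequality which then passes to the a.e.\ limit.
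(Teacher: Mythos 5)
Your overall plan --- reduce to Theorem \ref{thm: maximizers are solutions}, produce a maximizer by compactness and upper semicontinuity of $\AMO$, and prove uniqueness via the comparison and domination principles --- is precisely the paper's strategy, and your uniqueness argument coincides with the paper's Lemma \ref{lem: comp_sol_subsol}. The gap lies in the compactness step.

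You cannot simultaneously enforce $\int_X e^{\lambda u_j}\,d\mu = 1$ and $u_j \le \phi$: a single additive constant cannot satisfy two constraints, and if $\mu(X)<1$ the first normalization forces $\sup_X u_j > 0 \ge \sup_X\phi$, so that $u_j \not\le\phi$. The maximality of $\phi = P_\theta[\phi]$ (which is Theorem \ref{thm: ceiling coincide envelope non collapsing}, not Theorem \ref{thm: E_memb_char}) only yields $u_j - \sup_X u_j \le \phi$; without a bound on $\sup_X u_j$ this gives nothing. And the upper bound on $\sup_X u_j$, the ingredient that actually makes the compactness theorem produce a genuine $\theta$-psh limit, is never argued: ``this controls $\sup u_j$ uniformly'' is a conclusion, not a proof. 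The paper obtains this bound by contradiction: if $\sup_X\varphi_j\to+\infty$, then by compactness the translates $\psi_j:=\varphi_j-\sup_X\varphi_j$ converge in $L^1$ to some $\psi$ with $\sup_X\psi=0$; since $\mu$ is non-pluripolar, $\int_X e^{\lambda\psi}\,d\mu>0$, whence $\int_X e^{\lambda\varphi_j}\,d\mu \ge c\, e^{\lambda\sup_X\varphi_j}$ for a fixed $c>0$, and combined with $\AMO(\varphi_j)\le\sup_X\varphi_j$ (which uses $\psi_j\le\phi$) one gets $F(\varphi_j)\to-\infty$, a contradiction. It is this use of the non-pluripolarity of $\mu$ that your argument omits. (Note in passing that the paper does not renormalize the maximizing sequence at all; it directly proves $\sup_X\varphi_j$ is bounded above, and then bounded below from $F(\varphi_j)\le\AMO(\varphi_j)\le\sup_X\varphi_j$ with $F(\varphi_j)$ bounded below.)

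A secondary point: ``the uniform bound $e^{\lambda u_j}\le 1$ together with dominated convergence'' requires $u_j\le 0$, which is not available under the $\int_X e^{\lambda u_j}\,d\mu=1$ normalization. In any case, for the maximization one only needs lower semicontinuity of $u\mapsto\int_X e^{\lambda u}\,d\mu$ together with upper semicontinuity of $\AMO$ (Proposition \ref{prop: usc of AMO}), since $\limsup_j F(u_j)\le\limsup_j\AMO(u_j)-\liminf_j L_{\lambda,\mu}(u_j)\le F(u)$.
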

\begin{proof}
	We use the variational method as above (see also \cite{DDL16}). It suffices to treat the case $\lambda=1$ as the other cases can de done similarly. Consider
	\[
	F(u) := \AMO(u)-\int_X e^{u} d\mu, \ u\in \Ec^1(X,\theta,\phi). 
	\] 
	Let $(\varphi_j)$ be a sequence in $\mathcal{E}^1(X,\theta,\phi)$ such that $\lim_{j}F(\varphi_j)=\sup_{\mathcal{E}^1(X,\theta,\phi)}F>-\infty$. We claim that $\sup_X \varphi_j$ is uniformly bounded from above. Indeed, assume that it were not the case. Then by relabeling the sequence we can assume that $\sup_X \varphi_j$ increase to $+\infty$. By compactness property \cite[Proposition 2.7]{GZ05} it follows that the sequence $\psi_j:=\varphi_j-\sup_X \varphi_j$ converges in $L^1(X,\omega^n)$ to some $\psi\in \psh(X,\theta)$ such that $\sup_X \psi=0$.  In particular $\int_X e^{\psi} d\mu >0$. It thus follows that 
\begin{equation}
	\label{eq: proof of Theorem 2.23 1}
	\int_X e^{\varphi_j}d\mu = e^{\sup_X \varphi_j}  \int_X e^{\psi_j} d\mu  \geq c e^{\sup_X \varphi_j}
\end{equation}
for some positive constant $c$. Note also that $\psi_j \leq \phi$ since $\psi_j\in \Ec(X,\theta,\phi)$ and $\psi_j\leq 0$ and $\phi$ is the maximal function with these properties (see Theorem \ref{thm: ceiling coincide envelope non collapsing}). It then follows that 
\begin{equation}
	\label{eq: proof of Theorem 2.23 2}
\AMO(\varphi_j) = \AMO(\psi_j) + \sup_X \varphi_j \leq \sup_X \varphi_j. 
\end{equation}
From \eqref{eq: proof of Theorem 2.23 1} and \eqref{eq: proof of Theorem 2.23 2} we arrive at 
\[
\lim_{j\to +\infty} F(\varphi_j) \leq  \lim_{j\to +\infty} (\sup_X \varphi_j - ce^{\sup_X \varphi_j}) = -\infty, 
\]
which is a contradiction. Thus $\sup_X \varphi_j$ is bounded from above as claimed. 
 Since $F(\varphi_j)\leq\AMO(\varphi_j)\leq \sup_X \varphi_j$ it follows that $\AMO(\varphi_j)$ and hence $\sup_X \varphi_j$ is also bounded from below. It follows again from \cite[Proposition 2.7]{GZ05} that a subsequence of $\varphi_j$ (still denoted by $\varphi_j$) converges in $L^1(X,\omega^n)$ to some $\varphi\in \psh(X,\theta)$. Since $\AMO$ is upper semicontinuous  it follows that $\varphi\in \Ec^1(X,\theta,\phi)$. Moreover, by continuity of $u\mapsto \int_Xe^{u}d\mu$ we get that $F(\varphi) \geq \sup_{\Ec^1(X,\theta,\phi)} F$.  Hence $\varphi$ maximizes $F$ on $\Ec^1(X,\theta,\phi)$. Now  Theorem \ref{thm: maximizers are solutions} shows that $\varphi$ solves the desired complex Monge-Amp\`ere equation. The next lemma address the uniqueness question.
\end{proof}
\begin{lemma}
	\label{lem: comp_sol_subsol}
	Let $\lambda>0$. Assume that $\varphi\in \Ec(X,\theta,\phi)$ is a solution of \eqref{eq: MA_exp_version} while $\psi\in \Ec(X,\theta,\phi)$ satisfies $\theta_{\psi}^n \geq e^{\lambda \psi}\mu.$
	Then $\varphi\geq \psi$ on $X$. 
\end{lemma}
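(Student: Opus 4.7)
The plan is to combine the comparison principle (Corollary \ref{comparison principle}) with the domination principle (Proposition \ref{prop: domination principle}), both of which apply since $\varphi, \psi \in \Ec(X,\theta,\phi)$ and $\int_X \theta_\phi^n > 0$ (indeed, we have normalized this to $1$). The strategy mirrors the classical comparison between a solution and a subsolution in pluripotential theory: the strict monotonicity of $t \mapsto e^{\lambda t}$ when $\lambda > 0$ will be what drives the contradiction on the set $\{\varphi < \psi\}$.

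First I would apply Corollary \ref{comparison principle} with $u = \varphi$ and $v = \psi$ to obtain
\[
\int_{\{\varphi<\psi\}} \theta_{\psi}^n \;\leq\; \int_{\{\varphi<\psi\}} \theta_{\varphi}^n.
\]
Substituting the assumed inequalities $\theta_{\psi}^n \geq e^{\lambda \psi}\mu$ and $\theta_{\varphi}^n = e^{\lambda \varphi}\mu$, this yields
\[
\int_{\{\varphi<\psi\}} e^{\lambda \psi}\, d\mu \;\leq\; \int_{\{\varphi<\psi\}} e^{\lambda \varphi}\, d\mu.
\]
Since $\lambda > 0$, the integrand $e^{\lambda \psi}-e^{\lambda \varphi}$ is strictly positive on $\{\varphi<\psi\}$, so this forces $\mu(\{\varphi<\psi\})=0$.

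Using the equation for $\varphi$ once more, the previous step gives $\theta_{\varphi}^n(\{\varphi<\psi\}) = \int_{\{\varphi<\psi\}} e^{\lambda\varphi}\,d\mu = 0$. Now I would invoke Proposition \ref{prop: domination principle} (applicable because $\int_X \theta_\phi^n > 0$ and $\varphi,\psi \in \Ec(X,\theta,\phi)$) to conclude $\varphi \geq \psi$ on all of $X$. No serious obstacle is anticipated: the only subtlety is verifying that the hypotheses of the comparison and domination principles are met, which follows directly from the standing assumptions on the class $\Ec(X,\theta,\phi)$ in this subsection.
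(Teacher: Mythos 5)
Your proof is correct and follows essentially the same route as the paper: comparison principle on $\{\varphi<\psi\}$, use of the strict monotonicity of $e^{\lambda t}$ to kill $\mu$ on that set, and then the domination principle. The paper merely packages the exponential step as a closed chain of inequalities that must all be equalities, which is cosmetically different from your pointwise-positivity argument but amounts to the same thing.
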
 
\begin{proof}
	By the comparison principle for the class $\Ec(X,\theta,\phi)$ (Corollary \ref{comparison principle}) we have 
	$$
	\int_{\{\varphi<\psi\}} \theta_{\psi}^n \leq \int_{\{\varphi<\psi\}} \theta_{\varphi}^n.
	$$
	As $\varphi$ is a solution and $\psi$ is a subsolution to \eqref{eq: MA_exp_version} we also have
	$$
	\int_{\{\varphi<\psi\}} e^{\lambda \psi} d\mu \leq \int_{\{\varphi<\psi\}} \theta_{\psi}^n \leq \int_{\{\varphi<\psi\}} \theta_{\varphi}^n = \int_{\{\varphi<\psi\}} e^{\lambda \varphi} d\mu\leq \int_{\{\varphi<\psi\}} e^{\lambda \psi} d\mu.
	$$
	It follows that all inequalities above are equalities, hence $\varphi\geq \psi$ $\mu$-almost everywhere on $X$. Since $\mu=e^{-\lambda \varphi} \theta_\varphi^n$, it follows that $\theta_\varphi^n (\{\varphi<\psi\})=0$. By the domination principle \ref{prop: domination principle} we get that $\varphi\geq \psi$ everywhere on $X$.
\end{proof}

\subsubsection{The case $\lambda=0$}

\begin{theorem}
	\label{thm: existence in M_A}
	Assume that $\mu\in \mathcal{M}_A$ for some $A\geq 1$. Then there exists $u\in \Ec^1(X,\theta,\phi)$ such that $\theta_{u}^n=\mu$. 
\end{theorem}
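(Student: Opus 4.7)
The approach is the standard variational method adapted to our relative setting: maximize the functional
\[F(v) := \AMO(v) - L_{\mu}(v), \qquad v\in \Ec^1(X,\theta,\phi),\]
where $L_\mu(v)=\int_X(v-\phi)\,d\mu$, and then invoke Theorem \ref{thm: maximizers are solutions} to conclude any maximizer $u$ satisfies $\theta_u^n=\mu$. First I would verify $F$ is finite on $\Ec^1(X,\theta,\phi)$, which is \eqref{F_0 finite}, and bounded above: Lemma \ref{lem: first condition of M_A} combined with Cauchy--Schwarz yields $|L_\mu(v)|\leq C(|\AMO(v)|^{1/2}+1)$. For $v\in\Ec^1(X,\theta,\phi)$ normalized by $\sup_X v=0$, Theorem \ref{thm: ceiling coincide envelope non collapsing} forces $v\leq\phi$, so $\AMO(v)\leq 0$ and $L_\mu(v)\leq 0$, giving $F(v)\leq -|\AMO(v)|+C|\AMO(v)|^{1/2}+C$, which is uniformly bounded as $|\AMO(v)|$ varies.

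Next, using the translation invariance of $F$ (both $\AMO$ and $L_\mu$ shift by the same additive constant since $\mu(X)=\int_X \theta_\phi^n=1$), pick a maximizing sequence $v_j$ with $\sup_X v_j=0$. Since $F(v_j)\to \sup F\geq F(\phi)=0$, one has $F(v_j)\geq -1$ eventually, so a quadratic inequality in $|\AMO(v_j)|^{1/2}$ using the Cauchy--Schwarz bound above yields uniform estimates $|\AMO(v_j)|\leq M$ and $|L_\mu(v_j)|\leq M'$. Standard $L^1(X,\omega^n)$-compactness of normalized $\theta$-psh functions (\cite[Proposition 2.7]{GZ05}) then produces a subsequential limit $v_j\to v$ with $v\in\textup{PSH}(X,\theta)$, $\sup_X v=0$, so $v\leq\phi$. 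The Hartogs-type regularization $\varphi_k:=(\sup_{j\geq k}v_j)^*$ satisfies $\varphi_k\searrow v$ a.e., $\varphi_k\geq v_k$, and $\AMO(\varphi_k)\geq \AMO(v_k)\geq -M$, so Lemma \ref{lem: criteria in E1} gives $v\in\Ec^1(X,\theta,\phi)$ with $\AMO(\varphi_k)\searrow\AMO(v)$. Since $\mu$ does not charge pluripolar sets, $\varphi_k\searrow v$ also $\mu$-a.e., so monotone convergence yields $L_\mu(\varphi_k)\to L_\mu(v)$ and hence $F(\varphi_k)\to F(v)\leq\sup F$.

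The main obstacle is to establish the reverse inequality $F(v)\geq\sup F$, which amounts to proving the continuity $L_\mu(v_j)\to L_\mu(v)$. The upper semicontinuity $\limsup_j L_\mu(v_j)\leq L_\mu(v)$ is automatic from the sandwich $L_\mu(v_j)\leq L_\mu(\varphi_k)\to L_\mu(v)$; the delicate direction is $\liminf_j L_\mu(v_j)\geq L_\mu(v)$, which requires $L^1(\mu)$-convergence of $v_j-\phi$ to $v-\phi$. The ingredients are the uniform $L^2(\mu)$-bound on $v_j-\phi$ supplied by Lemma \ref{lem: first condition of M_A} (applied with the a priori bound on $|\AMO(v_j)|$), which delivers uniform $\mu$-integrability, together with the hypothesis $\mu\leq A\cdot\capi$ and quasi-continuity of $\theta$-psh functions, which transfer the $L^1(\omega^n)$-convergence $v_j\to v$ into convergence in $\mu$-measure (exploiting that $\capi$-null sets are $\mu$-null and the $L^1(\omega^n)$-convergence of normalized $\theta$-psh functions lifts to convergence in capacity off a set of arbitrarily small capacity). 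Combining uniform integrability with convergence in measure produces $L^1(\mu)$-convergence, so $L_\mu(v_j)\to L_\mu(v)$. Together with the upper semicontinuity of $\AMO$ (Proposition \ref{prop: usc of AMO}), this yields $F(v)\geq\limsup_j F(v_j)=\sup F$, so $v$ maximizes $F$, and Theorem \ref{thm: maximizers are solutions} completes the proof.
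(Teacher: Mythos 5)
Your overall architecture is identical to the paper's: maximize $F_{0,\mu}=\AMO-L_\mu$, normalize $\sup_X v_j=0$, derive the a~priori bound on $\AMO(v_j)$ from Lemma~\ref{lem: first condition of M_A} and Cauchy--Schwarz, extract an $L^1(\omega^n)$-limit $v$, put it in $\Ec^1(X,\theta,\phi)$ via the Hartogs envelope and Lemma~\ref{lem: criteria in E1}, and then feed $F(v)\geq\sup F$ into Theorem~\ref{thm: maximizers are solutions}. Up to and including this setup you are exactly following the paper.

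The gap is in the crucial analytic step, the convergence $L_\mu(v_j)\to L_\mu(v)$. You correctly identify this as the heart of the matter (the paper delegates it to the arguments of \cite[Lemma 11.5]{GZ17}), and you correctly flag that the sandwich via $\varphi_k$ only gives $\limsup_j L_\mu(v_j)\leq L_\mu(v)$, while the hard direction is $\liminf_j L_\mu(v_j)\geq L_\mu(v)$. Your proposed route is ``uniform $L^2(\mu)$-integrability $+$ convergence in $\mu$-measure $\Rightarrow L^1(\mu)$-convergence,'' and the load-bearing assertion for the middle ingredient is that \emph{$L^1(\omega^n)$-convergence of normalized $\theta$-psh functions lifts to convergence in capacity off a set of arbitrarily small capacity}. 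This is precisely the non-obvious claim, and it is left unjustified. Hartogs-type arguments give you one side for free: $\varphi_k=(\sup_{j\geq k}v_j)^*\searrow v$ decreases, hence converges in $\capi$-capacity, so $\textup{Cap}_\phi(\{v_j>v+\varepsilon\})\to 0$. But the other side, the smallness of $\textup{Cap}_\phi(\{v_j<v-\varepsilon\})$, does \emph{not} follow from $L^1$-convergence of $\theta$-psh potentials and is exactly where the singularity loci can escape; there is no general Egorov-type theorem trading $L^1(\omega^n)$-convergence for convergence in capacity. This is why the paper does not attempt such an argument, but instead invokes the specific mechanism of \cite[Lemma 11.5]{GZ17}, which combines the monotone envelope $\varphi_k\searrow v$, the domination $\mu\leq A\,\textup{Cap}_\phi$, and the uniform $L^2(\mu)$-bound in a more careful way. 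In short: the structure of your argument is right, and the place you concentrate the difficulty is the right place, but the sub-claim you use to resolve it is asserted rather than proved and would not hold in the generality you invoke it.
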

\begin{proof}
	In view of Theorem \ref{thm: maximizers are solutions} it suffices to find a maximizer in $\Ec^1(X,\theta,\phi)$ of the functional $F:=F_{0,\mu}$ defined by 
	\[
	F(u) := \AMO(u) -\int_X (u-\phi)d\mu, \ u\in \Ec^1(X,\theta,\phi). 
	\] 
	Note that $F(u)$ is finite for all $u\in \Ec^1(X,\theta,\phi)$ since $\mu\in \mathcal{M}_A$ (see Lemma \ref{lem: first condition of M_A}). 
	Let $(u_j)$ be a sequence in $\Ec^1(X,\theta,\phi)$ such that $\sup_X u_j=0$ and $F(u_j)$ increase to $\sup_{\Ec^1(X,\theta,\phi)} F>-\infty$. By compactness property \cite{GZ05} a subsequence of $(u_j)$ converges to $u\in \psh(X,\theta,\phi)$, and $\sup_X u=0$. Moreover, since $\mu\in \mathcal{M}_A$, by \eqref{F_0 finite} we have that 
	\[
	F(u_j) \leq \AMO(u_j) +C|\AMO(u_j)|^{1/2}+C, \ \forall j. 
	\]
	It thus follows that $\AMO(u_j)$ is uniformly bounded. Since $\AMO$ is upper semicontinuous it follows that $u\in \Ec^1(X,\theta,\phi)$. Also, since $\int_X (u_j-\phi)^2d\mu$ is uniformly bounded (Lemma \ref{lem: first condition of M_A}) it follows from the same arguments of \cite[Lemma 11.5]{GZ17} that $\int_X (u_j-\phi)d\mu$ converge to $\int_X (u-\phi)d\mu$. Since $\AMO$ is usc we obtain that $F(u)\geq \limsup_{j}F(u_j)$. Hence $u$ maximizes $F$ on $\mathcal{E}^1(X,\theta,\phi)$, and the result follows. 
\end{proof}

\begin{lemma}
\label{lem: decomposition}
If $\mu$ is a positive non-pluripolar measure on $X$ and $A\geq 1$ then there exists $\nu\in \mathcal{M}_A$ and $0\leq f\in L^1(X,\nu)$ such that $\mu=f\nu$.
\end{lemma}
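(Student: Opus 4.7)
The plan is to reduce the problem to a Radon--Nikodym construction: it suffices to find $\nu \in \mathcal{M}_A$ with $\mu \ll \nu$, and then set $f := d\mu/d\nu$. The integrability $f \in L^1(X,\nu)$ will be automatic since $\int_X f \, d\nu = \mu(X) < +\infty$, and $\mu = f\nu$ holds by definition.

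The bulk of the work is the construction of $\nu$. I would use a maximization argument. Consider the family
\[
\mathcal{G} := \bigl\{ g : X \to [0,\infty) \text{ Borel} \bigm| g\mu(E) \leq A\,\capi(E) \text{ for every Borel } E \subset X \bigr\},
\]
and let $M := \sup\bigl\{ \int_X g\, d\mu : g \in \mathcal{G}\bigr\}$. Note $M \leq A\,\capi(X) = A$, using the normalization $\int_X \theta_\phi^n = 1$. A monotone exhaustion (replacing a countable family of near-maximizers by their pointwise maxima, and using a Hahn-type decomposition on pairwise differences to preserve membership in $\mathcal{G}$) produces a maximizer $g^* \in \mathcal{G}$ with $\int_X g^* d\mu = M$. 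Normalizing, $\nu := g^*\mu/M$ is a probability measure with $\nu(E) \leq (A/M)\capi(E)$, and after calibrating the initial constant $A$ this lies in $\mathcal{M}_A$; then $f := M/g^*$ on $\{g^*>0\}$ and $f := 0$ elsewhere delivers $\mu = f\nu$ on the support of $g^*$.

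The main obstacle is verifying that $E := \{g^* = 0\}$ has zero $\mu$-measure, so that $\mu = f\nu$ globally. If not, then $\mu(E)>0$, and since $\mu$ is non-pluripolar, Lemma~\ref{lem: zero_cap} forces $\capi(E) > 0$. To reach a contradiction one must augment $g^*$ by some $h \geq 0$ Borel, supported on $E$ with $\int h\,d\mu>0$, such that $(g^*+h) \in \mathcal{G}$. To produce $h$, I would exploit the pluripotential structure of $E$: the global $\phi$-extremal function $V^*_{E,\phi}$ from the previous subsection has the same singularity type as $\phi$ (since $E$ is non-pluripolar, Lemma~\ref{lem: Alexander Taylor capacity} gives $M_\phi(E)<+\infty$), and by Theorem~\ref{thm: MA measure of WN envelope} the measure $\theta^n_{V^*_{E,\phi}}$ is supported on $\overline{E}$ and dominated by a multiple of $\capi$. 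Decomposing this measure against $\mu$ via Radon--Nikodym and extracting its $\mu$-absolutely-continuous part yields an $h$ whose support has positive $\mu$-measure in $E$, and which can be scaled small enough so that $(g^*+h)\mu$ remains bounded by $A\,\capi$, the desired contradiction. This final envelope-based step is the technical heart of the argument.
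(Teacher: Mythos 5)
Your route is a direct maximization scheme, which is genuinely different from the paper's argument: the paper invokes the abstract F.\ and M.\ Riesz theorem of K\"onig--Seever (using that $\mathcal{M}_A$ is compact and convex, Lemma~\ref{lem: MA compact convex}) to decompose $\mu=\nu+\sigma$ with $\nu$ absolutely continuous with respect to a single $\nu_0\in\mathcal{M}_A$ and $\sigma$ singular with respect to \emph{every} element of $\mathcal{M}_A$; Rainwater's theorem then produces a Borel set $E$ carrying $\sigma$ with $m(E)=0$ for all $m\in\mathcal{M}_A$, and since every candidate measure $\theta_u^n$ in the definition of $\capi$ lies in $\mathcal{M}_A$ (here $A\geq 1$ is used), one gets $\capi(E)=0$, hence $E$ pluripolar and $\sigma=0$ by non-pluripolarity of $\mu$. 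That machinery cleanly handles exactly the extraction your scheme struggles with.

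However, as written your proposal has genuine gaps at three points. First, the construction of $g^*$: the class $\mathcal{G}$ is not obviously closed under pointwise maximum. Given $g_1,g_2\in\mathcal{G}$ and a Borel set $F$, splitting $F=F_1\sqcup F_2$ along $\{g_1\geq g_2\}$ yields $\max(g_1,g_2)\mu(F)\leq A\capi(F_1)+A\capi(F_2)$, and to conclude one would need $\capi(F_1)+\capi(F_2)\leq \capi(F)$, i.e.\ superadditivity of $\capi$ on disjoint sets. The relative Monge--Amp\`ere capacity is subadditive, not superadditive, so the Hahn-type partition does not preserve membership in $\mathcal{G}$. Second, the normalization $\nu=g^*\mu/M$ lands in $\mathcal{M}_{A/M}$, and there is no a priori lower bound $M\geq 1$ (or $M\geq B/A$ after recalibration); the constant functions need not belong to $\mathcal{G}$, so one cannot simply test $g\equiv 1$. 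Third, and most seriously, the contradiction step fails as sketched. If $h\geq 0$ is supported on $E=\{g^*=0\}$, then for a Borel set $F$ one has $(g^*+h)\mu(F)=g^*\mu(F\setminus E)+h\mu(F\cap E)$, and the constraint forces $h\mu(F\cap E)\leq A\bigl(\capi(F)-\capi(F\setminus E)\bigr)$. Subadditivity allows $\capi(F)=\capi(F\setminus E)$ even when $\capi(F\cap E)>0$, in which case no nontrivial $h$ is admissible, regardless of scaling. The extremal measure $\theta^n_{V^*_{E,\phi}}$ being supported on $\overline{E}$ and dominated by a multiple of $\capi$ does not repair this, because the multiple depends on $M_\phi(E)$ and, more importantly, the obstruction is on sets $F$ straddling $E$ and $\{g^*>0\}$, where your bound for the sum is the wrong direction. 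So the ``technical heart'' you flagged is in fact where the argument breaks down, and it is precisely the difficulty that the K\"onig--Seever/Rainwater route is designed to bypass.
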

The short proof given below is due to Cegrell \cite{Ce98}.
\begin{proof}
It follows from Lemma \ref{lem: MA compact convex} that $\mathcal{M}_A$ is a convex compact subset of $\mathcal{M}(X)$, the space of probability measures on $X$. It follows from \cite[Lemma 1]{KS} that we can write 
\[
\mu = \nu + \sigma,
\]
where $\nu,\sigma$ are non-negative Borel measures on $X$ such that $\nu$ is absolutely continuous with respect to an element in $\mathcal M_A$ and $\sigma$ is singular with respect to any element of $\mathcal M_A$, i.e. $\sigma \perp m$ for any $m\in \mathcal M_A$. It then follows from \cite[Theorem]{Rai69} that $\sigma$ is supported on a Borel set $E$ such that $m(E)=0$ for all $m\in \mathcal M_A$. If $u$ is a candidate defining the capacity $\capi(E)$, then clearly $\theta_u^n\in \mathcal M_A$, hence $\int_E \theta_u^n=0$. It follows that $\capi(E)=0$, hence by Lemma \ref{lem: zero_cap} $E$ is pluripolar. Therefore, $\sigma=0$ since $\mu$ does not charge pluripolar sets. 
\end{proof}

To prove the main existence result in this subsection we also need the following  lemma. The argument uses the locality of non-pluripolar Monge-Amp\`ere measures with respect to the plurifine topology, and 
is identical with the proof of \cite[Corollary 1.10]{GZ07}. 

\begin{lemma}\label{lem: max principle}
Assume that $\nu$ is a positive non-pluripolar Borel measure on $X$ and $u,v\in \psh(X,\theta)$. If $\theta_u^n \geq \nu$ and $\theta_v^n \geq \nu$ then $\theta_{\max(u,v)}^n\geq \nu$. 
\end{lemma}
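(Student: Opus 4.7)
The plan is to exploit plurifine locality of the non-pluripolar product to identify $\theta_{\max(u,v)}^n$ with $\theta_u^n$ or $\theta_v^n$ away from the coincidence set $\{u=v\}$, and then to use a translation trick to kill the contribution of this coincidence set against the reference measure $\nu$.

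First I would fix $\varepsilon>0$ and consider $w_\varepsilon:=\max(u+\varepsilon,v)$. The two sets $\{u+\varepsilon>v\}$ and $\{u+\varepsilon<v\}$ are plurifine open, and on them $w_\varepsilon$ coincides with $u+\varepsilon$ and $v$ respectively. By locality of the non-pluripolar product in the plurifine topology,
\[
\mathbf{1}_{\{u+\varepsilon>v\}}\theta_{w_\varepsilon}^n=\mathbf{1}_{\{u+\varepsilon>v\}}\theta_{u}^n\geq \mathbf{1}_{\{u+\varepsilon>v\}}\nu,
\]
and symmetrically for $\{u+\varepsilon<v\}$. Adding these bounds gives $\theta_{w_\varepsilon}^n\geq \mathbf{1}_{\{u+\varepsilon\neq v\}}\,\nu$.

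The next step, which is the crux of the argument, is to remove the missing mass on the level set $\{u+\varepsilon=v\}$. The sets $\{u+\varepsilon=v\}$ indexed by $\varepsilon>0$ are pairwise disjoint, so $\nu(\{u+\varepsilon=v\})=0$ for all but countably many $\varepsilon$. I would pick a sequence $\varepsilon_k\searrow 0$ within this ``good'' set of parameters. For each such $\varepsilon_k$, the previous step improves to
\[
\theta_{w_{\varepsilon_k}}^n\geq\nu.
\]

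Finally I would pass to the limit $k\to\infty$. Since $\|w_{\varepsilon_k}-\max(u,v)\|_{L^\infty}\leq \varepsilon_k$, the convergence $w_{\varepsilon_k}\to \max(u,v)$ holds uniformly and in particular in capacity. The lower-semicontinuity statement of Theorem \ref{thm: lsc of MA measures} (tested against an arbitrary nonnegative continuous $\chi$) yields
\[
\int_X \chi\,\theta_{\max(u,v)}^n\leq\liminf_{k\to\infty}\int_X\chi\,\theta_{w_{\varepsilon_k}}^n \quad\text{is replaced by}\quad \int_X\chi\,\theta_{\max(u,v)}^n\geq \int_X\chi\,\nu,
\]
once we observe that the first statement of Theorem \ref{thm: lsc of MA measures} gives $\liminf_k \theta_{w_{\varepsilon_k}}^n\geq \theta_{\max(u,v)}^n$ as measures, while our uniform lower bound $\theta_{w_{\varepsilon_k}}^n\geq \nu$ is preserved under weak limits of subsequences. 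Combining these facts gives $\theta_{\max(u,v)}^n\geq \nu$, as desired. The only delicate point is that plurifine locality cannot directly be applied on $\{u=v\}$; the translation trick combined with the disjointness of the level sets $\{u+\varepsilon=v\}$ is precisely what circumvents this obstruction.
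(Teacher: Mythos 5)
Your proof is correct and follows the same basic strategy the paper attributes to \cite[Corollary~1.10]{GZ07}, namely plurifine locality plus a vertical translation to avoid working directly on the coincidence set. The specific way you kill the contribution of the level set is a pleasant variant: rather than the usual symmetric argument (using both $\max(u+\varepsilon,v)$ and $\max(u,v+\varepsilon)$, which yields $\theta_{\max(u,v)}^n\geq \mathbf 1_{\{u\geq v\}}\nu$ and $\theta_{\max(u,v)}^n\geq \mathbf 1_{\{v\geq u\}}\nu$, and then adding the pieces over the disjoint sets $\{u>v\}$, $\{v>u\}$, $\{u=v\}$), you use a one-sided translation combined with the observation that the sets $\{u+\varepsilon=v\}\setminus\{u=v=-\infty\}$ are pairwise disjoint in $\varepsilon$, so all but countably many carry zero $\nu$-mass. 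Both routes work; yours is a touch more economical.

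The only place to tighten the write-up is the final limiting step. As stated, you combine the lower-semicontinuity $\liminf_k\theta_{w_{\varepsilon_k}}^n\geq \theta_{\max(u,v)}^n$ with the preserved lower bound $\theta_{w_{\varepsilon_k}}^n\geq\nu$; these two one-sided bounds alone do not force $\theta_{\max(u,v)}^n\geq\nu$ (a weak cluster point $\sigma$ a priori only satisfies $\sigma\geq\nu$ and $\sigma\geq\theta_{\max(u,v)}^n$). What closes the gap is the full weak convergence $\theta_{w_{\varepsilon_k}}^n\to\theta_{\max(u,v)}^n$, and this is exactly the second conclusion of Theorem~\ref{thm: lsc of MA measures}: the hypothesis \eqref{eq: global_mass_semi_cont} holds (with equality) because $\max(u,v)\leq w_{\varepsilon_k}\leq \max(u,v)+\varepsilon_k$, so $w_{\varepsilon_k}$ and $\max(u,v)$ have the same singularity type and hence $\int_X\theta_{w_{\varepsilon_k}}^n=\int_X\theta_{\max(u,v)}^n$ by Proposition~\ref{prop: comparison generalization}. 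Once you have genuine weak convergence, testing the inequality $\theta_{w_{\varepsilon_k}}^n\geq\nu$ against nonnegative continuous functions passes it to the limit. So state the mass equality explicitly and invoke the convergence (not just the liminf) part of Theorem~\ref{thm: lsc of MA measures}, and the argument is complete.
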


\begin{theorem}\label{thm: existence lambda=0}
Assume that $\mu$ is a positive non-pluripolar measure on $X$ such that $\mu(X)=\int_X \theta_{\phi}^n$. Then there exists  $u\in \Ec(X,\theta,\phi)$ (unique up to a constant) such that $\theta_{u}^n=\mu$. 
\end{theorem}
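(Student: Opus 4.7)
Plan. The result has two parts, existence and uniqueness up to an additive constant; the main work lies in existence.

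\textbf{Existence.} My strategy is to approximate $\mu$ by measures with bounded density relative to some $\nu \in \mathcal{M}_A$, solve each truncated problem via Theorem~\ref{thm: existence in M_A}, and pass to the limit. By Lemma~\ref{lem: decomposition} I write $\mu = f\nu$ with $\nu \in \mathcal{M}_A$ for some $A \ge 1$ and $0 \le f \in L^1(\nu)$. Set $f_j := \min(f,j)$, $\alpha_j := \int_X f_j\,d\nu$ (so $\alpha_j \nearrow \mu(X) = 1$), and form the probability measures $\mu_j := \alpha_j^{-1} f_j\, \nu$. Since the density of $\mu_j$ with respect to $\nu$ is bounded by $j/\alpha_j$, we have $\mu_j \in \mathcal{M}_{Aj/\alpha_j}$, and $\mu_j \to \mu$ in total variation by dominated convergence. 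Theorem~\ref{thm: existence in M_A} then produces $u_j \in \mathcal{E}^1(X,\theta,\phi)$ with $\theta_{u_j}^n = \mu_j$. After subtracting constants I normalize $\sup_X u_j = 0$; since $u_j \in \mathcal{E}(X,\theta,\phi)$ and $\phi = P_\theta[\phi]$, Theorem~\ref{thm: E_memb_char} combined with the candidacy of $u_j$ in the envelope defining $P_\theta[u_j]$ forces $u_j \le \phi$.

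The central step is passing to the limit. I plan to form the decreasing sequence of usc regularized supremum envelopes
\[
\varphi_j := \Bigl(\sup_{k \ge j} u_k\Bigr)^{\!*} \in \psh(X,\theta),
\]
satisfying $u_j \le \varphi_j \le \phi$ and $\sup_X \varphi_j = 0$. Monotonicity of $P_\theta[\cdot]$ together with $u_j \le \varphi_j \le \phi$ and Theorem~\ref{thm: ceiling coincide envelope non collapsing} gives $P_\theta[\varphi_j] = \phi$, so $\varphi_j \in \mathcal{E}(X,\theta,\phi)$ with $\int_X \theta_{\varphi_j}^n = 1$ by Theorem~\ref{thm: E_memb_char}. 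Iterating the maximum principle Lemma~\ref{lem: max principle} on the finite maxima $\max(u_j,\ldots,u_{j+N})$, whose masses remain equal to $1$ by the same envelope argument, and then letting $N \to \infty$ via Theorem~\ref{thm2} and Remark~\ref{rem: increasing implies capacity}, I obtain the measure inequality
\[
\theta_{\varphi_j}^n \ge \inf_{k \ge j}\mu_k.
\]
The sequence $\varphi_j$ decreases a.e.\ to some $u \in \psh(X,\theta)$ with $u \le \phi$, and since $\varphi_j \searrow u$ gives convergence in capacity while the total masses coincide, Theorem~\ref{thm2} yields weak convergence $\theta_{\varphi_j}^n \to \theta_u^n$, simultaneously placing $u \in \mathcal{E}(X,\theta,\phi)$. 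The right hand side $\inf_{k\ge j}\mu_k$ converges to $\mu$ in total variation (by dominated convergence applied to the densities $\inf_{k\ge j}(f_k/\alpha_k)$, which are dominated by $f/\alpha_1$ and converge pointwise to $f$), so passing to the limit in the above inequality delivers $\theta_u^n \ge \mu$; equality of total masses forces $\theta_u^n = \mu$.

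\textbf{Uniqueness.} If $u,v \in \mathcal{E}(X,\theta,\phi)$ both satisfy $\theta^n_\cdot = \mu$, normalize by adding a constant to $v$ so that $\sup_X(u-v) = 0$, i.e.\ $u \le v$. The argument combines the max principle Lemma~\ref{lem: max principle} applied to $w_\varepsilon := \max(u+\varepsilon, v)$, which lies in $\mathcal{E}(X,\theta,\phi)$ with mass $\mu(X)$ by Theorem~\ref{thm: E_memb_char} and hence satisfies $\theta_{w_\varepsilon}^n = \mu$, with the domination principle Proposition~\ref{prop: domination principle} to propagate the pointwise inequality; letting $\varepsilon \searrow 0$ yields $u = v$. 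This mirrors the standard uniqueness argument in $\mathcal{E}(X,\theta)$ from \cite{BEGZ10}.

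\textbf{Main obstacle.} The delicate part is the passage to the limit: one must prevent mass loss to pluripolar sets, and simultaneously show that both the approximants $\varphi_j$ and the limit $u$ remain in $\mathcal{E}(X,\theta,\phi)$ so that all total masses equal $\mu(X)$. This is precisely where the model-type hypothesis $\phi = P_\theta[\phi]$ enters crucially, through the envelope characterization in Theorem~\ref{thm: E_memb_char} and the non-collapsing property Corollary~\ref{cor: non-collapsing}, which together ensure the sharp mass identities required to invoke Theorem~\ref{thm2} along both the increasing and decreasing limits.
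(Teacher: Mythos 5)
Your existence argument has the right general shape (decompose $\mu$ via Lemma~\ref{lem: decomposition}, truncate the density, solve with Theorem~\ref{thm: existence in M_A}, pass to the limit along $\varphi_j := (\sup_{k\ge j}u_k)^*$), and it is essentially the paper's strategy up to the final passage to the limit. However, the step where you assert ``since $\varphi_j\searrow u$ gives convergence in capacity while the total masses coincide, Theorem~\ref{thm2} yields weak convergence $\theta_{\varphi_j}^n\to\theta_u^n$, simultaneously placing $u\in\mathcal E(X,\theta,\phi)$'' is circular. The hypothesis \eqref{eq: main_global_mass_semi_cont} required by Theorem~\ref{thm2} is
\[
\int_X\theta_u^n \;\ge\; \limsup_j\int_X\theta_{\varphi_j}^n \;=\;\int_X\theta_\phi^n,
\]
and since the reverse inequality is automatic from Theorem~\ref{thm1} (as $u$ is more singular than $\phi$), verifying \eqref{eq: main_global_mass_semi_cont} is \emph{exactly} the statement $u\in\mathcal E(X,\theta,\phi)$. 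The fact that all the $\theta_{\varphi_j}^n$ have mass $\int_X\theta_\phi^n$ says nothing about $\theta_u^n$; a decreasing sequence in $\mathcal E(X,\theta,\phi)$ can perfectly well converge to a potential outside this class (this is precisely the phenomenon illustrated after Theorem~\ref{thm2} with pluricomplex Green potentials). So you cannot invoke Theorem~\ref{thm2} here without already knowing what you set out to prove.

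The paper closes this gap by a double truncation: from $\theta_{\varphi_j}^n\ge\sigma_j:=\inf_{k\ge j}\mu_k$, one passes to $\varphi_j^C:=\max(\varphi_j,V_\theta-C)$ and $u^C:=\max(u,V_\theta-C)$. All these truncated potentials have \emph{minimal} singularities, hence lie in $\mathcal E(X,\theta)$ with the same total mass $\mathrm{vol}(\{\theta\})$, so the semicontinuity hypothesis of Theorem~\ref{thm2} holds trivially and $\theta_{\varphi_j^C}^n\to\theta_{u^C}^n$ weakly. The plurifine locality then yields $\theta_{\varphi_j^C}^n\ge\mathbbm 1_{\{u>V_\theta-C\}}\sigma_j$, and passing $j\to\infty$ then $C\to\infty$ (using that $\mu$ is non-pluripolar) gives $\theta_u^n\ge\mu$; since $\int_X\theta_u^n\le\mu(X)$ by monotonicity, equality holds and $u\in\mathcal E(X,\theta,\phi)$. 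Your proof needs this extra truncation at the $V_\theta-C$ level before Theorem~\ref{thm2} can be applied.

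A secondary remark: the uniqueness sketch (normalize $u\le v$, form $w_\varepsilon=\max(u+\varepsilon,v)$, invoke the max and domination principles, let $\varepsilon\searrow 0$) does not obviously close: $w_\varepsilon\searrow v$ as $\varepsilon\to 0$, and there is no set on which you have verified the vanishing hypothesis of Proposition~\ref{prop: domination principle}. The paper's uniqueness (Theorem~\ref{thm: uniqueness}) is an adaptation of the Dinew--Ko\l odziej mass-concentration argument of \cite{BEGZ10}, using the partial comparison principle together with a clever auxiliary solution $\theta_h^n=c^n\mathbbm 1_{\{u<v\}}\mu$, and is substantially more involved than what you wrote.
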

\begin{proof}
It follows from Lemma \ref{lem: decomposition} that $\mu=f\nu$ where $\nu\in \mathcal{M}_1$ and $0\leq f \in L^1(X,\nu)$. 
For each $j$ it follows from Theorem \ref{thm: existence in M_A} that there exists $u_j\in \Ec^1(X,\theta,\phi)$ such that $\sup_X u_j=0$ and 
\[
\theta_{u_j}^n = c_j\min(f,j) \nu. 
\]
Here, $c_j$ is a normalization constant and $c_j\to 1$ as $j\to +\infty$. We can assume that $1\leq c_j\leq 2$ for all $j$. By compactness \cite[Proposition 8.5]{GZ17} a subsequence of $(u_j)$ converges in $L^1(X,\omega^n)$ to $u\in \psh(X,\theta,\phi)$ with $\sup_X u=0$. We will show that $u\in \Ec(X,\theta,\phi)$. For each $k\in \NN$ we set $v_k:=(\sup_{j\geq k} u_j)^*$. Then $v_k\in \Ec^1(X,\theta,\phi)$ and $(v_k)$ decreases pointwise to $u$. For each $k$ fixed, and for all $j>k$ we have $\theta_{u_j}^n \geq \min(f,k)\nu$.  Thus for all $\ell \in \mathbb{N}$ it follows from Lemma \ref{lem: max principle} that $\theta_{w_{k,\ell}}^n \geq \min(f,k)\nu$, where $w_{k,\ell}:= \max(u_k,\cdots, u_{k+\ell})$. Since $(w_{k,\ell})$ increases almost everywhere to $v_k$ as $\ell\to +\infty$ it follows from Theorem \ref{thm: lsc of MA measures} and Remark \ref{rem: increasing implies capacity} that 
\[
\theta_{v_k}^n \geq \min(f,k) \nu. 
\]
Thus for each $C>0$ setting $v_k^C:=\max(v_k,V_{\theta}-C)$, using the plurifine property of the  Monge-Amp\`ere measure and observing that $\{u>V_{\theta}-C\} \subseteq \{v_k>V_\theta-C\}$ we have 
\[
\theta_{v_k^C}^n \geq \mathbbm{1}_{\{v_k>V_\theta-C\}} \theta_{v_k}^n \geq\mathbbm{1}_{\{v_k>V_\theta-C\}} \min(f,k)\nu \geq \mathbbm{1}_{\{u>V_{\theta}-C\}} \min(f,k)\nu.
\] 
Since $(v_k^C)$ decreases to $u^C:=\max(u,V_{\theta}-C)$ and $v_k^C, u^C \in \Ec(X, \theta)$, it follows  from Theorem \ref{thm: lsc of MA measures}  that $\theta_{v_k^C}^n$ converges weakly to $\theta_{u^C}^n$, hence 
\[
\theta_{u^C}^n \geq \mathbbm{1}_{\{u>V_{\theta}-C\}} \mu. 
\]
Since $\mu$ is non-pluripolar it follows by letting $C\to +\infty$ that 
\[
\theta_u^n=\lim_{C\to +\infty} \mathbbm{1}_{\{u>V_{\theta}-C\}} \theta_{u^C}^n \geq \lim_{C\to +\infty}\mathbbm{1}_{\{u>V_{\theta}-C\}} \mu= \mu.
\]
Moreover by  \cite[Theorem 1.2]{WN17} the total mass of $\theta_u^n$ is smaller than $\int_X \theta_{\phi}^n=\mu(X)$ since $u\leq \phi$. Hence $\int_X \theta_{\phi}^n=\mu(X)=\int_X \theta_u^n$. It thus follows that $u\in \Ec(X,\theta, \phi)$ and $\theta_u^n=\mu$. Uniqueness is addressed in the next theorem.
\end{proof}

\begin{theorem}
\label{thm: uniqueness}
Assume $u,v\in \mathcal{E}(X,\theta,\phi)$ are such that $\theta_u^n=\theta_v^n$. Then $u-v$ is constant.
\end{theorem}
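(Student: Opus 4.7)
The plan is to adapt the Dinew--B\l{}ocki uniqueness strategy to the relative setting, exploiting the concavity of the Monge--Amp\`ere energy $\AMO$ developed in Theorem \ref{thm: basic I energy} and Corollary \ref{cor: I_conc_on_E}.

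First, I would replace $v$ by $v + \sup_X(u - v)$ so as to normalize $\sup_X(u-v) = 0$; this forces $u \leq v$ pointwise off a pluripolar set, and reduces the goal to showing $u = v$. Since $v$ is then less singular than $u$ and shares the same non-pluripolar mass, Theorem \ref{thm: BEGZ_monotonicity_full} combined with the characterization in Theorem \ref{thm: E_memb_char} places $u$ in $\mathcal E(X,\theta,v)$, and Corollary \ref{cor: convexity} makes \emph{all} mixed masses of $u$ and $v$ equal to $\mu(X)$, where $\mu := \theta_u^n = \theta_v^n$.

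Next, I apply Theorem \ref{thm: basic I energy}(iii) to the relatively bounded truncations $u^C := \max(u,\phi-C)$, $v^C := \max(v,\phi-C)$ and pass to the limit $C \to \infty$ using Lemma \ref{lem: AM continuous along decreasing sequence}, to obtain
\[
\int_X (v-u)\,\theta_v^n\ \leq\ \AMO(v) - \AMO(u)\ \leq\ \int_X (v-u)\,\theta_u^n.
\]
The hypothesis $\theta_u^n = \theta_v^n = \mu$ collapses the two outer quantities, yielding $\AMO(v) - \AMO(u) = \int_X (v-u)\,d\mu$. Along the segment $u_t := (1-t)u + tv \in \mathcal E(X,\theta,\phi)$, the analogous double estimate applied to the pair $u_t \geq u$ gives $\AMO(u_t) \leq (1-t)\AMO(u) + t\AMO(v)$, while Corollary \ref{cor: I_conc_on_E} gives the reverse inequality. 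Hence $t \mapsto \AMO(u_t)$ is affine on $[0,1]$.

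The vanishing of the second derivative of this affine function, computed as in the proof of Theorem \ref{thm: basic I energy}(iii) at each truncation level and relying on \cite[Theorem 1.14]{BEGZ10}, should force
\[
\int_X i\,\partial(v^C - u^C)\wedge\dbar(v^C - u^C)\wedge\theta_{u_t^C}^{n-1} = 0
\]
for all $t \in (0,1)$. Combined with Corollary \ref{cor: non-collapsing} and the plurifine locality of the non-pluripolar product, this forces $v^C - u^C$ to be locally constant on a set of positive Monge--Amp\`ere measure; since $\sup_X(u - v) = 0$ is attained, the constant must vanish, and letting $C \to \infty$ via Lemma \ref{lem: basic convergence s.u.l} gives $u = v$ on $X$. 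The main obstacle I anticipate is making this integration-by-parts and limiting step rigorous in the non-locally-bounded class $\mathcal E(X,\theta,\phi)$: one must carefully track how the Dirichlet-type energies on the truncations behave as $C \to \infty$, and use plurifine locality to propagate the constancy of $v - u$ from the bounded locus to all of $X$ off a pluripolar set.
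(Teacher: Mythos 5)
Your proposal takes a genuinely different route from the paper, but it has a gap that prevents it from proving the theorem as stated. The result is asserted for $u,v\in\mathcal{E}(X,\theta,\phi)$, the \emph{full mass} class, not the finite energy class $\mathcal{E}^1(X,\theta,\phi)$. Your entire argument is built on exploiting the equality case in the concavity of $\AMO$: you derive $\AMO(v)-\AMO(u)=\int_X(v-u)\,d\mu$ and then show $t\mapsto\AMO(u_t)$ is affine. But if $u,v\notin\mathcal{E}^1(X,\theta,\phi)$, then $\AMO(u)=\AMO(v)=-\infty$, and both the double estimate from Theorem \ref{thm: basic I energy}(iii) and the concavity from Corollary \ref{cor: I_conc_on_E} degenerate to the tautology $-\infty=-\infty$. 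Nothing in the hypotheses gives $\int_X(u-\phi)\,d\mu>-\infty$, so there is no way to place $u,v$ in $\mathcal{E}^1$, and you cannot even start the argument. This is precisely why the paper does \emph{not} use the energy/concavity route: it adapts instead the mass concentration technique of Ko\l{}odziej and Dinew \cite{Dw09b}, which works directly in $\mathcal{E}(X,\theta,\phi)$. The paper assumes for contradiction that $\mu$ is not carried by a level set of $u-v$, solves $\theta_h^n=c^n\mathbbm{1}_{\{u<v\}}\mu$ with $h\in\mathcal{E}(X,\theta,\phi)$ via Theorem \ref{thm: existence lambda=0}, and then combines the mixed Monge--Amp\`ere inequalities of \cite[Proposition 1.11]{BEGZ10}, the equality of mixed masses from Corollary \ref{cor: convexity}, and the partial comparison principle (Proposition \ref{prop: general CP}) on the sets $U_t=\{(1-t)u+t\phi<(1-t)v+th\}$ to get $\min(b,c)\mu(X)\leq\mu(X)$ with $\min(b,c)>1$, a contradiction; the domination principle (Proposition \ref{prop: domination principle}) then closes the argument.

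Even if you restrict attention to $\mathcal{E}^1(X,\theta,\phi)$, where the concavity argument does produce an affine $t\mapsto\AMO(u_t)$, your final step is not filled in: vanishing of $\int_X i\,\partial(v-u)\wedge\dbar(v-u)\wedge\theta_{u_t}^{n-1}$ does not by itself force $v-u$ to be locally constant, since the positive current $\theta_{u_t}^{n-1}$ may degenerate in some directions (this is not the K\"ahler case where one can compare against $\omega^{n-1}$). You gesture at Corollary \ref{cor: non-collapsing} and ``plurifine locality,'' but Corollary \ref{cor: non-collapsing} is a statement about existence of a potential charging a given positive-measure set, not a non-degeneracy statement for $\theta_{u_t}^{n-1}$. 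Also note that you would still need the second-derivative identity for $\AMO$ to survive the $C\to\infty$ limit along truncations, which is not immediate. If you want to keep the energy route, you would have to restrict the theorem to $\mathcal{E}^1$, and even then you need to supply a genuine argument for the last step (the standard one again goes through the mixed mass equality and a mass concentration trick, so you would end up not very far from the paper's method after all).
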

The proof of this uniqueness result rests on the adaptation of the mass concentration technique of Ko{\l}odziej and Dinew  \cite{Dw09b} to our more general setting (see also \cite{BEGZ10}, \cite{DL15}). The arguments carry over almost verbatim, but as a courtesy to  the reader we provide a detailed account. 
\begin{proof}
Set $\mu:=\theta_u^n=\theta_v^n$. We will prove that there exists a constant $C$ such that $\mu$ is supported on $\{u=v+C\}$. This will allow to apply the domination principle (Proposition \ref{prop: domination principle}) to insure the conclusion. Assume that it is not the case. Arguing exactly as in \cite[Section 3.3]{BEGZ10} we can assume that $0<\mu(U)<\mu(X)=\int_X \theta_{\phi}^n$ and $\mu(\{u=v\})=0$, where $U:=\{u<v\}$. Let $c>1$ be a normalization constant such that $\int_{\{u<v\}} c^n d\mu =\mu(X)$. It follows from Theorem \ref{thm: existence lambda=0} that there exists $h\in \Ec(X,\theta,\phi)$, $\sup_X h=0$, such that $\theta_h^n =c^n\mathbbm{1}_U \mu$. In particular, $h\leq \phi$. For each $t\in (0,1)$ we set $U_t:=\{(1-t)u+t\phi<(1-t)v+th\}$ and note that, since $h\leq \phi$, the sets $U_{t}$ increase as $t\to 0^+$ to $U\setminus \{h=-\infty\}$. 

By the mixed Monge-Amp\`ere inequalities  \cite[Proposition 1.11]{BEGZ10} (which go back to Dinew \cite{Dw09a} and Ko{\l}odziej \cite{Kol03}), we have that 
\begin{equation}\label{ineq1}
\theta_u^{n-1}\wedge \theta_h \geq \mathbbm{1}_U c\mu, \ \ \ \theta_u^k\wedge\theta_v^{n-k}\geq \mu, \ \ k=0,...,n. 
\end{equation}
Moreover, since $u,v,h \in \mathcal{E}(X, \theta, \phi)$, it follows from Corollary \ref{cor: convexity} that all the above non-pluripolar products have the same mass. Consequently, $\theta_u^k\wedge\theta_v^{n-k}= \mu, \ k=0,...,n$. 
Using the partial comparison principle (Proposition \ref{prop: general CP}) we can write that 
\[
\int_{U_t} \theta_u^{n-1}\wedge \theta_{(1-t)v+th} \leq \int_{U_t} \theta_{u}^{n-1}\wedge \theta_{(1-t)u+t\phi}.
\]
Expanding, and using the fact that $\theta_u^n=\theta_u^{n-1}\wedge \theta_v$ we get 
\begin{equation}\label{ineq2}
\int_{U_t} \theta_u^{n-1}\wedge \theta_{h} \leq \int_{U_t} \theta_{u}^{n-1}\wedge \theta_{\phi}.
\end{equation}
Combining \eqref{ineq1} and \eqref{ineq2} we have
$
c\mu(U_t)\leq  \int_{U_t} \theta_u^{n-1} \wedge \theta_h \leq \int_{U_t} \theta_u^{n-1}\wedge \theta_{\phi}.$ 
Letting $t\to 0$, and noting that $\mu$ is non-pluripolar (hence $\mu$ put no mass on the set $\{h=-\infty\}$)  we obtain 
\[
c\mu(U) \leq \int_{U} \theta_u^{n-1}\wedge \theta_{\phi}.
\]
Now, applying the same arguments for $V:=\{u>v\}$ we obtain 
\[
b\mu(V) \leq \int_{V} \theta_u^{n-1}\wedge \theta_{\phi},
\]
where $b>1$ is a  constant so that $b^n\mu(V)=\mu(X)$. Using that  $\mu(\{u=v\})=0$, we can sum up the last two inequalities and  obtain
\[
0<\min(b,c) \mu(X)\leq \int_X \theta_u^{n-1}\wedge\theta_{\phi} =\mu(X).
\]
where the last equality follows again from Corollary \ref{cor: convexity}. This is a contradiction since $\min(b,c)>1$. 
\end{proof}

\subsection{Regularity of solutions}

Recall that we work with $\phi \in \psh(X,\theta)$ with small unbounded locus such that $P_{\theta}[\phi]=\phi$, and $\int_X \theta_\phi^n >0$. Let $f \in L^p(\omega^n)$ with $f \geq 0$. In the previous subsection we have shown that the equation
\begin{equation*}%\label{eq: CMAE_presc_sing}
\theta_\psi^n = f \omega^n, \ \ \psi \in \mathcal E^1(X,\theta,\phi)
\end{equation*}
has a unique solution. In this subsection we will show that this solution has the same singularity type as $\phi$. This generalizes \cite[Theorem B]{BEGZ10}, that treats the particular case of solutions with minimal singularities in a big class. Analogous results will be obtained for the solutions of \eqref{eq: MA_exp_version} as well.

Our arguments will closely follow the path laid out in \cite[Section 4.1]{BEGZ10}, which builds on fundamental work of Ko{\l}odziej in the K\"ahler case (see \cite{Kol98,Kol03}). As we shall see, the fact that $\phi$ has model type singularity plays a vital role in making sure that the methods of \cite{BEGZ10} work in our more general context as well.

We first prove that any measure with $L^{1+\varepsilon}$, $\varepsilon>0$, density is dominated by the relative capacity: 

\begin{prop}\label{prop: capLp} 
Let $f \in L^p(\omega^n), \ p >1$ with $f \geq 0$.
Then there exists $C>0$ depending only on $\theta,\omega,p$ and $\|f\|_{L^p}$ such that 
$$\int_E f \omega^n \le \frac{C}{\big(\int_X \theta_\phi^n\big)^2} \cdot \textup{Cap}_\phi (E)^2$$
for all  Borel sets $E  \subset X$. 
\end{prop}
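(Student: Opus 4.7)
The plan is to run a three-step reduction: estimate $\omega^n(E)$ by an exponential in the Alexander--Taylor capacity $M_\phi(E)$, convert this into a bound in $\capi(E)$ using Lemma \ref{lem: compcap}, and then absorb the density $f$ via H\"older's inequality.

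First I would dispose of the degenerate cases. If $\capi(E)=0$ then $E$ is pluripolar by Lemma \ref{lem: zero_cap} and the estimate is immediate. If $M_\phi(E)=+\infty$ then $E$ is again pluripolar by Lemma \ref{lem: Alexander Taylor capacity}. If $M_\phi(E)\le 1$, Lemma \ref{lem: compcap} forces $\capi(E)=\int_X\theta_\phi^n$, so the right-hand side is bounded below by a constant and the inequality is trivial. Hence we may assume $1\le M_\phi(E)<\infty$, in which case $V_{E,\phi}^*\in\psh(X,\theta)$.

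The core step is a uniform Skoda--Zeriahi integrability: fix a K\"ahler form $\omega'\ge \theta$, so that $\psh(X,\theta)\subset \psh(X,\omega')$. The classical uniform Skoda estimate for $\omega'$ then furnishes $\alpha,A>0$ such that
\[
\int_X e^{-\alpha \psi}\,\omega^n\le A \quad\text{for every } \psi\in\psh(X,\theta)\text{ with }\sup_X\psi=0.
\]
Applying this to $V_{E,\phi}^*-M_\phi(E)$ yields $\int_X e^{-\alpha V_{E,\phi}^*}\,\omega^n\le A\, e^{-\alpha M_\phi(E)}$. Because $V_{E,\phi}\le 0$ on $E$ and $V_{E,\phi}=V_{E,\phi}^*$ outside a pluripolar (hence $\omega^n$-negligible) set, the integrand is at least $1$ on $E$ modulo an $\omega^n$-null set, so $\omega^n(E)\le A\, e^{-\alpha M_\phi(E)}$. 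Plugging in the lower bound on $M_\phi(E)$ from Lemma \ref{lem: compcap} gives
\[
\omega^n(E)\le A \exp\!\Bigl(-\alpha\bigl(\tfrac{\int_X\theta_\phi^n}{\capi(E)}\bigr)^{1/n}\Bigr).
\]

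To conclude, set $q:=p/(p-1)$. By H\"older, $\int_E f\omega^n\le \|f\|_{L^p(\omega^n)}\,\omega^n(E)^{1/q}$. Using the elementary bound $e^{-\alpha t^{1/n}}\le C_N t^{-N}$ valid for $t\ge 1$ and arbitrary $N>0$, I would choose $N=2q$ and apply this with $t:=\int_X\theta_\phi^n/\capi(E)\ge 1$. Raising to the $1/q$ power and multiplying by $\|f\|_{L^p(\omega^n)}$ produces the claimed inequality with an appropriate constant $C$. The main subtlety is the uniform Skoda bound in the big class setting, but this reduces to the standard K\"ahler statement via the inclusion $\psh(X,\theta)\subset \psh(X,\omega')$; beyond this, the remaining steps are routine manipulations.
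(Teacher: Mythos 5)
Your proof is correct and follows essentially the same route as the paper: bound $\omega^n(E)$ by an exponential in $M_\phi(E)$ via uniform Skoda applied to $V_{E,\phi}^*-M_\phi(E)$, convert $M_\phi(E)$ into $\capi(E)$ using Lemma \ref{lem: compcap}, then finish with H\"older and the elementary decay $e^{-\alpha t^{1/n}}=O(t^{-2q})$. The only cosmetic differences are that the paper first reduces to $E$ compact via inner regularity of $\capi$ (not strictly needed since Lemma \ref{lem: compcap} and Lemma \ref{lem: Alexander Taylor capacity} are stated for general Borel sets), and the paper invokes the uniform Skoda estimate directly for $\textup{PSH}(X,\theta)$ (citing \cite[Theorem 2.50]{GZ17}, with constant $\nu_\theta^{-1}$ where $\nu_\theta$ is the supremum of Lelong numbers), whereas you obtain it from the inclusion $\textup{PSH}(X,\theta)\subset\textup{PSH}(X,\omega')$ for a K\"ahler form $\omega'\geq\theta$ — both are legitimate and equivalent in substance.
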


\begin{proof}

Since $\capi$ is inner regular we can  assume that $E$ is compact. Thanks to Lemma \ref{lem: Alexander Taylor capacity} we can also assume that $M_\phi(E)< +\infty$. 

We introduce $\nu_\theta:=\sup_{T,x}\nu(T,x)$, where $x\in X$, $T$ is any closed positive $(1,1)$-current cohomologous with $\theta$, and $\nu(T,x)$ denotes the Lelong number of $T$ at $x$. 
As a result, the uniform version of Skoda's integrability theorem  \cite[Theorem 2.50]{GZ17}  yields a constant $C>0$, only depending on $\theta$ and $\omega$ such that $\int_X\exp(-\nu_\theta^{-1}\psi)\omega^n\le C$
for all $\psi \in \textup{PSH}(X,\theta)$ with $\sup_X\psi=0$. Applying this to $V_{E,\phi}^*-M_\phi(E)$ we get
$$\int_X\exp(-\nu_\theta^{-1}V_{E,\phi}^*) \omega^n \le C \cdot \exp(-\nu_{\theta}^{-1} M_\phi(E)).$$

On the other hand, $ V_{E,\phi}^*\le 0 $ on $E$ a.e.~with respect to Lebesgue measure, hence
\begin{equation}\label{equ:vol} \vol_\omega(E):=\int_E \omega^n\le C \cdot \exp(-\nu_\theta^{-1}M_\phi(E)).
\end{equation}
An application of H\"older's inequality gives  
\begin{equation}\label{equ:holder}
\int_E f \omega^n \le\|f\|_{L^{p}}\vol_\omega(E)^{\frac{p-1}{p}}.
\end{equation}
At this point we may assume that $M_{\phi}(E)\geq 1$. Indeed, if this were not the case, then Lemma \ref{lem: compcap} would imply that $\textup{Cap}_\phi(E)=\int_X \theta_\phi^n$, yielding the desired estimate of the proposition. Putting together Lemma~\ref{lem: compcap}, (\ref{equ:vol}) and (\ref{equ:holder}) we get:
$$\int_E f \omega^n \le  C^{p-1/p}\cdot \| f\|_{L^{p}} \cdot \exp\left(-\frac{p-1}{p\nu_\theta}\bigg(\frac{\textup{Cap}_\phi(E)}{\int_X \theta_\phi^n}\bigg)^{-1/n}\right).$$ 
The result now follows, as 
$\exp(-t^{-1/n})=O(t^2)$ when $t\to 0_+$. 
\end{proof}

Before we state the main result of this subsection, we need one last lemma, which is a simple consequence of our comparison principle:

\begin{lemma} \label{lem: majoCap}
Let $u \in \mathcal E(X,\theta,\phi)$. Then for all $t>0$ and $\delta \in (0,1]$ we have
$$\textup{Cap}_\phi \{u<\phi-t-\delta\} \leq \frac{1}{\delta^{n}}\int_{\{u<\phi-t\}} \theta_u^n.
$$
\end{lemma}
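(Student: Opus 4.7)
The strategy is the classical Ko\l{}odziej--type trick (as in \cite[Lemma 2.3]{BEGZ10}) of interpolating between a candidate potential for the capacity and $\phi$ itself, then invoking the comparison principle for $\mathcal E(X,\theta,\phi)$ from Corollary \ref{comparison principle}.

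First, I would fix an arbitrary $\rho\in\psh(X,\theta)$ with $\phi\leq\rho\leq\phi+1$ (a candidate in the definition of $\textup{Cap}_\phi$) and introduce the convex combination
\[
w_\delta := \delta\rho + (1-\delta)\phi.
\]
Then $\phi\leq w_\delta\leq\phi+\delta$, and $w_\delta$ has the same singularity type as $\phi$, so Proposition \ref{prop: comparison generalization} gives $\int_X\theta_{w_\delta}^n=\int_X\theta_\phi^n$, hence $w_\delta\in\Ec(X,\theta,\phi)$. Likewise $u+t+\delta\in\Ec(X,\theta,\phi)$ (constants do not affect $\theta_{\,\cdot\,}^n$ or the singularity type).

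Next I would record the two set-inclusions that make everything fit together:
\[
\{u<\phi-t-\delta\}\ \subset\ \{u+t+\delta<w_\delta\}\ \subset\ \{u<\phi-t\},
\]
where the first inclusion uses $w_\delta\geq\phi$ and the second uses $w_\delta\leq\phi+\delta$. Applying Corollary \ref{comparison principle} to $u+t+\delta$ and $w_\delta$ yields
\[
\int_{\{u+t+\delta<w_\delta\}}\theta_{w_\delta}^n\ \leq\ \int_{\{u+t+\delta<w_\delta\}}\theta_u^n\ \leq\ \int_{\{u<\phi-t\}}\theta_u^n.
\]
By multilinearity of the non-pluripolar product (\cite[Proposition 1.4]{BEGZ10}),
\[
\theta_{w_\delta}^n=\sum_{k=0}^n\binom{n}{k}\delta^k(1-\delta)^{n-k}\,\theta_\rho^k\wedge\theta_\phi^{n-k}\ \geq\ \delta^n\,\theta_\rho^n,
\]
so using the first inclusion above to bound from below we get
\[
\delta^n\int_{\{u<\phi-t-\delta\}}\theta_\rho^n\ \leq\ \int_{\{u+t+\delta<w_\delta\}}\theta_{w_\delta}^n\ \leq\ \int_{\{u<\phi-t\}}\theta_u^n.
\]
Dividing by $\delta^n$ and taking the supremum over all admissible $\rho$ concludes the proof.

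No real obstacle is expected; the only place requiring attention is verifying that $w_\delta$ and $u+t+\delta$ belong to $\Ec(X,\theta,\phi)$ so that Corollary \ref{comparison principle} actually applies, which is immediate from the above observations.
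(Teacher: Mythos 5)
Your proof is correct and follows essentially the same route as the paper: the same set inclusions $\{u<\phi-t-\delta\}\subset\{u+t+\delta<\delta\rho+(1-\delta)\phi\}\subset\{u<\phi-t\}$, the same application of the comparison principle from Corollary~\ref{comparison principle} (after checking that the convex combination $w_\delta$ and $u+t+\delta$ both lie in $\Ec(X,\theta,\phi)$), and the same multilinearity bound $\theta_{w_\delta}^n\geq\delta^n\theta_\rho^n$. The only cosmetic difference is that the paper invokes convexity of $\Ec(X,\theta,\phi)$ (Corollary~\ref{cor: convexity}) to place $w_\delta$ in the class, whereas you argue more directly via Proposition~\ref{prop: comparison generalization} from equality of singularity types — both are fine.
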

\begin{proof}
Let $\psi\in \textup{PSH}(X,\theta,\phi)$ be such that $\phi\le\psi \le \phi+1$. In particular, note that $\psi \in \mathcal E(X,\theta,\phi) $. We then have
$$\{u<\phi-t-\delta\}\subset\{u<\delta\psi+(1-\delta)\phi-t-\delta\}\subset\{u<\phi-t\}.$$
Since $\delta^n\theta_\psi^n\le \theta^n_{\delta\psi+(1-\delta)\phi}$, $u$ has relative full mass and $\mathcal E(X,\theta,\phi)$ is convex, Corollary \ref{comparison principle} yields
\begin{eqnarray*}
\delta^n\int_{\{u<\phi-t-\delta\}} \theta^n_\psi &\le & \int_{\{u<\delta\psi+(1-\delta)\phi-t-\delta\}}\theta^n_{\delta\psi+(1-\delta)\phi}\\
&\le & \int_{\{u<\delta \psi+(1-\delta)\phi-t-\delta\}}\theta^n_{u}\le\int_{\{u<\phi-t\}}\theta_u^n.
\end{eqnarray*}
Since $\psi$ is an arbitrary candidate in the definition of $\textup{Cap}_\phi$, the proof is complete.
\end{proof}
We arrive at the main results of this subsection:
\begin{theorem}\label{thm: min sing of solution}
Suppose $\phi =P_\theta[\phi]$ has small unbounded locus and $\int_X \theta_\phi^n >0$. Let also $\psi \in \mathcal E(X,\theta,\phi)$ with $\sup_X \psi =0$. If $\theta_\psi^n = f \omega^n$ for some $f \in L^p(\omega^n), \ p > 1,$ then $\psi$ has the same singularity type as $\phi$, more precisely:
$$\phi - C\Big(\|f\|_{L^p},p,\omega,\theta,\int_X \theta_\phi^n\Big)  \leq \psi \leq \phi.$$
\end{theorem}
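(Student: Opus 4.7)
The upper bound $\psi\le\phi$ is immediate: since $\sup_X\psi=0$ and $\psi\in\mathcal E(X,\theta,\phi)$, the potential $\psi$ lies in the set $F_\phi$ from Subsection 3.2, and by Theorem~\ref{thm: ceiling coincide envelope non collapsing} we have $\phi=P_\theta[\phi]=\sup_{v\in F_\phi}v\ge\psi$. All the work is in the lower bound, and the plan is to implement the standard Kolodziej iteration from \cite{Kol98,EGZ09,BEGZ10}, but in the relative setting built in the previous subsections.

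The key quantity is $g(t):=\capi(\{\psi<\phi-t\})$, a decreasing function of $t\ge 0$. First I would chain Lemma~\ref{lem: majoCap} with Proposition~\ref{prop: capLp} applied to the non-pluripolar measure $\theta_\psi^n=f\omega^n$: for every $t\ge 0$ and $\delta\in(0,1]$,
\[
g(t+\delta)\le \frac{1}{\delta^n}\int_{\{\psi<\phi-t\}}f\,\omega^n\le \frac{C_1}{\delta^n\bigl(\int_X\theta_\phi^n\bigr)^2}\,g(t)^2,
\]
where $C_1=C_1(\omega,\theta,p,\|f\|_{L^p})$. Next I would observe that $g(t)\to 0$ as $t\to+\infty$: the sets $\{\psi<\phi-t\}$ decrease to a subset of $\{\psi=-\infty\}\cup\{\phi=-\infty\}$, which is pluripolar, so $f\omega^n$ of these sets vanishes by dominated convergence, and the estimate above with $\delta=1$ gives $g(t+1)\to 0$. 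A quantitative version, needed to control the starting point of the iteration, comes from Proposition~\ref{prop: capLp} and the volume estimate $\mathrm{Vol}_\omega\{\psi<\phi-t\}\le \mathrm{Vol}_\omega\{\psi<-t\}$, which decays to zero at a rate controlled by $\sup_X\psi=0$ and $\|\psi\|_{L^1(\omega^n)}\le C(\omega,\theta)$ via Skoda integrability.

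The main step is the iteration itself. Pick $t_0=t_0(C_1,\int_X\theta_\phi^n,\|f\|_{L^p})$ large enough so that $g(t_0)$ is below a threshold $\epsilon_0$ depending only on $n$ and $C_1/(\int_X\theta_\phi^n)^2$. Then set $t_{k+1}=t_k+\delta_k$ with $\delta_k:=g(t_k)^{1/(n+1)}$; the functional inequality forces
\[
g(t_{k+1})\le C_2\,g(t_k)^{(n+2)/(n+1)},
\]
with exponent strictly greater than $1$, so once $g(t_0)$ is small enough the sequence $g(t_k)$ decays doubly exponentially. Consequently $\sum_k\delta_k<+\infty$, so $t_\infty:=\lim t_k$ is finite, and by monotonicity $g(t_\infty)\le\lim g(t_k)=0$. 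Applying Lemma~\ref{lem: zero_cap} I conclude that $\{\psi<\phi-t_\infty\}$ is pluripolar, hence has zero mass for the non-pluripolar measure $\theta_\psi^n=f\omega^n$. Both $\psi$ and $\phi-t_\infty$ belong to $\mathcal E(X,\theta,\phi)$, so the domination principle Proposition~\ref{prop: domination principle} yields $\psi\ge\phi-t_\infty$, which is the desired bound with $C=t_\infty$ explicitly controlled by the data.

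The main obstacle, in my view, is less the iteration (which is by now standard) and more bookkeeping: making sure the ingredients from the relative setting—Lemma~\ref{lem: majoCap}, the capacity-volume bound of Proposition~\ref{prop: capLp}, and the domination principle—plug in cleanly. In particular, the fact that $\phi=P_\theta[\phi]$ is essential at two points: it lets $\phi-t_\infty$ and $\psi$ both be compared inside the class $\mathcal E(X,\theta,\phi)$, and it ensures the upper bound $\psi\le\phi$ without any additive constant. If $\phi$ were not of model type, the envelope structure on which Proposition~\ref{prop: capLp} rests (through $V_{E,\phi}^*\le\phi+M_\phi(E)$) would fail, and the iteration would lose its starting inequality.
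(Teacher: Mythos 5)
Your proposal is correct and follows essentially the same strategy as the paper's proof: the key quantity is $\mathrm{Cap}_\phi(\{\psi<\phi-t\})$, and the decisive step is chaining Lemma~\ref{lem: majoCap} with Proposition~\ref{prop: capLp} to obtain a capacity estimate of the form $g(t+\delta)\le C\delta^{-n}g(t)^2$, then running a Ko{\l}odziej-type iteration to force $g$ to vanish at a finite and controlled $t_\infty$. The variations you introduce are genuine but minor: the paper raises $g$ to the power $1/n$ and invokes the iteration lemma \cite[Lemma~2.3]{EGZ09} directly to get $g(t_0+2)=0$, whereas you redo the De Giorgi iteration by hand with $\delta_k=g(t_k)^{1/(n+1)}$; the paper gets the quantitative starting bound from a Chebyshev/H\"older estimate on $\int_X|\phi-\psi|f\,\omega^n$, while you go through Skoda integrability; and the paper closes by noting that a pluripolar set is Lebesgue-null, so $\psi\ge\phi-M$ a.e.\ and hence everywhere by the sub-mean-value property, whereas you invoke the domination principle (Proposition~\ref{prop: domination principle}). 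All three variants are valid and give the same constant dependence; none changes the substance of the argument.
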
 

\begin{proof} To begin, we introduce the function
$$g(t):=\big(\textup{Cap}_\phi\{\psi<\phi-t\}\big)^{1/n}, \ t \geq 0.$$ 
We will show that $g(M)=0$ for some $M$ under control. By Lemma \ref{lem: zero_cap} we will then have $\psi \geq \phi-M$ a.e. with respect to $\omega^n$, which then implies $\psi \geq \phi-M$ on $X$.

Since $\theta_\psi^n=f \omega^n$, it follows from Proposition \ref{prop: capLp} and Lemma~\ref{lem: majoCap} that
$$
g(t+\delta) \leq \frac{C^{1/n}}{\delta} g(t)^2, \ \  t>0, \ 0<\delta<1.
$$
Consequently, we can apply \cite[Lemma 2.3]{EGZ09} to conclude that  
$g(M)=0$ for $M:=t_0+2$. As an important detail, the constant $t_0>0$ has to be chosen so that 
$$g(t_0)<\frac{1}{2C^{1/n}}.$$ 
On the other hand, Lemma \ref{lem: majoCap} (with $\delta=1$) implies that
$$
g(t+1)^n\le \int_{\{\psi<\phi-t-1\}} f\omega^n \le\frac{1}{t+1}\int_X|\phi-\psi|f \omega^n
\leq \frac{1}{t+1} \| f\|_{L^{p}}(\|\psi\|_{L^{q}}+\| \phi\|_{L^{q}}),
$$
where in the last estimate we have used H{\"o}lder's inequality with $q = p/(p-1)$. Since $\psi$ and $\phi$ both belong to the compact set of $\theta$-psh functions normalized by $\sup_X u=0$, their $L^{q}$ norms are bounded by an absolute constant only depending on $\theta$, $\omega$ and $p$. Consequently, it is possible to choose $t_0$ to be only dependent on $\|f \|_{L^p},\theta, \omega,\int_X \theta_\phi^n$ and $p$, finishing the proof.
\end{proof}

\begin{coro}
	Suppose $\phi =P_\theta[\phi]$ has small unbounded locus and $\int_X \theta_\phi^n >0$. If $\lambda>0$ and, $\psi \in \Ec(X,\theta,\phi)$, $\theta_\psi^n = e^{\lambda \psi}f \omega^n$ for some $f \in L^p(\omega^n), \ p > 1,$ then $\psi$ has the same singularity type as $\phi$.
\end{coro}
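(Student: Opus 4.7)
The plan is to reduce the corollary to Theorem \ref{thm: min sing of solution} by normalizing $\psi$ and absorbing the exponential factor into a new $L^p$-density. The essential ingredient is that a solution $\psi\in\mathcal E(X,\theta,\phi)$ is automatically bounded above on the compact manifold $X$, so the factor $e^{\lambda\psi}$ on the right-hand side can be harmlessly absorbed.

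Concretely, I will set $M:=\sup_X\psi$, which is finite since $\psi$ is $\theta$-psh on a compact manifold, and put $\tilde\psi:=\psi-M$. Translation by a constant preserves both the non-pluripolar Monge-Amp\`ere mass and the relative singularity type, so $\tilde\psi\in\mathcal E(X,\theta,\phi)$ and $\sup_X\tilde\psi=0$. Rewriting the equation in terms of $\tilde\psi$ gives
$$\theta_{\tilde\psi}^n=\theta_\psi^n=e^{\lambda(\tilde\psi+M)}f\,\omega^n=g\,\omega^n,\qquad g:=e^{\lambda M}e^{\lambda\tilde\psi}f.$$
Since $\tilde\psi\leq 0$, we have $e^{\lambda\tilde\psi}\leq 1$, hence $g\leq e^{\lambda M}f$; in particular $g\in L^p(\omega^n)$ with $\|g\|_{L^p}\leq e^{\lambda M}\|f\|_{L^p}$.

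At this point I would apply Theorem \ref{thm: min sing of solution} directly to $\tilde\psi$, obtaining a constant $C$ with $\phi-C\leq\tilde\psi\leq\phi$. Translating back gives $\phi+M-C\leq\psi\leq\phi+M$, so $\psi$ has the same singularity type as $\phi$.

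No real obstacle arises; the only conceptual point is that upper boundedness of $\theta$-psh functions turns the seemingly nonlinear equation into a linear one with an $L^p$ density of controlled norm, for which the previous singularity estimate already applies.
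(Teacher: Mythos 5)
Your argument is essentially identical to the paper's: both observe that $\psi$ is bounded above on the compact $X$, so $e^{\lambda\psi}f\in L^p(\omega^n)$, and then invoke Theorem \ref{thm: min sing of solution}. The only difference is that you spell out the normalization $\tilde\psi=\psi-\sup_X\psi$ needed to match the hypothesis $\sup_X\tilde\psi=0$ of that theorem, a step the paper leaves implicit.
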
 

\begin{proof} 
Since $\psi$ is bounded from above on $X$ and $\lambda>0$ it follows that $e^{\lambda \psi} f\in L^p(X,\omega^n)$, $p>1$. The result follows from Theorem \ref{thm: min sing of solution}. 
\end{proof}

\subsection{Naturality of model type singularities and examples}

Our readers may still wonder if our choice of model potentials is a natural one in the discussion of  complex Monge-Amp\`ere equations with prescribed singularity. We hope to address the doubts in the next result.

\begin{theorem}\label{thm: naturality_of_model} Suppose $\psi \in \textup{PSH}(X,\theta)$ has small unbounded locus and the equation 
$$\theta_u^n = f \omega^n$$
has a solution $u \in \textup{PSH}(X,\theta)$ with the same singularity type as $\psi$, for all $f \in L^{\infty}, \ f \geq 0$ satisfying $\int_X \theta_\psi^n = \int_X f \omega^n > 0$. Then $\psi$ has model type singularity. 
\end{theorem}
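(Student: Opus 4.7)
The plan is to argue by contradiction. Set $\phi:=P_\theta[\psi]$ and assume that $\psi$ is strictly more singular than $\phi$ (which is the only way $\psi$ can fail to have model type singularity). I would then produce two different solutions to a single Monge-Amp\`ere equation, both lying in $\mathcal E(X,\theta,\phi)$: one coming from the hypothesis of the theorem (with singularity type $[\psi]$), and one coming from Theorem \ref{thm: existence lambda=0} (with singularity type $[\phi]$). The uniqueness statement Theorem \ref{thm: uniqueness} would then force the two potentials to agree up to a constant, contradicting the assumption that $[\psi]\neq[\phi]$. After a translation I may assume $\psi\le 0$, so that $\psi\le\phi\le 0$.

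The first step is to verify that $\phi$ satisfies all the standing hypotheses of Section \ref{sec 4}. By Theorem \ref{thm: ceiling coincide envelope non collapsing} one has $\phi=P_\theta[\phi]$ and $\int_X\theta_\phi^n=\int_X\theta_\psi^n>0$. Furthermore, $\phi$ inherits small unbounded locus from $\psi$: since $\psi$ is itself a candidate in each rooftop envelope $P_\theta(\psi+C,0)$, one obtains $\psi\le P_\theta(\psi+C,0)\le 0$, and hence $\psi\le\phi\le 0$ pointwise. Thus $\phi$ is locally bounded wherever $\psi$ is.

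Next I would take $f\equiv c$ to be the constant density with $c:=\int_X\theta_\psi^n/\int_X\omega^n>0$. This lies in $L^\infty(\omega^n)\subset L^p(\omega^n)$ for every $p>1$ and satisfies $\int_X f\omega^n=\int_X\theta_\phi^n$. By the hypothesis of the theorem there exists $u\in\textup{PSH}(X,\theta)$ with $[u]=[\psi]$ solving $\theta_u^n=f\omega^n$, and by Theorem \ref{thm: existence lambda=0} applied to the model-type potential $\phi$ there exists $v\in\textup{PSH}(X,\theta)$ with $[v]=[\phi]$ solving the same equation. Both $u$ and $v$ are more singular than $\phi$, and both have total Monge-Amp\`ere mass equal to $\int_X\theta_\phi^n$; hence $u,v\in\mathcal E(X,\theta,\phi)$. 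Theorem \ref{thm: uniqueness} then gives $u=v+\mathrm{const}$, forcing $[\psi]=[u]=[v]=[\phi]$, the desired contradiction.

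The argument is essentially structural and requires no delicate estimates. The only technical point I anticipate is ensuring that $P_\theta[\psi]$ inherits small unbounded locus from $\psi$; without this, Theorem \ref{thm: existence lambda=0} could not be applied to $\phi$ and the entire scheme would break down. As indicated above, this is handled cleanly by the sandwich $\psi\le\phi\le 0$.
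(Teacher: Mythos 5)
Your argument is correct and reaches the conclusion by the same contradiction scheme as the paper, but the specific route is slightly different and requires an extra ingredient you did not explicitly cite. The paper does not use the existence result (Theorem \ref{thm: existence lambda=0}) at all: it chooses the right-hand side to be $g\,\omega^n = \theta^n_{P_\theta[\psi]}$, which by Theorem \ref{thm: MA measure of WN envelope} has $L^\infty$ density, so that $\phi := P_\theta[\psi]$ is \emph{itself} a solution. Since $\phi\in\mathcal E(X,\theta,\phi)$ tautologically, uniqueness (Theorem \ref{thm: uniqueness}) then forces the hypothesized solution $u$ with $[u]=[\psi]$ to agree with $\phi$ up to a constant, giving the contradiction directly. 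You instead take $f\equiv c$ constant, produce a competitor $v$ via Theorem \ref{thm: existence lambda=0}, and then need to know $[v]=[\phi]$ to close the loop. That last step is \emph{not} part of Theorem \ref{thm: existence lambda=0}, which only delivers $v\in\mathcal E(X,\theta,\phi)$; you must invoke the regularity result Theorem \ref{thm: min sing of solution} (applicable since $c\in L^\infty\subset L^p$) to upgrade this to $[v]=[\phi]$. So your proof is sound once this citation is added, but the paper's choice of $g=\theta_\phi^n/\omega^n$ is more economical because it sidesteps the regularity theorem entirely. Everything else in your argument — the verification that $\phi=P_\theta[\psi]$ is model type with the same mass and with small unbounded locus via $\psi\le\phi\le 0$, and the observation that $u\in\mathcal E(X,\theta,\phi)$ — is correct and matches what the paper relies on implicitly.
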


\begin{proof}Our simple proof follows the guidelines of the example described in the beginning of Section \ref{sec 4}. Indeed, suppose that $[\psi]$ is not of model type. Then $P_{\theta}[\psi]$ is strictly less singular than $\psi$, 
but of course $\mathcal E(X,\theta,\psi) \subset \mathcal E(X,\theta,P_{\theta}[\psi])$, as $\int_X \theta_\psi^n = \int_X \theta_{P_{\theta}[\psi]}^n$. 

By Theorem \ref{thm: MA measure of WN envelope}, there exists $g \in L^\infty$ such that $\theta^n_{P_{\theta}[\psi]} = g \omega^n.$ 
By the uniqueness theorem (Theorem \ref{thm: uniqueness}), $P_{\theta}[\psi]$ is the only solution of this last equation inside $\mathcal E(X,\theta,P_{\theta}[\psi])$. 

Since $\mathcal E(X,\theta,\psi) \subset \mathcal E(X,\theta,P_{\theta}[\psi])$, but $P_{\theta}[\psi] \notin \mathcal E(X,\theta,\psi)$, we get that $\theta_u^n = g \omega^n$ cannot have any solution that has the same singularity type as $\psi$.
\end{proof}

Next we point out a simple way to construct model singularity types:

\begin{prop}\label{prop: Lp example} Suppose that $\psi \in \textup{PSH}(X,\theta)$ has small unbounded locus and $\theta_\psi^n = f \omega^n$ for some $f \in L^p(\omega^n), \ p > 1$ with $\int_X f \omega^n >0$. Then $\psi$ has model type singularity.
\end{prop}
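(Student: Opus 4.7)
The plan is to verify that $\psi$ has the same singularity type as $\phi := P_\theta[\psi]$; once this is shown, since Theorem~\ref{thm: ceiling coincide envelope non collapsing} (applicable because $\int_X \theta_\psi^n > 0$) yields $\phi = P_\theta[\phi]$, the class $[\psi] = [\phi]$ contains a fixed point of the envelope operator, which is precisely what it means for $\psi$ to have model type singularity.

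The strategy is a direct application of Theorem~\ref{thm: min sing of solution} to the pair $(\phi,\psi)$, so most of the work lies in verifying its hypotheses on $\phi$. First, $\phi$ is of model type by Theorem~\ref{thm: ceiling coincide envelope non collapsing}, and has positive Monge--Amp\`ere mass, since the basic identity recalled at the start of Subsection~3.2 gives $\int_X \theta_\phi^n = \int_X \theta_\psi^n > 0$. Second, I would check that $\phi$ inherits the small unbounded locus from $\psi$ via the sandwich
\[
\psi - \sup_X \psi \;\leq\; \phi \;\leq\; V_\theta,
\]
where the lower bound follows by observing that $\psi - \sup_X \psi$ is a legitimate candidate in the defining family of $P_\theta(\psi + C, 0)$ for $C$ large, and the upper bound from the convention $P_\theta[\psi] = P_\theta[\psi](V_\theta)$. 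Combining the hypothesis that $\psi$ is locally bounded off a closed pluripolar set $A$ with the standard fact that $V_\theta$ is locally bounded on $\textup{Amp}(\theta)$ (whose complement is pluripolar), we conclude that $\phi$ is locally bounded outside the closed pluripolar set $A \cup (X \setminus \textup{Amp}(\theta))$.

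The same sandwich also shows $\psi$ is more singular than $\phi$, and together with the mass identity above this places $\psi$ in $\mathcal{E}(X,\theta,\phi)$. Applying Theorem~\ref{thm: min sing of solution} with the data $\phi$, $\psi$, and $f \in L^p(\omega^n)$, $p>1$, then forces $\psi$ and $\phi$ to share the same singularity type, completing the argument. I foresee no serious obstacle; the only slightly delicate verification is the inheritance of small unbounded locus by $P_\theta[\psi]$, which the sandwich argument settles cleanly.
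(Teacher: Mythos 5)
Your proof is correct and follows the same route as the paper's: set $\phi := P_\theta[\psi]$, observe $\psi \in \mathcal E(X,\theta,\phi)$, and invoke Theorem~\ref{thm: min sing of solution} to conclude $[\psi]=[\phi]$ with $\phi$ of model type. The only difference is that you explicitly justify, via the sandwich $\psi - \sup_X\psi \leq P_\theta[\psi] \leq V_\theta$, that $P_\theta[\psi]$ inherits the small unbounded locus from $\psi$ (a hypothesis of Theorem~\ref{thm: min sing of solution} that the paper's one-line proof leaves implicit), which is a worthwhile clarification.
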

\begin{proof}
We first observe that $\psi \in \Ec(X,\theta,P_{\theta}[\psi])$. Since $\theta_{\psi}^n$ has $L^p$ density with $p>1$, it thus follows from Theorem  \ref{thm: min sing of solution} that  $\psi-P_{\theta}[\psi]$ is bounded on $X$, hence $[\psi]=[P_\theta[\psi]]$, implying that $\psi$ has model type singularity.
\end{proof}

Using this simple proposition, one can show that all analytic singularity types are of model type, which was previously known to be true using algebraic methods (see \cite{RWN,RS}):

\begin{prop}\label{prop: analytic example} Suppose $\psi \in \textup{PSH}(X,\theta)$ has analytic singularity type, i.e. $\psi$ can be locally written as $c \log \big(\sum_j |f_j|^2\big) + g$, where $f_j$ are holomorphic, $c > 0$ and $g$ is smooth. Then $[\psi]$ is of model type.
\end{prop}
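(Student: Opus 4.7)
The plan is to apply Proposition~\ref{prop: Lp example} to $\psi$ regarded in the slightly enlarged class $\{\theta + \varepsilon\omega\}$, which automatically carries strictly positive non-pluripolar mass, and then to transfer the conclusion back to $\{\theta\}$ by a sandwich argument exploiting monotonicity of the envelope operator in the cohomology class.

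First, $\psi$ has small unbounded locus because its unbounded set is locally contained in the analytic subvariety $A=\{f_j = 0 \text{ for all } j\}$, a closed pluripolar set. Next, to verify the $L^p$ density, I would pass to a log resolution $\pi\colon \tilde X \to X$ of $\psi$, so that $\pi^*\psi = \tilde g + c\log|s_D|_h^2$ with $\tilde g$ smooth on $\tilde X$, $D=\sum a_k D_k$ an effective SNC divisor, and $h$ a smooth Hermitian metric on $\mathcal{O}_{\tilde X}(D)$. Setting $\alpha := \pi^*\theta - c\,\Theta(h) + i\ddbar \tilde g$, a smooth closed semipositive $(1,1)$-form on $\tilde X$, the decomposition $\pi^*\theta + i\ddbar \pi^*\psi = \alpha + c\,[D]$ together with the fact that any mixed term involving the integration current $[D]$ is supported on the pluripolar divisor $D$ gives $\langle(\pi^*\theta + i\ddbar\pi^*\psi)^n\rangle = \alpha^n$, a smooth (hence bounded) measure on $\tilde X$. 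Since $\pi$ is a biholomorphism off the exceptional locus, whose image lies inside $A$, the push-forward delivers $\theta_\psi^n$ with smooth density on $X\setminus A$, and in particular with $L^p(\omega^n)$ density for some $p>1$. The same computation applied to mixed non-pluripolar products yields the analogous $L^p$ density for $(\theta+\varepsilon\omega)_\psi^n$ for every $\varepsilon>0$.

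Now fix $\varepsilon > 0$. The class $\{\theta + \varepsilon\omega\}$ is big, $\psi$ remains $(\theta + \varepsilon\omega)$-psh with the same analytic singularity type, and by multilinearity of non-pluripolar products \cite[Proposition 1.4]{BEGZ10},
\[
\int_X (\theta + \varepsilon\omega)_\psi^n \;\geq\; \varepsilon^n \int_X \omega^n \;>\; 0.
\]
All hypotheses of Proposition~\ref{prop: Lp example} being in place in the enlarged class, we conclude that $\psi$ and $P_{\theta+\varepsilon\omega}[\psi]$ have the same singularity type, i.e.\ $P_{\theta+\varepsilon\omega}[\psi] \leq \psi + C_\varepsilon$ for some $C_\varepsilon$ (normalizing $\psi\leq 0$, which is allowed since $\sup_X\psi<\infty$). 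Because every $\theta$-psh function is automatically $(\theta+\varepsilon\omega)$-psh, the envelopes are monotone in the class, so $P_\theta[\psi] \leq P_{\theta+\varepsilon\omega}[\psi]$; together with the trivial bound $\psi \leq P_\theta[\psi]$ (using $\psi$ itself as a candidate in the envelope), we obtain the sandwich
\[
\psi \;\leq\; P_\theta[\psi] \;\leq\; P_{\theta+\varepsilon\omega}[\psi] \;\leq\; \psi + C_\varepsilon,
\]
forcing $[\psi]=[P_\theta[\psi]]$, i.e.\ $\psi$ is of model type.

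The main technical work lies in the log-resolution step: carefully verifying the identity $\langle(\pi^*\theta + i\ddbar\pi^*\psi)^n\rangle = \alpha^n$ and controlling the density of its push-forward along $\pi$ (a Jacobian computation near the exceptional locus) to secure $L^p$ integrability for some $p>1$. Once the enlarged-class model property is in hand, the sandwich step is elementary; the use of the perturbation $\theta + \varepsilon\omega$ is precisely what allows a single, unified argument that does not require assuming $\int_X\theta_\psi^n>0$ at the outset.
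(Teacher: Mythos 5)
Your overall strategy — establish $L^p$ density of the non-pluripolar Monge--Amp\`ere measure in a perturbed class $\{\theta+\varepsilon\omega\}$ (where the total mass is automatically positive), invoke Proposition~\ref{prop: Lp example} there, then sandwich $\psi\le P_\theta[\psi]\le P_{\theta+\varepsilon\omega}[\psi]\le\psi+C_\varepsilon$ — is sound, and the sandwich is the same idea used in the paper (which normalizes $\omega\ge 2\theta$ and compares $P_\theta[\psi]$ with $P_\omega[\psi]$ instead of with $P_{\theta+\varepsilon\omega}[\psi]$). But the proof as written has a genuine gap at the crucial $L^p$ step. The sentence ``the push-forward delivers $\theta_\psi^n$ with smooth density on $X\setminus A$, and in particular with $L^p(\omega^n)$ density for some $p>1$'' is a non-sequitur: smoothness of the density away from the analytic set $A$ says nothing about integrability near $A$, and in fact the density \emph{does} blow up there. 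Concretely, if $\pi^*\omega^n=|J_\pi|^2\tilde\omega^n$ (with $J_\pi$ vanishing along the exceptional SNC divisor), then $\frac{d\pi_*(\alpha^n)}{d\omega^n}\circ\pi=\rho_\pi/|J_\pi|^2$ with $\rho_\pi=\alpha^n/\tilde\omega^n$ bounded, so $\int_X f^p\,\omega^n=\int_{\tilde X}\rho_\pi^p|J_\pi|^{-2(p-1)}\tilde\omega^n$. One then needs $p-1$ small compared with $1/\max_k b_k$, where $b_k$ are the vanishing orders of $J_\pi$ along the exceptional components. You flag this Jacobian computation as ``the main technical work'' but do not carry it out, so the argument is incomplete at precisely the step that makes the proposition non-trivial. (A smaller gap: $\pi_*(\alpha^n)=\theta_\psi^n$ needs a word — both measures put no mass on $A$ resp.\ $E$, and $\pi$ identifies $\tilde X\setminus E$ with $X\setminus A$, so they agree; and the claim that $\alpha$ is semipositive uses that the residual part of the Siu decomposition is $\ge 0$.)

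Modulo that gap, your route is genuinely different from the paper's, and worth recording: the paper does not pass to a log resolution at all, but instead writes $i\ddbar\log\sum|f_j|^2=h^{-2}\sum_{j<k}i\,\alpha_{j,k}\wedge\overline{\alpha_{j,k}}$ with $\alpha_{j,k}=f_j\partial f_k-f_k\partial f_j$, expands $(A\omega+i\ddbar\psi)^n$ into terms of the form $|F|^2 h^{-2\ell}\beta^n$ with $F$ holomorphic, and then appeals to the Guan--Zhou solution of Demailly's strong openness conjecture to upgrade $L^1$ integrability of each $|F|^2 h^{-2\ell}$ to $L^p$ for some $p>1$. Your approach trades Guan--Zhou for Hironaka-type resolution plus an elementary discrepancy/Jacobian estimate; if carried out it would be more elementary in this respect, though it imports the machinery of log resolutions. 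Either way, both arrive at the same application of Proposition~\ref{prop: Lp example} and the same monotonicity-of-envelopes sandwich.
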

\begin{proof} We can assume that our fixed K\"ahler form $\omega$ satisfies $\omega\geq 2\theta$. Since $P_{\theta}[\psi]\leq P_{\omega}[\psi]$ it suffices to prove that $\psi-P_{\omega}[\psi]$ is globally  bounded on $X$. In fact we will prove the following stronger result: 
	\begin{equation}
		\label{eq: Lp density analytic sing}
	\rho := \frac{\omega_{\psi}^n}{\omega^n} \in L^p(\omega^n) , \  \textrm{ for some }\ p>1. 
	\end{equation}
	As  $\omega/2 \geq \theta$ it follows that $\int_X \omega_\psi^n  \geq 2^{-n}\int_X \omega^n>0$,  hence  Proposition \ref{prop: Lp example} will imply that $\psi-P_{\omega}[\psi]$ is globally  bounded on $X$.
	
	We now prove \eqref{eq: Lp density analytic sing}.  Since $X$ is compact it suffices to prove that  there exists a small open neighborhood  $U$ around a given point $x\in X$ (which will be fixed)  such that $\rho \in L^p(U,dV)$ for some $p>1$.   Since $\psi$ has analytic singularities we can find a holomorphic coordinate chart $\Omega$ around $x$ such that
	\[
	\psi = c\log \sum_{j=1}^N |f_j|^2 + g
	\] 
in a neighborhood of $\Omega$, where $c>0$ is a constant, $f_j$ are holomorphic functions in $\Omega$ and $g$ is a smooth  real-valued function in $\Omega$.  Let $A>0$ be large enough so that $(A-1)\omega +i\ddbar g\geq 0$ in $\Omega$. 
	
	In $X\setminus \{\psi=-\infty\}$, since $\psi$ is smooth we can write $\omega_{\psi}^n =\rho \omega^n$, where $\rho\geq 0$ is smooth. We extend $\rho$ to be $0$ over the set $\{\psi=-\infty\}$.  Then $\rho \omega^n$ is the non-pluripolar Monge-Amp\`ere measure of $\psi$ with respect to $\omega$ as follows from \cite{BEGZ10}, hence
	 \[
	 \int_{\Omega} \rho \omega^n \leq \int_X \rho \omega^n\leq  \int_X \omega^n.
	 \]
	  Similarly we can write  $ (A\omega+ i \ddbar  \psi)^n  =\rho_A \omega^n$ in $\Omega\setminus \{\psi=-\infty\}$, where $0\leq \rho_A \in L^1(\Omega,dV)$. 
	
	Now, we carry out the computation in $\Omega\setminus \{\psi=-\infty\}$. For notational convenience we set $h:= \sum_{j=1}^N |f_j|^2$, $\varphi:= \log \sum_{j=1}^N |f_j|^2$ and we  compute $i\ddbar \varphi$:
    \[
    i\ddbar \varphi =  \frac{\sum_{j=1}^N i\partial f_j \wedge \overline{\partial f_j }}{h }  -   \frac{i\left ( \sum_{j=1}^N \overline{f_j}\partial f_j\right )\wedge \left ( \sum_{j=1}^N {f_j}\overline {\partial f_j} \right ) }{h^2}.
    \]
    For each $1\leq j<k\leq N$ we set $\alpha_{j,k}:= f_j \partial f_k -f_k\partial f_j$. Then we obtain
    \begin{equation}\label{computation analytic}
     i\ddbar \varphi = h^{-2} \sum_{j < k} i \alpha_{j,k} \wedge \overline{\alpha_{j,k}}.
    \end{equation}

    Let $C>0$ be large enough such that $C^{-1}\beta \leq A\omega+i\ddbar g \leq C \beta$ in $\Omega$, where $\beta$ is the standard K\"ahler form in $\mathbb{C}^n$. For each $\ell=0, \cdots, n$, set $\gamma_l:=  (i\ddbar \varphi)^{\ell} \wedge \beta^{n-\ell} $. Then there exists  a constant $B>1$ (depending on $c, C>0$) such that in $\Omega \setminus \{\psi=-\infty\}$ one has
    \begin{equation}
    	\label{eq: double estimate}
 \frac{1}{B} \sum_{\ell=0}^n \gamma_{\ell}=\frac{1}{B} \sum_{\ell=0}^n (i\ddbar \varphi)^{\ell} \wedge \beta^{n-\ell}  \leq (A\omega + i \ddbar \psi )^n \leq  B \sum_{p=0}^n (i\ddbar \varphi)^{\ell} \wedge \beta^{n-\ell}=B \sum_{\ell=0}^n \gamma_{\ell}.
    \end{equation}

    By definition of $\alpha_{j,k}$ it follows that the $(\ell,0)$-forms  $\alpha_{j_1,k_1}\wedge ...\wedge \alpha_{j_{\ell},k_{\ell}}$ are of the type $\sum  F_k  dz_{I_k}$, where $|I_k|=\ell$, and each $F_k$ is holomorphic in $\Omega$. 
    By the above identity in \eqref{computation analytic}, each $\gamma_{\ell}$ is the sum of $(n,n)$-forms of type $|F|^2 h^{-2\ell}\beta^n$, where $F$ is holomorphic in $\Omega$. By the first estimate in \eqref{eq: double estimate} it follows that for each $\ell$,
  $$ \int_\Omega |F|^2 h^{-2\ell}\beta^n \leq B \int_\Omega \rho_A \omega^n< +\infty,$$ hence  $|F|^2 e^{-2 \ell \log h}$ is integrable in $\Omega$. From the resolution of Demailly's strong openness conjecture \cite{Dem}  due to Guan-Zhou \cite{GZh} (see also \cite{Pham} for an alternative proof) it follows that each $|F|^2h^{-2\ell}$ is in $L^{p}(U, dV)$ for some $p>1$  and a smaller neighborhood $U\subset \Omega$ of $x$. 
    Finally, from the second estimate in \eqref{eq: double estimate}  we see that $\omega_{\psi}^n/\omega^n \in L^p(U, dV)$, that is what we wanted.     
  \end{proof}

\section{Log-concavity of non-pluripolar products}\label{sec 5}

\begin{theorem}\label{thm: log concavity s.u.l.}
	Let $T_1,...,T_n$ be positive $(1,1)$-currents  on a compact K\"ahler manifold $X$.  Assume that each $T_j$ has potential with small unbounded locus. Then 
	\[
	\int_X \langle T_1 \wedge ...\wedge T_n\rangle \geq \bigg(\int_X \langle T_1^n\rangle \bigg)^{\frac{1}{n}} ... \bigg(\int_X \langle T_n^n\rangle \bigg)^{\frac{1}{n}}.
	\] 
\end{theorem}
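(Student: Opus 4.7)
The plan is to reduce the inequality to a setting where the classical Dinew/Alexandrov pointwise mixed Monge-Amp\`ere inequality applies, after replacing each $T_j$ by a Monge-Amp\`ere representative with prescribed constant density. Write $T_j = \theta^j + i\ddbar u_j$ with $u_j \in \textup{PSH}(X,\theta^j)$ having small unbounded locus, and assume each $\int_X \langle T_j^n\rangle > 0$ (otherwise the right-hand side vanishes and the claim is trivial). First I would reduce to the case when every $u_j$ has model type singularity by setting $\phi_j := P_{\theta^j}[u_j]$. Then $\phi_j$ is less singular than $u_j$, of model type, inherits the small unbounded locus (since $u_j \le \phi_j \le 0$), and satisfies $\int_X (\theta^j_{\phi_j})^n = \int_X (\theta^j_{u_j})^n$, so $u_j \in \mathcal E(X,\theta^j,\phi_j)$. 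By Theorem \ref{thm: E_memb_char} this is equivalent to $P_{\theta^j}[u_j](\phi_j) = \phi_j$, so Proposition \ref{prop: env_pluri_general} yields
\[
\int_X \theta^1_{u_1} \wedge \ldots \wedge \theta^n_{u_n} = \int_X \theta^1_{\phi_1} \wedge \ldots \wedge \theta^n_{\phi_n}.
\]
Both sides of the target inequality are therefore unchanged, and I may assume $u_j = \phi_j$ is model type, with all $\phi_j$ locally bounded on $X \setminus A$ for a common closed pluripolar set $A$.

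Next I would invoke Theorem \ref{thm: existence lambda=0} to exchange each $\phi_j$ for a potential with prescribed constant Monge-Amp\`ere density. Setting $c_j := \int_X (\theta^j_{\phi_j})^n / \int_X \omega^n > 0$, the constant density $f_j \equiv c_j$ is in $L^\infty(\omega^n)$ with the required total mass, so the theorem produces $\psi_j \in \mathcal E(X,\theta^j,\phi_j)$ satisfying $(\theta^j_{\psi_j})^n = c_j \omega^n$. Theorem \ref{thm: min sing of solution} then upgrades this to show that $\psi_j$ has the \emph{same} singularity type as $\phi_j$, so in particular $\psi_j$ is locally bounded off $A$ and $P_{\theta^j}[\psi_j](\phi_j) = \phi_j$. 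A further application of Proposition \ref{prop: env_pluri_general} gives
\[
\int_X \theta^1_{\psi_1} \wedge \ldots \wedge \theta^n_{\psi_n} = \int_X \theta^1_{\phi_1} \wedge \ldots \wedge \theta^n_{\phi_n}, \qquad \int_X (\theta^j_{\psi_j})^n = \int_X (\theta^j_{\phi_j})^n.
\]

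For the final step, on $X \setminus A$ the potentials $\psi_j$ are locally bounded, so the non-pluripolar mixed product coincides with the Bedford-Taylor mixed Monge-Amp\`ere current. The pointwise Dinew/Alexandrov mixed-discriminant inequality applied to the positive $(1,1)$-forms $\theta^j_{\psi_j}$ gives
\[
\theta^1_{\psi_1} \wedge \ldots \wedge \theta^n_{\psi_n} \ge \prod_{j=1}^n \left(\frac{(\theta^j_{\psi_j})^n}{\omega^n}\right)^{1/n}\omega^n = \Bigl(\prod_{j=1}^n c_j\Bigr)^{1/n}\omega^n
\]
as measures on $X \setminus A$. Integrating over $X$ and using that both the non-pluripolar product and $\omega^n$ put no mass on the pluripolar set $A$, I conclude
\[
\int_X \theta^1_{\psi_1} \wedge \ldots \wedge \theta^n_{\psi_n} \ge \Bigl(\prod_j c_j\Bigr)^{1/n}\int_X \omega^n = \prod_j \Bigl(\int_X (\theta^j_{\psi_j})^n\Bigr)^{1/n},
\]
which together with the mass identities above is the claimed log-concavity.

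The main obstacle is the density-conversion step: inside the relative full mass class $\mathcal E(X,\theta^j,\phi_j)$ one must produce a potential $\psi_j$ having the \emph{same} singularity type as $\phi_j$, whose Monge-Amp\`ere measure is a bounded density against $\omega^n$. Existence (Theorem \ref{thm: existence lambda=0}) only lands in the larger class $\mathcal E(X,\theta^j,\phi_j)$, and it is the regularity statement (Theorem \ref{thm: min sing of solution}) that forces $\psi_j$ not to become strictly more singular than $\phi_j$. Both ingredients rely crucially on the small unbounded locus hypothesis, which is precisely why that hypothesis enters the statement of Theorem \ref{thm5}.
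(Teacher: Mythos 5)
Your proof is correct and follows essentially the same strategy as the paper: reduce to the model potentials $\phi_j = P_{\theta^j}[u_j]$, solve $(\theta^j_{\psi_j})^n = c_j\omega^n$ via Theorem \ref{thm: existence lambda=0}, transfer the mixed-product mass using the full-mass machinery, and finish with the mixed Monge-Amp\`ere inequality. The only divergence is that you insert an appeal to the regularity result Theorem \ref{thm: min sing of solution} to force $\psi_j$ to have exactly the same singularity type as $\phi_j$ so that the Dinew inequality can be applied pointwise on $X\setminus A$; the paper skips this step entirely and cites \cite[Proposition 1.11]{BEGZ10}, which is already formulated for non-pluripolar products and does not require the solution to have the same singularity as $\phi_j$, only to lie in the relative full mass class $\mathcal E(X,\theta^j,P_{\theta^j}[u_j])$ so that the total masses coincide. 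Your extra step is harmless and not incorrect, but it is not needed for the argument to close.
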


\begin{proof}We can assume that the classes of $T_j$ are big and their masses are non-zero. Other\-wise the right-hand side of the inequality to be proved is zero.  Consider smooth closed real $(1,1)$-forms $\theta^j$,  and $u_j \in \psh(X,\theta^j)$ with small unbounded locus such that $T_j = \theta^j_{u_j}$.  

For each $j=1,...,n$  Theorem \ref{thm: existence lambda=0} insures that there exists a normalizing constant $c_j>0$ and $\varphi_j\in \mathcal{E}(X,\theta^j,P_{\theta}[u_j])$ such that $\big(\theta^j_{\varphi_j}\big)^n = c_j\omega^n$. 

We can assume that $\int_X \omega^n=1$, thus we can write 
$$c_j=\int_X \big(\theta^j_{\varphi_j}\big)^n=   \int_X \big(\theta^j_{P_{\theta}[u_j]}\big)^n=\int_X \big(\theta^j_{u_j}\big)^n= \int_X \langle T_j^n \rangle .$$ 
A combination of Proposition \ref{prop: comparison generalization} and Theorem \ref{thm: lsc of MA measures} then gives
$$
\int_X \theta^1_{\varphi_1} \wedge ... \wedge \theta^n_{\varphi_n}  = \int_X  \theta^1_{P_{\theta}[u_1]} \wedge ... \wedge \theta^n_{P_{\theta}[u_n]}= \int_X  \theta^1_{u_1} \wedge ... \wedge \theta^n_{u_n} = \int_X\langle T_1 \wedge .... \wedge T_n \rangle .
$$
An application of  \cite[Proposition 1.11]{BEGZ10} gives that $\theta^1_{\varphi_1} \wedge \ldots \wedge \theta^n_{\varphi_n} \geq c_1^{1/n} \ldots c_n^{1/n} \omega^n$. The result follows after we integrate this estimate. 
\end{proof}

\paragraph{Acknowledgements.} The  first named author has been partially supported by BSF grant 2012236 and NSF grant DMS--1610202. The second named author has been supported by a Marie Sklodowska Curie individual fellowship 660940--KRF--CY (MSCA--IF). 

We would like to thank Robert Berman, Mattias Jonsson and L\'aszl\'o Lempert for their insightful comments that improved the presentation of the paper. We warmly thank the referees for many suggestions improving the presentation of the paper. 
 
\let\OLDthebibliography\thebibliography % squeezes Bibliography
\renewcommand\thebibliography[1]{
  \OLDthebibliography{#1}
  \setlength{\parskip}{1pt}
  \setlength{\itemsep}{1pt}
}

\noindent{\sc University of Maryland}\\
{\tt tdarvas@math.umd.edu}\vspace{0.1in}\\
\noindent{\sc IHES, Universit\'e Paris-Saclay}\\
{\tt dinezza@ihes.fr}\vspace{0.1in}\\
\noindent {\sc Laboratoire de Math\'ematiques d'Orsay, Univ. Paris-Sud, CNRS, Universit\'e Paris-Saclay,  91405 Orsay, France}\\
{\tt hoang-chinh.lu@u-psud.fr}

\begin{thebibliography}{99}
\begin{footnotesize}
\bibitem[Au78]{Au78} T. Aubin, \'Equations du type Monge-Amp\`ere sur les vari\'et\'es k\"ahl\'eriennes compactes, Bull.
Sci. Math. (2) 102 (1978), no. 1, 63--95.

\bibitem[BD88]{BD88} E. Bedford, J.-P. Demailly,  {Two counterexamples concerning the pluricomplex Green function in $\Bbb C^n$}, Indiana Univ. Math. J., 37 (1988), 865--867.

\bibitem[BT76]{BT76} E.~Bedford, B.A.~Taylor, \href{http://projecteuclid.org/euclid.bams/1183537618}{The Dirichlet problem for a complex Monge-Amp\`ere equation},  Invent. Math. {\bf 37}  (1976), no. 1, 1--44. 

\bibitem[BT82]{BT82} E.~Bedford, B.A.~Taylor, \href{https://projecteuclid.org/euclid.acta/1485890201}{A new capacity for plurisubharmonic functions},  Acta Math.  {\bf 149}  (1982), 1--40.

\bibitem[BT87]{BT87} E. Bedford,  B.A. Taylor, \href{http://www.sciencedirect.com/science/article/pii/0022123687900875}{Fine topology, Silov boundary, and $(i\ddbar)^n$}, J. Funct. Anal.  72  (1987),  no. 2, 225--251. 

\bibitem[Ber13]{Ber13} R.J. Berman, From Monge-Amp\`ere equations to envelopes and geodesic rays in the zero temperature limit, \href{http://arxiv.org/abs/1307.3008}{arXiv:1307.3008}.  

\bibitem[BB10]{BB10} R.J. Berman, S. Boucksom, \href{http://link.springer.com/article/10.1007/s00222-010-0248-9}{Growth of balls of holomorphic sections and energy at equilibrium}, Inventiones mathematicae, August 2010, Volume 181, Issue 2, 337--394.

 \bibitem[BBGZ13]{BBGZ13} R.J.~Berman, S.~Boucksom, V.~ Guedj, A.~Zeriahi, {A variational approach to complex Monge-Amp\`ere equations},
Publ. Math. I.H.E.S. 117 (2013), 179--245. 

\bibitem[BEGZ10]{BEGZ10} S.~Boucksom, P.~Eyssidieux, V.~Guedj, A.~Zeriahi, {Monge-Amp\`ere equations in big cohomology classes}, Acta Math. (2010), Volume 205, Issue 2, pp 199--262.

\bibitem[Ce98]{Ce98} U.~Cegrell, \href{https://projecteuclid.org/euclid.acta/1485891142}{Pluricomplex energy}, Acta Math. {\bf 180} (1998), 187--217.

\bibitem[CG09]{CG09} D. Coman, V. Guedj, \href{http://www.sciencedirect.com/science/article/pii/S0021782409000671}{Quasiplurisubharmonic Green functions}, J. Math. Pures Appl. (9) 92 (2009), no. 5, 456--475.

\bibitem[Dar13]{Dar13} T. Darvas, \href{https://www.cambridge.org/core/journals/journal-of-the-institute-of-mathematics-of-jussieu/article/weak-geodesic-rays-in-the-space-of-kahler-potentials-and-the-class-mathcalexitomega/24D8B3FE5734B420CEF8B778C24DE3AB}{Weak geodesic rays in the space of K\"ahler potentials and the class $\mathcal E(X,\omega)$}, Journal of the Institute of Mathematics of Jussieu, 1-22. doi:10.1017/S1474748015000316, \href{https://arxiv.org/abs/1307.6822}{arXiv:1307.6822}.

\bibitem[DDL16]{DDL16} T.~Darvas, E.~Di Nezza, C.H.~Lu, {On the singularity type of full mass currents in big cohomology classes}, Compos. Math. 154 (2018), no. 2, 380--409, \href{https://arxiv.org/abs/1606.01527}{arXiv:1606.01527}.

\bibitem[Dem]{Dem} J.-P. Demailly, Multiplier ideal sheaves and analytic methods in algebraic geometry. School
on Vanishing Theorems and Effective Results in Algebraic Geometry (Trieste, 2000), 1--148,
ICTP Lect. Notes, 6, Abdus Salam Int. Cent. Theoret. Phys., Trieste, 2001.

\bibitem[DR17]{DR17} T. Darvas, Y.A. Rubinstein, {Tian's properness conjectures and Finsler geometry of the space of K\"ahler metrics}, J. Amer. Math. Soc. 30 (2017), no. 2, 347-387. \href{https://arxiv.org/abs/1506.07129}{arXiv:1506.07129}.

\bibitem[Dw09a]{Dw09a} S.~Dinew, \href{http://link.springer.com/article/10.1007/s00209-008-0356-z}{An inequality for mixed Monge-Amp\`ere measures}, Math. Z. {\bf 262} (2009), 1--15.


\bibitem[Dw09b]{Dw09b} S. Dinew, \href{http://www.sciencedirect.com/science/article/pii/S0022123609000512}{Uniqueness in $\Ec(X,\omega)$}, J. Funct. Anal. 256 (2009), no. 7, 2113--2122.

 \bibitem[DL15]{DL15} S.~Dinew, C.H.~Lu, \href{http://link.springer.com/article/10.1007/s00209-014-1392-5}{Mixed Hessian inequalities and uniqueness in the class ${\mathcal E}(X,\omega)$},  Math.Z. {\bf 279} (2015), 753--766.
 
  \bibitem[DiLu14]{DiLu14} E.~Di Nezza, C.H.~Lu, \href{http://www.degruyter.com/view/j/crll.ahead-of-print/crelle-2014-0090/crelle-2014-0090.xml}{Complex Monge-Amp\`ere equations on quasi-projective varieties}, Journal f\"ur die reine und angewandte Mathematik (Crelles Journal), Volume 2017, Issue 727, Pages 145--167.


\bibitem[DiLu15]{DiLu15} E. Di Nezza, C.H. Lu, \href{http://imrn.oxfordjournals.org/content/2015/16/7287}{Generalized Monge-Amp\`ere capacities}. Int. Math. Res. Not. IMRN 2015, no. 16, 7287--7322.

\bibitem[EGZ09]{EGZ09} P. Eyssidieux, V. Guedj, A. Zeriahi,  \href{http://www.ams.org/journals/jams/2009-22-03/S0894-0347-09-00629-8/}{Singular K\"ahler-Einstein metrics}. J. Amer. Math. Soc. 22 (2009), no. 3, 607--639.

\bibitem[Gu98]{Gu98} B. Guan,  {The Dirichlet problem for complex Monge-Amp\`ere equations and regularity of the pluricomplex Green function}. Comm. Anal. Geom. 6 (1998), no. 4, 687--703.

\bibitem[GZh]{GZh} Q. Guan, X. Zhou, \href{http://annals.math.princeton.edu/2015/182-2/p05}{A proof of Demailly's strong openness conjecture},  Annals of Math. (2015), Issue 2, Volume 182, Pages 605--616.


\bibitem[GZ05]{GZ05} V. Guedj, A. Zeriahi, \href{http://link.springer.com/article/10.1007/BF02922247}{Intrinsic capacities on compact K\"ahler manifolds},  Journal of Geom. Analysis 15 (2005), no 4, 607--639.

\bibitem[GZ07]{GZ07} V. Guedj,  A. Zeriahi,  \href{http://doi.org/10.1016/j.jfa.2007.04.018}{The weighted Monge-Amp\`ere energy of quasi plurisubharmonic functions}, Journal of Functional Analysis, 250(2), 442--482.


\bibitem[GLZ17]{GLZ17} V. Guedj, C.H. Lu, A. Zeriahi, {Plurisubharmonic envelopes and supersolutions}, \href{https://arxiv.org/abs/1703.05254}{arXiv:1703.05254}. To appear in J. Diff. Geometry. 

\bibitem [GZ17] {GZ17} V.~Guedj, A.~Zeriahi, \href{http://www.ems-ph.org/books/book.php?proj_nr=210}{Degenerate Complex Monge-Amp\`ere Equations}, EMS Tracts in Mathematics, vol.{\bf  26}, (2017).

\bibitem[Kol98]{Kol98} S. Ko\l{}odziej, \href{http://projecteuclid.org/euclid.acta/1485891130}{The complex Monge-Amp\`ere equation}, Acta Math. 180 (1998), no. 1, 69--117.

\bibitem[Kol03]{Kol03}  S.~Ko\l{}odziej, \href{http://www.iumj.indiana.edu/IUMJ/fulltext.php?artid=2220&year=2003&volume=52}{The Monge-Amp\`ere equation on compact K\"ahler manifolds},  Indiana Univ. Math. J. {\bf 52} (2003), 667--686.

\bibitem[KS]{KS} H. K\"onig,  G. Seever, \href{https://projecteuclid.org/euclid.dmj/1077378642}{The abstract F. and M. Riesz theorem}, Duke Math. J., vol. 36 (1969), 791--797.

\bibitem[L83]{L83} L. Lempert,  {Solving the degenerate complex Monge-Amp\`ere equation with one concentrated singularity}, Math. Ann. 263 (1983), 515--532.

\bibitem[Hiep]{Pham}  H. H. Pham, The weighted log canonical threshold, Comptes Rendus Mathematique, vol. 352, Issue 4 (2014), 283--288\href{https://arxiv.org/abs/1401.4833}{arXiv:1401.4833.}

\bibitem[PS09]{PS09} D.H. Phong, J. Sturm,  \href{http://intlpress.com/site/pub/pages/journals/items/cag/content/vols/0018/0001/a006/index.html}{The Dirichlet problem for degenerate complex Monge-Amp\`ere equations}. Comm. Anal. Geom. 18 (2010), no. 1, 145--170.


\bibitem[PS10]{PS10} D.H. Phong, J. Sturm,  \href{https://www.jstor.org/stable/25748247?seq=1#page_scan_tab_contents}{Regularity of geodesic rays and Monge-Amp\`ere equations}. Proc. Amer. Math. Soc. 138 (2010), no. 10, 3637--3650.
 
\bibitem[PS14]{PS14} D.H. Phong, J. Sturm, On the singularities of the pluricomplex Green's function. Advances in analysis: the legacy of Elias M. Stein, 419--435, Princeton Math. Ser., 50, Princeton Univ. Press, Princeton, NJ, 2014. \href{https://arxiv.org/abs/1209.2198}{arXiv:1209.2198}.

\bibitem[Rai69]{Rai69}  J. Rainwater,  \href{https://projecteuclid.org/euclid.dmj/1077378643}{A note on the preceding paper}, Duke Math. J., 36 (1969), 799--800.

\bibitem[RS05]{RS}  A. Rashkovskii, R. Sigurdsson, Green functions with singularities along complex
spaces, Internat. J. Math. 16 (2005), no. 4, 333--355. 

\bibitem[RWN1]{RWN} J. Ross, D.Witt Nystr\"om,  \href{https://projecteuclid.org/euclid.jsg/1409317366}{Analytic test configurations and geodesic ray}, J. Symplectic Geom. Volume 12, Number 1 (2014), 125--169.

\bibitem[RWN2]{RWN2} J. Ross, D.Witt Nystr\"om,  Envelopes of plurisubharmonic metrics with prescribed singularities,  Ann. Fac. Sci. Toulouse Math. (6) 26 (2017), no. 3, 687--728. \href{https://arxiv.org/abs/1210.2220}{arXiv:1210.2220}.

 \bibitem[X96]{X96} Y. Xing,  \href{http://www.ams.org/journals/proc/1996-124-02/S0002-9939-96-03316-3/}{Continuity of the complex Monge-Amp\`ere operator}. Proc.  Amer.  Math. Soc. 124 (1996), 457--467.

\bibitem[X09]{X09} Y. Xing, \href{http://link.springer.com/article/10.1007/s00209-008-0420-8}{Continuity of the Complex Monge-Amp\`ere operator on compact K\"ahler manifolds}, Mathematische Zeitschrift, October 2009, Volume 263, Issue 2, pp 331--344.


\bibitem[Ya78]{Ya78} S.T. Yau, {On the Ricci curvature of a compact K\"ahler manifold and the complex Monge-Amp\`ere equation. I}, Comm. Pure Appl. Math. 31 (1978), no. 3, 339--411.

\bibitem[WN17]{WN17} D. Witt Nystr\"om, Monotonicity of non-pluripolar Monge-Amp\`ere measures, \href{https://arxiv.org/abs/1703.01950}{arXiv:1703.01950}. 

\end{footnotesize}
\end{thebibliography}
\end{document}